\def\mf#1{\mathfrak{#1}}
\def\mc#1{\mathcal{#1}}
\def\mb#1{\mathbb{#1}}
\def\tx#1{{\rm #1}}
\def\tb#1{\textbf{#1}}
\def\ts#1{\textsf{#1}}
\def\tr{\tx{tr}\,}
\def\R{\mathbb{R}}
\def\C{\mathbb{C}}
\def\Q{\mathbb{Q}}
\def\Z{\mathbb{Z}}
\def\lmod{\setminus}
\def\ol#1{\overline{#1}}
\def\rk{\tx{rk}}
\def\hat{\widehat}
\def\vol{\tx{vol}}
\def\rw{\rightarrow}
\def\lw{\leftarrow}
\def\lrw{\longrightarrow}
\def\lw{\leftarrow}
\def\sm{\smallsetminus}
\def\<{\langle}
\def\>{\rangle}
\def\mr#1{\mathring{#1}}
\newenvironment{mytitle}
{\begin{center}\large\sc}
{\end{center}}
\newtheorem{thm}{Theorem}[subsection]
\newtheorem{lem}[thm]{Lemma}
\newtheorem{pro}[thm]{Proposition}
\newtheorem{cnd}[thm]{Conditions}
\newtheorem{cor}[thm]{Corollary}
\newtheorem{fct}[thm]{Fact}
\numberwithin{equation}{subsection}
\def\phi{\varphi}
\newlength{\cplxcorr}
\newlength{\sumcorr}
\def\ssum#1{\setlength{\sumcorr}{(\widthof{$\displaystyle\sum_{#1}$}-\widthof{$\displaystyle\sum$})/2} \hspace{-\sumcorr}\sum_{#1}\hspace{-\sumcorr} }
\begin{document}

\begin{mytitle} Epipelagic $L$-packets and rectifying characters \end{mytitle}
\begin{center} Tasho Kaletha \end{center}
\begin{abstract}We provide an explicit construction of the local Langlands correspondence for general tamely-ramified reductive p-adic groups and a class  of wildly ramified Langlands parameters. Furthermore, we verify that our construction satisfies the expected properties of such a  correspondence. More precisely, we show that each $L$-packet we construct admits a parameterization in terms of the Langlands dual group,  contains a unique generic element for a fixed Whittaker datum, satisfies the formal degree conjecture, is compatible with twists by central and cocentral characters, provides a  stable virtual character, and satisfies the expected endoscopic character identities. Moreover, we show that in the case of $\tx{GL}_n$,  our construction coincides with the one given by Bushnell and Henniart \cite{BH05a}, \cite{BH05b}. Our results suggest a general approach to the construction of the local Langlands correspondence for tamely-ramified groups and regular supercuspidal parameters.
\end{abstract}
\let\thefootnote\relax\footnotetext{This research is supported in part by NSF grant DMS-1161489.}

The purpose of this paper is to construct the local Langlands correspondence for the epipelagic representations of tamely-ramified reductive $p$-adic groups. These are irreducible supercuspidal representations of positive depth $\frac{1}{m}$, where $m$ is a certain integer. When the residual characteristic $p$ does not divide $m$, they appear in the general constructions of supercuspidal representations of Adler \cite{Ad98} and Yu \cite{Yu01}. More recently, their construction was reinterpreted by Reeder and Yu \cite{RY} in terms of Vinberg's theory of graded Lie algebras and this allowed for the restriction $p \nmid m$ to be removed. In the latter paper, these representations were called epipelagic. In terms of the Adler-Yu construction, an epipelagic representation of a tamely-ramified  connected reductive group $G$ arises from a pair $(S,\chi)$, where $S \subset G$ is a maximal torus, and $\chi : S(F) \rw \C^\times$ is a  character, and the pair is subject to certain conditions. These conditions imply in particular that the torus $S$ is tamely-ramified, but not unramified. In fact, the ramification degree of the splitting field of $S$ is equal to $m$, and we are assuming that $p \nmid 2m$.

Our construction of the local Langlands correspondence is a generalization of the approach originally used by Langlands \cite{Lan88} to  construct the discrete series $L$-packets for real groups, and later employed by DeBacker and Reeder \cite{DR09} to construct supercuspidal L-packets for certain tamely-ramified parameters and unramified groups. However, there are two interesting new phenomena occurring in the present situation which were visible neither in the setting of real groups, nor in the setting of tamely-ramified supercuspidal  parameters.

To describe the first phenomenon, we recall that the first step in Langlands' approach is the construction of an elliptic maximal torus  $S \subset G$ and a character $\chi : S(F) \rw \C^\times$ from the given Langlands parameter $\phi$. In fact, obtaining the maximal torus  is quite simple. It is the character $\chi$ that causes the trouble. Both in Langlands' original paper, as well as in the work of  DeBacker and Reeder, $\chi$ was obtained by an ad-hoc construction, which is independent of the particular Langlands-parameter at hand,  in that it is the same construction for all parameters of the same kind. In \cite{Kal11a}, the author gave a reinterpretation of the construction of DeBacker and Reeder using the work of Langlands and Shelstad \cite[\S\S5,6]{LS87} on L-embeddings of maximal tori into the L-group of G. The  upshot is that the Langlands parameter $\phi$ can be factored as the composition of a Langlands parameter $\phi_S : W_F \rw {^LS}$ for  the torus $S$ and an $L$-embedding $^Lj : {^LS} \rw {^LG}$. Moreover, it was shown in loc. cit. that in the setting of tamely-ramified  supercuspidal parameters, there is an essentially unique choice for $^Lj$, and that $\chi$ is the character corresponding to the  parameter $\phi_S$. The same uniqueness is also true in the case of real discrete series $L$-packets and is implicit in the work \cite {She10} of Shelstad. The new phenomenon occurring in the setting of epipelagic $L$-packets is that the $L$-embedding $^Lj$ ceases to be unique. In fact, when the maximal torus $S$ is tamely-ramified, there are always $2^n$ choices of tamely-ramified $L$-embeddings $^Lj$, for a certain number $n$ depending on $S$, and each two choices differ by a sign. The question is then to choose the correct signs. What is interesting is that there does not  appear to be a universal choice -- the signs depend on the arithmetic data encoded in the particular Langlands parameter $\phi$. That  this is the case for $\tx{GL}_n$ is visible in the work of Bushnell and Henniart \cite{BH05b}. Inspired by that work, we give a simple  and explicit construction of the $L$-embedding $^Lj : {^LS} \rw {^LG}$, which relies on the use of Langlands' $\lambda$-constants \cite[Thm. 2.1]{LanArt}. The role of choosing the  correct $L$-embedding in our construction is somewhat parallel to the role of the rectifying character in the work of Bushnell and  Henniart. This parallel is however not entirely direct, because unlike in the case of $\tx{GL}_n$, there is no ``naive'' construction in  our case. Rather, the construction can be performed only after a choice of an $L$-embedding has been made.

We now come to the second new phenomenon occurring in our construction. It concerns the grouping of representations into $L$-packets. Each  constituent of our $L$-packets is constructed, just like in the case of discrete series of real groups and of depth-zero supercuspidal  representations of p-adic groups, from a pair $(S',\chi')$ of a maximal torus $S' \subset G$ and a character $\chi' : S'(F) \rw \C^ \times$.  In the case of discrete series representations of real groups, as well as in the case of depth-zero supercuspidal  representations of unramified $p$-adic groups, an $L$-packet is formed by considering all $G(F)$-conjugacy classes of pairs $(S',\chi')$ in the stable conjugacy class of the pair $(S,\chi)$ that was obtained from the  Langlands parameter $\phi$. While we can clearly perform the same construction in our setting, the result will in general not be an $L$-packet, as one quickly sees by studying the character relations from the theory of endoscopy. The failure of these relations leads to the  definition and study of a new sign invariant associated to a maximal torus $S' \subset G$. This sign invariant takes the form of a function, which assigns to each element in the set of roots of $S'$ the number $+1$ or $-1$. Moreover, this sign invariant appears to be interesting in its own right. For example, it provides a refinement of the Kottwitz sign $e(G)$ associated to the reductive group $G$ \cite{Kot83}. The invariant can be used to construct a character $\epsilon_{S'} : S'(F) \rw \C^\times$. In order to obtain the correct $L$-packet, one has to take the representations corresponding to the pairs $(S',\chi' \cdot \epsilon_{S'})$, for all $G(F)$-conjugacy classes of pairs $(S',\chi')$ in the stable class of $(S,\chi)$. It may be worth pointing out that, contrary to the case  of the $L$-embedding $^Lj : {^LS} \rw {^LG}$, the characters $\epsilon_{S'}$ do not depend on the Langlands-parameter $\phi$ or the  character $\chi$ constructed from it. Rather, they only depend on the particular torus $S'$. We believe that  these characters will play a role in the local Langlands correspondence for other classes of parameters, especially when considering  supercuspidal representations which arise from tamely-ramified, but not unramified, tori.

Both of these phenomena came initially as a surprise to us. In \cite{Kal12}, we studied the $L$-packets of epipelagic representations of depth $\frac{1}{h}$ for split, absolutely simple, simply-connected groups $G$ with Coxeter number $h$. In that situation, neither of the two phenomena appears. We believe that it is the simply-connectedness that is chiefly responsible for this. For example, in the study of the group $\tx{GL}_n$ both phenomena need to be addressed.

We will now describe in more detail the individual results of this paper. After fixing notation in Section \ref{sec:notation} and reviewing some known results in \ref{sec:prelims} that will be used throughout the paper, we set off by defining and studying in Section \ref{sec:torinv} the sign invariant associated to a pair $(G,S)$ of a connected reductive group $G$ defined over a local field and a maximal torus $S$ of $G$. The invariant depends only on the $G(F)$-conjugacy class of $S$, but in general it changes when we pass to a different rational class within the stable class of $S$, and in particular when we transfer $S$ to an inner form of $G$. The initial definition we give of the invariant is straightforward and elementary, yet in order to study it we find it more convenient to provide a cohomological interpretation. With this interpretation at hand, we then give a formula for how the invariant varies within a given stable class, and from this we conclude that one can recover the Kottwitz sign $e(G)$ from the invariant of any elliptic maximal torus of $G$ (Proposition \ref{pro:torinvstab}). The main technical burden of this section is the proof of the vanishing result \ref{pro:torinvvan}, which is crucial for the proofs of stability and endoscopic transfer in the later sections.

Having established the necessary results on the toral invariant, we move in Section \ref{sec:packs} to the first main goal of this paper -- the construction of the local Langlands correspondence for epipelagic representations. In \cite[\S7.1]{RY} Reeder and Yu single out a class of Langlands parameters which they believe should correspond to the class of epipelagic representations of $G$ under the local Langlands correspondence, whenever $G$ is an absolutely simple and simply-connected tamely-ramified $p$-adic group. Their main motivation for this prediction is the formal degree conjecture of Hiraga-Ichino-Ikeda \cite{HII08} and its reformulation given in \cite{GR10}. In the paper at hand, we consider any tamely-ramified reductive $p$-adic group $G$ and a class of Langlands parameters for it which generalizes that for the absolutely-simple simply connected case. The precise conditions we impose on the parameters are Conditions \ref{cnd:parm} in Section \ref{sec:lpackconst}. To a Langlands parameter satisfying these conditions, we explicitly construct in Section \ref{sec:lpackconst} a finite set ($L$-packet) of epipelagic representations of $G$. Here is a brief summary of the construction: Let $F$ be a finite extension of the field $\Q_p$ of $p$-adic numbers, and let $W$ be its Weil group and $\Gamma$ its Galois group. Let $G$ be a connected reductive algebraic group defined over $F$ and split over a tamely-ramified extension of $F$. Let $\hat G$ be the complex Langlands dual group of $G$, and let $^LG = \hat G \rtimes W$ be the Weil-form of the $L$-group of $G$. A Langlands parameter $\phi : W \rw {^LG}$ satisfying Conditions \ref{cnd:parm} normalizes a unique maximal torus $\hat T$ of $\hat G$, and hence provides an action of $W$ on $\hat T$ which one easily shows extends to an action of $\Gamma$. The complex torus $\hat T$ with the new $\Gamma$-action is the complex dual of a torus $S$ defined over $F$. In Section \ref{sec:chispec} we construct an $L$-embedding $^Lj : {^LS} \rw {^LG}$ of the $L$-group of $S$ into the $L$-group of $G$. The construction of ${^Lj}$ is based on Langlands' $\lambda$-constants \cite[Thm. 2.1]{LanArt}. These constants are Gauss-sums formed from additive characters on certain finite fields, and the necessary additive characters are extracted from the parameter $\phi$. The $L$-embedding ${^Lj}$ provides a factorization $\phi = {^Lj} \circ \phi_S$, with $\phi_S : W \rw {^LS}$ a Langlands parameter for $S$. Let $\chi_S : S(F) \rw \C^\times$ be the character corresponding to $\phi_S$ under the local Langlands correspondence for tori. The torus $S$ comes equipped with a stable conjugacy class of embeddings $[j] : S \rw G$. The $L$-packet on $G$ corresponding to $\phi$ is then the set of all epipelagic representations arising from the pairs $j_*(S,\chi_S\cdot \epsilon_j)$, where $j$ runs over the set of $G(F)$-conjugacy classes in the stable conjugacy class $[j]$, and $\epsilon_j : S(F) \rw \C^\times$ is the character constructed in Section \ref{sec:torinvchar} from the toral invariant of $jS$. All the details of this construction are given in Section \ref{sec:lpackconst}, except for the construction of the $L$-embedding $^Lj$, which is given in Section \ref{sec:chispec}. The reason for this separation is that the arguments in \ref{sec:lpackconst} are fairly general and we believe they will apply with little or no modification to much more general classes of Langlands parameters. On the other hand, Section \ref{sec:chispec} is quite specific to the parameters at hand, and while we believe that a similar approach will work for other classes of parameters, the specific formulas will most likely be different.

It would be interesting to compare the material in our Section \ref{sec:lpackconst} with that in \cite[\S7.2,\S7.3]{RY}, where the authors consider the special case of an absolutely simple, simply-connected group $G$ and under the same assumptions on the residual characteristic of $F$ construct a Langlands parameter starting from an epipelagic representation. Their construction is quite different from ours, as it relies on the invariant theory of graded Lie algebras and moreover goes in the opposite direction. It would be interesting to see how our construction in Section \ref{sec:lpackconst}, when specialized to absolutely simple and simply-connected groups, relates to the one in \cite{RY}.

After the $L$-packets have been constructed, the next step, taken up in Section \ref{sec:lpackparm}, is to parameterize their constituents in terms of the centralizer $S_\phi = \tx{Cent}(\phi,\hat G)$. We do this using the language of representations of extended pure inner forms based on Kottwitz's theory of isocrystals with additional structure \cite{Kot85}, \cite{Kot97}. Thus, rather than considering an individual group $G$ and a Langlands parameter for it, we consider all extended pure inner forms $G^b$ of a given fixed quasi-split group $G$ and all representations of $G^b(F)$. The necessary notions from \cite{Kal11a} are recalled in Section \ref{sec:isoinner}. Crucial for the parameterization is the fact that our $L$-packets satisfy Shahidi's tempered packet conjecture \cite{Sha90} -- when the quasi-split group $G$ is endowed with a Whittaker datum, the $L$-packet on $G$ contains a unique generic constituent (Proposition \ref{pro:unigen}). The parameterization itself takes the form of a commutative diagram
\begin{equation} \label{eq:diagint} \xymatrix{
\tx{Irr}(S_\phi)\ar[d]\ar[r]^-\sim&\Pi_\phi\ar[d]\\
X^*(Z(\hat G)^\Gamma)\ar[r]^-\sim&\tb{B}(G)_\tx{bas}
} \end{equation}
The set $\Pi_\phi$ consists of equivalence classes of quadruples $(G^b,\xi,b,\pi)$, where $(\xi,b) : G \rw G^b$ is an extended pure inner twist of the fixed quasi-split group $G$, and $\pi$ is an epipelagic representation of $G^b(F)$. The set $\tx{Irr}(S_\phi)$ consists of the equivalence classes of irreducible algebraic representations of the complex algebraic group $S_\phi$. The left vertical arrow is given by taking central characters, while the right vertical arrow is given by sending $(G^b,\xi,b)$ to $b$. The bottom horizontal arrow is Kottwitz's isomorphism \cite[Prop. 5.6]{Kot85}. The top vertical arrow is the bijection we construct.

The remainder of Section \ref{sec:packs} is devoted to the proof of two conjectural properties for our $L$-packets -- the formal degree conjecture of Hiraga, Ichino, and Ikeda \cite{HII08}, and the compatibility with central and cocentral characters \cite[\S10]{Bo77}. In the course of proving the latter in Section \ref{sec:centchar} we provide a new construction of the character of $G(F)$ associated to a parameter $W \rw Z(\hat G)$, and of the character of $Z_G(F)$ associated to a parameter $W \rw {^LG}$. The original constructions of these objects \cite[\S10]{Bo77}required auxiliary choices. We use the cohomology of crossed modules and the cohomological pairings for complexes of tori of length two from the work of Kottwitz and Shelstad \cite[App. A]{KS99} to provide canonical constructions of these objects which do not rely on auxiliary choices.

The second main goal of this paper is to prove that our $L$-packets satisfy the conjectural character relations of the theory of endoscopy. The first main result in this direction is Theorem \ref{thm:stab}: For any inner twist $\xi : G \rw G'$ of the quasi-split group $G$, put
\[ S\Theta_{\phi,\xi,G'} = e(G')\sum_\pi \Theta_\pi, \]
where the sum runs over the constituents of the $L$-packet on $G'$ for the parameter $\phi$. Then Theorem \ref{thm:stab} asserts that and if $\gamma \in G(F)$ and $\gamma' \in G'(F)$ are related strongly-regular semi-simple elements, then
\[ S\Theta_{\phi,\xi,G'}(\gamma') = S\Theta_{\phi,\tx{id},G}(\gamma).\]
This statement contains as a special case the assertion that the function $S\Theta_{\phi,\xi,G'}$ is stable. The second main result is the endoscopic transfer. Let $(H,s,{^L\eta})$ be an extended endoscopic triple for $G$. In particular $^L\eta$ is an $L$-embedding ${^LH} \rw {^LG}$. While this is not the most general case of endoscopy, one can easily reduce to it by passing to an extension of $G$ whose derived group is simply connected, so it will in fact suffice to treat this case. Fixing a Whittaker datum for $G$, there exists for each extended pure inner twist $(\xi,b) : G \rw G^b$ a canonical normalization $\Delta$ of the Langlands-Shelstad absolute transfer factor \cite[\S2]{Kal11a}. Let $\phi : W \rw {^LG}$ be an epipelagic Langlands parameter which factors through ${^L\eta}$ as $\phi = {^L\eta}\circ{\phi_H}$. We then have the $L$-packet $\Pi_{\phi_H,\tx{id},H}$ on $H(F)$ and define the endoscopic lift to $G^b$ of the stable character $S\Theta_{\phi_H,\tx{id},H}$ by the formula
\[ \tx{Lift}^{G^b}_H S\Theta_{\phi_H,\tx{id},H}(\gamma^b) = \sum_{\gamma^H \in H(F)_\tx{sr}/\tx{st}} \Delta(\gamma^H,\gamma^b)\frac{D^H(\gamma^H)^2}{D^{G^b}(\gamma^b)^2}S\Theta_{\phi^H,\tx{id},H}(\gamma^H). \]
On the other hand, we define the $s$-stable character of the packet $\Pi_{\phi,b}$ on $G^b(F)$ by
\[ \Theta_{\phi,b}^s = e(G^b)\sum_{\substack{\rho \in \tx{Irr}(S_\phi)\\ \rho \mapsto b}} \tx{tr}\rho(s) \Theta_{\pi_\rho}. \]
Here the map $\rho \mapsto \pi_\rho$ is the top horizontal map in the commutative square \eqref{eq:diagint}, and the map $\rho \mapsto b$ is the composition of the left vertical and lower horizontal maps. Then Theorem \ref{thm:endotran} asserts that
\[ \tx{Lift}^{G^b}_H S\Theta_{\phi_H,\tx{id},H}(\gamma^b) = \Theta_{\phi,b}^s(\gamma^b), \]
provided we assume that the residual characteristic of $F$ is not too small.

The proof of these two theorems is based on the recent work of Adler-Spice \cite{AS10} on the characters of supercuspidal representations, the works of Waldspurger \cite{Wal97}, \cite{Wal06}, and Ngo \cite{Ngo10} on endoscopy for $p$-adic Lie\-algebras, as well as a recent result of Kottwitz on the comparison of Weil constants and epsilon factors. The restriction on the residual characteristic comes from the fact that the proof uses a suitable extension of the locally defined logarithm map. It may be possible to remove, or at least significantly weaken, this restriction via the use of the maps $\ts{e}_x$ used by Adler-Spice, since their character formulas, as well as all other ingredients in the proof, are in fact valid without the restrictions we impose. We have not tried to pursue this.

The last main result of this paper is the comparison of our construction with the construction of Bushnell and Henniart for $\tx{GL}_n(F)$ in Section \ref{sec:epipackgln}. This comparison shows (Theorem \ref{thm:recti}) that our construction provides the same result as theirs for the group $\tx{GL}_n(F)$, and, since we have already proved the transfer to inner forms, also for the group $\tx{GL}_n(D)$ for any division algebra $D$. The main part of the comparison is to show that the $L$-embedding we construct in Section \ref{sec:chispec} differs from the ``traditional'' $L$-embedding that may be used in the special case of $\tx{GL}_n$ precisely by the rectifying character of \cite[Thm 2.1]{BH05b}. While the use of Langlands' $\lambda$-constants in Section \ref{sec:chispec} was inspired by the work of Bushnell and Henniart, this comparison result is still not entirely obvious, the reason being that our constructions apply to a general group and thus cannot reference the ``traditional'' $L$-embedding available for $\tx{GL}_n$, and moreover our constructions have a very different structure from that of the rectifying character for $\tx{GL}_n$.

\tb{Acknowledgements:} This work has profited greatly from enlightening and inspiring mathematical conversations with Benedict Gross, Guy Henniart, Atsushi Ichino, Robert Kottwitz, Mark Reeder, and Loren Spice. It is a pleasure to thank these mathematicians for their interest and support.

\tableofcontents

\section{Notation} \label{sec:notation}
Throughout the paper, $F$ will denote a $p$-adic field, i.e. a finite extension of the field $\Q_p$ of $p$-adic numbers. We will write $O_F$ for the ring of integers, $\mf{p}_F$ for the maximal ideal, $k_F$ for the residue field -- a finite field of cardinality $q$ and characteristic $p$. We fix an algebraic closure $\ol{F}$ of $F$ and denote by $F^u$ the maximal unramified extension of $F$ contained in $\ol{F}$ of $F$. Let $\Gamma_F,W_F,I_F,P_F$ denote the Galois, Weil, inertia, and wild inertia groups of $\ol{F}/F$. When the field is clear from the context, we may drop the subscript $F$. On the other hand, if $E/F$ is a finite extension, we will denote by $\Gamma(E/F)$ or $\Gamma_{E/F}$ the relative Galois group, and will use the analogous notation for the other groups associated to the extension $E/F$.

The letters $G,H,J$ will often denote algebraic groups, while the Fraktur letters $\mf{g},\mf{h},\mf{j}$ will denote their Lie-algebras, and $\mf{g}^*,\mf{h}^*,\mf{j}^*$ their duals. Given an algebraic group $G$, we will write $Z_G$ for its center, and $A_G$ for the maximal split torus inside of $Z_G$. For an element $\gamma \in G$, we will write $\tx{Cent}(\gamma,G)$ or $G^\gamma$ for the centralizer of $\gamma$ in $G$, and $G_\gamma$ for its connected component. Given a ring $R$ over which the algebraic group $G$ is defined, we will write $G(R)$ for the set of $R$-points of $G$. It will sometimes be convenient to reuse the symbol $G$ also for the set of points of $G$ over an algebraically closed field that is understood from the context (e.g. it may be $\ol{F}$ if $G$ is defined over $F$).

If $G$ is a connected reductive group defined over $F$, we will denote by $\mc{B}^\tx{red}(G,F)$ the reduced Bruhat-Tits building of $G(F)$. Given $x \in \mc{B}^\tx{red}(G,F)$ we will write $G(F)_x$ for the stabilizer of $x$ for the action of $G(F)$ on $\mc{B}^\tx{red}(G,F)$, and given a non-negative real number $r$ we will write $G(F)_{x,r}$ and $G(F)_{x,r+}$ for the corresponding Moy-Prasad filtration subgroups \cite{MP96} of $G(F)_x$. In particular, the parahoric subgroup at $x$ is $G(F)_{x,0}$, and its pro-unipotent radical is $G(F)_{x,0+}$. We will also use the notation $G(F)_{x,r:s}$ to denote the quotient $G(F)_{x,r}/G(F)_{x,s}$, as was done in \cite{Yu01}. Similar notation will be used for the Moy-Prasad lattices inside the Lie-algebra $\mf{g}(F)$.

We will make frequent use of local class field theory and of Langlands' correspondence for tori over local fields. We normalize these correspondences following Langlands' article \cite{Lan97}. In particular, uniformizers of non\-archimedean local fields will correspond to the Frobenius automorphisms of their maximal unramified extension.

\section{Preliminaries} \label{sec:prelims}

The purpose of this section is to recall some material from \cite{RY}, \cite{GLRY}, and \cite{Yu01}, which is relevant to our situation. Let $G$ be a connected reductive group defined over $F$. We assume that $G$ splits over a tamely ramified extension of $F$. Let $\mf{g}$ be the Lie-algebra of $G$.

\subsection{Parahoric subgroups, Moy-Prasad filtrations, and invariant theory} \label{sec:parahorics}

Let $y \in \mc{B}^\tx{red}(G,F)$ be a rational point of order $e$ (\cite[\S3.3]{RY}). Assume that $p \nmid e$ and let $E/F^u$ be the smallest extension of order $e$. Then the group $G$ splits over $E$ and $y$ is a hyperspecial point in $\mc{B}^\tx{red}(G,E)$ \cite[\S4.2]{RY}. Choose a uniformizer $\omega \in E$. This choice provides a character
\[ \zeta : \Gamma_{E/F^u} \rw \mu_e(F^u),\qquad \sigma \mapsto \omega^{-1}\sigma(\omega), \]
where $\mu_e(F^u)$ denotes the subgroup of roots of unity in $F^u$ of order $e$.

Let $\ts{G}_y=G(E)_{y,0:0+}$ and $\ts{g}_y=\mf{g}(E)_{y,0:0+}$. Then $\ts{G}_y$ is a connected reductive group defined over $\ol{k_F}$ and $\ts{g}_y$ is its Lie algebra. For each $r \in \frac{1}{e}\Z$, multiplication by  $\omega^{-er}$ provides  a $\ts{G}_y$-equivariant isomorphism
\[ \mf{g}(E)_{y,r:r+} \rw \ts{g}_y. \]
The point $y$ is preserved by $\Gamma_{E/F^u}$, and we have $[\mf{g}(E)_{y,r:r+}]^{\Gamma_{E/F^u}} =\mf{g}(F^u)_{y,r:r+}$. Moreover, the action of $\Gamma_{E/F^u}$ on $\mf{g}(E)_{y,0}$ descends to an algebraic action on $\ts{g}_y$. The above isomorphism restricts to an isomorphism of $\ol{k_F}$-vector spaces
\[ \mf{g}(F^u)_{y,r:r+} \rw \ts{g}_y^{\zeta^r}, \]
where $\ts{g}_y^\zeta$ denotes the $\zeta$-isotypic eigenspace for the action of $\Gamma_{E/F^u}$. Furthermore, we have
\[ \left([\ts{G}_y]^{\Gamma_{E/F^u}}\right)^\circ=G(F^u)_{y,0:0+}. \]
The fact that $y$ is stable under Frobenius provides a $k_F$-structure on both the reductive group $\ts{G}_y$ and its Lie-algebra $\ts{g}_y$ and we have
\[ \left([\ts{G}_y]^{\Gamma_{E/F^u}}\right)^\circ(k_F) = G(F)_{y,0:0+},\qquad \mf{g}(F)_{y,r:r+} \rw \ts{g}_y^{\zeta^r}(k_F). \]
Note that the same discussion applies equally well to the dual $\mf{g}^*$ of the Lie-algebra $\mf{g}$.

We now recall a few notions. Let $H$ be a reductive group over some algebraically closed field and let $V$ be a rational representation of $H$. Let $K$ be the kernel of this representation. We call a vector $v \in V$ stable if
\[ [\tx{Stab}(v,H) : K] < \infty. \]
We call a vector in $\tx{Lie}(H)$ or $\tx{Lie}^*(H)$ regular semi-simple, if its connected centralizer for the (co)-adjoint action of $H$ is a maximal torus, and strongly-regular semi-simple, if its centralizer is a maximal torus. These two notions coincide in many cases, but not always, as for example over fields of small positive characteristic.

\begin{pro} \label{pro:stabvec} An element $X$ of $\ts{g}_y^{\zeta^r}$ or ${\ts{g}^*_y}^{\zeta^r}$ is a stable vector for the action of $\left([\ts{G}_y]^{\Gamma_{E/F^u}}\right)^\circ$ if and only if its inclusion into $\ts{g}_y$ or $\ts{g}^*_y$ is regular semi-simple and the action of $\Gamma_{E/F^u}$ on $\ts{S}=\tx{Stab}(v,\ts{G}_y)$ is elliptic, i.e. $X_*(\ts{S})^{\Gamma_{E/F^u}}=X_*(Z(\ts{G}_y))$.
\end{pro}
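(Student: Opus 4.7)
Set $\Gamma=\Gamma_{E/F^u}$, $H=\ts{G}_y$, $H_0=\left([\ts{G}_y]^\Gamma\right)^\circ$, and $V=\ts{g}_y$ or $\ts{g}^*_y$ with the eigenspace decomposition under $\Gamma$; write $V_r=V^{\zeta^r}$ and let $K$ denote the kernel of $H_0\to\tx{GL}(V_r)$. Since $K$ is contained in $\tx{Stab}(X,H_0)$ for every $X\in V_r$, the stability condition $[\tx{Stab}(X,H_0):K]<\infty$ is equivalent to the equality of dimensions $\dim\tx{Stab}(X,H_0)=\dim K$. My plan is to compute both sides in terms of $\ts{S}=\tx{Cent}(X,H)$ and of $Z(H)$, and then read off the equivalence.

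First I would identify $K$. Since $Z(H)$ acts trivially on the (co)adjoint representation, $K$ always contains $Z(H)^{\Gamma,\circ}$, giving $\dim K\geq \tx{rank}_\Q X_*(Z(H))^\Gamma$. To obtain the reverse inequality I would exhibit a single regular semisimple element $X_0\in V_r$ (in $V$); any $h\in K$ then fixes $X_0$, so lies in a maximal torus of $H$, and a density argument using the Vinberg/Reeder-Yu fact that the regular semisimple locus in $V_r$ is Zariski open and nonempty (this is exactly the stable-grading hypothesis in the setup, ensured by $p\nmid e$ and the point $y$ having order $e$) forces $h\in Z(H)$. Hence $\dim K=\dim Z(H)^{\Gamma,\circ}=\tx{rank}_\Q X_*(Z(H))^\Gamma$.

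Next I compute $\dim\tx{Stab}(X,H_0)$. For $X\in V_r$ the eigenrelation $\sigma(X)=\zeta^r(\sigma)X$ implies $\tx{Cent}(X,H)$ is $\Gamma$-stable, so on the regular semisimple locus (where the centralizer is a maximal torus $\ts{S}$) we have $\tx{Stab}(X,H_0)=\ts{S}^\Gamma\cap H_0$ of dimension $\tx{rank}_\Q X_*(\ts{S})^\Gamma$. Off the regular semisimple locus $\tx{Cent}(X,H)^\circ$ strictly contains a maximal torus, and a standard bound shows $\dim\tx{Cent}(X,H)^\Gamma>\tx{rank}_\Q X_*(\ts{T})^\Gamma$ for any $\Gamma$-stable maximal torus $\ts{T}\subset\tx{Cent}(X,H)^\circ$; in particular $\dim\tx{Stab}(X,H_0)>\dim K$. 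Thus stability forces regular semisimplicity, and reduces the remaining content to the equation $\tx{rank}_\Q X_*(\ts{S})^\Gamma=\tx{rank}_\Q X_*(Z(H))^\Gamma$. Since $Z(H)\subset\ts{S}$ gives a $\Gamma$-equivariant inclusion $X_*(Z(H))\hookrightarrow X_*(\ts{S})^\Gamma$ of finite index, equality of $\Q$-ranks is equivalent to the stated elliptic condition $X_*(\ts{S})^\Gamma=X_*(Z(H))$, proving both directions simultaneously.

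The statement for $\ts{g}^*_y$ is entirely parallel: on the regular semisimple locus of $\ts{g}^*_y$ the coadjoint centralizer is again a maximal torus of $H$, either via an $H$-equivariant graded identification $\ts{g}_y\cong\ts{g}^*_y$ furnished by a non-degenerate invariant symmetric form, or directly from the structure of the coadjoint representation of a reductive group. The main obstacle in the plan is pinning down $K$: a priori the kernel of a linear action on a single graded piece $V_r$ can be strictly larger than the global kernel on $V$, and the non-triviality of the proof is exactly the input from the Vinberg-theoretic existence of regular semisimple elements in $V_r$ which rules this out. Once this is granted, the remainder is a formal dimension count.
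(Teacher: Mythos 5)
The paper's own ``proof'' is only a pointer to \cite[Lemma 5.6]{GLRY}, so what you have written is a reconstruction rather than a paraphrase. Your overall skeleton — identify the kernel $K$, compare $\dim\tx{Stab}(X,H_0)$ with $\dim K$, and convert the ellipticity hypothesis into an equality of torus ranks — is indeed the right shape of argument, and the ``if'' direction (regular semisimple plus elliptic $\Rightarrow$ stable) is essentially complete: $\tx{Stab}(X,H_0)^\circ\subseteq(\ts{S}^\Gamma)^\circ$ has dimension $\tx{rank}\,X_*(\ts{S})^\Gamma=\tx{rank}\,X_*(Z(H))$ under ellipticity, while $K\supseteq Z(H)^{\Gamma,\circ}$ of the same dimension, so the two dimensions coincide. (You should at least remark that the $\Gamma$-action being trivial on $X_*(Z(\ts{G}_y))$ is built into the hypothesis that $X_*(Z(\ts{G}_y))\subseteq X_*(\ts{S})^\Gamma$.)

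The ``only if'' direction, however, contains two genuine gaps. First, your reduction of the case $X$ not regular semisimple mischaracterizes the non-semisimple case: the sentence ``off the regular semisimple locus $\tx{Cent}(X,H)^\circ$ strictly contains a maximal torus'' is only true when $X$ is semisimple but not regular. When $X$ has a nontrivial nilpotent part, $\tx{Cent}(X,H)^\circ$ has strictly smaller rank and contains no maximal torus of $H$; your ``standard bound'' is not even addressing that case, and the Jordan decomposition in the graded setting (both parts stay in $V_r$, and the nilpotent centralizer contributes extra $\Gamma$-fixed dimension) needs to be invoked explicitly. Second, the asserted ``standard bound'' $\dim\tx{Cent}(X,H)^\Gamma>\tx{rank}_\Q X_*(\ts{T})^\Gamma$ is not a general fact about cyclic actions on reductive groups — for an inner automorphism by a regular element of a torus, one can have $M^\Gamma$ equal in dimension to $T^\Gamma$ with $M\supsetneq T$. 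Making this step work requires using that $X$ itself lies in $V_r\cap\tx{Lie}(\tx{Cent}(X,H))$, so that the graded pieces of the centralizer are constrained; as written the inequality is asserted rather than argued. Finally, your determination of $\dim K$ by a density argument presupposes that $V_r$ contains regular semisimple elements; this is indeed part of the Reeder--Yu/GLRY package, but since it is exactly the nontrivial input it should be cited rather than described as a formal step. In short: the outline is faithful to the expected structure, but the parts you flag as ``standard'' are in fact the substance, and two of them are either false as stated or need the Vinberg-theoretic structure to be used more carefully.
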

\begin{proof} The proof is the same as for \cite[Lemma 5.6]{GLRY}. \end{proof}

\subsection{Generic characters}

In this section we will recall the notion of generic characters. It was defined  in \cite[\S9]{Yu01}, generalizing an earlier definition of Kutzko for $\tx{GL}_n$. We will furthermore provide a characterization of generic character which will be useful later.

Let $S \subset G$ be an elliptic tamely-ramified maximal torus, let $\mf{s}$ be its Lie-algebra. and let $r>0$. A character $\chi : S(F) \rw \C^\times$ is called generic of depth $r$ if it satisfies two conditions. First, it must restrict trivially to $S(F)_{r+}$. If that is the case, then using the Moy-Prasad isomorphism
\[ \tx{MP}_S : S(F)_{r:r+} \rw \mf{s}(F)_{r:r+} \]
it gives rise to a character on $\mf{s}(F)_r/\mf{s}(F)_{r+}$. This is a finite-dimensional $k_F$-vector space and its dual is given by $\mf{s}^*(F)_{-r:(-r)+}$. Thus, given a non-trivial character $\psi : k_F \rw \C^\times$, there is a unique element $X \in \mf{s}^*(F)_{-r:(-r)+}$ such that
\[ \chi(Y) = \psi\<X,Y\> \quad \forall Y \in \mf{s}(F)_{r:r+}. \]
The $k_F$-line spanned by $X$ is independent of the choice of $\psi$.

Let $E/F^u$ be the splitting extension of $S \times F^u$. There is a non-zero element $z \in E$ such that $zX \in \mf{s}^*(E)_{0:0+}$. The $\ol{k_F}$-line $l_\chi$ spanned by $zX$ depends neither on the choice of $z$ nor on the choice of $\psi$, and is thus canonically associated to $\chi$. We can embed $\mf{s}^*$ into $\mf{g}^*$ as the 1-isotypic subspace of $\mf{g}^*$ for the coadjoint action of $S$. More precisely, the natural surjection $\mf{g}^* \rw \mf{s}^*$ which is dual to the embedding $\mf{s} \rw \mf{g}$ becomes an isomorphism when restricted to the $1$-isotypic eigenspace for the coadjoint action of $S$ on $\mf{g}^*$. In this way, $l_\chi$ becomes a  line inside $\mf{g}^*(E)_{x,0:0+}$, where $x$ is the point of $\mc{B}^\tx{red}(G,F)$ corresponding to $S$ \cite{Pr01}. This quotient is isomorphic to the $k_E$-points of the dual Lie algebra of the reductive $k_E$-group $G(E)_{x,0:0+}$. The second condition that $\chi$ must satisfy in order to be called generic is that the line $l_\chi$ be strongly-regular semi-simple, i.e. that its centralizer in $G(E)_{x,0:0+}$ for the coadjoint action be a maximal torus.

We can now give the following characterization of generic characters.

\begin{lem} \label{lem:genchar} Let $\chi : S(F) \rw \C^\times$ be a character trivial on $S(F)_{r+}$ and non-trivial on $S(F)_r$. Let $E/F$ be the splitting extension of $S$ and $N : S(E) \rw S(F)$ the norm map. Then $\chi$ is generic if and only if the following conditions hold:
\begin{enumerate}
\item For each root $\alpha \in R(S,G)$, the character
\[ E^\times_r/E^\times_{r+} \rw \C^\times,\qquad x \mapsto \chi(N(\alpha^\vee(x))) \]
is non-trivial.
\item The stabilizer of $\chi\circ N : S(E)_r \rw \C^\times$ in $\Omega(S,G)$ is trivial.
\end{enumerate}
\end{lem}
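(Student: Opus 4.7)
The strategy is to unwind the notion of ``generic'' through the Moy--Prasad isomorphism, reducing each of the two defining conditions on the line $l_\chi$ to the corresponding condition of the lemma. Let $X\in\mf{s}^*(F)_{-r:(-r)+}$ be the element corresponding to $\chi|_{S(F)_r}$ under the pairing $\chi(\tx{MP}_S^{-1}(Y))=\psi(\<X,Y\>)$, and let $\bar X\in\ts{s}^*_y$ denote the image of $zX$ in $\mf{s}^*(E)_{0:0+}$, so that $l_\chi=\ol{k_F}\cdot\bar X$. By definition, $\chi$ is generic iff $\bar X$ is strongly-regular semi-simple in $\ts{g}^*_y$ under the coadjoint action of $\ts{G}_y$, which splits into (a) $\<\bar X,\alpha^\vee\>\neq0$ for each $\alpha\in R(\ts{S}_y,\ts{G}_y)=R(S,G)$, so that the connected centralizer is $\ts{S}_y$, and (b) the stabilizer of $\bar X$ in $\Omega(\ts{S}_y,\ts{G}_y)=\Omega(S,G)$ is trivial, promoting the connected to the full centralizer. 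The plan is thus to show (a)$\Leftrightarrow$(1) and (b)$\Leftrightarrow$(2).

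For (a)$\Leftrightarrow$(1), the main step is a Moy--Prasad computation along the coroot. For $x=1+u\in E^\times_r$, the element $\alpha^\vee(x)\in S(E)_r$ has Moy--Prasad image $u\cdot H_\alpha$ with $H_\alpha=d\alpha^\vee(1)\in\mf{s}(E)$. Applying the norm at the Lie-algebra level yields $\sum_\sigma\sigma(u\cdot H_\alpha)\in\mf{s}(F)_{r:r+}$, and the $\Gamma$-invariance of $X$ together with the Galois-equivariance of the pairing give
\[
\chi(N(\alpha^\vee(x)))\;=\;\psi\bigl(\tx{Tr}_{E/F}(u\cdot\<X,H_\alpha\>)\bigr).
\]
Non-degeneracy of the trace pairing on $E^\times_r/E^\times_{r+}$ then shows this character is non-trivial iff $\<X,H_\alpha\>$ attains the minimal possible valuation, iff $\<\bar X,\alpha^\vee\>\neq0$ in $\ol{k_F}$; ranging over all roots gives the equivalence.

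For (b)$\Leftrightarrow$(2), the geometric Weyl group $\Omega(S,G)$ acts on $S$ preserving the Moy--Prasad filtration on $S(E)$, with induced action on characters $(w\cdot(\chi\circ N))(Y)=(\chi\circ N)(w^{-1}Y)$. Transporting via the MP isomorphism on the $E$-side, $\chi\circ N$ becomes the character $Y\mapsto\psi(\tx{Tr}_{E/F}\<X,Y\>)$ on $\mf{s}(E)_{r:r+}$, so
\[
(w\cdot(\chi\circ N))(Y)\;=\;\psi\bigl(\tx{Tr}_{E/F}\<wX,Y\>\bigr).
\]
Non-degeneracy once more yields: $w$ fixes $\chi\circ N$ iff $wX=X$ modulo $\mf{s}^*(E)_{(-r)+}$, iff $w\bar X=\bar X$ in $\ts{s}^*_y$, which is precisely (b). The main technical care in the argument is in the Moy--Prasad book-keeping: $X$ is $F$-rational while $\alpha^\vee$, $H_\alpha$ and the Weyl action live naturally on the $E$-side, so one must use consistently the compatibility of the norm map with the Moy--Prasad isomorphism and the Galois-equivariance of both pairings; once these identifications are in place, both directions reduce to linear algebra on finite-dimensional $\ol{k_F}$-vector spaces.
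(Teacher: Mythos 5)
Your proposal follows essentially the same route as the paper: unwind genericity through the Moy--Prasad isomorphism, split strong regularity of the line $l_\chi$ into regularity ($\<\bar X, H_\alpha\>\neq 0$ for every root $\alpha$) plus triviality of the Weyl-group stabilizer, and match each of these to the corresponding condition of the lemma via the duality between $\chi\circ N$ and $X$. The one point worth spelling out, as the paper does, is that the non-degeneracy you invoke for the trace form on residue-field quotients is not automatic but holds precisely because $E/F$ is tamely ramified, i.e.\ $p\nmid e$ (the reduction of $\tx{Tr}_{E/F}$ to $k_E\rw k_F$ is $e\cdot\tx{tr}_{k_E/k_F}$, which is why the paper introduces the auxiliary character $\Psi(x)=\psi(e\cdot\tx{tr}_{k_E/k_F}(x))$ and checks its non-triviality before proceeding).
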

\begin{proof}
We fix a non-trivial character $\psi : k_F \rw \C^\times$ and let $X \in \mf{s}^*(F)_{-r:(-r)+}$ be the unique element such that
$\chi(y)=\psi\<X,\tx{MP}_S(y)\>$ for all $y \in S(F)_{r:r+}$. If $e$ is the ramification degree of $E/F$, then one computes for $y \in S(E)_{r:r+}$
\[ \chi(N(y)) = \psi(e\cdot\tr_{k_E/k_F}\<X,\tx{MP}_S(y)\>). \]
Since $p \nmid e$, the character $\Psi : k_E \rw \C^\times$, $x \mapsto \psi(e\cdot\tr_{k_E/k_F}(x))$ is non-trivial.

Let $\omega \in E$ be a uniformizer and $\alpha \in R(S,G)$. For $x \in \mf{p}_E^r/\mf{p}_E^{r+}$ one has
\[ \chi(N(\alpha^\vee(1+x))) = \Psi\<X,d\alpha^\vee(x)\> = \Psi\<\omega^r X,d\alpha^\vee(\omega^{-r}x)\>. \]
The line in $\mf{s}^*(E)_{0:0+}$ spanned by $\omega^rX$ is the line canonically associated to $\chi$, and it is regular if and only if $\<\omega^rX,d\alpha^\vee(1)\>$ is a non-zero element of $k_E$ for all $\alpha \in R(S,G)$. This is equivalent to the non-triviality of the linear form
\[ k_E \rw k_E,\qquad u \mapsto \<\omega^rX,d\alpha^\vee(u) \> \]
 and this in turn is equivalent to the non-vanishing of the character of $\mf{p}_E^r/\mf{p}_E^{r+}$ provided by the right-hand side of above equation (in the variable x). Thus the first condition in the statement of the lemma is equivalent to the regularity of the line $l_\chi$.

Given that, the second condition is then equivalent to the strong regularity of $l_\chi$, because for a given $w \in \Omega(S,G)$, the equality $\Psi\<X,Y\>=\Psi\<X,{^wY}\>$ for all $Y$ is equivalent to the equality $X={^{w^{-1}}X}$.

\end{proof}

\subsection{Construction of a map $(S,\chi) \mapsto \pi_{S,\chi}$} \label{sec:repconst}

Let $S \subset G$ be a tamely ramified maximal torus defined over $F$ and let $\chi : S(F) \rw \C^\times$ be a character. We assume that this data has the following properties.

\begin{cnd}\ \\[-20pt] \label{cnd:char}
\begin{enumerate}
\item The image of $I_F$ in $\tx{Aut}_{\ol{F}}(S)$ is generated by an elliptic regular element.
\item If $e$ is the ramification degree of the splitting extension of $S$, then $\chi$ restricts trivially to $S(F)_\frac{2}{e}$ and non-trivially to $S(F)_\frac{1}{e}$.
\item The character of $S(F)_\frac{1}{e}/S(F)_\frac{2}{e}$ induced by $\chi$ is generic.
\end{enumerate}
\end{cnd}

Given this data, an epipelagic supercuspidal representation of $G(F)$ of depth $\frac{1}{e}$ can be constructed as follows. The torus $S$ acts on the Lie algebra $\mf{g}$ of $G$ and decomposes it as
\[ \mf{g} = \mf{s} \oplus \mf{n} \]
where $\mf{s}$ is the Lie algebra of $S$ and and $\mf{n}$ is the sum of all isotypic subspaced on which $S$ acts non-trivially. This direct decomposition is defined over $F$. Let $y \in \mc{B}^\tx{red}(G,F)$ be the unique point corresponding to the embedding $j$ \cite{Pr01}. Then we have for all real numbers $r$ we have
\[ \mf{g}(F)_{y,r} = \mf{s}(F)_{y,r} \oplus \mf{n}(F)_{y,r}. \]
The character $\chi$ provides a character on
\[ S(F)_{\frac{1}{e}:\frac{2}{e}} \cong \mf{s}(F)_{\frac{1}{e}:\frac{2}{e}} \]
which can be extended to a character $\hat\chi_0$ on
\[ G(F)_{y,\frac{1}{e}:\frac{2}{e}} \cong \mf{g}(F)_{y,\frac{1}{e}:\frac{2}{e}} \]
using the above decomposition.
Since this character is invariant under the conjugation of $S(F)$ on $G(F)_{y,\frac{1}{e}}$ and its restriction to $S(F)_\frac{1}{e}$ agrees with $\chi$, we obtain a character
\[ \hat\chi : S(F)G(F)_{y,\frac{1}{e}} \rw \C^\times, \quad (s,g) \mapsto \chi(s)\hat\chi_0(g). \]

We put $r=\frac{1}{e}$ and $\ts{V}=\mf{g}(F^u)_{y,r:r+}$. Then $\ts{V}$ is a vector space over $\ol{k_F}$ with a $k_F$-structure and $\ts{V}(k_F)=\mf{g}(F)_{y,r:r+}$. Moreover, we have $\ts{V}^*=\mf{g}^*(F^u)_{y,-r:0}$.

\begin{pro} Let $\xi : k_F \rw \C^\times$ be a non-trivial character, and let $\lambda :  \ts{V}(k_F) \rw k_F$ be the unique linear form such that $\hat\chi_0 = \xi\circ\lambda$. Then
\begin{enumerate}
\item $\lambda$ is a stable vector for the action of $G(F^u)_{y,r:r+}$ on $\ts{V}^*$.
\item The stabilizer of $\lambda$ for the action of $G(F)_y$ on $\ts{V}^*$ is precisely $S(F)G(F)_{y,r}$.
\end{enumerate}
\end{pro}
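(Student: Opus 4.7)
The plan is to identify $\lambda$ explicitly with the element $X \in \mf{s}^*(F)_{-r:(-r)+}$ attached to $\chi$ by Moy-Prasad duality, and then to read off both claims by transferring to the reductive quotient at $y$ (as in Section \ref{sec:parahorics}) and invoking the genericity of $\chi$.

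First I would unwind the construction of $\hat\chi_0$. By definition it is built from $\chi|_{S(F)_{r:r+}}$ via the $F$-rational splitting $\mf{g}(F)_{y,r:r+} = \mf{s}(F)_{y,r:r+} \oplus \mf{n}(F)_{y,r:r+}$ and the Moy-Prasad isomorphism $\tx{MP}_S$, extended trivially on the $\mf{n}$-summand. Fixing $\xi$, there is a unique $X \in \mf{s}^*(F)_{-r:(-r)+}$ with $\chi(\tx{MP}_S^{-1}(Y)) = \xi\<X,Y\>$, and via the dual splitting $\mf{g}^* = \mf{s}^* \oplus \mf{n}^*$ this $X$, regarded inside $\mf{g}^*(F)_{y,-r:(-r)+}$, is precisely $\lambda$.

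For part (1), transport $X$ by the scaling map of Section \ref{sec:parahorics} to an element $\tilde X$ lying in the $\zeta^{-r}$-isotypic part of $\ts{g}_y^*$. Under the embedding of $\mf{s}^*$ into $\mf{g}^*$ as the $1$-isotypic coadjoint eigenspace of $S$, the $\ol{k_F}$-line through $\tilde X$ coincides with the line $l_\chi$ from the definition of a generic character. Condition \ref{cnd:char}(3) therefore makes this line strongly regular semisimple, and hence the pointwise centralizer of $\tilde X$ in $\ts{G}_y$ is exactly the image torus $\ts{S}$ of $S$. Condition \ref{cnd:char}(1), saying that inertia acts on $S$ through an elliptic regular automorphism, translates directly into ellipticity of the $\Gamma_{E/F^u}$-action on $\ts{S}$. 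Proposition \ref{pro:stabvec} then immediately delivers stability.

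For part (2), I would prove both containments. For $S(F) G(F)_{y,r} \subseteq \tx{Stab}(\lambda)$: the coadjoint commutator estimate shows that for $g \in G(F)_{y,s}$ and $Z \in \mf{g}^*(F)_{y,-r}$, $\tx{Ad}^*(g)Z - Z \in \mf{g}^*(F)_{y,-r+s}$; since in the Reeder-Yu setup the first positive Moy-Prasad jump at $y$ equals $r = 1/e$, one has $G(F)_{y,0+} = G(F)_{y,r}$, and this subgroup acts trivially on $\ts{V}^*$. Moreover $S(F)$ stabilizes $X$ because $S$ acts trivially on $\mf{s}^*$. For the reverse inclusion, descend modulo $G(F)_{y,r}$: the image of the stabilizer in $[\ts{G}_y]^{\Gamma_{E/F^u}}(k_F)$ is forced by strong regularity of $\tilde X$ to be $\ts{S}^{\Gamma_{E/F^u}}(k_F) = S(F)_0/S(F)_{0+}$, which lifts to $S(F)_0 \, G(F)_{y,r}$ inside the parahoric; the remaining cosets of $G(F)_{y,0}$ inside $G(F)_y$ that contribute are exactly those represented by $S(F)/S(F)_0$, yielding $S(F) G(F)_{y,r}$.

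The main obstacle is the reverse inclusion in part (2): one must control the finite quotient $G(F)_y/G(F)_{y,0}$ and rule out ``Weyl-type'' cosets outside $S(F)\cdot G(F)_{y,0}$ that might a priori stabilize $\lambda$. This is ultimately still a consequence of strong regularity, this time applied to the full normalizer of $\ts{S}$ in $\ts{G}_y$, but requires careful bookkeeping of which component-group cosets of $G(F)_{y,0}$ in $G(F)_y$ are represented by elements of $S(F)$.
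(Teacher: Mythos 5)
Your proof of part (1) matches the paper: identify $\lambda$ with $X$, transport to the reductive quotient, check the two hypotheses of Proposition \ref{pro:stabvec} from Conditions \ref{cnd:char}(3) and (1). That part is fine. The forward inclusion of part (2) is also fine.

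The reverse inclusion of part (2) is where the real work lies, and there your proposed route diverges from the paper's and has a genuine gap — one you half-acknowledge. You propose a one-shot descent to the reductive quotient over $F$: reduce $g$ modulo $G(F)_{y,r}$, argue that the image stabilizer is $S(F)_{0:0+}$ by strong regularity, and handle the component group by ``bookkeeping.'' The problem is that this descent does not actually pin down the stabilizer. Over $\ol{k_F}$, strong regularity only tells you that the stabilizer of $\tilde X$ in $\ts{G}_{y,E}$ is the maximal torus $\ts{S}_E$; to conclude that the stabilizer in $([\ts{G}_y]^{\Gamma_{E/F^u}})^\circ(k_F)$ is $S(F)_{0:0+}$ you would need to know that $\ts{S}_E \cap ([\ts{G}_y]^{\Gamma_{E/F^u}})^\circ$ has $k_F$-points exactly $S(F)_{0:0+}$, and $\ts{S}_E^{\Gamma_{E/F^u}}$ is in general a \emph{disconnected} diagonalizable group, so this is not automatic. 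The component group $G(F)_y/G(F)_{y,0}$ then adds another layer that is not resolved by ``the remaining cosets that contribute are exactly those represented by $S(F)/S(F)_0$'' — that is the very statement to be proved, not a tool.

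The paper proves the reverse inclusion by an entirely different mechanism: a chain of three lemmas that implement a successive approximation, culminating in the statement that for any $Y = \tx{Ad}(g)X$ with $X - Y \in \mf{g}^*(F)_{y,0}$, there exists $h \in G(F)_{y,r}$ with $\tx{Ad}(h)Y \in \mf{s}^*(F)_{-r}$. The force of the approximation is that the correcting element $h$ is pinned to $G(F)_{y,r}$, so $w = hg$ lies in $N(S,G)(F) \cap G(F)_y$ and the problem collapses to showing the Weyl element $\bar w$ is trivial. That final step is run in the reductive quotient over $E$ (not over $F$): since $y$ is hyperspecial in $\mc{B}^\tx{red}(G,E)$ and $E$ splits $S$, the full $\Omega(S,G)$ injects into the Weyl group of $G_{x,E}(k_E)$, where $\omega X = \omega Y'$ and strong regularity kills $\bar w$. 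Working over $E$ sidesteps exactly the disconnectedness issues that make the descent over $F$ delicate. So the missing idea in your approach is the approximation lemma and the change of base to $E$; without it, the descent-to-the-quotient step does not close.
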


\begin{proof}

The first statement follows immediately from Proposition \ref{pro:stabvec} and the genericity of $\chi$. We set out to prove the second statement.

Recall that $r=\frac{1}{e}$, let $X \in \mf{s}^*(F)_{-r}$ be a lift of $\lambda$, and let $g \in G(F)_y$. Put $Y=\tx{Ad}(g)X$. Since
\[ \ts{V}^*(k_F) = \mf{g}^*(F)_{x,-r:0} \]
the coadjoint action of $g$ fixes $\lambda$ precisely when $X-Y \in \mf{g}^*(F)_0$. Our goal is to show that in this situation $g \in S(F)G(F)_{y,r}$. This requires several steps.

\begin{lem}  Let $t$ be a positive element of $\frac{1}{e}\Z$. Then the map
\[ g \mapsto (\tx{Ad}(h)-1)X \]
is an isomorphism of finite groups
\[ \frac{G(F)_{x,t}}{G(F)_{x,t+} \cdot S(F)_{x,t}} \lrw \frac{\mf{g}^*(F)_{x,t-r}}{\mf{g}^*(F)_{x,(t-r)+} + \mf{s}^*(F)_{x,t-r}} \]
\end{lem}

\begin{proof}
It is known that the image of the given map belongs to $\mf{g}^*(F)_{x,t-r}$ \cite[Appendix B.5]{DR09}. Since both sides are $k_F$-vector spaces of the same dimension, thus finite abelian groups of the same order, it will be enough to show that the map is an injective homomorphism.  We have
\begin{eqnarray*}
 (\tx{Ad}(h'h'')-1)X&=&\tx{Ad}(h')\tx{Ad}(h'')X-\tx{Ad}(h')X+\tx{Ad}(h')X-X\\
&=&\tx{Ad}(h')[(\tx{Ad}(h'')-1)X] + (\tx{Ad}(h')-1)X\\
&=&(\tx{Ad}(h'')-1)X + (\tx{Ad}(h')-1)X\\
\end{eqnarray*}
where the last equality holds because, due to the positivity of $t$, the coadjoint action of $G(F)_{x,t}$ on the quotient $\mf{g}^*(F)_{x,(t-r):(t-r)+}$ is trivial. To show injectivity, we consider the following diagram
\[ \xymatrix{
\frac{G(F)_{x,t}}{G(F)_{x,t+} \cdot S(F)_{x,t}}\ar[r]\ar[d]^{MP}&\frac{\mf{g}^*(F)_{x,t-r}}{\mf{g}^*(F)_{x,(t-r)+} + \mf{s}^*(F)_{x,t-r}}\ar@{^(->}[dd] \\
\frac{\mf{g}(F)_{x,t}}{\mf{g}(F)_{x,t+} + \mf{s}(F)_{x,t}}\ar@{^(->}[d]\\
\frac{\mf{g}(E)_{x,t}}{\mf{g}(E)_{x,t+} + \mf{s}(E)_{x,t}}\ar[d]^{\cdot\omega^{-et}}&\frac{\mf{g}^*(E)_{x,t-r}}{\mf{g}^*(E)_{x,(t-r)+} + \mf{s}^*(E)_{x,t-r}}\ar[d]^{\cdot \omega^{-e(t-r)}}\\
\frac{\mf{g}(E)_{x,0:0+}}{\mf{s}(E)_{0:0+}}\ar[r]^{H \mapsto [H,X]-X}&\frac{\mf{g}^*(E)_{x,0:0+}}{\mf{s}^*(E)_{0:0+}}
} \]
The inclusions come from the standard vanishing result in Galois cohomology, and the pairing $[]$ is the action of a Lie algebra on its dual, i.e. the differential of the coadjoint action. The regularity of $X$ in $\mf{g}^*(E)_{x,0:0+}$ implies that if $[X,H] \in \mf{s}^*(E)_{0:0+}$, then $H \in \mf{s}(E)_{0:0+}$, which shows that the bottom map is injective.
\end{proof}

\begin{lem} Assume that $X-Y \in \mf{g}^*(F)_{x,s}+\mf{s}^*(F)_0$, where $s \in \frac{1}{e}\Z_{\geq 0}$. Then there exists $h \in G(F)_{x,s+}$ such that $X-\tx{Ad}(h)Y \in \mf{g}^*(F)_{x,s+}+\mf{s}^*(F)_0$. \end{lem}
\begin{proof}
Write $X-Y = Z_1 + Z_2$ with $Z_1 \in \mf{g}^*(F)_{x,s}$ and $Z_2 \in \mf{s}^*(F)_0$. By the preceding lemma, choose $h \in G(F)_{x,s+}$ such that
\[ (\tx{Ad}(h^{-1})-1)X \in -Z_1 + \mf{g}^*(F)_{x,s+}+\mf{s}^*(F)_s. \]
Then we have
\begin{eqnarray*}
X-\tx{Ad}(h)Y&=&\tx{Ad}(h)[\tx{Ad}(h^{-1})X-Y]\\
&=&\tx{Ad}(h)[(\tx{Ad}(h^{-1})-1)X+Z_1+Z_2]\\
&\in&\tx{Ad}(h)[\mf{g}^*(F)_{x,s+}+\mf{s}^*(F)_0]\\
\end{eqnarray*}
The lattice $\mf{g}^*(F)_{x,s+}$ is preserved by $\tx{Ad}(h)$, while for an element $Z \in \mf{s}^*(F)_0$ we have
\[ \tx{Ad}(h)Z = (\tx{Ad}(h)-1)Z+Z \in \mf{g}^*(F)_{x,s+} + \mf{s}^*(F)_0. \]
\end{proof}

\begin{lem} There exists $h \in G(F)_{y,r}$ such that $\tx{Ad}(h)Y \in \mf{s}^*(F)_{-r}$. \end{lem}
\begin{proof}
If $Y \in \mf{s}^*(F)_{-r}$ there is nothing to prove. Otherwise, let $s$ be the largest element of $\frac{1}{e}\Z$ for which $X-Y \in \mf{g}^*(F)_{x,s}+\mf{s}^*(F)_0$. Since $X-Y \in \mf{g}^*(F)_{x,0}$, $s_1 \geq 0$ is positive. Applying the preceding lemma, we obtain $h_1 \in G(F)_{x,s+}$ such that $X-\tx{Ad}(h_1)Y \in \mf{g}^*(F)_{x,s+}+\mf{s}^*(F)_0$. Inductively we obtain a sequence $h_k \in G(F)_{x,s+\frac{k}{e}}$ with the property that
\[ X-\tx{Ad}(h_k\cdot h_{k-1}\cdot \dots \cdot h_1)Y \in \mf{g}^*(F)_{x,s+\frac{k}{e}}+\mf{s}^*(F)_0. \]
Let $h$ be the limit, for $k \rw \infty$, of the sequence of partial products $p_k = h_k \dots h_1$. Then we have
\[ X-\tx{Ad}(h)Y \in \mf{s}^*(F)_0. \]

\end{proof}

We are now ready to complete the proof. Recall that $X \in \mf{s}^*(F)_{-r}$ is generic, $g \in G(F)_y$, $Y=\tx{Ad}(g)X$, and $X-Y \in \mf{g}^*(F)_{y,0}$. Choose $h$ as in the above lemma, and put $Y'=\tx{Ad}(h)Y$. Then $Y' \in \mf{s}^*(F)_{-r}$ and
\[ X-Y' = X-Y-(\tx{Ad}(h)-1)Y \in \mf{g}^*(F)_{y,0}. \]
We claim that $w=hg \in S(F)$. Indeed, since both elements $X$ and $Y'$ belong to $\mf{s}^*(F)$ and are regular, $w$ belongs to the normalizer of $S$ in $G(F)_y$. We want to show that its image $\bar w$ in the Weyl group $\Omega(S,G)$ is trivial. Since $y$ is a special vertex in $\mc{B}^\tx{red}(G,E)$ and $E$ splits $S$, the Weyl group $\Omega(S,G)$ projects isomorphically to the Weyl group of $G_{x,E}(k_E)$. The elements $\omega X$ and $\omega Y'$ have the same image in $\mf{g}^*_{x,E}(k_E)$, and this image is strongly-regular. The statement follows.
\end{proof}

Let
\[ \pi_{S,\chi} := \textrm{c-ind}_{S(F)G(F)_{y,\frac{1}{e}}}^{G(F)} \hat\chi. \]
According to \cite{RY}, this is an irreducible supercuspidal representation of depth $\frac{1}{e}$.

\begin{fct} \label{fct:repequiv}
The representations $\pi_{S_1,\chi_1}$ and $\pi_{S_2,\chi_2}$ are isomorphic if and only if the pairs $(S_1,\chi_1)$ and $(S_2,\chi_2)$ are rationally conjugate.
\end{fct}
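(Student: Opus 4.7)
The ``if'' direction is immediate: if $g \in G(F)$ conjugates $(S_1, \chi_1)$ to $(S_2, \chi_2)$, then it conjugates the point $y_1$ to $y_2$ and the decomposition $\mf{g} = \mf{s}_1 \oplus \mf{n}_1$ to its counterpart, so it conjugates the inducing characters $\hat\chi_1$ and $\hat\chi_2$ defined on $K_i := S_i(F)G(F)_{y_i,\frac{1}{e}}$. Transport of structure then gives $\pi_{S_1,\chi_1} \cong \pi_{S_2,\chi_2}$.

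For the converse, my plan is to use Frobenius reciprocity and Mackey theory for the open, compact-mod-center subgroups $K_i$. Since each $\hat\chi_i$ is one-dimensional, a nonzero $G(F)$-intertwiner between the two compact inductions produces, via Frobenius reciprocity and Mackey decomposition, an element $g \in G(F)$ for which $\hat\chi_1$ and $g\hat\chi_2 g^{-1}$ agree on $K_1 \cap gK_2 g^{-1}$. The task is then to show that after adjusting $g$ by elements of $K_1$ and $K_2$ one may arrange $gK_2 g^{-1} = K_1$ and $g\hat\chi_2 g^{-1} = \hat\chi_1$; the desired rational conjugacy of $(S_i, \chi_i)$ will follow.

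The main step is to extract the torus from the intertwining relation. Restricting attention to the pro-unipotent parts $G(F)_{y_i, \frac{1}{e}}$, the characters $\hat\chi_i$ are detected by stable vectors $\lambda_i \in \ts{V}_i^*$ in the sense of the previous proposition; in particular $X_i \in \mf{s}_i^*(F)_{-\frac{1}{e}}$ lifts $\lambda_i$, and $X_i$ is strongly regular in $\mf{g}^*(E)_{y_i,0:0+}$ by the genericity assumption. The agreement of $\hat\chi_1$ with $g\hat\chi_2 g^{-1}$ on an appropriate subgroup translates into the statement that $X_1 - \tx{Ad}^*(g)X_2$ lies in a lattice of the form controlled by the previous proposition's stabilizer computation. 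Invoking that proposition (with its successive approximation lemmas) after possibly modifying $g$ by an element of $G(F)_{y_1,\frac{1}{e}}$, we obtain $\tx{Ad}^*(g)X_2 \in \mf{s}_1^*(F)_{-\frac{1}{e}}$ and $X_1 - \tx{Ad}^*(g)X_2 \in \mf{g}^*(F)_{y_1, 0}$, which by the strong regularity forces $g$ to conjugate $S_2$ to $S_1$ and to match $y_2$ to $y_1$.

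Once $gS_2 g^{-1} = S_1$ (as $F$-tori), the intertwining relation on the full group $K_1$ reduces on the torus factor to $\chi_1 = g \chi_2 g^{-1}$ on $S_1(F)_{\frac{1}{e}}$, and on the Lie-algebra factor the extensions $\hat\chi_{i,0}$ were defined canonically via the $S_i$-stable decomposition $\mf{g} = \mf{s}_i \oplus \mf{n}_i$, so they automatically match. Finally, one upgrades the equality of characters on $S_1(F)_{\frac{1}{e}}$ to equality on all of $S_1(F)$: this is again Mackey theory, applied now to the pair of (still compact-mod-center-induced) representations from $K_i$, the extra freedom being that $g$ can be modified within $S_1(F)$. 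The main obstacle I anticipate is the successive-approximation bookkeeping in the Mackey-style intertwining argument, but all the needed ingredients, in particular the stabilizer computation for stable vectors and the strong regularity of $X_i$, are already in place from the previous subsection.
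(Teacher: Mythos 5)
The paper does not actually supply a proof of this statement: it is labeled as a \emph{Fact} and rests implicitly on the results of Reeder--Yu \cite{RY}, which are cited for irreducibility in the sentence immediately preceding. So you are reconstructing an argument the author treats as known, and your outline is in the right spirit — compact induction, Frobenius reciprocity, Mackey decomposition, and the stabilizer proposition from the same subsection are exactly the ingredients that make the statement true.

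That said, there is one genuine gap in the plan. The stabilizer proposition you invoke (with its successive-approximation lemmas) is proved for elements $g \in G(F)_y$, i.e.\ elements that already fix the point $y$; the hypotheses of its lemmas are formulated in terms of Moy--Prasad lattices $\mf{g}^*(F)_{y,s}$ attached to a single point. The Mackey argument, by contrast, hands you an arbitrary $g \in G(F)$, and if $g$ does not carry $y_2$ into (a small neighborhood of) $y_1$, then $\tx{Ad}^*(g)X_2$ need not lie in any of the lattices $\mf{g}^*(F)_{y_1,s}$ to which those lemmas apply, so the cascade of approximations never gets started. You write that one can conclude $g$ ``matches $y_2$ to $y_1$'' from strong regularity, but that is precisely the point that needs a separate argument. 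What is missing is the preliminary step showing that a nonzero intertwiner forces $g \cdot y_2$ to coincide with $y_1$ (after adjusting $g$ within $K_1$ and $K_2$): this is usually done either by Moy--Prasad depth theory — both representations have depth exactly $\tfrac{1}{e}$, contain the unrefined minimal $K$-types $(G(F)_{y_i,\frac{1}{e}},\hat\chi_{i,0})$, and isomorphic irreducibles must have associate minimal $K$-types, which in particular forces the relevant points of the building to be $G(F)$-conjugate — or by a direct analysis of the intersection $G(F)_{y_1,\frac{1}{e}} \cap gG(F)_{y_2,\frac{1}{e}}g^{-1}$ showing that if $gy_2 \neq y_1$ then the character-agreement condition on the intersection forces $X_1$ to have depth strictly greater than $-\tfrac{1}{e}$ at $y_1$, contradicting genericity. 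Until you close this step, the reduction to the in-stabilizer proposition is not available. The remaining bookkeeping you flag as a concern (upgrading agreement from $S_1(F)_{\frac{1}{e}}$ to all of $S_1(F)$) is, by comparison, routine once the points and tori are matched, since the inducing characters on $K_i$ restrict to $\chi_i$ on $S_i(F)$ by construction.
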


We now want to recall a result of DeBacker and Reeder from \cite{DR10}, pertaining to the genericity of the representation $\pi_{S,\chi}$. Assume that $G$ is quasi-split. Choose a non-degenerate $G$-invariant bilinear form $\<\>$ on $\mf{g}(F)$ and an additive character $\psi : F \rw \C^\times$. Let $(B,\psi_B)$ be a Whittaker datum. Write $B=TU$ and $\mf{g} = \mf{t} \oplus \mf{u} \oplus \ol{\mf{u}}$, where $\ol{U}$ is the unipotent radical of the Borel subgroup $T$-opposite to $B$. There exists a regular nilpotent element $E_- \in \ol{\mf{u}}(F)$ such that $\psi\<X,E_-\> = \psi_B(\tx{exp}(X))$ for all $X \in \mf{u}(F)$. Furthermore, there exists a regular semi-simple element $Y \in \mf{s}(F)$ such that for all $t \in S(F)_\frac{1}{e}$, $\chi_S(t) = \psi\<Y,\tx{MP}_S(t)\>$.

\begin{pro}[DeBacker-Reeder, \cite{DR10}] \label{pro:generic} The representation $\pi_{S,\chi}$ is $(B,\psi_B)$-generic if and only if $Y$ belongs to the Kostant-section associated to $E_-$.
\end{pro}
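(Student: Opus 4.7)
The plan is to apply Mackey decomposition to the compactly induced representation $\pi_{S,\chi} = \tx{c-ind}_K^{G(F)} \hat\chi$ with $K = S(F)G(F)_{y,1/e}$. By Frobenius reciprocity, $\tx{Hom}_{U(F)}(\pi_{S,\chi},\psi_B)$ decomposes over double cosets $U(F)\backslash G(F)/K$, with the summand at $g$ being $\tx{Hom}_{U(F)\cap gKg^{-1}}(\hat\chi^g,\psi_B)$. This space is non-zero precisely when $\hat\chi^g$ and $\psi_B$ coincide on the intersection, so the question reduces to a character-matching condition indexed by $g$.

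Next, I would rewrite the matching condition in Lie-algebra terms using the Moy-Prasad isomorphisms and the bilinear form $\<,\>$. The character $\hat\chi$ on $G(F)_{y,1/e}$ is, via the decomposition $\mf{g} = \mf{s}\oplus\mf{n}$ used to construct $\hat\chi_0$, encoded by $Y \in \mf{s}(F)\subset \mf{g}(F)$, while $\psi_B$ on $\mf{u}(F)$ is encoded by $E_- \in \ol{\mf{u}}(F)$ via $X\mapsto\psi\<X,E_-\>$. Using the orthogonality relation $\mf{u}^\perp = \mf{b}$ coming from the non-degenerate form, the coincidence of $\hat\chi^g$ and $\psi_B$ on $U(F)\cap gKg^{-1}$ becomes, to leading order, the condition $\tx{Ad}(g)Y \in E_- + \mf{b}(F)$.

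To finish, I would invoke Kostant's slice theorem: the affine subspace $E_- + \tx{ker}(\tx{ad}(E_+))$, for any completing $\mf{sl}_2$-triple $(E_-,H,E_+)$, is a transversal slice to the regular $\tx{Ad}(G)$-orbits in $\mf{g}$, and every element of $E_-+\mf{b}$ is $\tx{Ad}(B)$-conjugate to a unique element of the slice. Regularity of $Y$ is provided by genericity of $\chi$ via Proposition \ref{pro:stabvec}. Combining these, a $g$ realizing the matching exists if and only if $Y$ itself is the unique Kostant-section representative of its $\tx{Ad}(G)$-orbit, i.e. $Y$ lies in the Kostant section; the uniqueness in Kostant's theorem simultaneously yields that the Whittaker space is at most one-dimensional.

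The principal technical obstacle is the double-coset bookkeeping and a depth-by-depth verification that matching at the top Moy-Prasad layer (the Kostant condition) propagates to matching at all higher depths, so that no spurious contributions arise from other cosets or at deeper levels. Here the stability of the leading term (Proposition \ref{pro:stabvec}) and the specific way in which $\hat\chi_0$ was extended from $\mf{s}$ to $\mf{g}$ using the $\mf{s}\oplus\mf{n}$ decomposition play decisive roles in isolating a single contributing double coset.
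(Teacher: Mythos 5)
The paper does not supply its own proof here: it attributes the result to DeBacker--Reeder \cite{DR10}, and the only content in the ``proof'' is the remark that \cite[Prop.~4.10]{DR09} is stated for unramified $S$ with a vertex $y$, but the same argument goes through without those hypotheses. Your sketch correctly reproduces the strategy of the cited proof: Mackey theory plus Frobenius reciprocity reduce genericity to a character-intertwining condition indexed by double cosets $U(F)\backslash G(F)/K$; the Moy--Prasad isomorphisms and the invariant form $\langle\cdot,\cdot\rangle$ translate the intertwining condition on the top layer into $\mathrm{Ad}(g)Y \in E_- + \mf{b}(F)$; and Kostant's slice theorem then pins $Y$ down to the Kostant section and gives uniqueness of the Whittaker functional. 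Your closing paragraph honestly names the real work that your sketch leaves out --- the depth-by-depth bookkeeping showing that matching at the leading Moy--Prasad layer controls all deeper layers and isolates a single contributing double coset --- and this is precisely the technical content of the DeBacker--Reeder argument that the present paper does not reproduce. So the proposal is correct in outline and takes the same route as the reference the paper defers to; a complete write-up would need to carry out the filtration argument rather than wave at it, and would also need to verify, as the paper asserts without detail, that nothing breaks when $S$ is ramified and $y$ is not a vertex.
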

We remark that, while the statement of this result \cite[Prop. 4.10]{DR09} requires that $S$ be an unramified torus and its point in $\mc{B}^\tx{red}(G,F)$ be a vertex, the result holds without that assumption and the proof remains the same.

\subsection{Isocrystals and inner forms} \label{sec:isoinner}

We briefly recall some material from \cite{Kot85}, \cite{Kot97}, and \cite{Kal11a}, which will be used in the construction of $L$-packets and the study of their endoscopy. Let $L$ be a completion of $F^u$. Then $\ol{L}=L \otimes_{F^u} \ol{F}$ is an algebraic closure of $L$. In \cite{Kot85}, \cite{Kot97}, Kottwitz defines and studies the set $\tb{B}(G)$ of isomorphism classes of isocrystals with $G$-structure. It can be defined cohomologically as $H^1(W_F,G(\ol{L}))$, which is the same as the set of Frobenius-twisted conjugacy classes in $G(L)$. To such an object $b$, Kottwitz defines its Newton homomorphism $\nu_b : \mb{D} \rw G(L)$, which is a group homomorphism defined up to conjugation, and $\mb{D}$ is the pro-diagonalizable group whose character module is the trivial $\Gamma$-module $\Q$. Kottwitz shows that the natural map $H^1(\Gamma,G(\ol{F})) \rw H^1(W_F,G(\ol{L}))$ is injective and its image is precisely the set of $b$ with $\nu_b=1$. The (often larger) set for which $\nu_b$ factors through $Z(G)$ is called the set of \emph{basic} $G$-isocrystals, denoted by $\tb{B}(G)_\tx{bas}$. Kottwitz shows that every element of $\tb{B}(G)_\tx{bas}$ gives rise to an inner form of $G$.

We now consider the pullback diagram
\[ \xymatrix{
E(G,Z)\ar[d]^p\ar[r]&Z^1(W_F,G(\ol{L}))_\tx{bas}\ar[d] \\
Z^1(\Gamma,[G/Z](\ol{F}))\ar@{^{(}->}[r]&Z^1(W_F,[G/Z](\ol{L}))_\tx{bas}
} \]
Since the lower horizontal arrow is injective, so is the upper. Moreover, one can define \cite[\S2.1]{Kal11a} an equivalence relation on $E(G,Z)$ so that the square remains cartesian after passing to equivalence classes at all four corners. In doing so, the lower horizontal map becomes bijective, and thus so does the upper. The upshot is that $E(G,Z)$ allows us to select ``nice'' cocycles in each cohomology class in $B(G)_\tx{bas}$.

Let $S \subset G$ be an elliptic maximal torus. Then we can also form the set $E(S,Z)$. Just as in the case of $G$, it embeds into $Z^1(W_F,S(\ol{L}))$ (we can omit the subscript $\tx{bas}$ now, as it has no effect for tori). We claim moreover that, also as in the case of $G$, the map $E(S,Z) \rw \tb{B}(S)$ is surjective. We need to show that the map $H^1(\Gamma,[S/Z](\ol{F})) \rw H^1(W_F,[S/Z](\ol{L}))$ is bijective, which was tautological for the basic elements for $G/Z$, but it is not so here. It is nonetheless quite immediate -- the image of this map is the subset of $b$ for which $\nu_b$ is trivial. However, $\nu_b$ can be interpreted as an element of $X_*(S/Z)^\Gamma \otimes \Q$, and since $S$ is elliptic, $X_*(S/Z)^\Gamma=\{0\}$.

An inner twist of $G$ is a map $\xi : G \rw G'$ which is an isomorphism of algebraic groups defined over $\ol{F}$, and for which $\xi^{-1}\sigma(\xi) \in \tx{Inn}(G)$. Given two strongly-regular semi-simple elements $\gamma \in G(F)$ and $\gamma' \in G'(F)$, we say that $\gamma$ and $\gamma'$ are stably-conjugate (or related), if there exists $g \in G$ such that $\tx{Ad}(g)\gamma=\gamma'$. An extended pure inner twist is defined to be $(\xi,b) : G \rw G'$, where $\xi : G \rw G'$ is an inner twist, $b \in E(G,Z)$, and $\xi^{-1}\sigma(\xi) = \tx{Ad}(p(b)_\sigma)$. Two semi-simple elements $\gamma \in G(F)$ and $\gamma' \in G'(F)$ will be called stably-conjugate if there exists an equivalent extended pure inner twist $(\xi',b') : G \rw G'$ such that $\xi'(\gamma)=\gamma'$ and $b' \in B(G_\gamma)$. In that situation, we will give $b'$ the name $\tx{inv}_b(\gamma,\gamma')$, or if $b$ is understood, just $\tx{inv}(\gamma,\gamma')$. This element allows one to define compatible normalizations of transfer factors \cite[\S2.2]{Kal11a}.

\section{On toral invariants} \label{sec:torinv}

\subsection{Definition of the toral invariant} \label{sec:torinvdef}

Let $G$ be a connected reductive group defined over a local field $F$, and let $S \subset G$ be a maximal torus defined over $F$. This section will be devoted to the study of a certain invariant of the pair $(S,G)$. Consider the set $R(S,G)$ of roots of $S$. The Galois group $\Gamma$ of $F$ acts on this set. Following \cite{LS87}, we will call an orbit of this action \emph{symmetric}, if it is preserved by multiplication by $-1$. Otherwise, we will call the orbit \emph{asymmetric}. We will also need to pay attention to the action of the inertia subgroup $I \subset \Gamma$ on $R(S,G)$. A given $\Gamma$-orbit $\mc{O}$ decomposes as a disjoint union of $I$-orbits, and one has the following dichotomy: Either all $I$-orbits contained in $\mc{O}$ are stable under multiplication by $-1$, or none is. In the first case, we will call $\mc{O}$ \emph{inertially symmetric}, and in the second case \emph{inertially asymmetric}. We will say that a root $\alpha \in R(S,G)$ is (inertially) symmetric/asymmetric, if its $\Gamma$-orbit has this property. The sets of symmetric resp. inertially symmetric roots will be denoted by $R(S,G)_\tx{sym}$ resp. $R(S,G)_\tx{insym}$.

Given a root $\alpha \in R(S,G)$, one has the subgroups
\[ \Gamma_\alpha = \tx{Stab}(\alpha,\Gamma)\qquad\tx{and}\qquad \Gamma_{\pm\alpha} = \tx{Stab}(\{\alpha,-\alpha\},\Gamma). \]
One has $[\Gamma_{\pm\alpha}:\Gamma_\alpha] =1$ if $\alpha$ is asymmetric, and $[\Gamma_{\pm\alpha}:\Gamma_\alpha] =2$ if $\alpha$ is symmetric. Analogously, we have the subgroups $I_\alpha$ and $I_{\pm\alpha}$ of $I$ with $[I_{\pm\alpha}:I_\alpha]$ being equal to 1 resp. 2 when $\alpha$ is inertially asymmetric resp. symmetric. Let $F_\alpha$ and $F_{\pm\alpha}$ be the subfields of $\ol{F}$ fixed by $\Gamma_\alpha$ resp. $\Gamma_{\pm\alpha}$. Then $\alpha$ is symmetric if and only if $F_\alpha/F_{\pm\alpha}$ is a quadratic extension, and is inertially symmetric if and only if this extension is ramified.

The toral invariant that we are going to study in this section is a function
\[ f : R(S,G)_\tx{sym} \rw \{\pm 1\}, \]
which is defined as follows: Let $\alpha \in R(S,G)$ be a symmetric root. The 1-dimensional root subspace $\mf{g}_\alpha \subset \mf{g}$ corresponding to $\alpha$ is defined over $F_\alpha$, and we may choose a non-zero element $X_\alpha \in \mf{g}_\alpha(F_\alpha)$. Let $\tau \in \Gamma_{\pm\alpha} \sm \Gamma_\alpha$. Then $\tau X_\alpha$ is a non-zero element of $\mf{g}_{-\alpha}(F_\alpha)$, and
\[ f(X_\alpha) := \frac{[X_\alpha,\tau X_\alpha]}{H_\alpha} \]
is thus a non-zero element of $F_\alpha$. Here $H_\alpha \in \mf{s}(F_\alpha)$ is the coroot corresponding to $\alpha$. One checks right away that $f(X_\alpha) \in F_{\pm\alpha}^\times$, and that changing the choice of $X_\alpha$ multiplies $f(X_\alpha)$ by an element of $F_{\pm\alpha}^\times$ which is a norm from $F_\alpha$. Let $\kappa_\alpha : F_{\pm\alpha}^\times \rw \{\pm 1\}$ be the non-trivial character which kills all norms from $F_\alpha$. It follows that
\[ f(\alpha) = \kappa_\alpha\left(\frac{[X_\alpha,\tau X_\alpha]}{H_\alpha}\right) \]
is independent of the choice of $X_\alpha$. This is the invariant that will be the subject of this section. In the case where $F=\R$, this invariant is well known -- a symmetric root $\alpha$ is then customarily called imaginary, and it is further called compact if $f(\alpha)=-1$, and non-compact otherwise.

The following simple observation is sometimes useful.
\begin{fct} \label{fct:torinvgal} The function $f : R(S,G)_\tx{sym} \rw \{\pm 1\}$ is $\Gamma$-invariant.
\end{fct}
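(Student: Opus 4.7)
The plan is to verify $\Gamma$-equivariance directly from the definition, exploiting that every object entering the definition of $f(\alpha)$ is canonically attached to $\alpha$ and hence transforms equivariantly under the Galois action.

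Fix $\alpha \in R(S,G)_\tx{sym}$ and $\sigma \in \Gamma$. First I would note that $R(S,G)_\tx{sym}$ is $\Gamma$-stable (since the property of being stabilized setwise by $-1$ is preserved under the Galois action on stabilizers), so $\sigma\alpha$ is symmetric and the quantity $f(\sigma\alpha)$ is defined. The stabilizers transform by conjugation: $\Gamma_{\sigma\alpha}=\sigma\Gamma_\alpha\sigma^{-1}$ and $\Gamma_{\pm\sigma\alpha}=\sigma\Gamma_{\pm\alpha}\sigma^{-1}$, so that $F_{\sigma\alpha}=\sigma(F_\alpha)$ and $F_{\pm\sigma\alpha}=\sigma(F_{\pm\alpha})$.

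Next I would compute $f(\sigma\alpha)$ using compatible choices. Starting from a fixed nonzero $X_\alpha\in\mf{g}_\alpha(F_\alpha)$ and a fixed $\tau\in\Gamma_{\pm\alpha}\sm\Gamma_\alpha$, take $X_{\sigma\alpha}:=\sigma X_\alpha$ and $\tau':=\sigma\tau\sigma^{-1}$; these are legitimate choices in the recipe for $f(\sigma\alpha)$. Since $\sigma$ acts by Lie algebra automorphisms of $\mf{g}(\ol F)$ and sends $H_\alpha$ to $H_{\sigma\alpha}$, one gets
\[
\frac{[X_{\sigma\alpha},\tau' X_{\sigma\alpha}]}{H_{\sigma\alpha}} \;=\; \frac{[\sigma X_\alpha,\sigma\tau X_\alpha]}{\sigma H_\alpha} \;=\; \sigma\!\left(\frac{[X_\alpha,\tau X_\alpha]}{H_\alpha}\right).
\]

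The final ingredient is compatibility of the quadratic characters. The field isomorphism $\sigma:F_{\pm\alpha}\to F_{\pm\sigma\alpha}$ carries the subgroup $N_{F_\alpha/F_{\pm\alpha}}(F_\alpha^\times)$ onto $N_{F_{\sigma\alpha}/F_{\pm\sigma\alpha}}(F_{\sigma\alpha}^\times)$, because $\sigma(N_{F_\alpha/F_{\pm\alpha}}(x))=N_{F_{\sigma\alpha}/F_{\pm\sigma\alpha}}(\sigma x)$. Consequently $\kappa_{\sigma\alpha}\circ\sigma=\kappa_\alpha$ on $F_{\pm\alpha}^\times$, and applying this to the element above yields $f(\sigma\alpha)=f(\alpha)$.

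There is no substantial obstacle here; the statement is a formal consequence of the $\Gamma$-equivariance of root spaces, coroots, Lie brackets, and of the functoriality of the norm-residue symbol under transport of structure. The only thing one must be careful about is choosing the auxiliary element $\tau'$ by conjugation from $\tau$, so that the two computations of $f(\alpha)$ and $f(\sigma\alpha)$ literally match under $\sigma$.
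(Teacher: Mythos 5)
Your proof is correct. The paper states this as a \emph{Fact} without proof (it is described as a ``simple observation''), so there is no paper argument to compare against; your argument supplies exactly the transport-of-structure calculation the author evidently has in mind. You correctly observe that $\sigma$ intertwines all the defining data: it carries $\mf{g}_\alpha(F_\alpha)$ to $\mf{g}_{\sigma\alpha}(F_{\sigma\alpha})$, conjugates the stabilizer subgroups, sends $H_\alpha$ to $H_{\sigma\alpha}$, commutes with the Lie bracket, and carries the norm subgroup $N_{F_\alpha/F_{\pm\alpha}}(F_\alpha^\times)$ onto $N_{F_{\sigma\alpha}/F_{\pm\sigma\alpha}}(F_{\sigma\alpha}^\times)$, hence $\kappa_{\sigma\alpha}\circ\sigma=\kappa_\alpha$. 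The one small care you rightly take is to choose $\tau'=\sigma\tau\sigma^{-1}\in\Gamma_{\pm\sigma\alpha}\sm\Gamma_{\sigma\alpha}$ and $X_{\sigma\alpha}=\sigma X_\alpha$ so that the two computations literally correspond under $\sigma$; since $f(\alpha)$ is independent of these auxiliary choices (as the paper notes), this suffices.
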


Since this invariant is associated to the pair $(G,S)$, we will sometimes denote it by $f_{(G,S)}$. It is clear that $f_{(G,S)} = f_{(G_\tx{ad},S_\tx{ad})}$. On the other hand, it is important to note that this invariant is sensitive to both the group $G$ and the torus $S$ -- if $S$ is replaced by a stably-conjugate torus, or if $G$ is replaced by an inner form, then the invariant will in general be different. We will study some of this behavior in the next subsections.

Before we begin with the study of the invariant $f_{(G,S)}$, it will be useful to recall another notion from \cite{LS87} which we will heavily use. A \emph{gauge} on $R(S,G)$ is a function $p : R(S,G) \rw \{\pm 1\}$ with the property that $p(-\alpha)=-p(\alpha)$. A choice of positive roots on $R(S,G)$ determines a gauge, but not all gauges arise in this way. If $p,q$ are two gauges, then there exists a sequence of gauges $p=p_0,p_1,\dots,p_n=q$ such that for each $i=0,\dots,n-1$, the gauges $p_i$ and $p_{i+1}$ disagree on a single pair $\{\gamma,-\gamma\}$ of roots. In other words, $p_i(\alpha) \neq p_{i+1}(\alpha)$ implies $\alpha \in \{\gamma,-\gamma\}$.

\subsection{A cohomological interpretation}
The sign $f(\alpha)$ associated to $\alpha \in R(S,G)_\tx{sym}$ can be given the following cohomological interpretation. Let $S_\alpha$ be the 1-dimensional anisotropic torus defined over $F_{\pm\alpha}$ and split over $F_\alpha$. Thus, $S_\alpha(\ol{F})=\ol{F}^\times$ with $\sigma \in \Gamma_{\pm\alpha}$ acting as
\[ \sigma_\alpha(x) = (\sigma x)^{\kappa_\alpha(\sigma)}, \]
where $\kappa_\alpha$ is now viewed as a character on $\Gamma_{\pm\alpha}$ via local class field theory. The inflation-restriction sequence and Hilbert's theorem 90 imply that the inflation map
\[ H^1(\Gamma_{\pm\alpha}/\Gamma_\alpha,S_\alpha(F_\alpha)) \rw H^1(\Gamma_{\pm\alpha},S_\alpha(\ol{F})) \]
is an isomorphism. On the other hand, we have the isomorphism
\[ H^1(\Gamma_{\pm\alpha}/\Gamma_\alpha,S_\alpha(F_\alpha)) \rw F_{\pm\alpha}^\times/N(F_\alpha^\times) \]
given by evaluating a 1-cocycle of $\Gamma_{\pm\alpha}/\Gamma_\alpha$ with values in $S_\alpha(F_\alpha)  = F_\alpha^\times$ at the non-trivial element of  $\Gamma_{\pm\alpha}/\Gamma_\alpha$. Composing the inverse of the first isomorphism with the second and then with the character $\kappa_\alpha$, we obtain an isomorphism
\[ \kappa_\alpha^\tx{coh} : H^1(\Gamma_{\pm\alpha},S_\alpha(\ol{F}))  \rw \{\pm 1\}. \]

\begin{pro}\label{pro:torinvcoh}\ \\[-20pt]
\begin{enumerate}
\item
For any non-zero element $X_\alpha \in \mf{g}_\alpha(\ol{F})$, the map
\[ f^\tx{coh}(X_\alpha) : \Gamma_{\pm\alpha} \rw \ol{F}^\times,\qquad \sigma \mapsto \frac{\sigma X_{\sigma^{-1}\alpha}}{X_\alpha} \]
is an element of $Z^1(\Gamma_{\pm\alpha},S_\alpha(\ol{F}))$.
\item If $X_{-\alpha} \in \mf{g}_{-\alpha}$ is such that $[X_\alpha,X_{-\alpha}]=H_\alpha$, then
\[ f^\tx{coh}(X_{-\alpha}) = f^\tx{coh}(X_\alpha)^{-1}. \]
\item The cohomology class $f^\tx{coh}(\alpha)$ of $f^\tx{coh}(X_\alpha)$ is independent of the choice of $X_\alpha$, and its image under $\kappa_\alpha^\tx{coh}$ is equal to $f(\alpha)$.
\end{enumerate}
\end{pro}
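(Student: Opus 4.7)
My plan is to address the three parts in order. The common engine throughout is the Galois equivariance of the bracket relation $[X_\alpha, X_{-\alpha}] = H_\alpha$: applying any $\sigma \in \Gamma_{\pm\alpha}$ and using $\sigma H_\alpha = \kappa_\alpha(\sigma) H_\alpha$ forces the scalars by which $\sigma$ sends $X_\alpha, X_{-\alpha}$ into their images in $\mf{g}_{\sigma\alpha}, \mf{g}_{-\sigma\alpha}$ to multiply to $1$. This single observation is what makes the cocycle condition work and what pins down the right normalizations.

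For part (1), the formula for $f^\tx{coh}(X_\alpha)$ implicitly requires the element $X_{-\alpha}$ of part (2), so I pair $X_\alpha$ at the outset with the unique $X_{-\alpha}$ satisfying $[X_\alpha, X_{-\alpha}] = H_\alpha$. I then verify the twisted cocycle identity $c_{\sigma\tau} = c_\sigma \cdot \sigma_\alpha(c_\tau)$ by case analysis on whether $\sigma, \tau$ lie in $\Gamma_\alpha$ or not; the twist $\sigma_\alpha(x) = (\sigma x)^{\kappa_\alpha(\sigma)}$ built into the definition of $S_\alpha$ is exactly what absorbs the inversion that occurs when $\sigma$ swaps the two root lines, and this is the reason the cocycle takes values in $S_\alpha$ rather than $\mb{G}_m$. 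The bookkeeping in this case analysis is the main computational obstacle. Part (2) is then immediate: writing $\sigma X_\alpha = a X_{\sigma\alpha}$ and $\sigma X_{-\alpha} = b X_{-\sigma\alpha}$, the relation $ab = 1$ unwinds directly, in each of the two cases $\sigma \in \Gamma_\alpha$ and $\sigma \notin \Gamma_\alpha$, to the asserted identity $f^\tx{coh}(X_\alpha)(\sigma) \cdot f^\tx{coh}(X_{-\alpha})(\sigma) = 1$.

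For part (3), I first check independence of the cohomology class on $X_\alpha$: replacing $X_\alpha$ by $\lambda X_\alpha$ (and accordingly $X_{-\alpha}$ by $\lambda^{-1} X_{-\alpha}$) modifies the cocycle by the coboundary $\sigma \mapsto \sigma_\alpha(\lambda)/\lambda$ of $\lambda$ viewed as a $0$-cochain in $S_\alpha(\ol{F})$; this again splits into the same two cases and uses the definition of $\sigma_\alpha$. To identify the cohomology class under $\kappa_\alpha^\tx{coh}$, I then specialize to $X_\alpha \in \mf{g}_\alpha(F_\alpha)$, which is possible because $\mf{g}_\alpha$ is defined over $F_\alpha$. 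With this choice the cocycle is inflated from $\Gamma_{\pm\alpha}/\Gamma_\alpha$ and takes values in $S_\alpha(F_\alpha) = F_\alpha^\times$, and its value on the non-trivial coset $\tau\Gamma_\alpha$ is $\tau X_{-\alpha}/X_\alpha = \mu^{-1}$, where $\mu \in F_{\pm\alpha}^\times$ is defined by $\tau X_\alpha = \mu X_{-\alpha}$; the inversion $\mu \mapsto \mu^{-1}$ is once more the $ab = 1$ identity from part (1). Since $\mu = [X_\alpha, \tau X_\alpha]/H_\alpha$ is precisely the element whose class modulo $N(F_\alpha^\times)$ defines $f(\alpha)$, and $\kappa_\alpha(\mu^{-1}) = \kappa_\alpha(\mu)$ because $\kappa_\alpha$ is $\pm 1$-valued, we conclude $\kappa_\alpha^\tx{coh}(f^\tx{coh}(\alpha)) = f(\alpha)$.
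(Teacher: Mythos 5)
Your proof is correct and follows essentially the same route as the paper's: both recognize that $f^{\mathrm{coh}}(X_\alpha)$ implicitly requires choosing $X_{-\alpha}$ normalized by $[X_\alpha,X_{-\alpha}]=H_\alpha$, both derive the ``$ab=1$'' relation (the content of part 2) from applying $\sigma$ to that bracket identity together with $\sigma H_{\sigma^{-1}\alpha}=H_\alpha$, both feed this into the case analysis for the cocycle identity, and both handle part 3 by the coboundary computation followed by specializing $X_\alpha\in\mf g_\alpha(F_\alpha)$ and evaluating at $\tau$. The only difference is presentational: the paper proves part 2 explicitly before part 1, whereas you state the bracket-equivariance fact up front and then proceed in the stated numerical order.
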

\begin{proof}
We note first that $f^\tx{coh}(X_\alpha)$ is well-defined, because $\sigma X_{\sigma^{-1}\alpha}$ is a non-zero element of $\mf{g}_\alpha(\ol{F})$ for each $\sigma \in \Gamma_{\pm\alpha}$. Before proving that $f^\tx{coh}(X_\alpha)$ is a 1-cocycle, it will be more convenient to prove the second part of the proposition. For this, note first that for all $\sigma \in \Gamma$, we have
\[ \sigma H_{\sigma^{-1}\alpha} = H_\alpha. \]
Then applying $\sigma \in \Gamma_{\pm\alpha}$ to the equation
\[ 1 = \frac{[X_{\sigma^{-1}\alpha},X_{-\sigma^{-1}\alpha}]}{H_{\sigma^{-1}\alpha}} \]
we obtain
\begin{eqnarray*}
1&=&\frac{[\sigma X_{\sigma^{-1}\alpha},\sigma X_{-\sigma^{-1}\alpha}]}{\sigma H_{\sigma^{-1}\alpha}}\\
&=&\frac{[f^\tx{coh}(X_\alpha,\sigma) X_\alpha, f^\tx{coh}(X_{-\alpha},\sigma)X_{-\alpha}]}{H_{\alpha}}\\
&=&f^\tx{coh}(X_\alpha,\sigma)f^\tx{coh}(X_{-\alpha},\sigma).
\end{eqnarray*}
Having shown the second claim, we now turn to $f^\tx{coh}(X_\alpha) \in Z^1(\Gamma_{\pm\alpha},S_\alpha(\ol{F}))$. For this, let $\sigma,\tau \in \Gamma_{\pm\alpha}$. Then
\begin{eqnarray*}
f^\tx{coh}(X_\alpha,\sigma\tau)&=&\frac{\sigma\tau X_{\tau^{-1}\sigma^{-1}\alpha}}{X_\alpha}\\
&=&\frac{\sigma\tau X_{\tau^{-1}\sigma^{-1}\alpha}}{\sigma X_{\sigma^{-1}\alpha}} \frac{\sigma X_{\sigma^{-1}\alpha}}{X_\alpha} \\
&=&\sigma\left(\frac{\tau X_{\tau^{-1}\sigma^{-1}\alpha}}{X_{\sigma^{-1}\alpha}}\right) f^\tx{coh}(X_\alpha,\sigma)
\end{eqnarray*}
Now
\[ \frac{\tau X_{\tau^{-1}\sigma^{-1}\alpha}}{X_{\sigma^{-1}\alpha}} = \begin{cases} f^\tx{coh}(X_\alpha,\tau)&,\sigma \in \Gamma_\alpha\\ f^\tx{coh}(X_{-\alpha},\tau)&, \sigma \in \Gamma_{\pm\alpha}\sm \Gamma_\alpha \end{cases} \]
and using part 2 which we just proved, we see that in both cases we have
\[ \sigma\left(\frac{\tau X_{\tau^{-1}\sigma^{-1}\alpha}}{X_{\sigma^{-1}\alpha}}\right) = \sigma_\alpha(f^\tx{coh}(X_\alpha,\tau)). \]
We now come to part 3. If $\tilde X_\alpha = cX_\alpha$ for some $c \in \ol{F}^\times$, then  $\tilde X_{-\alpha} = c^{-1}X_{-\alpha}$, and hence we have $\sigma(\tilde X_{\sigma^{-1}\alpha}) = \sigma_\alpha(c)\sigma X_{\sigma^{-1}\alpha}$. This implies that $f(\tilde X_\alpha,\sigma)=c^{-1}\sigma_\alpha(c)f(X_\alpha,\sigma)$, showing that the class of $f(X_\alpha)$ is independent of the choice of $X_\alpha$. Now let $X_\alpha \in \mf{g}_\alpha(F_\alpha)$. Then $f(X_\alpha)$ is the inflation of an element of $Z^1(\Gamma_{\pm\alpha}/\Gamma_\alpha,S_\alpha(F_\alpha))$, and thus
\[ \kappa_\alpha^\tx{coh}(f^\tx{coh}(\alpha)) = \kappa_\alpha(f^\tx{coh}(X_\alpha,\tau)) \]
for any $\tau \in \Gamma_{\pm\alpha} \sm \Gamma_\alpha$. But
\[ f^\tx{coh}(X_\alpha,\tau) = \frac{\tau X_{-\alpha}}{X_\alpha} = \tau f(X_\alpha)^{-1}\]
and hence
\[ \kappa_\alpha(f^\tx{coh}(X_\alpha,\tau)) = \kappa_\alpha(f(X_\alpha)) = f(\alpha). \]
\end{proof}

As a first application of the cohomological interpretation of $f(\alpha)$, we obtain a description of the behavior of $f(\alpha)$ under stable conjugacy.

\begin{pro} \label{pro:torinvstab} Let $(G,S)$ and $(G',S')$ be two pairs consisting of a connected reductive group and a maximal torus thereof, both defined over the local field $F$. Let $\xi : G \rw G'$ be an inner twist which restricts to an isomorphism $S \rw S'$ defined over $F$. Then
\begin{enumerate}
\item
\[ f_{(G',S')}(\xi\alpha) = f_{(G,S)}(\alpha) \cdot \kappa_\alpha(\eta_{t,\alpha}), \]
where for each $\alpha \in R(S,G)_\tx{sym}$, $\eta_{t,\alpha}$ is the image of $t_\sigma = \xi^{-1}{^\sigma\xi} \in H^1(F,S_\tx{ad})$ under
\[ \xymatrix{ H^1(F,S_\tx{ad})\ar[r]^-{\tx{Res}}&H^1(F_{\pm\alpha},S_\tx{ad})\ar[r]^\alpha& H^1(F_{\pm\alpha},S_\alpha)}. \]
\item If $t$ is the image of $\lambda \in H^{-1}(E/F,X_*(S_\tx{ad}))$ under the Tate-Nakayama isomorphism, then
\[ \kappa_\alpha(\eta_{t,\alpha}) = (-1)^{\<\lambda,\sum_{\sigma \in \Gamma/\Gamma_{\pm\alpha}}\sigma\alpha \>}. \]
\item If $S$ is elliptic, we have
\[ \prod_{\alpha \in R(S',G')_\tx{sym}/\Gamma} f_{(G',S')}(\alpha) = e(G)e(G')\prod_{\alpha \in R(S,G)_\tx{sym}/\Gamma} f_{(G,S)}(\alpha). \]
\end{enumerate}
\end{pro}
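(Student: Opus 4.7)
The proof splits into three parts, the first two being direct cohomological calculations and the third requiring comparison with Kottwitz's explicit formula for the sign.

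\textbf{For Part 1}, I would work with the cohomological interpretation $f^\tx{coh}$ from Proposition~\ref{pro:torinvcoh}. Fix nonzero root vectors $X_\beta \in \mf{g}_\beta(\ol{F})$ for each $\beta \in R(S,G)$, and set $Y_{\xi\beta} := \xi(X_\beta)$, identifying $R(S,G) \cong R(S',G')$ via $\xi$. The defining relation $\xi^{-1}\,{}^\sigma\xi = \tx{Ad}(t_\sigma)$ applied on $\mf{g}(\ol F)$ gives
\[ \sigma(\xi(X_\beta)) = \xi(\tx{Ad}(t_\sigma)(\sigma X_\beta)) = (\sigma\beta)(t_\sigma) \cdot \xi(\sigma X_\beta). \]
Specializing to $\beta=\sigma^{-1}\alpha$, so that $\sigma\beta=\alpha$, and dividing by $Y_{\xi\alpha}$ (using linearity of $\xi$ on root spaces to identify ratios in $\mf{g}_\alpha$ and $\mf{g}'_{\xi\alpha}$) yields the cocycle identity $f^\tx{coh}(Y_{\xi\alpha})(\sigma) = \alpha(t_\sigma) \cdot f^\tx{coh}(X_\alpha)(\sigma)$, where $\alpha : S_\tx{ad} \rw S_\alpha$ is the morphism of tori over $F_{\pm\alpha}$ dual to the embedding $X^*(S_\alpha) = \Z\chi_\alpha \hrw X^*(S_\tx{ad})$, $\chi_\alpha \mapsto \alpha$. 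Since $\sigma \mapsto \alpha(t_\sigma)$ represents $\eta_{t,\alpha}$, passing to classes and applying $\kappa_\alpha^\tx{coh}$ gives the stated formula.

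\textbf{For Part 2}, this is a functoriality computation for Tate--Nakayama. On cocharacter lattices, $\alpha_*: X_*(S_\tx{ad}) \rw X_*(S_\alpha) = \Z_{\kappa_\alpha}$ is the pairing $\mu \mapsto \<\mu,\alpha\>$. Tate--Nakayama commutes both with pushforward along morphisms of tori and with restriction of scalars (the latter since the fundamental class for $E/F$ restricts to that for $E/F_{\pm\alpha}$), so $\eta_{t,\alpha}$ corresponds via $TN$ to $\alpha_*(\tx{res}\,\lambda) \in \widehat{H}^{-1}(\Gamma_{E/F_{\pm\alpha}}, \Z_{\kappa_\alpha})$. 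By Shapiro's lemma applied to the adjunction $\tx{Hom}_{\Gamma_{\pm\alpha}}(M, \Z_{\kappa_\alpha}) = \tx{Hom}_\Gamma(M, \tx{Ind}_{\Gamma_{\pm\alpha}}^\Gamma \Z_{\kappa_\alpha})$, the composition $\alpha_* \circ \tx{res}$ is induced on $\widehat{H}^{-1}(\Gamma,\cdot)$ by the $\Gamma$-homomorphism $\mu \mapsto \sum_{\sigma \in \Gamma/\Gamma_{\pm\alpha}} \sigma \otimes \<\mu,\sigma\alpha\>$, followed by projection to the identity coset; evaluated on $\lambda$ this produces $\<\lambda, \sum_\sigma \sigma\alpha\> \in \widehat{H}^{-1}(\Gamma_{E/F_{\pm\alpha}}, \Z_{\kappa_\alpha}) = \Z/2\Z$. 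Combined with the identification of this $\Z/2\Z$ with $\{\pm 1\}$ afforded by $TN$ together with $\kappa_\alpha^\tx{coh}$, this yields the desired formula.

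\textbf{For Part 3}, combining Parts 1 and 2 together with the $\Gamma$-equivariant bijection $R(S,G) \leftrightarrow R(S',G')$ induced by $\xi$, the ratio of the two products in the statement equals $(-1)^{\<\lambda,\Sigma\>}$ with
\[ \Sigma := \sum_{\alpha \in R_\tx{sym}/\Gamma} \sum_{\sigma \in \Gamma/\Gamma_{\pm\alpha}} \sigma\alpha. \]
The double sum $\Sigma$ picks one root from each pair $\{\beta,-\beta\} \subset R_\tx{sym}$, and since $\<\lambda,\beta\> \equiv \<\lambda,-\beta\> \pmod{2}$, the sign $(-1)^{\<\lambda,\Sigma\>}$ is independent of the representative choices and depends only on $\lambda$ and on $R_\tx{sym}/\pm$. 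It remains to match this sign with $e(G)e(G')$. Using $e(G)^2=1$ and transferring via a common quasi-split inner form $G^*$ with elliptic torus $S^*$ (where $e(G^*)=1$), this reduces to showing that on $H^1(F, S^*_\tx{ad})$ the character $(-1)^{\<\lambda,\Sigma\>}$ agrees with $e(G')$ evaluated on the inner-twist cocycle; this is Kottwitz's sign formula \cite{Kot83} after restriction to the image of $H^1(F, S^*_\tx{ad})$ in $H^1(F, G^*_\tx{ad})$. \textbf{The main obstacle} is precisely this identification with the Kottwitz sign, which sits outside the formal cohomological framework of Parts 1 and 2 and must be handled by direct comparison with Kottwitz's explicit formula.
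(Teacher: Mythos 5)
Parts 1 and 2 are correct and follow essentially the paper's line of argument: the computation $f^\tx{coh}(\xi X_\alpha,\sigma) = \alpha(\xi^{-1}\,{}^\sigma\xi)\, f^\tx{coh}(X_\alpha,\sigma)$ is exactly what the paper does, and the functoriality/Shapiro analysis of the maps $\tx{Res}$ and $\alpha_*$ under Tate--Nakayama matches the commutative diagram in the paper's proof (where the bottom-left map is $[\lambda]\mapsto[\sum_\sigma\sigma^{-1}\lambda]$ and the bottom-right is pairing with $\alpha$).

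Part 3, however, stops short. You correctly reduce the assertion to showing that on $H^1(F,S_\tx{ad})$ the character $(-1)^{\<\lambda,\Sigma\>}$, with $\Sigma=\sum_{\alpha\in R_\tx{sym}/\Gamma}\sum_{\sigma\in\Gamma/\Gamma_{\pm\alpha}}\sigma\alpha$, equals $e(G')$ evaluated on the twisting cocycle — but you then label this ``the main obstacle'' and do not close it. This is a genuine gap, and it is not a mere citation issue: Kottwitz's sign is not stated in \cite{Kot83} in a form that directly matches $(-1)^{\<\lambda,\Sigma\>}$; one must make the translation. The paper's argument fills the gap in two concrete steps. First, the Kottwitz sign on $H^1(F,S_\tx{ad})$ is given by pairing the cocycle $\xi^{-1}\,{}^\sigma\xi$ against the character represented by $\sigma\rho-\rho\in Z^1(F,X^*(S_\tx{ad}))$, and unwinding the cup-product/Tate--Nakayama duality yields $e(G')=(-1)^{\<2\rho,\lambda\>}$. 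Second, one must show $\<2\rho,\lambda\>\equiv\<\Sigma,\lambda\>\pmod 2$; this is done by replacing the positivity gauge $p$ (with $R_p=2\rho$) by one constant on asymmetric $\Gamma$-orbits — which leaves $R_p\pmod{2X^*(S)}$ unchanged — and then invoking ellipticity to kill the asymmetric orbits, so that only symmetric orbits contribute and the remaining sum is exactly $\Sigma$. This gauge-plus-ellipticity step is the combinatorial core missing from your sketch, and it also explains why ellipticity is a hypothesis of Part 3.

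A minor remark on Part 3 as written: the reduction ``transferring via a common quasi-split inner form $G^*$'' is a slight detour; since Parts 1--2 already express the ratio of the two products as $(-1)^{\<\lambda,\Sigma\>}$ for the single cocycle $t_\sigma$ comparing $(G,S)$ to $(G',S')$, one can directly compare with $e(G)e(G')$, assuming without loss of generality that $G$ is quasi-split (so $e(G)=1$), rather than passing through a third group.
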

\begin{proof}
Choose $X_\alpha \in \mf{g}_\alpha(\ol{F})$. Then the first claim follows from part 3 of Proposition \ref{pro:torinvcoh} and the following computation.
\begin{eqnarray*}
f_{(G',S')}^\tx{coh}(\xi X_\alpha,\sigma) &= &\frac{\sigma \xi X_{\sigma^{-1}\alpha}}{\xi X_\alpha}
=  \frac{\xi(\xi^{-1}{^\sigma\xi})\sigma X_{\sigma^{-1}\alpha}}{\xi X_\alpha}\\
&=&\frac{\xi \alpha(\xi^{-1}{^\sigma\xi})f^\tx{coh}_{(G,S)}(X_\alpha,\sigma)X_\alpha}{\xi X_\alpha}\\
&=&\alpha(\xi^{-1}{^\sigma\xi})f^\tx{coh}_{(G,S)}(X_\alpha,\sigma).
\end{eqnarray*}
For the second claim, the functoriality and compatibility with connecting homomorphisms of the Tate-Nakayama isomorphism gives the diagram
\[ \xymatrix{
H^1(F,S)\ar[r]^{\tx{Res}}&H^1(F_{\pm\alpha},S)\ar[r]^\alpha&H^1(F_{\pm\alpha},S_\alpha)\\
H^{-1}(E/F,X_*(S))\ar[u]^\cong\ar[r]^{\tx{Res}}&H^{-1}(E/F_{\pm\alpha},X_*(S))\ar[u]^\cong\ar[r]^\alpha&H^{-1}(F_{\pm\alpha}/F_\alpha,\Z_{(-1)})\ar[u]\\
[X_*(S)_\Gamma]_\tx{tor}\ar[u]\ar[r]&[X_*(S)_{\Gamma_{\pm\alpha}}]_\tx{tor}\ar[u]\ar[r]&\Z/2\Z\ar[u]
} \]
The left horizontal arrow on the bottom is given by
\[ [\lambda] \mapsto [\sum_{\sigma \in \Gamma/\Gamma_{\pm\alpha}} \sigma^{-1}\lambda] \]
and the right horizontal arrow on the bottom is given by
\[ [\mu] \mapsto \<\mu,\alpha\>. \]
This proves the second claim. For the final claim, we need to show that
\[ \prod_{\alpha \in R(G,S)_\tx{sym}/\Gamma} \kappa_\alpha(\eta_{t,\alpha}) \]
is equal to $e(G)e(G')$. In view of the first part of this proposition, we may assume that $G$ is quasi-split. Using the second part, we see that the above product is equal to $(-1)$ raised to the power
\[ \sum_{\alpha \in R(G,S)_\tx{sym}/\Gamma} \sum_{\sigma \in \Gamma/\Gamma_{\pm\alpha}} \<\lambda, \sigma\alpha \>. \]
On the other hand, the Kottwitz sign $e(G')$  \cite{Kot83} is given by taking the image of $\xi^{-1}{^\sigma\xi}$ under the map
\[ \xymatrix{
H^1(F,S_\tx{ad})\ar[r]^-\partial&H^2(F,Z(G_\tx{sc}))\ar[r]^-\rho&H^2(F,\mb{G}_m)\ar[r]^{exp(2\pi i \tx{inv})}&\C^\times}, \]
where $\rho$ denotes half the sum of an arbitrary set of positive roots. Dual to the above sequence we have the sequence
\[ \xymatrix{
H^1(F,X^*(S_\tx{ad}))&\ar[l]_-\partial H^0(F,X^*(Z(G_\tx{sc})))&\ar[l]_-{\cdot\rho} H^0(F,\Z) \\
} \]
The character $\tx{exp}(2\pi i \tx{inv}) : H^2(F,\mb{G}_m) \rw \C^\times$ corresponds to the element $1 \in H^0(F,\Z)$. Thus the character on $H^1(F,S_\tx{ad})$ given by the first sequence is equal to the image of $1$ under the second sequence. This image is represented by the element $\sigma\rho-\rho$ of $Z^1(F,X^*(S_\tx{ad}))$. We want to compute the paring of the class of this cocycle with $\xi^{-1}{^\sigma\xi}$. This pairing is given by
\[ \exp( 2\pi i \tx{inv}((\sigma\rho-\rho) \cup(\xi^{-1}{^\sigma\xi})). \]
But we are assuming that $\xi^{-1}{^\sigma\xi}=(\lambda \cup \tx{Fund}_{E/F})$, where $\tx{Fund}_{E/F}$ denotes the fundamental class of $E/F$ in $H^2(F,\mb{G}_m)$. Thus
\begin{eqnarray*}
\exp( 2\pi i \tx{inv}((\sigma\rho-\rho) \cup(\xi^{-1}{^\sigma\xi})) &=& \exp( 2\pi i \tx{inv}((\sigma\rho-\rho)  \cup (\lambda \cup \tx{Fund}_{E/F}) )\\
&=&\exp( 2\pi i [E:F]^{-1}((\sigma\rho-\rho)\cup\lambda))\\
&=&\exp( 2\pi i [E:F]^{-1}(\sum_{\sigma \in \Gamma(E/F)} \sigma\< (\sigma^{-1}\rho-\rho),\lambda\>))\\
&=&\exp( 2\pi i \< -\rho,\lambda\>)\\
&=&(-1)^{\<2\rho,\lambda\>}
\end{eqnarray*}
What remains to be shown is that in $\Z/2\Z$ we have the equality
\[ \<2\rho,\lambda\> = \sum_{\alpha \in R(G,S)_\tx{sym}/\Gamma} \sum_{\sigma \in \Gamma_E/\Gamma_{\pm\alpha}} \<\lambda, \sigma\alpha \>.\]
Recall that $2\rho$ is the sum of an arbitrary choice of positive roots in $R(S,G)$. For any gauge $p : R(S,G) \rw \{\pm 1\}$, let
\[ R_p = \sum_{\substack{\beta \in R(S,G)\\p(\beta)=+1}} \beta. \]
It is clear that for two gauges $p,q$, we have $R_p=R_q$ in $X^*(S)/2X^*(S)$. If we take $p$ to be the gauge for which $R_p=2\rho$, and $q$ to be a gauge which is constant on every asymmetric orbit of $\Gamma$ in $R(S,G)$, then we see that $\<2\rho,\lambda\> = \<R_q,\lambda\>$ and moreover
\[ R_q = \sum_{\substack{\beta\in R(S,G)_\tx{sym}\\ q(\beta)=+1}} \beta. \]
The reason that the above sum runs only over the symmetric roots is that if an asymmetric root is $q$-positive, then so is its entire orbit, but the sum over that orbit is zero due to the ellipticity of $S$. It is now clear that in $X^*(S)/2X^*(S)$ we have the equality
\[ \sum_{\alpha \in R(G,S)_\tx{sym}/\Gamma} \sum_{\sigma \in \Gamma_E/\Gamma_{\pm\alpha}} \alpha = R_q. \]

\end{proof}

\subsection{A vanishing result}
The purpose of this section is to prove the following vanishing statement for the toral invariant $f : R(S,G)_\tx{sym} \rw \{\pm 1\}$ when $F$ is a non-archimedean local field.

\begin{pro} \label{pro:torinvvan} Assume that the action of $I$ on $X^*(S)$ is tame and generated by a regular elliptic element. Then $f(\alpha)=1$ for all inertially asymmetric $\alpha \in R(S,G)_\tx{sym}$.
\end{pro}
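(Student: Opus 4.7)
The plan is to use the cohomological interpretation of the toral invariant provided in Proposition~\ref{pro:torinvcoh}. Inertial asymmetry of $\alpha$ means that the quadratic extension $F_\alpha/F_{\pm\alpha}$ is unramified, so $\kappa_\alpha$ is the unramified quadratic character of $F_{\pm\alpha}^\times$, which is trivial precisely on the elements of even valuation (equivalently, on norms from $F_\alpha^\times$). The proof therefore reduces to exhibiting a choice of $X_\alpha \in \mf{g}_\alpha(F_\alpha)$ for which $f(X_\alpha) \in F_{\pm\alpha}^\times$ has even valuation.

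First I would fix a convenient integral model. Let $E$ be a tame finite Galois extension of $F$ that splits $S$ and contains $F_\alpha$, and let $y \in \mc{B}^\tx{red}(G,E)$ be the hyperspecial point associated to $S$. Let $X_\alpha^0 \in \mf{g}_\alpha(E)$ be an $O_E$-generator of the root-space piece of the Moy-Prasad lattice at $y$, and normalize $X_{-\alpha}^0 \in \mf{g}_{-\alpha}(E)$ by $[X_\alpha^0,X_{-\alpha}^0]=H_\alpha$. Using inertial asymmetry, pick $\tau \in \Gamma_{\pm\alpha}\setminus\Gamma_\alpha$ lifting the Frobenius of $F_{\pm\alpha}$; because $\tau$ preserves the integral structure and swaps $\alpha$ with $-\alpha$, we must have $\tau X_\alpha^0 = \zeta X_{-\alpha}^0$ for some $\zeta \in O_E^\times$. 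Now for any $X_\alpha = \lambda X_\alpha^0 \in \mf{g}_\alpha(F_\alpha)$ with $\lambda \in E^\times$, the companion $X_{-\alpha}=\lambda^{-1}X_{-\alpha}^0$ satisfies the bracket relation, and a one-line calculation gives
\[ f(X_\alpha) \,=\, \zeta \cdot \lambda \cdot \tau(\lambda). \]
Since Galois preserves the valuation of $E$, this yields $v_{F_{\pm\alpha}}(f(X_\alpha)) = 2v_E(\lambda)/e(E/F_{\pm\alpha})$, and (using $e(E/F_\alpha)=e(E/F_{\pm\alpha})$ since $F_\alpha/F_{\pm\alpha}$ is unramified) the target evenness is equivalent to $e(E/F_\alpha) \mid v_E(\lambda)$, i.e.\ to $\lambda$ being chooseable in $O_E^\times$.

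Finally, I would show that the obstruction to picking $\lambda \in O_E^\times$ vanishes. By Hilbert 90 applied to the exact sequence $1 \to O_E^\times \to E^\times \to \Z \to 1$, this obstruction is the class of the cocycle $\eta(\sigma)=\sigma X_\alpha^0/X_\alpha^0$ inside $H^1(\Gal(E/F_\alpha),O_E^\times)\cong \Z/e(E/F_\alpha)\Z$. The tameness assumption confines the inertial part of $\eta$ to values in the roots of unity $\mu(O_E)$, encoded by the $\zeta$-character from Section~\ref{sec:parahorics}; the regular-elliptic assumption then pins down the inertia action on $\mf{g}_\alpha$ precisely enough to force $[\eta]$ to be zero in $\Z/e(E/F_\alpha)\Z$. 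With $\lambda \in O_E^\times$ in hand, $f(X_\alpha) = \zeta\lambda\tau(\lambda) \in O_E^\times \cap F_{\pm\alpha}^\times = O_{F_{\pm\alpha}}^\times$, so $\kappa_\alpha(f(X_\alpha)) = 1$.

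The hardest step will be this final cohomological verification: showing that the tame regular-elliptic structure genuinely annihilates the obstruction class in $\Z/e(E/F_\alpha)\Z$. This requires carefully decoding how the Moy-Prasad depth of $\mf{g}_\alpha$ at $y$ interacts with the $\zeta$-grading of Section~\ref{sec:parahorics} and with the descent datum from $E$ down to $F_\alpha$, and it is exactly where the regular-elliptic hypothesis on the inertial action enters essentially rather than formally.
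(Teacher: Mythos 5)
Your proposal is built around the correct cohomological interpretation and would constitute a genuinely different and in fact shorter route than the paper's proof (which reduces to the quasi-split inner form via Lemma~\ref{lem:torinvstabie}, constructs a specific torus there using Langlands--Shelstad $a$-data and Springer lifts, and then carries out a delicate gauge manipulation). Your strategy of producing an explicit $X_\alpha \in \mf{g}_\alpha(F_\alpha)$ that is a unit for a Chevalley integral model, so that $f(X_\alpha) \in O_{F_{\pm\alpha}}^\times$, is sound, and the formula $f(X_\alpha)=\zeta\lambda\tau(\lambda)$ is correct.

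However, there is a gap at exactly the point you flag: you do not prove that the class $[\eta]$ in $H^1(\Gamma(E/F_\alpha),O_E^\times)\cong\Z/e(E/F_\alpha)\Z$ vanishes, and you predict this will require a careful analysis of the $\zeta$-grading and the Moy--Prasad depth of $\mf{g}_\alpha$. In fact nothing of the sort is needed. The regular-elliptic hypothesis implies that the cyclic group generated by the image of $I$ in $\tx{Aut}(X^*(S))$ acts \emph{freely} on $R(S,G)$: a regular element $\sigma$ has a regular eigenvector $v$ with eigenvalue a primitive root of unity of order $\tx{ord}(\sigma)$, and $\sigma^k\alpha=\alpha$ forces $\alpha(v)=\zeta^{-k}\alpha(v)\neq 0$, hence $\sigma^k=1$. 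Therefore $I_\alpha$ coincides with the kernel of $I\to\tx{Aut}(X^*(S))$, i.e.\ with $I_E$ when $E$ is the splitting field of $S$; that is, $E/F_\alpha$ is \emph{unramified}, so $e(E/F_\alpha)=1$ and the obstruction group is literally the trivial group. (If you insisted on a strictly larger $E$, observe that your cocycle $\eta$ inflates from $\Gamma(K/F_\alpha)$ where $K$ is the splitting field, and $H^1(\Gamma(K/F_\alpha),O_K^\times)=0$.) This is precisely the observation the paper records at the start of the proof of Lemma~\ref{lem:h1}; with it your argument closes up, and does so considerably more directly than the paper's.
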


We will first prove a weaker result which holds under a more general hypothesis.

\begin{lem} \label{lem:torinvstabie} In the situation of Proposition \ref{pro:torinvstab}, assume that the action of $I$ on $X^*(S)$ is elliptic. Then for all inertially asymmetric $\alpha \in R(S,G)_\tx{sym}$, we have
\[ f_{(G',S')}(\xi\alpha) = f_{(G,S)}(\alpha). \]
\end{lem}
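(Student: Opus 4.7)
The plan is to combine the two parts of Proposition~\ref{pro:torinvstab} with a clean decomposition of the Galois orbit of $\alpha$ into inertia orbits.

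First, part~1 of Proposition~\ref{pro:torinvstab} reduces the asserted equality to showing $\kappa_\alpha(\eta_{t,\alpha}) = 1$, where $t \in H^1(F, S_\tx{ad})$ is the class of $\xi^{-1}\sigma(\xi)$. Next, part~2 of the same proposition lets me rewrite this as
\[ \kappa_\alpha(\eta_{t,\alpha}) = (-1)^{\langle \lambda,\, \bar\alpha_+\rangle}, \qquad \bar\alpha_+ \;:=\; \sum_{\sigma \in \Gamma/\Gamma_{\pm\alpha}} \sigma\alpha, \]
with $\lambda$ any Tate--Nakayama preimage of $t$. It therefore suffices to show that $\bar\alpha_+ \in X^*(S_\tx{ad})$ vanishes modulo $2X^*(S_\tx{ad})$ (the expression is in any case only well-defined modulo signs of its terms, so only its class mod $2$ is meaningful).

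The key geometric observation concerns the decomposition of the symmetric $\Gamma$-orbit $\Gamma\alpha$ into $I$-orbits. Since $\alpha$ is \emph{inertially} asymmetric, no $I$-orbit contained in $\Gamma\alpha$ is stable under negation; so these $I$-orbits pair off into disjoint negation pairs $\{I\beta_j,\, -I\beta_j\}_{j=1}^m$. A choice of coset representatives for $\Gamma/\Gamma_{\pm\alpha}$ is the same data as a choice of one root from each $\pm$-pair in $\Gamma\alpha$, and the disjointness $I\beta_j \cap (-I\beta_j) = \emptyset$ lets me take the full union $I\beta_1 \sqcup \cdots \sqcup I\beta_m$ as my set of representatives. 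With this choice,
\[ \bar\alpha_+ \;=\; \sum_{j=1}^m \ \sum_{\beta \in I\beta_j} \beta. \]
Each inner sum is manifestly $I$-invariant (the inertia action permutes the terms of an $I$-orbit). The ellipticity hypothesis on the $I$-action on $X^*(S)$ is precisely the statement that $X^*(S_\tx{ad})^I = 0$ (the lattice is torsion-free, so vanishing of invariants rationally is vanishing outright). Consequently each inner sum vanishes, $\bar\alpha_+ = 0$ in $X^*(S_\tx{ad})$, and the formula gives $\kappa_\alpha(\eta_{t,\alpha}) = 1$.

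I do not foresee a serious obstacle. The only two bookkeeping items worth verifying carefully are: the equivalence of ``$I$ acts elliptically on $X^*(S)$'' with the vanishing $X^*(S_\tx{ad})^I = 0$; and that the inertial-asymmetry condition really does force every $I$-orbit in $\Gamma\alpha$ to be disjoint from its negative (which is immediate from the dichotomy recorded in Section~\ref{sec:torinvdef}). The proof should be short once the choice of coset representatives above is adopted.
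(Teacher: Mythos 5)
Your argument is correct and is essentially the paper's own proof. Both proofs invoke Proposition~\ref{pro:torinvstab} to reduce to showing that $\sum_{\sigma \in \Gamma/\Gamma_{\pm\alpha}}\sigma\alpha$ vanishes modulo $2$ times the root lattice, then exploit the freedom in choosing coset representatives so that the resulting sum breaks up into complete $I$-orbit sums, each of which is $I$-invariant and therefore zero by the ellipticity of the inertia action. The paper phrases the choice of representatives in the language of a gauge on the $\Gamma$-orbit $\mc{O}$ (a gauge constant on each $I$-orbit), which is precisely your union $I\beta_1 \sqcup \cdots \sqcup I\beta_m$; the disjointness of each $I$-orbit from its negative, which you correctly derive from inertial asymmetry, is exactly what makes such a gauge available. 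The two phrasings are interchangeable and the content is identical.
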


\begin{proof}[Proof of Lemma \ref{lem:torinvstabie}] According to Proposition \ref{pro:torinvstab} it is enough to show that for any set of representatives $X \subset \Gamma$ for the quotient $\Gamma / \Gamma_{\pm\alpha}$, we have
\[ \sum_{\sigma \in X} \sigma\alpha  \in 2Q, \]
where $Q$ is the span of $R(S,G)$ in $X^*(S)$. It is clear that this statement is independent of the particular choice of $X$. Moreover, the choice of $X$ is equivalent to the choice of a gauge on the orbit $\mc{O}$ of $\Gamma$ through $\alpha$. By assumption, each $I$-orbit inside of $\mc{O}$ is asymmetric. Thus, we may choose a gauge $p : \mc{O} \rw \{\pm 1\}$ which is constant on each $I$-orbit. Then the above sum takes the form
\[ \sum_{\substack{o \in \mc{O}/I \\ p(o)=+1}} \sum_{\beta \in o} \beta. \]
The inertial ellipticity of $S$ implies that the inner sum is always zero.
\end{proof}

Before proving Proposition \ref{pro:torinvvan}, we need a preparatory lemma.

\begin{lem} \label{lem:h1} Assume that the action of $I$ on $X^*(S)$ is tame and generated by a regular element. Let $\alpha \in R(S,G)_\tx{sym}$ be inertially asymmetric, and let $E$ be the splitting field of $S$. If $X_\alpha \in \mf{g}_\alpha(E)$, then
\[ f(\alpha) = (-1)^{\tx{val}_E(f^\tx{coh}(X_\alpha,\tau))} \]
for any $\tau \in \Gamma_{\pm\alpha} \sm \Gamma_\alpha$.
\end{lem}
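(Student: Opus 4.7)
The plan is to reduce via a scaling argument to the case $X_\alpha \in \mf{g}_\alpha(F_\alpha)$, compute the cocycle explicitly in terms of $d = [X_\alpha,\tau X_\alpha]/H_\alpha$, and then invoke Springer regularity of the inertial action to control ramification of the relevant extension.

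First I would note that $(-1)^{\tx{val}_E(f^\tx{coh}(X_\alpha,\tau))}$ is independent of the choice of non-zero $X_\alpha \in \mf{g}_\alpha(E)$: rescaling $X_\alpha$ by $a \in E^\times$ multiplies $f^\tx{coh}(X_\alpha,\tau)$ by $(a\tau(a))^{-1}$, and since $E/F$ is Galois and $\tau$ preserves $\tx{val}_E$, the exponent changes by the even integer $-2\tx{val}_E(a)$. It therefore suffices to prove the identity for one convenient choice. I would take $X_\alpha^0 \in \mf{g}_\alpha(F_\alpha)$ together with its partner $X_{-\alpha}^0 \in \mf{g}_{-\alpha}(F_\alpha)$ normalized by $[X_\alpha^0,X_{-\alpha}^0]=H_\alpha$. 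Writing $\tau X_\alpha^0 = dX_{-\alpha}^0$ with $d = [X_\alpha^0,\tau X_\alpha^0]/H_\alpha$, the relation $\tau^2 \in \Gamma_\alpha$ (which fixes $X_\alpha^0$) shows $d \in F_{\pm\alpha}^\times$, while applying $\tau$ to $[X_\alpha^0,X_{-\alpha}^0]=H_\alpha$ and using $\tau H_\alpha = -H_\alpha$ yields $\tau X_{-\alpha}^0 = d^{-1}X_\alpha^0$. Hence $f^\tx{coh}(X_\alpha^0,\tau) = d^{-1}$, and by the very definition of the toral invariant $f(\alpha) = \kappa_\alpha(d)$. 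The lemma thus reduces to the identity $\kappa_\alpha(d) = (-1)^{\tx{val}_E(d)}$.

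This is where the hypotheses enter. Inertial asymmetry of $\alpha$ means $F_\alpha/F_{\pm\alpha}$ is unramified of degree two, so $\kappa_\alpha$ is the unramified quadratic character of $F_{\pm\alpha}^\times$, namely $\kappa_\alpha(x) = (-1)^{\tx{val}_{F_{\pm\alpha}}(x)}$. The remaining task is then to show that $\tx{val}_E$ agrees with $\tx{val}_{F_{\pm\alpha}}$ on $F_{\pm\alpha}^\times$, i.e.\ that $E/F_{\pm\alpha}$ is unramified. Let $\sigma_0 \in \Gal(E/F)$ generate the cyclic image of $I$, acting regularly on $X^*(S)$. By the definition of Springer regularity, $\sigma_0$ has an eigenvector $v \in X_*(S)\otimes\C$ with eigenvalue $\zeta$ of exact order $e$ such that $\<\beta,v\> \ne 0$ for every root $\beta$. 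If $\sigma_0^k\alpha = \alpha$, then $\zeta^{-k}\<\alpha,v\> = \<\alpha,v\>$ forces $e \mid k$, so the stabilizer of $\alpha$ in $\<\sigma_0\>$ is trivial. This means the inertia subgroup of $\Gal(E/F_\alpha)$ is trivial and $E/F_\alpha$ is unramified. Combined with the unramifiedness of $F_\alpha/F_{\pm\alpha}$, the extension $E/F_{\pm\alpha}$ is unramified, and the lemma follows. The main obstacle is this ramification-theoretic input, which is the only place where the regularity hypothesis on the inertial action is used.
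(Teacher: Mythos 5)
Your proof is correct and follows essentially the same route as the paper. Both proofs turn on the same two ramification facts: inertial asymmetry gives $F_\alpha/F_{\pm\alpha}$ unramified, and regularity of the inertial action gives $E/F_\alpha$ unramified, so that $\tx{val}_E$ restricts to $\tx{val}_{F_{\pm\alpha}}$ on $F_{\pm\alpha}^\times$ and the unramified quadratic character $\kappa_\alpha$ becomes the parity of $\tx{val}_E$.

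The paper's version of the reduction step works at the level of cohomology: it observes that the assignment $[z]\mapsto(-1)^{\tx{val}_E(z(\tau))}$ on $Z^1(\Gamma_{\pm\alpha}/\Gamma_E,S_\alpha(E))$ is independent of the choice of $\tau$ and of the representative $z$ (this is the last paragraph of the paper's proof), and then checks the formula on cocycles inflated from $Z^1(\Gamma_{\pm\alpha}/\Gamma_\alpha,S_\alpha(F_\alpha))$. Your scaling argument --- that rescaling $X_\alpha$ by $a\in E^\times$ multiplies $f^\tx{coh}(X_\alpha,\tau)$ by $(a\tau(a))^{-1}$, whose $E$-valuation is even --- is the concrete manifestation of that well-definedness and achieves exactly the same reduction to $X_\alpha\in\mf{g}_\alpha(F_\alpha)$, just without invoking the inflation--restriction machinery. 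Your version also makes explicit the Springer-theoretic input (a regular eigenvector with a primitive $e$-th root of unity as eigenvalue forces the stabilizer of any root in $\langle\sigma_0\rangle$ to be trivial), which the paper asserts in one sentence without spelling out. So the proof buys a little more transparency about where regularity is used, at the price of slightly more hands-on bookkeeping with the pair $X_\alpha^0, X_{-\alpha}^0$.
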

\begin{proof}
Since $X_\alpha$ is fixed by $\Gamma_E$, the 1-cocycle $f^\tx{coh}(X_\alpha)$ is the inflation of an element of $Z^1(\Gamma_{\pm\alpha}/\Gamma_E,S_\alpha(E))$. According to Proposition \ref{pro:torinvcoh}, $f(\alpha)$ is the image of $f^\tx{coh}(X_{\alpha})$ under the isomorphism
\[ \xymatrix{
H^1(\Gamma_{\pm\alpha}/\Gamma_\alpha,S_\alpha(F_\alpha))\ar[d]\ar[r]^{\tx{Inf}}&H^1(\Gamma_{\pm\alpha}/\Gamma_E,S_{\alpha_0}(E)) \\ F_{\pm\alpha}^\times/N(F_\alpha^\times)\ar[r]^{\kappa_\alpha}&\{\pm 1\} } \]
We are assuming that $\alpha$ is inertially asymmetric, and hence $F_\alpha/F_{\pm\alpha}$ is an unramified extension. Thus, the composition of the vertical map and the lower horizontal map sends a class $[z]$ to
\[  (-1)^{\tx{val}_{F_\alpha}(z(\tau))}, \]
where $\tau$ is the non-trivial element of $\Gamma_{\pm\alpha}/\Gamma_\alpha$. To prove the lemma, it will be enough to show that in fact the full isomorphism $H^1(\Gamma_{\pm\alpha}/\Gamma_E,S_{\alpha_0}(E)) \rw \{\pm 1\}$ sends a class $[z]$ to
\[ (-1)^{\tx{val}_{E}(z(\tau))}, \]
where $\tau$ is now any element of $\Gamma_{\pm\alpha} \sm \Gamma_\alpha$. The regularity of the $I$-action implies that the extension $E/F_\alpha$ is unramified. Thus, we know that the isomorphism takes the desired form on 1-cocycles inflated from $Z^1(\Gamma_{\pm\alpha}/\Gamma_\alpha,S_\alpha(F_\alpha))$. What we need to show is that the above displayed expression is independent of the choice of $\tau$ and of the representative $z$ within its cohomology class.

To see independence of $\tau$, let $\sigma \in \Gamma_\alpha$. Since $H^1(\Gamma_\alpha/\Gamma_E,S_\alpha(E))$ is trivial, we can choose $c \in S_\alpha(E)=E^\times$ such that $z(\sigma)=c^{-1}{^\sigma c}$. Then
\[ \tx{val}_E(z(\tau\sigma)) = \tx{val}_E(z(\tau){^\tau(c^{-1}\cdot{^\sigma c})}) = \tx{val}_E(z(\tau)). \]
To see independence of the representative $z$, let $c \in E^\times$. Then
\[ \tx{val}_E((c^{-1}\cdot{^{\tau_\alpha}c})z(\tau)) = \tx{val}_E((c\cdot{^\tau c})^{-1}) +\tx{val}_E(z(\tau)) \in \tx{val}_E(z(\tau))+2\Z. \]
\end{proof}

\begin{proof}[Proof of Proposition \ref{pro:torinvvan}]
The first step in the proof will be to replace $S$ by a convenient torus in the quasi-split inner form of $G$. We may assume without loss of generality that $G$ is simply connected. Let $G_0$ be the quasi-split inner form of $G$, and let $(T_0,B_0,\{X_\alpha\})$ be a splitting of $G_0$. Let $\xi : G_0 \rw G$ be an inner twist such that $\xi(T_0)=S$. For $\sigma \in \Gamma$, let $w_S(\sigma) \in \Omega(T_0,G_0)$ be the image of $\xi^{-1}{^\sigma\xi} \in N(T_0,G_0)$. Then $w_S \in Z^1(\Gamma,\Omega(T_0,G_0))$. We are going to choose $a$-data for $R(S,G)$. Recall that $a$-data is a $\Gamma$-equivariant function $a : R(S,G) \rw \ol{F}^\times$ such that $a(-\alpha)=-a(\alpha)$. It is enough to specify $a$ on a set of representatives for $R(S,G)/(\Gamma \times \{\pm 1\})$. We fix such a set, and for $\alpha$ belonging to it, we set $a(\alpha)$ as follows:
\begin{enumerate}
\item If $\alpha$ is asymmetric, we put $a(\alpha)=1$.
\item If $\alpha$ is symmetric but inertially asymmetric, we choose an element of $\tx{Ker}(Tr : k_{F_\alpha} \rw k_{F_{\pm\alpha}})$ and let $a_\alpha$ be its Teichm\"uller representative.
\item If $\alpha$ is inertially symmetric, then we choose a uniformizer $\omega \in F_\alpha$ such that $\omega^2 \in F_{\pm\alpha}$, and put $a(\alpha)=\omega$.
\end{enumerate}
One checks easily that this provides a valid $a$-data for $R(S,G)$. We transport this $a$-data via $\xi$ to $a$-data for $R(T_0,G_0)$ for the action of $\Gamma$ on that set given by $w_S(\sigma)\sigma$. For $w \in \Omega(T_0,G_0)$, let $n(w) \in N(T_0,G_0)$ be the Springer lift of $w_S(\sigma)$, and let $x_S(\sigma) \in T_0$ be given by
\[ x_S(\sigma) = \prod_{\substack{\beta > 0\\ (w_S(\sigma)\sigma)^{-1}\beta < 0 }} \beta^\vee(a(\beta)), \]
where $\beta>0$ means $\beta \in R(T_0,B_0)$.
It is then shown in \cite[\S2.3]{LS87} that
\[ n_S(\sigma) :=  x_S(\sigma)n(w_S(\sigma)) \]
is an element of $Z^1(F,N(T_0,G_0))$. Since $H^1(F,G_0)=1$, we may choose $g \in G_0$ such that $g^{-1}{^\sigma g} = n_S(\sigma)$. Then $S_0:=\tx{Ad}(g)T_0$ is a maximal torus of $G_0$ which one easily checks is defined over $F$. Applying Lemma \ref{lem:torinvstabie} to the inner twist $\xi\circ\tx{Ad}(g^{-1}) : G_0 \rw G$ we see that it is enough to compute $f_{(G_0,S_0)}(\tx{Ad}(g)\xi^{-1}\alpha)$.

To lighten the notation, we now assume that $G=G_0$ and $S=S_0$. Let $\alpha_0 \in R(T_0,G_0)$ be such that $\tx{Ad}(g)\alpha_0=\alpha$. Let $X_{\alpha_0} \in \mf{g}_{\alpha_0}(\ol{F})$ and $X_{-\alpha_0} \in \mf{g}_{\alpha_0}(\ol{F})$ be such that $[X_{\alpha_0},X_{-\alpha_0}]=H_{\alpha_0}$. Then $f^\tx{coh}(\alpha) \in H^1(F_{\pm\alpha},S_\alpha)$ is the class of
\[ f^\tx{coh}(X_{\alpha_0},\sigma) = \frac{n_S(\sigma)\sigma X_{(w_S(\sigma)\sigma)^{-1}\alpha_0}}{X_{\alpha_0}}. \]
We are going to choose $X_{\alpha_0}$ in the following way: Choose $w \in \Omega(T_0,G_0)$ such that $w^{-1}\alpha_0 \in \Delta(T_0,B_0)$ and set $X_{\alpha_0} = n(w)(X_{w^{-1}\alpha_0})$. It is then known \cite[Exp. XXXIII, \S6]{SGA3}, that for all $\sigma \in \Gamma_{\pm\alpha}$,
\[ n(w_S(\sigma))\sigma X_{(w_S(\sigma)\sigma)^{-1}\alpha_0} = \epsilon_\sigma X_{\alpha_0}, \]
with $\epsilon_\sigma \in \{\pm 1\}$. Thus
\[ f^\tx{coh}(X_{\alpha_0},\sigma) = \alpha_0(x_S(\sigma))\epsilon_\sigma. \]
Now let $E$ be the splitting field of $S$. Any $\sigma \in \Gamma_E$ fixes all elements of the pinning $\{X_\alpha\}_{\alpha \in \Delta(T_0,B_0)}$, and hence it also fixes $X_{\alpha_0}$. Moreover, $n_S(\sigma)=1$. It follows that $f^\tx{coh}(X_{\alpha_0}) \in Z^1(\Gamma_{\pm\alpha}/\Gamma_E,S_\alpha(E))$. According to Lemma \ref{lem:h1}, we have
\[ f(\alpha) = (-1)^{\tx{val}_E(f^\tx{coh}(X_{\alpha_0},\tau))} = (-1)^{\tx{val}_E(\alpha_0(x_S(\tau)))}\]
for any $\tau \in \Gamma_{\pm\alpha} \sm \Gamma_\alpha$. Recalling the definition of $x_S(\tau)$, we arrive at the formula
\[ f(\alpha) = (-1)^\zeta,\quad\tx{where}\quad \zeta = \sum_{\substack{\beta > 0\\ (w_S(\tau)\tau)^{-1}\beta < 0 }} \tx{val}_E(a(\beta)) \<\alpha_0,\beta^\vee\>. \]
Recall that $\beta>0$ means $\beta \in R(T_0,B_0)$. From now on, it will be more convenient in terms of notation to sum over roots for $S$ instead. Thus, let
$B:=\tx{Ad}(g)B_0$. Then $B$ is a Borel subgroup of $G$ (not necessarily defined over $F$) containing $S$, and we have
\[ \zeta = \sum_{\substack{\beta > 0\\ \tau^{-1}\beta < 0 }} \tx{val}_E(a(\beta)) \<\alpha,\beta^\vee\>, \]
where now $\beta>0$ means $\beta \in R(S,B)$. According to our choice of $a$-data, we have
\[ \tx{val}_E(a(\beta)) = \begin{cases} 1&,I\beta=-I\beta\\ 0&,\tx{else} \end{cases}. \]
Thus we obtain
\[ \zeta = \sum_{\substack{\beta > 0\\ \tau^{-1}\beta < 0\\I\beta=-I\beta }} \<\alpha,\beta^\vee\>. \]

\begin{lem} For any gauge $p: R(S,G) \rw \{\pm 1\}$, let
\[ \zeta_p = \sum_{\substack{p(\beta) =+1\\ p(\tau^{-1}\beta)=-1\\I\beta=-I\beta }} \<\alpha,\beta^\vee\>. \]
Then for any two gauges $p,q$, we have $(-1)^{\zeta_p} = (-1)^{\zeta_q}$.
\end{lem}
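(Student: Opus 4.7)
The plan is to invoke the fact, recalled in Section \ref{sec:torinvdef}, that any two gauges on $R(S,G)$ can be linked by a chain of gauges, each differing from its successor in exactly one pair $\{\gamma,-\gamma\}$. Thus it suffices to show $\zeta_p \equiv \zeta_q \pmod 2$ under the assumption that $q(\pm\gamma)=-p(\pm\gamma)$ and $q$ agrees with $p$ on all other roots. The conditions defining the sum are local at $\beta$ and $\tau^{-1}\beta$, so only indices $\beta \in \{\pm\gamma,\pm\tau\gamma\}$ can contribute to $\zeta_p-\zeta_q$.

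If $\gamma$ is not inertially symmetric then, since $\tau$ normalizes $I$, none of $\pm\gamma,\pm\tau\gamma$ satisfies $I\beta=-I\beta$, so these roots are absent from both sums and $\zeta_p=\zeta_q$. We may therefore assume $\gamma$ is inertially symmetric. Introduce the indicator $\chi_p(\beta)\in\{0,1\}$ of $\{p(\beta)=+1\}$ so that
\[ \zeta_p=\sum_{\beta:\,I\beta=-I\beta}\chi_p(\beta)\bigl(1-\chi_p(\tau^{-1}\beta)\bigr)\<\alpha,\beta^\vee\>, \]
and use $\chi_p(-\beta)=1-\chi_p(\beta)$ together with $\<\alpha,(-\beta)^\vee\>=-\<\alpha,\beta^\vee\>$ to collapse the four terms.

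In the generic subcase $\tau\gamma\notin\{\pm\gamma\}$, set $a=\chi_p(\gamma)$, $b=\chi_p(\tau^{-1}\gamma)$, $c=\chi_p(\tau\gamma)$; a short calculation gives the contribution of the four roots to $\zeta_p$ as $(a-b)\<\alpha,\gamma^\vee\>+(c-a)\<\alpha,\tau\gamma^\vee\>$. Flipping $a\mapsto 1-a$ yields
\[ \zeta_p-\zeta_q=(2a-1)\bigl(\<\alpha,\gamma^\vee\>-\<\alpha,\tau\gamma^\vee\>\bigr). \]
Because $\tau\in\Gamma_{\pm\alpha}\setminus\Gamma_\alpha$ acts on $\alpha$ by $-1$, we have $\<\alpha,\tau\gamma^\vee\>=\<\tau^{-1}\alpha,\gamma^\vee\>=-\<\alpha,\gamma^\vee\>$, so the bracket equals $2\<\alpha,\gamma^\vee\>$ and $\zeta_p-\zeta_q$ is even. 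The degenerate subcase $\tau\gamma=\gamma$ is trivial: each of the two surviving terms equals $a(1-a)\<\alpha,\gamma^\vee\>=0$. In the remaining subcase $\tau\gamma=-\gamma$, the two surviving terms sum to $(2a-1)\<\alpha,\gamma^\vee\>$, giving $\zeta_p-\zeta_q=(4a-2)\<\alpha,\gamma^\vee\>$, which is again even.

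The only real obstacle is bookkeeping: enumerating the three ways $\tau$ may act on the flipped pair $\{\gamma,-\gamma\}$ and checking parity in each. Once the generic subcase is handled, the two degenerate subcases reduce to one-line computations, completing the argument.
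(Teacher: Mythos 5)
Your proof is correct and follows essentially the same strategy as the paper's: reduce via the chain-of-gauges fact to a single flipped pair $\{\gamma,-\gamma\}$, observe that only the indices $\beta\in\{\pm\gamma,\pm\tau\gamma\}$ can be affected, and then do casework on how $\tau$ acts on $\gamma$, using $\tau^{-1}\alpha=-\alpha$ at the decisive moment. The only cosmetic difference is that you work additively with $\{0,1\}$-valued indicator functions (and treat $\tau\gamma=\gamma$ and $\tau\gamma=-\gamma$ as separate degenerate subcases), whereas the paper stays multiplicative with $\{\pm 1\}$-valued factors and handles the two degenerate subcases simultaneously via $p(\gamma)p(\tau^{-1}\gamma)=q(\gamma)q(\tau^{-1}\gamma)$.
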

\begin{proof}
We have
\begin{eqnarray*}
(-1)^{\zeta_p}&=&\prod_{\substack{p(\beta) =+1\\ p(\tau^{-1}\beta)=-1\\I\beta=-I\beta }} (-1)^{\<\alpha,\beta^\vee\>} \\
&=&\prod_{\substack{p(\beta) =+1\\ I\beta=-I\beta }} \Big(p(\beta)p(\tau^{-1}\beta)\Big)^{\<\alpha,\beta^\vee\>}
\end{eqnarray*}
We notice that each factor in the above product remains unchanged if we replace $\beta$ by $-\beta$. This allows us to rewrite the product as
\[ \prod_{\substack{\{\beta,-\beta\} \in R(S,G)/\{\pm 1\}\\ I\beta=-I\beta }} \Big(p(\beta)p(\tau^{-1}\beta)\Big)^{\<\alpha,\beta^\vee\>}.  \]
As already remarked, we may assume without loss of generality that there exists $\gamma \in R(S,G)$ such that if $p(\beta) \neq q(\beta)$ then $\beta \in \{\gamma,-\gamma\}$. If $I\gamma \neq -I\gamma$, then $\zeta_p=\zeta_q$, so assume $I\gamma=-I\gamma$. Each pair of roots $\{\beta,-\beta\}$ provides the same contribution to $(-1)^{\zeta_p}$ and $(-1)^{\zeta_q}$, except possibly the pairs $\{\gamma,-\gamma\}$ and $\tau\{\gamma,-\gamma\}$. These pairs can either be distinct, or equal. If they are equal, that is, if $\tau\gamma = \pm\gamma$, then $p(\gamma)p(\tau^{-1}\gamma) = q(\gamma)q(\tau^{-1}\gamma)$ and thus $(-1)^{\zeta_p}=(-1)^{\zeta_q}$. If on the other hand the pairs $\{\gamma,-\gamma\}$ and $\tau\{\gamma,-\gamma\}$ are distinct, then $(-1)^{\zeta_p}(-1)^{\zeta_q}$ is equal to

\[ \left(\frac{p(\gamma)p(\tau^{-1}\gamma)}{q(\gamma)q(\tau^{-1}\gamma)}\right)^{\<\alpha,\gamma\>}
\left(\frac{p(\tau\gamma)p(\gamma)}{q(\tau\gamma)q(\gamma)}\right)^{\<\alpha,\tau\gamma\>}. \]
The two exponents are negatives of each other, because $\tau^{-1}\alpha=-\alpha$, and hence above product is equal to
\[ \left(\frac{p(\tau\gamma)p(\tau^{-1}\gamma)}{q(\tau\gamma)q(\tau^{-1}\gamma)}\right)^{\<\alpha,\gamma\>}. \]
Our assumption that the pairs $\{\gamma,-\gamma\}$ and $\tau\{\gamma,-\gamma\}$ are distinct now implies that neither of $\tau\gamma$ and $\tau^{-1}\gamma$ can be equal to $\pm\gamma$, and thus the fraction reduces to $1$.
\end{proof}

Returning to the proof of Proposition \ref{pro:torinvvan}, the Lemma we just proved implies that
\[ f(\alpha) = (-1)^{\zeta_p} \]
for any gauge $p : R(S,G) \rw \{\pm 1\}$. We now choose $p$ as follows. First consider the subset of $R(S,G)$ given by
\[ \{ \beta \in R(S,G)| \<\alpha,\beta^\vee\> \neq 0 \wedge I\beta=-I\beta \}. \]
On this subset, we set
\[ p(\beta) = \tx{sgn}\<\alpha,\beta^\vee\>, \]
while outside of this subset, we choose $p$ arbitrarily. For this gauge we have
\begin{eqnarray*}
p(\tau^{-1}\beta)&=&\tx{sgn}\<\alpha,\tau^{-1}\beta^\vee\>\\
&=&\tx{sgn}\<\tau\alpha,\beta^\vee\>\\
&=&\tx{sgn}\<-\alpha,\beta^\vee\> = -p(\beta).
\end{eqnarray*}
Hence
\[ f(\alpha) = (-1)^{\zeta_p} = \prod_{\substack{\{\beta,-\beta\} \in R(S,G)/\{\pm 1\}\\ I\beta=-I\beta }} (-1)^{\<\alpha,\beta^\vee\>} =
\prod_{\substack{\beta>0\\ I\beta=-I\beta }} (-1)^{\<\alpha,\beta^\vee\>}.  \]
According to \cite[Ch. VI, \S1, no. 10, Prop. 29]{Bou}, we have $\sum_{\beta>0} \beta^\vee = 2\rho^\vee$ with $\<\alpha,2\rho^\vee\> \in 2\Z$, thus
\[ f(\alpha) = \prod_{\substack{\beta>0\\ I\beta \neq -I\beta }} (-1)^{\<\alpha,\beta^\vee\>}.  \]
One checks immediately that the above product is equal to
\[ \prod_{\substack{p(\beta)=+1\\ I\beta \neq -I\beta }} (-1)^{\<\alpha,\beta^\vee\>} = (-1)^{\left\<\alpha,\sum\limits_{\substack{p(\beta)=+1\\ I\beta \neq -I\beta }} \beta^\vee\right\>} \]
for any gauge $p: R(S,G) \rw \{\pm 1\}$. We now choose a gauge $p$ which is constant on each asymmetric $I$-orbit in $R(S,G)$. Then we have
\[ \sum_{\substack{p(\beta)=+1\\ I\beta \neq -I\beta }} \beta^\vee = \sum_{\substack{o \in R(S,G)/I \\ p(o)=+1\\ o \neq -o }}\ \ \sum_{\beta \in o} \beta^\vee. \]
The ellipticity of the inertial action now implies that the inner sum is zero for each $o$.
\end{proof}

\subsection{Relation to the Weil constant} \label{sec:torinvweil}

We continue to assume that $F$ is a non-archimedean local field. Consider the root space decomposition of the Lie algebra of $G$
\[ \mf{g} = \mf{s} \oplus \mf{n},\qquad \mf{n} = \bigoplus_{\alpha \in R(S,G)} \mf{g}_\alpha. \]
Let $B$ be an $N(S,G)(F)$-invariant symmetric bilinear form on $\mf{n}(F)$, and $\psi : F \rw \C^\times$ a non-trivial additive character. Then we can consider the Weil-constant $\gamma_\psi(\mf{n}(F),B)$ \cite[\S VIII]{Wal95}.

\begin{lem} \label{lem:torinvweil} We have the equality
\[ \gamma_\psi(\mf{n}(F),B) = \prod_{\alpha \in R(S,G)_\tx{sym}/\Gamma}f_{(G,S)}(\alpha)\kappa_\alpha(B_\alpha)\lambda_{F_\alpha/F_{\pm\alpha}}(\psi\circ\tx{tr}_{F_\alpha/F_{\pm\alpha}}). \]
Here $\lambda$ is the Langlands $\lambda$-function \cite[Thm. 2.1]{LanArt} (see also \cite[\S1.5]{BH05b}), and $B_\alpha \in F_{\pm\alpha}^\times$ is the number $B(X_\alpha,Y_\alpha)$ where $X_\alpha \in \mf{g}_\alpha(F_\alpha)$ and $Y_\alpha \in \mf{g}_{-\alpha}(F_\alpha)$ are any elements with $[X_\alpha,Y_\alpha]=H_\alpha$.
\end{lem}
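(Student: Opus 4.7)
The plan is to decompose $\mf{n}(F)$ into $B$-orthogonal summands indexed by pairs of $\Gamma$-orbits on $R(S,G)$, and to compute the Weil constant of each summand via restriction-of-scalars for Weil constants together with the defining identity for Langlands' $\lambda$-function.

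First, $B$ is $N(S,G)(F)$-invariant, and since $S(F)$ is Zariski-dense in $S$, the form $B$ extends to an $S$-invariant form over $\ol{F}$. Hence $B(\mf{g}_\alpha, \mf{g}_\beta) = 0$ unless $\alpha+\beta = 0$, so
\[ \mf{n}(F) = \bigoplus_{\{\mc{O},-\mc{O}\}} V_{\mc{O}\cup-\mc{O}}, \qquad V_{\mc{O}\cup-\mc{O}} := \bigoplus_{\beta \in \mc{O}\cup-\mc{O}} \mf{g}_\beta, \]
is an orthogonal decomposition indexed by unordered pairs of $\Gamma$-orbits (symmetric orbits being self-paired). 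By multiplicativity of $\gamma_\psi$ on orthogonal sums, it suffices to compute the contribution from each pair.

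For an asymmetric pair with representative $\alpha$, $V_{\mc{O}\cup-\mc{O}}$ identifies $F$-linearly with $\tx{Res}_{F_\alpha/F}(\mf{g}_\alpha(F_\alpha)\oplus\mf{g}_{-\alpha}(F_\alpha))$, carrying the trace to $F$ of the natural $F_\alpha$-bilinear form; in the basis $\{X_\alpha,Y_\alpha\}$ (with $[X_\alpha,Y_\alpha]=H_\alpha$) this $F_\alpha$-form is the hyperbolic plane scaled by $B_\alpha$. Via the restriction identity
\[ \gamma_\psi(\tx{Res}_{E/F} V, \tx{tr}_{E/F} \circ B_E) = \lambda_{E/F}(\psi)^{\dim_E V}\,\gamma_{\psi\circ\tx{tr}_{E/F}}(V, B_E) \]
and the evaluation of the Weil constant of a hyperbolic plane, the asymmetric contributions reduce to standard sign factors that will cancel out at the end. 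For a symmetric orbit with representative $\alpha$, realize $V_\mc{O} \cong \tx{Res}_{F_{\pm\alpha}/F} W_\alpha$, where $W_\alpha := (\mf{g}_\alpha\oplus\mf{g}_{-\alpha})^{\Gamma_{\pm\alpha}} = \{(X,\tau X) : X \in \mf{g}_\alpha(F_\alpha)\}$ is a 2-dimensional $F_{\pm\alpha}$-space. Parametrizing $W_\alpha$ by $c\in F_\alpha$ via $c \mapsto (cX_\alpha, \tau(cX_\alpha))$ and using the defining identity $\tau X_\alpha = f(X_\alpha) Y_\alpha$, a short direct computation yields
\[ B|_{W_\alpha}(c,c') = f(X_\alpha) B_\alpha \cdot \tx{tr}_{F_\alpha/F_{\pm\alpha}}(c \tau c'). \]

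The final step is to assemble the symmetric contribution. Apply restriction-of-scalars to $V_\mc{O}\cong\tx{Res}_{F_{\pm\alpha}/F}W_\alpha$; the inner Weil constant is that of a 2-dimensional form over $F_{\pm\alpha}$, proportional by $f(X_\alpha)B_\alpha$ to the twisted trace form. Diagonalizing this form over $F_{\pm\alpha}$ and using the product rule $\gamma_\psi(a)\gamma_\psi(b)=(a,b)\,\gamma_\psi(ab)$, the scaling by $f(X_\alpha)B_\alpha$ contributes precisely the Hilbert symbol $(f(X_\alpha)B_\alpha, d)_{F_{\pm\alpha}}$, where $d$ is a discriminant element for $F_\alpha/F_{\pm\alpha}$; this factors as $\kappa_\alpha(f(X_\alpha))\kappa_\alpha(B_\alpha) = f(\alpha)\kappa_\alpha(B_\alpha)$. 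The residual Weil constant of the (unscaled) twisted trace form is matched to $\lambda_{F_\alpha/F_{\pm\alpha}}(\psi\circ\tx{tr}_{F_{\pm\alpha}/F})$ via the defining property of $\lambda$; the tower relation
\[ \lambda_{F_\alpha/F}(\psi) = \lambda_{F_\alpha/F_{\pm\alpha}}(\psi\circ\tx{tr}_{F_{\pm\alpha}/F}) \cdot \lambda_{F_{\pm\alpha}/F}(\psi)^2 \]
then absorbs the ``extra'' $\lambda_{F_{\pm\alpha}/F}(\psi)^2$ coming from the restriction for symmetric orbits against the $\lambda_{F_\alpha/F}(\psi)^2$ coming from asymmetric orbits.

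The main obstacle is the careful bookkeeping of the auxiliary sign factors (Hilbert symbols, values of $\gamma_\psi(\pm 1)$, and pairs of $\lambda$-factors) generated along the way: these must telescope cleanly across the entire orbit decomposition to leave only the stated product over symmetric-orbit representatives.
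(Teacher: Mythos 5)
Your overall skeleton — orthogonal decomposition of $\mf{n}(F)$ indexed by orbits, asymmetric contributions disappearing, symmetric orbits giving a scaled norm form over $F_{\pm\alpha}$ which yields a Hilbert symbol plus a $\lambda$-factor — matches the paper's strategy. The reduction of $B$ on $W_\alpha$ to the scaled twisted trace form $f(X_\alpha)B_\alpha\cdot\tx{tr}_{F_\alpha/F_{\pm\alpha}}(c\tau c')$ and the extraction of $f(\alpha)\kappa_\alpha(B_\alpha)$ as a Hilbert symbol are also correct and in the spirit of the paper.

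However, there is a genuine gap in your restriction-of-scalars identity
\[
\gamma_\psi\bigl(\tx{Res}_{E/F} V,\ \tx{tr}_{E/F}\circ B_E\bigr) \;=\; \lambda_{E/F}(\psi)^{\dim_E V}\,\gamma_{\psi\circ\tx{tr}_{E/F}}\bigl(V, B_E\bigr).
\]
This looks like Deligne's inductivity formula for $\epsilon$-factors, but it is not correct for Weil indices. The Weil constant $\gamma_\psi(V,Q)$ is an invariant that depends only on the additive group of $V$ and the function $v\mapsto\psi(Q(v))$ (any auxiliary Haar measure drops out in the ratio defining $\gamma_\psi$). If $Q=\tx{tr}_{E/F}\circ Q_E$ and $\psi_E=\psi\circ\tx{tr}_{E/F}$, then $\psi\circ Q = \psi_E\circ Q_E$ as functions on the identical additive group, so in fact
\[
\gamma_\psi\bigl(\tx{Res}_{E/F}V,\ \tx{tr}_{E/F}\circ Q_E\bigr) = \gamma_{\psi_E}\bigl(V, Q_E\bigr)
\]
with no $\lambda$-factor at all. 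This is precisely the identity the paper uses, silently, when it replaces $\gamma_\psi(\mf{n}_\mc{O}(F),B)$ by $\gamma_{\tilde\psi}(F_\alpha,(x,y)\mapsto x\tau y+y\tau x)$ after absorbing $\tx{tr}_{F_{\pm\alpha}/F}$ and the scalar $B(X_\alpha,\tau X_\alpha)$ into the character $\tilde\psi$. The $\lambda$-factor enters only later — as the explicit value of the Weil index of the unscaled norm form, via the calculation of \cite[Lemma 1.2]{JL70} — not as a byproduct of restriction of scalars.

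Because of the spurious $\lambda^{\dim}$ factor, your proposal generates an extra $\lambda_{F_\alpha/F}(\psi)^2$ for every asymmetric pair and an extra $\lambda_{F_{\pm\alpha}/F}(\psi)^2$ for every symmetric orbit, and you then appeal to a ``telescoping'' via the tower relation. But this telescoping has no basis: the asymmetric pairs and symmetric orbits index disjoint collections of root orbits, with unrelated fields $F_\alpha$ and $F_{\pm\alpha}$, so there is no mechanism whereby one set of $\lambda$-squares cancels the other. Once the restriction-of-scalars step is corrected — no $\lambda$-factor — both of these spurious contributions disappear simultaneously: asymmetric pairs become hyperbolic over $F$ (equivalently over $F_\alpha$) and contribute $1$ outright, and the symmetric contribution reduces directly to $f(\alpha)\kappa_\alpha(B_\alpha)\lambda_{F_\alpha/F_{\pm\alpha}}(\psi\circ\tx{tr}_{F_{\pm\alpha}/F})$, as in the paper. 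So the fix is to delete the $\lambda^{\dim_E V}$ factor and the entire ``telescoping'' paragraph, after which your argument lines up with the paper's.
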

\begin{proof}
We have a decomposition of vector spaces defined over $F$
\[ \mf{n} = \bigoplus_{ \mc{O} \in R(S,G)/\Gamma} \mf{n}_\mc{O},\qquad\tx{where}\qquad \mf{n}_\mc{O} = \bigoplus_{\alpha \in \mc{O}} \mf{g}_\alpha. \]
Hence
\[ \gamma_\psi(\mf{n}(F),B) = \prod_{\mc{O} \in R(S,G)/\Gamma} \gamma_\psi(\mf{n}_\mc{O}(F),B). \]
If $\mc{O}$ is asymmetric, the space $\mf{n}_\mc{O}(F)$ is $B$-isotropic, and thus $\gamma_\psi(\mf{n}_\mc{O}(F),B)=1$. If $\mc{O}$ is symmetric, then choosing a root $\alpha \in \mc{O}$ and an element $X_\alpha \in \mf{g}_\alpha(F_\alpha)$ we obtain an isomorphism of $F$-vector spaces
\[ F_\alpha \rw \mf{n}_\mc{O}(F),\qquad x \mapsto \sum_{\sigma \in \Gamma/\Gamma_\alpha} \sigma(xX_\alpha). \]
The pull-back of $B$ to $F_\alpha$ under this isomorphism is equal to
\[ (x,y) \mapsto \tx{tr}_{F_{\pm\alpha}/F}[(x\tau(y)+y\tau(x))B(X_\alpha,\tau X_\alpha)], \]
where $\tau \in \Gamma_{\pm\alpha} \sm \Gamma_\alpha$, and hence
\[ \gamma_\psi(\mf{n}_\mc{O}(F),B) = \gamma_{\tilde\psi}(F_\alpha,(x,y) \mapsto (x\tau(y)+y\tau(x))), \]
where $\tilde\psi : F_{\pm\alpha} \rw \C^\times$ is the character $x \mapsto \psi(\tx{tr}_{F_{\pm\alpha}/F}(B(X_\alpha,\tau X_\alpha)\cdot x))$. The calculation in the proof of \cite[Lemma 1.2]{JL70} shows then that
\[ \gamma_\psi(\mf{n}_\mc{O}(F),B) = \lambda_{F_\alpha/F_{\pm\alpha}}(\tilde\psi). \]
To complete the proof, we note that $B(X_\alpha,\tau X_\alpha) = f_{(G,S)}(\alpha) B_\alpha$ and then use the properties of the Langlands constant.
\end{proof}

\begin{cor} \label{cor:weilinner} Assume that the maximal torus $S$ is elliptic. Let $\xi : G \rw G'$ be an inner twist and $S' \subset G'$ a maximal torus. Assume that $\xi$ restricts to an isomorphism $S \rw S'$ defined over $F$. Let $B$ be a symmetric $\tx{Ad}$-invariant non-degenerate bilinear forms on $\mf{g}(F)$, and let $B'$ be its transfer to $\mf{g'}(F)$ as in \cite{Wal95}. Then
\[ \gamma_\psi(\mf{g}(F),B)\gamma_\psi(\mf{g'}(F),B') = e(G)e(G'). \]
\end{cor}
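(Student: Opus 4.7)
The plan is to apply Lemma \ref{lem:torinvweil} to both pairs $(G,S,B)$ and $(G',S',B')$ and to combine the results using Proposition \ref{pro:torinvstab}(3), while handling the torus contribution separately.

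First I would decompose orthogonally $\mf{g} = \mf{s} \oplus \mf{n}$ with respect to $B$ and $\mf{g}' = \mf{s}' \oplus \mf{n}'$ with respect to $B'$; both decompositions are defined over $F$. Since $\xi$ restricts to an $F$-rational isomorphism $S \to S'$ and the transfer $B'$ is by construction the $\xi$-pullback of $B$, the pairs $(\mf{s}(F), B|_{\mf{s}})$ and $(\mf{s}'(F), B'|_{\mf{s}'})$ are $F$-isometric, so their Weil constants coincide; denote this common value by $\gamma_S$. This gives
\[ \gamma_\psi(\mf{g}(F),B)\,\gamma_\psi(\mf{g}'(F),B') \;=\; \gamma_S^2 \cdot \gamma_\psi(\mf{n}(F),B|_{\mf{n}})\,\gamma_\psi(\mf{n}'(F),B'|_{\mf{n}'}). \]

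Next I would apply Lemma \ref{lem:torinvweil} to both nilpotent parts. The $F$-isomorphism $\xi|_S$ provides a $\Gamma$-equivariant bijection between $R(S,G)$ and $R(S',G')$ preserving symmetry type; in particular the fields $F_\alpha, F_{\pm\alpha}$ and the characters $\psi\circ\tx{tr}_{F_\alpha/F_{\pm\alpha}}$ are the same for corresponding roots. A short computation shows $B_\alpha = B'_{\xi\alpha}$ in $F_{\pm\alpha}^\times$: starting from $X_\alpha \in \mf{g}_\alpha(F_\alpha)$ and $Y_\alpha \in \mf{g}_{-\alpha}(F_\alpha)$ with $[X_\alpha,Y_\alpha]=H_\alpha$, one rescales $\xi(X_\alpha)$ and $\xi(Y_\alpha)$ by mutually inverse scalars to achieve $F_\alpha$-rationality, and those scalars cancel in the form pairing. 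Hence $\kappa_\alpha(B_\alpha) = \kappa_{\xi\alpha}(B'_{\xi\alpha})$. Multiplying the two instances of Lemma \ref{lem:torinvweil}, the $\kappa$-factors square to $1$, the $\lambda$-factors appear squared, and by Proposition \ref{pro:torinvstab}(3) together with $f \in \{\pm 1\}$ the toral invariant factors combine as
\[ \prod_{\alpha} f_{(G,S)}(\alpha)\,f_{(G',S')}(\xi\alpha) \;=\; e(G)e(G')\prod_\alpha f_{(G,S)}(\alpha)^2 \;=\; e(G)e(G'). \]
The identity then reduces to the residual claim
\[ \gamma_S^2 \cdot \prod_{\alpha \in R(S,G)_{\tx{sym}}/\Gamma} \lambda_{F_\alpha/F_{\pm\alpha}}(\psi\circ\tx{tr})^2 \;=\; 1. \]

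The main obstacle is this last identity, which is a statement purely about the intrinsic data $(S, B|_{\mf{s}}, \psi)$ and the root structure of $(G,S)$, independent of the inner form. Using the classical formula $\lambda_{K/F}(\psi)^2 = \kappa_{K/F}(-1)$ for a quadratic extension, it reduces further to $\gamma_\psi(\mf{s}(F), B|_{\mf{s}})^2 = \prod_\alpha \kappa_\alpha(-1)$. I would prove this via a discriminant computation in $F^\times/(F^\times)^2$: the $\tx{Ad}$-invariance of $B$ forces $B|_{\mf{s}}$ to be determined by its pairings on root spaces via the coroot basis of $\mf{s}$, and the ellipticity of $S$ rigidifies the Galois structure sufficiently that the discriminant of $B|_{\mf{s}}$ matches, up to the universal factor $\gamma_\psi(1)^{2\dim S}$, the product of the local symbols $\kappa_\alpha(-1)$ over symmetric orbits.
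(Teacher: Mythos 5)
Your reduction is set up correctly: the orthogonal splitting $\mf{g}=\mf{s}\oplus\mf{n}$, the observation that the torus contributions $\gamma_S$ agree for $G$ and $G'$ because $\xi|_S$ is an $F$-isomorphism and $B'$ is the $\xi$-pushforward of $B$, and the rescaling argument showing $B_\alpha = B'_{\xi\alpha}$ are all sound; the combination with Lemma \ref{lem:torinvweil} and Proposition \ref{pro:torinvstab}(3) is the right machinery. Where you part ways with the paper is that the paper's ``follows at once'' phrase is best read as proving the \emph{ratio}
\[
\frac{\gamma_\psi(\mf{g'}(F),B')}{\gamma_\psi(\mf{g}(F),B)} = e(G)e(G'),
\]
in which the torus contributions, the $\kappa_\alpha$-factors and the $\lambda_{F_\alpha/F_{\pm\alpha}}$-factors all cancel on the nose, leaving only $\prod f'/f = e(G)e(G')$ from Proposition \ref{pro:torinvstab}(3); this really is a one-line consequence, and it is the ratio form that is used everywhere in \S\ref{sec:stab} and \S\ref{sec:endotran}. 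By taking the \emph{product} instead, you pick up the extra factor $\gamma_S^2\prod_\alpha\lambda^2$, which you must then dispose of.

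That extra identity
\[
\gamma_\psi(\mf{s}(F),B|_{\mf{s}})^2\cdot\prod_{\alpha\in R(S,G)_\tx{sym}/\Gamma}\lambda_{F_\alpha/F_{\pm\alpha}}(\psi\circ\tx{tr})^2 = 1
\]
is precisely the assertion $\gamma_\psi(\mf{g}(F),B)^2=1$ (substitute back $\gamma_\psi(\mf{g}(F),B)=\gamma_S\cdot\gamma_\psi(\mf{n}(F),B|_{\mf{n}})$ and use that $f,\kappa\in\{\pm1\}$). This is a genuine nontrivial fact about the discriminant of $\tx{Ad}$-invariant forms and the Galois module structure of $\mf{s}$, and your final paragraph does not prove it — it only sketches the shape of a discriminant computation (``I would prove this via$\ldots$'', ``the ellipticity of $S$ rigidifies the Galois structure sufficiently$\ldots$''). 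In particular the argument must track the universal factor $\gamma_\psi(1)^{2\dim S}$ (which is not $1$ for a general $\psi$), and must identify the class of $\tx{disc}(B|_{\mf{s}})$ in $F^\times/(F^\times)^2$ with $\prod_\alpha\kappa_\alpha(-1)$; neither step is carried out. So while the statement you reduce to is correct, and the approach you outline is plausible, as written there is a gap: the proof hinges on an identity you assert but do not establish, and which does \emph{not} ``follow at once'' from Lemma \ref{lem:torinvweil} and Proposition \ref{pro:torinvstab} the way the ratio form does.
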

\begin{proof} This follows at once from Lemma \ref{lem:torinvweil} and Proposition \ref{pro:torinvstab}. \end{proof}

\subsection{A result of Kottwitz on $\epsilon$-factors}

This is a convenient place to review a recent result of Kottwitz on the relationship between Weil-constants and epsilon factors. In this subsection only, we assume that $F$ is any non-archimedean local field, without assumptions on its characteristic or its residual characteristic, and denote by $\ol{F}$ a fixed separable closure of $F$. On the space $\mf{n}$ introduced in the last section one can define a canonical $N(S,G)$-invariant $\ol{F}$-valued quadratic form. Namely, on each plane $\mf{g}_\alpha \oplus \mf{g}_{-\alpha}$ it is defined by the rule
\[ (X,Y) \mapsto \frac{[X,Y]}{H_\alpha}. \]
This assignment is unchanged if we replace $\alpha$ by $-\alpha$, and on the space $\mf{n}$, which is the direct sum of these hyperplanes for all $\{\alpha,-\alpha\} \in R(S,G)/\{\pm 1\}$, we take the sum of the individual forms defined above. One sees easily that this quadratic form is $\Gamma$-equivariant, and hence defines an $F$-valued quadratic form on $\mf{n}(F)$. We will call this form $\tx{can}$. Kottwitz's result then is the following.

\begin{thm} Let $T$ be a minimal Levi in the quasi-split inner form of $G$. Then
\[ \epsilon_L(X^*(S)_\C-X^*(T)_\C,\psi) = e(G)\gamma_\psi(\mf{n}(F),\tx{can}), \]
where $\epsilon_L$ is the Langlands normalization \cite[\S3.6]{Tat77} of the $\epsilon$-factor of the degree-0 virtual $\Gamma$-representation $X^*(S)_\C-X^*(T)_\C$.
\end{thm}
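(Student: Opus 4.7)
The plan is a two-stage reduction followed by an orbit-by-orbit comparison.

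\textbf{Reduction to the quasi-split inner form.} Transfer $S$ stably to an $F$-torus $S_\tx{qs}$ in the quasi-split inner form $G_\tx{qs}$ of $G$. The $\Gamma$-module $X^*(S)$ is isomorphic to $X^*(S_\tx{qs})$, so the left-hand side is unchanged when $(G,S)$ is replaced by $(G_\tx{qs},S_\tx{qs})$. For the right-hand side, Lemma \ref{lem:torinvweil} applied with $B = \tx{can}$ (so $B_\alpha = 1$, hence $\kappa_\alpha(B_\alpha)=1$) gives
\[ \gamma_\psi(\mf{n}(F),\tx{can}) = \prod_{\mc{O} \in R(S,G)_\tx{sym}/\Gamma} f_{(G,S)}(\alpha_\mc{O})\,\lambda_{F_{\alpha_\mc{O}}/F_{\pm\alpha_\mc{O}}}(\psi \circ \tx{tr}), \]
while Proposition \ref{pro:torinvstab}(3) yields $\prod_\mc{O} f_{(G,S)}(\alpha_\mc{O}) = e(G)\prod_\mc{O} f_{(G_\tx{qs},S_\tx{qs})}(\alpha_\mc{O})$. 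Combining these and using $e(G)^2 = 1 = e(G_\tx{qs})$ shows $e(G)\gamma_\psi(\mf{n}(F),\tx{can}) = \gamma_\psi(\mf{n}_\tx{qs}(F),\tx{can}_\tx{qs})$, so it suffices to treat the quasi-split case.

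\textbf{Orbit decomposition in the quasi-split case.} With $G$ quasi-split ($e(G)=1$), the goal becomes the identity $\epsilon_L(V,\psi) = \prod_\mc{O} f_{(G,S)}(\alpha_\mc{O})\lambda_{F_{\alpha_\mc{O}}/F_{\pm\alpha_\mc{O}}}(\psi \circ \tx{tr})$ for $V := X^*(S)_\C - X^*(T)_\C$. Fix $g \in G(\ol{F})$ with $gTg^{-1}=S$ and let $w_S(\sigma) = g^{-1}\sigma(g) \bmod T$ be the associated Weyl cocycle; via $g$, the character of $V$ at $\sigma$ becomes $\tx{tr}(w_S(\sigma)\sigma \mid X^*(T)_\C) - \tx{tr}(\sigma \mid X^*(T)_\C)$. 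Partition this virtual character according to the $\Gamma$-orbit structure on $R(S,G)$. The asymmetric orbits $\mc{O}$ come in pairs $\{\mc{O}, -\mc{O}\}$ whose joint contribution is self-dual (the sum of a character and its contragredient) and thus has trivial $\epsilon_L$ by the functional equation. Each symmetric orbit $\mc{O}$ contributes, up to a $1$-cochain twist with values in the 1-dimensional torus $S_{\alpha_\mc{O}}$, the induced character $\tx{Ind}_{\Gamma_{\pm\alpha_\mc{O}}}^\Gamma \kappa_{\alpha_\mc{O}}$. By the inductivity of $\epsilon_L$ applied to the Artin identity $\tx{Ind}_{\Gamma_{\pm\alpha}}^\Gamma \kappa_\alpha = \tx{Ind}_{\Gamma_\alpha}^\Gamma 1 - \tx{Ind}_{\Gamma_{\pm\alpha}}^\Gamma 1$ together with $\epsilon_L(\tx{Ind}_E^F 1,\psi) = \lambda_{E/F}(\psi)$ and the transitivity of $\lambda$, one obtains $\epsilon_L(\tx{Ind}_{\Gamma_{\pm\alpha}}^\Gamma \kappa_\alpha,\psi) = \lambda_{F_\alpha/F_{\pm\alpha}}(\psi \circ \tx{tr})$, giving the desired $\lambda$-factors on the left.

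\textbf{Main obstacle.} The hard part is to match the signs $f_{(G,S)}(\alpha_\mc{O})$ on the right-hand side with the $1$-cochain twists on the left. The cohomological interpretation in Proposition \ref{pro:torinvcoh} indicates the correct mechanism: the identification of the symmetric-orbit summand of $V$ with $\tx{Ind}_{\Gamma_{\pm\alpha_\mc{O}}}^\Gamma \kappa_{\alpha_\mc{O}}$ is canonical only up to a class in $H^1(\Gamma_{\pm\alpha_\mc{O}},S_{\alpha_\mc{O}}(\ol{F}))$, and its image under $\kappa_{\alpha_\mc{O}}^\tx{coh}$ is precisely $f_{(G,S)}(\alpha_\mc{O})$. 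Translating this twist into a multiplicative modification of the $\epsilon$-factor and verifying it produces exactly the sign $f_{(G,S)}(\alpha_\mc{O})$ is the technical heart of the proof. A possibly cleaner route is induction along a chain of Levi subgroups $T = L_0 \subset L_1 \subset \cdots \subset L_r = G$: at each stage one new symmetric orbit enters and the rank-one base case reduces to the defining identity $\epsilon_L(\chi_{E/F},\psi) = \lambda_{E/F}(\psi)$, with Proposition \ref{pro:torinvstab}(3) controlling how the signs propagate in the inductive step.
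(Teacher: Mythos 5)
The paper offers no proof of this theorem; the surrounding text introduces it as ``a recent result of Kottwitz'' and states it without argument, so there is nothing in the paper for your sketch to reproduce. Judged on its own terms, your first stage is sound but conditional: the reduction via Lemma \ref{lem:torinvweil} with $B = \tx{can}$ (giving $B_\alpha = 1$) and Proposition \ref{pro:torinvstab}(3) does work, but part (3) of that proposition is stated and proved only for \emph{elliptic} $S$ (ellipticity is what kills the asymmetric orbit sums in its proof), whereas the theorem is asserted for an arbitrary maximal torus $S$. So as written your reduction silently imposes an extra hypothesis, or else needs a preliminary Levi descent step.

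The orbit decomposition in the quasi-split case is where the real gap lies, and it is larger than you acknowledge. The virtual representation $V = X^*(S)_\C - X^*(T)_\C$ lives entirely on the Cartan: it has no canonical decomposition indexed by $\Gamma$-orbits on $R(S,G)$, and you never produce one. Dimension counting already signals trouble — $\dim V = 0$, while the summand $\tx{Ind}_{\Gamma_{\pm\alpha}}^\Gamma \kappa_\alpha$ you assign to a symmetric orbit has positive degree, and you do not say what degree-$0$ piece an asymmetric pair $\{\mc{O},-\mc{O}\}$ contributes. The assertion that a self-dual contribution ``has trivial $\epsilon_L$ by the functional equation'' is also incorrect as stated: for $W \oplus W^\vee$ the functional equation gives $\epsilon_L(W)\epsilon_L(W^\vee) = \det W(-1)$, a sign that is not automatically $+1$, so you owe a separate argument. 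The identity $\epsilon_L(\tx{Ind}_{\Gamma_{\pm\alpha}}^\Gamma \kappa_\alpha,\psi) = \lambda_{F_\alpha/F_{\pm\alpha}}(\psi\circ\tx{tr})$ also drops a $\lambda_{F_{\pm\alpha}/F}$-factor unless one is very careful with degree-$0$ inductivity and the $\epsilon_L(1,\psi_K)$ terms. Beyond that you explicitly concede the ``technical heart'' (matching the cohomological twist with $f_{(G,S)}(\alpha)$) is unverified, and the alternative Levi induction $T = L_0 \subset \cdots \subset L_r = G$ is not actually set up: when $S$ is elliptic no proper Levi contains $S$, so it is unclear what it would mean for one symmetric orbit to ``enter at each stage''. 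What you have is a reasonable plan of attack, not a proof.
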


Combining this Theorem with Lemma \ref{lem:torinvweil}, we obtain the following Corollary.
\begin{cor} \label{cor:torinveps} Let $T$ be a minimal Levi in the quasi-split inner form of $G$. Then
\[ \epsilon_L(X^*(S)_\C-X^*(T)_\C,\psi) = e(G)\prod_{\alpha \in R(S,G)_\tx{sym}/\Gamma}f_{(G,S)}(\alpha)\lambda_{F_\alpha/F_{\pm\alpha}}(\psi\circ\tx{tr}_{F_\alpha/F_{\pm\alpha}}). \]
\end{cor}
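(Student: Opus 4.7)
The plan is very direct: combine Kottwitz's theorem (stated immediately above) with Lemma \ref{lem:torinvweil}, after verifying that for the canonical bilinear form $\tx{can}$ on $\mf{n}(F)$ the local factors $\kappa_\alpha(B_\alpha)$ all collapse to $1$. Indeed, by the very definition of $\tx{can}$, if $X_\alpha \in \mf{g}_\alpha(F_\alpha)$ and $Y_\alpha \in \mf{g}_{-\alpha}(F_\alpha)$ satisfy $[X_\alpha,Y_\alpha]=H_\alpha$, then
\[ \tx{can}(X_\alpha,Y_\alpha) = \frac{[X_\alpha,Y_\alpha]}{H_\alpha} = 1, \]
so the invariant $B_\alpha$ associated to $\tx{can}$ (in the notation of Lemma \ref{lem:torinvweil}) equals $1 \in F_{\pm\alpha}^\times$, and hence $\kappa_\alpha(B_\alpha)=1$ for every symmetric root $\alpha$.

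With this observation, Lemma \ref{lem:torinvweil} specializes to
\[ \gamma_\psi(\mf{n}(F),\tx{can}) = \prod_{\alpha \in R(S,G)_\tx{sym}/\Gamma} f_{(G,S)}(\alpha)\,\lambda_{F_\alpha/F_{\pm\alpha}}(\psi\circ\tx{tr}_{F_\alpha/F_{\pm\alpha}}). \]
Multiplying both sides by $e(G)$ and invoking Kottwitz's theorem
\[ \epsilon_L(X^*(S)_\C - X^*(T)_\C,\psi) = e(G)\gamma_\psi(\mf{n}(F),\tx{can}) \]
yields the desired identity.

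In short, there is no real obstacle: the work is all contained in the two ingredients already established (Lemma \ref{lem:torinvweil} and the Kottwitz theorem), and the only thing to check is the normalization of $\tx{can}$ against the definition of $B_\alpha$. The one point worth emphasizing in the write-up is that Lemma \ref{lem:torinvweil} requires a bilinear form, while $\tx{can}$ was introduced as a quadratic form; but the lemma was proved for the symmetric bilinear form obtained from it by polarization, and on each hyperbolic plane $\mf{g}_\alpha\oplus\mf{g}_{-\alpha}$ this polarization sends $(X_\alpha,Y_\alpha)\mapsto [X_\alpha,Y_\alpha]/H_\alpha$, which is precisely what we used above.
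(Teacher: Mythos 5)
Your proof is correct and follows exactly the route the paper takes: the paper simply remarks that the corollary is obtained by combining Kottwitz's theorem with Lemma \ref{lem:torinvweil}, and you supply the one verification left implicit, namely that the invariant $B_\alpha$ of the canonical form equals $1$ (so $\kappa_\alpha(B_\alpha)=1$) by construction of $\tx{can}$. Your remark on matching the quadratic form $\tx{can}$ with the bilinear form appearing in the lemma is a sensible clarification to include.
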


\subsection{The character associated to the toral invariant} \label{sec:torinvchar}

We continue to assume that $F$ is a non-archimedean local field and assume now moreover that the residual characteristic of $F$ is not $2$. Using the toral invariant $f_{(G,S)}$ we will define a character
\[ \epsilon_f : S(F) \rw \C^\times. \]
For this, we first give for each $\alpha \in R(S,G)$ a character $\epsilon_\alpha : F_\alpha^\times \rw \C^\times$ as follows. If $\alpha$ is asymmetric, we take $\epsilon_\alpha=1$. If $\alpha$ is symmetric and inertially asymmetric, then we consider the group $k_{F_\alpha}^\times/k_{F_{\pm\alpha}}^\times$. This is a cyclic group of even order, and we take on it the unique character which sends any generator to $f_{(G,S)}(\alpha)$. We then inflate this character to $O_{F_\alpha}^\times$ and take $\epsilon_\alpha$ to be its unique extension to $F_{\alpha}^\times$ which restricts trivially to $F_{\pm\alpha}^\times$. Finally, if $\alpha$ is inertially symmetric, we take the unramified character on $F_\alpha^\times$ given by $\epsilon_\alpha(x) =  f_{(G,S)}(\alpha)^{\tx{val}_{F_\alpha}}(x)$.

The set of characters $\{\epsilon_\alpha|\ \alpha \in R(S,G)\}$ obtained in this way satisfies the assumptions of \cite[Cor 2.5.B]{LS87} and thus gives rise to a character $\epsilon_f$ of $S(F)$.

\begin{lem} \label{lem:torinvchar} Assume that the action of $I$ on $X^*(S)$ is tame and generated by a regular elliptic element. Then for every $\gamma \in S(F)$ whose root values are topologically semi-simple, we have
\[ \epsilon_f(\gamma) = \prod_{\substack{\alpha \in R(S,G)_\tx{sym}/\Gamma \\ \alpha(\gamma) \neq 1}} f_{(G,S)}(\alpha). \]
For every $\gamma$ whose root values are topologically unipotent, we have $\epsilon_f(\gamma)=1$.
\end{lem}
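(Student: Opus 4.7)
The plan is to invoke Proposition \ref{pro:torinvvan} to collapse the Langlands--Shelstad product to contributions from inertially symmetric roots, and then analyze those contributions using the ramified quadratic structure of $F_\alpha/F_{\pm\alpha}$.

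Under the hypothesis, Proposition \ref{pro:torinvvan} yields $f_{(G,S)}(\alpha) = 1$ for every inertially asymmetric symmetric $\alpha$, so $\epsilon_\alpha$ is trivial for such $\alpha$, as it is also for asymmetric $\alpha$ by definition. Hence in the product formula of \cite[Cor 2.5.B]{LS87} only the inertially symmetric orbits $[\alpha] \in R(S,G)_{\tx{insym}}/\Gamma$ contribute nontrivially. For each such $[\alpha]$, $\epsilon_\alpha(x) = f_{(G,S)}(\alpha)^{\tx{val}_{F_\alpha}(x)}$ is unramified of order dividing $2$, trivial on $O_{F_\alpha}^\times$ and on $F_{\pm\alpha}^\times$ (whose uniformizer has even $F_\alpha$-valuation because $F_\alpha/F_{\pm\alpha}$ is ramified quadratic). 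I would then write out the LS formula, choosing $a$-data $a_\alpha \in F_\alpha^\times$ of odd $F_\alpha$-valuation on each inertially symmetric orbit (the ambiguity lying in $F_{\pm\alpha}^\times$, on which $\epsilon_\alpha$ vanishes), so that schematically
\[ \epsilon_f(\gamma) = \prod_{\substack{[\alpha] \in R(S,G)_{\tx{insym}}/\Gamma\\\alpha(\gamma) \neq 1}} \epsilon_\alpha\!\left(\tfrac{\alpha(\gamma) - 1}{a_\alpha}\right), \]
with factors for which $\alpha(\gamma) = 1$ taken to be $1$ by convention.

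For $\gamma$ with topologically semi-simple root values, a Galois-theoretic argument shows $\alpha(\gamma) \in \{\pm 1\}$ for each inertially symmetric $\alpha$: the element $\alpha(\gamma)$ is a Teichm\"uller unit in $F_\alpha^\times$ with $\tau(\alpha(\gamma)) = \alpha(\gamma)^{-1}$ for $\tau \in \Gamma_{\pm\alpha} \sm \Gamma_\alpha$, and since $F_\alpha/F_{\pm\alpha}$ is ramified, $\tau$ acts trivially on Teichm\"uller units, forcing $\alpha(\gamma)^2 = 1$. If $\alpha(\gamma) = -1$ then $\alpha(\gamma) - 1 = -2 \in O_{F_\alpha}^\times$ (using $p \neq 2$), so $(\alpha(\gamma) - 1)/a_\alpha$ has $F_\alpha$-valuation $-1$ and the factor equals $f_{(G,S)}(\alpha)^{-1} = f_{(G,S)}(\alpha)$; if $\alpha(\gamma) = 1$ the factor is $1$. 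Extending the product trivially over inertially asymmetric symmetric orbits, where $f_{(G,S)}(\alpha) = 1$, recovers the stated formula.

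For $\gamma$ with topologically unipotent root values, writing $\alpha(\gamma) = 1 + x$ with $x \in \mathfrak{p}_{F_\alpha}$, the norm-one condition $\alpha(\gamma)\tau(\alpha(\gamma)) = 1$ gives $\tau x \equiv -x \pmod{x^2}$, which in the ramified quadratic $F_\alpha/F_{\pm\alpha}$ forces $x$ to lie in the $(-1)$-eigenspace $\pi_\alpha F_{\pm\alpha}$ up to higher-order terms, so $\tx{val}_{F_\alpha}(x)$ is odd. Dividing by $a_\alpha$, itself of odd valuation, yields an element of even $F_\alpha$-valuation, on which $\epsilon_\alpha$ is trivial. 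Every factor is thus $1$, giving $\epsilon_f(\gamma) = 1$. The main obstacle is making the \cite[Cor 2.5.B]{LS87} formula explicit enough to pin down the $a$-data normalization; once this is in hand, the two cases are mechanical. The tame regular-elliptic hypothesis enters twice: through Proposition \ref{pro:torinvvan}, and through the equality $k_{F_\alpha} = k_{F_{\pm\alpha}}$ that underlies both the Teichm\"uller argument in the semi-simple case and the odd-valuation argument in the unipotent case.
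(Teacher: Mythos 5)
Your overall strategy matches the paper's: invoke Proposition \ref{pro:torinvvan} to kill the inertially asymmetric contributions, and then analyze each inertially symmetric orbit using that $F_\alpha/F_{\pm\alpha}$ is ramified quadratic. Your Teichm\"uller argument in the semisimple case and the $(-1)$-eigenspace/odd-valuation argument in the unipotent case are both sound. However, there is a genuine gap in the formula you feed these computations into.

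You assert the schematic identity
\[
\epsilon_f(\gamma) \;=\; \prod_{\substack{[\alpha]\\ \alpha(\gamma)\neq 1}} \epsilon_\alpha\!\left(\frac{\alpha(\gamma)-1}{a_\alpha}\right),
\]
with $a$-data $a_\alpha$ appearing. This is the shape of the $\Delta_{II}$ transfer factor, not the evaluation of the character built from $\chi$-data. The character $\epsilon_f$ is constructed via \cite[Cor.~2.5.B]{LS87} from the $\epsilon_\alpha$ alone — $a$-data plays no role in its construction — and its value at $\gamma$ is given by the argument of \cite[Lemma~3.5.A]{LS87}: for each symmetric orbit one chooses $\delta_\alpha \in F_\alpha^\times$ with $\delta_\alpha/\tau(\delta_\alpha) = \alpha(\gamma)$, and the corresponding factor is $\epsilon_\alpha(\delta_\alpha)^{-1}$. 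The paper then argues: if $\alpha(\gamma)$ is topologically unipotent, one can take $\delta_\alpha$ topologically unipotent (pro-$p$ coefficients, so $H^1(\langle\tau\rangle,\cdot)=0$), hence $\epsilon_\alpha(\delta_\alpha)=1$; if $\alpha(\gamma)$ is topologically semisimple, it lies in $F_{\pm\alpha}^\times$ (ramified extension), has norm one, so equals $\pm 1$, and one takes $\delta_\alpha=1$ or a uniformizer $\omega$ with $\omega^2\in F_{\pm\alpha}$. Your formula does, as it happens, give the same answer — writing $\alpha(\gamma)-1 = (\delta_\alpha-\tau\delta_\alpha)/\tau\delta_\alpha$ and noting $\delta_\alpha-\tau\delta_\alpha$ is $\tau$-anti-invariant like $a_\alpha$, one finds $\tx{val}_{F_\alpha}\!\bigl((\alpha(\gamma)-1)/a_\alpha\bigr) \equiv \tx{val}_{F_\alpha}(\delta_\alpha) \pmod 2$, which suffices since $\epsilon_\alpha$ is unramified quadratic. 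But that parity coincidence is precisely what's missing from your writeup; without it your starting identity is unjustified, and you yourself flag this as the "main obstacle." Replacing your $\Delta_{II}$-style formula by the genuine \cite[Lemma~3.5.A]{LS87} evaluation (or supplying the parity comparison above) closes the gap and makes the rest of your argument essentially coincide with the paper's.
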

\begin{proof}
Following the argument of the proof of \cite[Lemma 3.5.A]{LS87}, we see that
\[ \epsilon_f(\gamma) = \prod_{\alpha \in R(S,G)_\tx{asym}/(\Gamma \times \{\pm 1\})} \epsilon_\alpha(\gamma^\alpha)^{-1} \prod_{\alpha \in R(S,G)_\tx{sym}/\Gamma} \epsilon_\alpha(\delta^\alpha)^{-1} \]
for certain elements $\gamma^\alpha,\delta^\alpha \in F_\alpha^\times$. According to the choice of $\epsilon_\alpha$, the first product vanishes. Moreover, by Proposition \ref{pro:torinvvan}, the second product runs only over inertially symmetric orbits. To evaluate that product, we need to describe the element $\delta^\alpha$. For this, we consider the exact sequence
\[ \xymatrix{ 1\ar[r]&F_{\pm\alpha}^\times\ar[r]&F_\alpha^\times\ar[r]^{1-\tau}&F_\alpha^\times\ar[r]^{1+\tau}&F_{\pm\alpha}^\times}, \]
where $\tau \in \Gamma_{\pm\alpha} \sm \Gamma_\alpha$. The element $\alpha(\gamma) \in F_\alpha^\times$ belongs to the kernel of $1+\tau$ and we let $\delta_\alpha \in F_\alpha^\times$ be any element which maps to $\alpha(\gamma)$ under $1-\tau$.

If $\alpha(\gamma)$ is topologically unipotent, then so is $\delta^\alpha$, but then $\epsilon_\alpha(\delta^\alpha)=1$. This shows the second claim.
If $\alpha(\gamma)$ is topologically semi-simple, then since $F_\alpha/F_{\pm\alpha}$ is ramified, this means that $\alpha(\gamma) \in F_{\pm\alpha}^\times$. Belonging to the kernel of $1+\tau$ then implies $\alpha(\gamma) \in \{\pm 1\}$. If $\alpha(\gamma)=1$, we may take $\delta^\alpha=1$, and if $\alpha(\gamma)=-1$, we may take $\delta^\alpha=\omega$, where $\omega \in F_\alpha^\times$ is a uniformizer such that $\omega^2 \in F_{\pm\alpha}$. Then we see that
\[ \epsilon_\alpha(\delta^\alpha) = \begin{cases} f_{(G,S)}(\alpha)&,\alpha(\gamma) \neq 1 \\ 1&,\tx{else} \end{cases}. \]
It follows that
\[ \epsilon_f(\gamma) = \prod_{\alpha \in R(S,G)_\tx{insym}/\Gamma} f_{(G,S)}(\alpha). \]
Applying again Proposition \ref{pro:torinvvan}, we see that this expression is equal to the right hand side of the claimed equality.
\end{proof}

\section{Construction of epipelagic $L$-packets} \label{sec:packs}

\subsection{Construction of $L$-packets} \label{sec:lpackconst}
We now assume that $G$ is quasi-split. Let $\hat G$ be the complex dual group of $G$. Let us recall what data this entails (see also \cite[\S1]{Kot84}). First, $\hat G$ comes equipped with an action of the Galois group $\Gamma$ by algebraic automorphisms. Furthermore there exist splittings $(T,B,\{X_\alpha\})$ and $(\hat T,\hat B,\{X_{\alpha^\vee}\})$ of $G$ and $\hat G$ respectively, which are fixed by the action of $\Gamma$, and an isomorphism of $\Gamma$-modules $X_*(T) \rw X^*(\hat T)$ which identifies the $B$-simple coroots with the $\hat B$-simple roots. For any other two $\Gamma$-fixed splittings $(T^1,B^1\{X_\alpha^1\})$ and $(\hat T^1,\hat B^1,\{X_{\alpha^\vee}^1\})$ there exists a unique isomorphism $X_*(T^1) \rw X^*(\hat T^1)$ induced by the one for $T$ and $\hat T$. The splittings themselves are not part of the data of $\hat G$, but the compatible system of isomorphisms $X_*(T) \rw X^*(\hat T)$ is.

Let $^LG = \hat G \rtimes W_F$ be the Weil-form of the $L$-group of $G$. We consider Langlands parameters
\[ \phi : W_F \rw {^LG}, \]
which are subject to the following conditions:
\begin{cnd}\ \\[-20pt] \label{cnd:parm}
\begin{enumerate}
\item $\hat T = \tx{Cent}(\phi(P_F),\hat G)$ is a maximal torus of $\hat G$ belonging to a $\Gamma$-fixed splitting.
\item The image of $\phi(I_F)$ in $\Omega(\hat T,\hat G) \rtimes I_F$ is generated by a regular elliptic element.
\item If $w \in I_F^{\frac{1}{m}+}$, where $m$ is the order of the regular elliptic element, then $\phi(w)=(1,w)$.
\end{enumerate}
\end{cnd}

We will call such Langlands parameters \emph{epipelagic}. In this section we are going to construct to each epipelagic Langlands parameter $\phi$ and each inner twist $\xi : G \rw G'$ a packet $\Pi_{\phi,\xi,G'}$ of epipelagic representations of $G'(F)$. We will argue that $\Pi_{\phi,\xi,G'}$ depends only on the $\hat G$-conjugacy class of $\phi$.

Let $\hat S$ be the Galois-module whose underlying abelian group is the complex torus $\hat T$ and whose Galois action is provided by
\[ \phi : W_F \rw N(\hat T,\hat G)\rtimes W_F \rw \Omega(\hat T,\hat G) \rtimes W_F \rw \tx{Aut}_\tx{alg}(\hat T). \]
It is clear that this action factors through a finite quotient of $W_F$, and this quotient is the Galois group of a finite tamely ramified extension $K/F$. Let $S$ be the algebraic torus defined over $F$ whose complex dual is $\hat S$. The splitting field of $S$ is $K$.

In Section \ref{sec:chispec} we are going to construct a $\hat G$-conjugacy class of embeddings ${^Lj} : {^LS} \rw {^LG}$. These embeddings will be tamely ramified, in the sense that ${^Lj}(1,w)=(1,w)$ for all $w \in P_F$. Given this conjugacy class, there exists an element $^Lj$ in it satisfying the following:
\begin{cnd}\label{cnd:ljx}\ \\[-20pt]
\begin{itemize}
\item $^Lj(\hat S) = \hat T$.
\item The following two group homomorphisms are equal:
\[\xymatrix{
W_F\ar[rr]^-\phi&&N(\hat T,\hat G)\rtimes W_F\ar[r]&\Omega(\hat T,\hat G) \rtimes W_F\\
W_F\ar[r]^-{\iota_2}&\hat S \rtimes W_F\ar[r]^-{^Lj}&N(\hat T,\hat G)\rtimes W_F\ar[r]&\Omega(\hat T,\hat G) \rtimes W_F
}\]
\end{itemize}
\end{cnd}

These properties imply that the image of $^Lj$ contains the image of $\phi$, and thus we obtain a factorization
\[ \phi = {^Lj} \circ \phi_{S,{^Lj}}. \]
The local Langlands correspondence for tori attaches to $\phi_{S,{^Lj}}$ a character
\[ \chi_{S,{^Lj}} : S(F) \rw \C^\times. \]
The character $\chi_{S,{^Lj}}$ depends on the choice of $^Lj$ within its $\hat G$-conjugacy class. The dependence is as follows.
\begin{lem} \label{lem:changelj} Let $^Lj_1,{^Lj_2} : {^LS} \rw {^LG}$ be $\hat G$-conjugate and satisfy Conditions \ref{cnd:ljx}. Then ${^Lj_2} = \tx{Ad}(n){^Lj_1}$ for
an element $n \in N(\hat T,\hat G)$ whose projection to $\Omega(\hat T,\hat G)$ is fixed by ${^Lj_1}(\Gamma)$. If $h \in \Omega(S,G)(F)$ is the element with
\[ {^Lj_1}\circ\hat{\tx{Ad}}(h) = \tx{Ad}(n)\circ{^Lj_1} \]
then
\[ \chi_{S,{^Lj_2}} = \chi_{S,{^Lj_1}}\circ\tx{Ad}(h^{-1}). \]
\end{lem}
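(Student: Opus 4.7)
The first assertion amounts to tracking the semidirect-product structure of ${^LG} = \hat G \rtimes W_F$. The hypothesis that ${^Lj_1}$ and ${^Lj_2}$ are $\hat G$-conjugate lets me write ${^Lj_2} = \tx{Ad}(n) \circ {^Lj_1}$; since Conditions~\ref{cnd:ljx} force both embeddings to take $\hat S$ onto the same torus $\hat T$, the conjugating element must normalize $\hat T$, so $n \in N(\hat T, \hat G)$. Writing the $W_F$-section of each embedding as ${^Lj_k}(1,w) = (s_k(w), w)$, a direct computation in the semidirect product gives $s_2(w) = n \cdot s_1(w) \cdot \sigma_w(n^{-1})$. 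Conditions~\ref{cnd:ljx} also force both $s_k$ to lift the same map $\omega_S \colon W_F \to \Omega(\hat T,\hat G)$ (the one induced by $\phi$), and reducing the previous relation modulo $\hat T$ rearranges to $\tx{Ad}({^Lj_1}(1,w))(\bar n) = \bar n$, which is the asserted fixedness of $\bar n$ under ${^Lj_1}(\Gamma)$.

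\textbf{Locating $h$.} I use the canonical identification $\Omega(S,G) \cong \Omega(\hat T, \hat G)$ of abstract Weyl groups; under it the natural $\Gamma$-action on $\Omega(S,G)$ corresponds to the twisted action $w \mapsto \tx{Ad}({^Lj_1}(1, w))$ on $\Omega(\hat T, \hat G)$, because the Galois action on $\hat S$ is by construction the standard $\sigma_w$-action composed with conjugation by $\omega_S(w) = \bar{s_1(w)}$. Hence $\bar n$ pulls back to a unique $h \in \Omega(S,G)^\Gamma = \Omega(S,G)(F)$. On the subgroup $\hat S \subset {^LS}$, the equality ${^Lj_1} \circ \hat{\tx{Ad}}(h) = \tx{Ad}(n) \circ {^Lj_1}$ is then just the unwinding of this identification, since both sides act on $\hat T$ by conjugation by $\bar n$. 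Promoting this to an equality of honest $L$-homomorphisms ${^LS} \to {^LG}$, which on the $W_F$-section reads $n \cdot s_1(w) = s_1(w) \cdot \sigma_w(n)$, requires the representative $n$ itself (and not merely $\bar n$) to be fixed by $\tx{Ad}({^Lj_1}(W_F))$. This is the one subtle step: it is where the choice of $n$ within its $\hat T$-coset is pinned down and where any residual cohomological input (a trivialization of a class in $H^1(W_F,\hat S)$) must be spent. I expect this to be the main obstacle in writing out a full proof.

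\textbf{The character identity.} Once the displayed equation holds as $L$-homomorphisms, the character identity follows purely formally from local Langlands for tori. Substituting ${^Lj_2} = \tx{Ad}(n) \circ {^Lj_1} = {^Lj_1} \circ \hat{\tx{Ad}}(h)$ into the two factorizations $\phi = {^Lj_k} \circ \phi_{S, {^Lj_k}}$ produces
\[
{^Lj_1} \circ \phi_{S, {^Lj_1}} \;=\; \phi \;=\; {^Lj_2} \circ \phi_{S, {^Lj_2}} \;=\; {^Lj_1} \circ \hat{\tx{Ad}}(h) \circ \phi_{S, {^Lj_2}},
\]
and injectivity of ${^Lj_1}$ yields $\phi_{S, {^Lj_1}} = \hat{\tx{Ad}}(h) \circ \phi_{S, {^Lj_2}}$. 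Functoriality of the local Langlands correspondence for tori intertwines pre-composition of parameters by the automorphism $\hat{\tx{Ad}}(h)$ of $\hat S$ with pre-composition of $S(F)$-characters by $\tx{Ad}(h)$, giving $\chi_{S, {^Lj_1}} = \chi_{S, {^Lj_2}} \circ \tx{Ad}(h)$, which rearranges to $\chi_{S, {^Lj_2}} = \chi_{S, {^Lj_1}} \circ \tx{Ad}(h^{-1})$ as claimed.
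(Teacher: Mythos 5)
Your proof is correct and follows the same route as the paper: the verification that $n$ normalizes $\hat T$ with $\bar n$ fixed under $\tx{Ad}({^Lj_1}(1,\cdot))$ is the content the paper dispatches with "follows directly from Conditions \ref{cnd:ljx}", and your chain of equalities $\phi = {^Lj_1}\circ\phi_{S,{^Lj_1}} = {^Lj_2}\circ\phi_{S,{^Lj_2}} = \tx{Ad}(n){^Lj_1}\circ\phi_{S,{^Lj_2}} = {^Lj_1}\circ\hat{\tx{Ad}}(h)\circ\phi_{S,{^Lj_2}}$, followed by functoriality of the local Langlands correspondence for tori, is verbatim the paper's argument for the character identity. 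The concern you flag is a real one and is not addressed in the paper's terse proof: for the $L$-homomorphism identity ${^Lj_1}\circ\hat{\tx{Ad}}(h) = \tx{Ad}(n)\circ{^Lj_1}$ to hold on all of ${^LS}$ (and not merely on $\hat S$), one needs not just $\bar n$ but a representative $n$ in $N(\hat T,\hat G)$ fixed under $\tx{Ad}({^Lj_1}(1,w))$ for all $w$; the obstruction is exactly the image of $\bar n$ under the connecting map $\Omega(\hat T,\hat G)^\Gamma \to H^1(W_F,\hat T)$ coming from $1\to\hat T\to N(\hat T,\hat G)\to\Omega(\hat T,\hat G)\to 1$ with the twisted $W_F$-action, i.e.\ a class in $H^1(W_F,\hat S)$ as you say. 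Strictly speaking the lemma is phrased conditionally ("If $h\in\Omega(S,G)(F)$ is the element with\ldots"), so your proof of the implication is complete as written; but you are right that the later uses of the lemma (in particular, the claim in the proof of the parameterization proposition that every $L$-embedding in the conjugacy class satisfying Conditions \ref{cnd:ljx} is of the form ${^Lj}\circ\hat{\tx{Ad}}(h)$) do presuppose the existence of a fixed lift $n$, and the paper does not spell out why the relevant cohomology class vanishes. This is a genuine subtlety that the paper glosses over rather than a flaw in your proof.
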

\begin{proof}
The first claim follows directly from the fact that both ${^Lj_i}$ satisfy Conditions \ref{cnd:ljx}. For the second, we compute
\[ {^Lj_1}\circ\phi_{S,{^Lj_1}}=\phi={^Lj_2}\circ\phi_{S,{^Lj_2}} = \tx{Ad}(n){^Lj_1}\circ\phi_{S,{^Lj_2}} = {^Lj_1}\circ\hat{\tx{Ad}}(h)\circ\phi_{S,{^Lj_2}}, \]
from which we conclude that
\[ \chi_{S,{^Lj_2}} = \chi_{S,{^Lj_1}}\circ\tx{Ad}(h^{-1}). \]
\end{proof}

\begin{lem} For every choice of $^Lj$ within its $\hat G$-conjugacy class, every inner twist $\xi : G \rw G'$, and every admissible embedding $j : S \rw G'$ of $S$ into an inner form $G'$ of $G$, the pair $j_*(S,\chi_{S,{^Lj}})$ satisfies the Conditions \ref{cnd:char}.
\end{lem}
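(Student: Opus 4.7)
The plan is to reduce to the case $G'=G$ with $j=\tx{id}$, and then verify the three conditions in \ref{cnd:char} one by one. The reduction is justified because all three conditions are intrinsic to $(S,\chi_{S,{^Lj}})$: the inertia action on $X^*(S)$ is defined from $\phi$ alone; the Moy-Prasad filtration on $jS(F)$ depends only on the isomorphism class of the $F$-torus $S$; and genericity of the induced character on $S(F)_{1/e:2/e}$ amounts to strong regularity of a line in $\mf{s}^*(E)_{0:0+}$ viewed inside $\mf{g}^*(E)_{y,0:0+}$, which is invariant under stable conjugacy and inner twisting since the relevant centralizer is computed over $\ol{F}$.

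For Condition (1), unwinding the definition of $S$ shows that the $\Gamma$-action on $\hat S = \hat T$ is the composition of $\phi$ with the projection $N(\hat T,\hat G) \rtimes W_F \rw \Omega(\hat T,\hat G) \rtimes W_F$, which by \ref{cnd:parm}(2) has $I_F$-image generated by a regular elliptic element; dualizing yields the claim. For Condition (2), I invoke the local Langlands correspondence for tori to identify the depth of $\chi_{S,{^Lj}}$ with the largest jump at which $\phi_{S,{^Lj}}$ is nontrivial in the upper-numbering ramification filtration on $W_F$. Since $^Lj$ is tame and restricts to an isomorphism $\hat S \rw \hat T$, the identity ${^Lj}\circ \phi_{S,{^Lj}} = \phi$ combined with \ref{cnd:parm}(3) forces $\phi_{S,{^Lj}}|_{I_F^{1/m+}} = 1$, which gives triviality on $S(F)_{2/e}$. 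Non-triviality on $S(F)_{1/e}$ follows from \ref{cnd:parm}(1): if the $\hat T$-component of $\phi|_{P_F}$ were trivial, then $\phi(P_F) \subseteq 1 \rtimes P_F$, whose centralizer in $\hat G$ would be all of $\hat G$, contradicting $\tx{Cent}(\phi(P_F),\hat G) = \hat T$; given \ref{cnd:parm}(3), the only admissible depth is exactly $1/m = 1/e$.

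For Condition (3), I apply Lemma \ref{lem:genchar}. For hypothesis (a), I translate the character $x \mapsto \chi_{S,{^Lj}}(N(\alpha^\vee(x)))$ on $E^\times$ via LLC for tori into the Langlands parameter $W_E \rw \C^\times$ obtained by composing $\phi_{S,{^Lj}}|_{W_E}$ with the character $\alpha : \hat S \rw \C^\times$; its non-triviality at depth $1/m$ then follows from the regular ellipticity of the Weyl element in \ref{cnd:parm}(2), which implies that no root $\alpha$ of $\hat S$ can annihilate the $\hat T$-valued cocycle encoding the depth-$1/m$ wild part of $\phi$. For hypothesis (b), the Weyl stabilizer of $\chi_{S,{^Lj}} \circ N$ corresponds dually to the centralizer in $\Omega(\hat T,\hat G)$ of the regular elliptic element from \ref{cnd:parm}(2), which is trivial by definition. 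The hard part will be carrying out the LLC-for-tori translations in Condition (3) precisely, tracking how depth interacts with restriction $W_E \rw W_F$ and pullback along coroots of $\hat S$; this is where the explicit construction of $^Lj$ in Section \ref{sec:chispec} should enter in an essential way, as it controls how the upper-numbering filtration on $W_F$ transports through $^Lj$ and pins down the wild part of $\phi_{S,{^Lj}}$.
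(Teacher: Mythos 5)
Your overall strategy matches the paper's: verify each condition of \ref{cnd:char}, and for the genericity condition apply Lemma \ref{lem:genchar} after observing that since $^Lj$ is tame, $\phi$ and $\phi_{S,{^Lj}}$ have the same restriction to wild inertia. The treatment of Conditions (1) and (2) is essentially sound (your centralizer argument for non-triviality is a fine alternative to the paper's more compressed appeal to \cite{Yu01}).

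However, there is a genuine error in your treatment of Condition (3). For hypothesis (b) of Lemma \ref{lem:genchar}, you claim that the Weyl stabilizer of $\chi_{S,{^Lj}}\circ N|_{S(E)_r}$ ``corresponds dually to the centralizer in $\Omega(\hat T,\hat G)$ of the regular elliptic element from \ref{cnd:parm}(2), which is trivial by definition.'' Both halves of this are wrong. First, the centralizer of a regular elliptic element in a Weyl group is the nontrivial cyclic group it generates, not the trivial group, so the asserted triviality fails. Second, the correct dual object is not that centralizer but the stabilizer of the restriction of $\phi_{S,{^Lj}}$ to wild inertia. The paper's argument (which you should replace yours with) runs through \ref{cnd:parm}(1), not (2): if $w \in \Omega(\hat T,\hat G)$ stabilizes $\phi_{S,{^Lj}}|_{P_F}$, then a representative $n_w \in N(\hat T,\hat G)$ centralizes $\phi(P_F)$, so $n_w \in \tx{Cent}(\phi(P_F),\hat G) = \hat T$ by \ref{cnd:parm}(1), forcing $w=1$. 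The same item (1), not item (2), is also what is needed for your hypothesis (a): if $\alpha^\vee\circ\phi_{S,{^Lj}}$ were trivial on $P_F$ for some root $\alpha^\vee$ of $\hat S$, the root subgroup $\hat G_{\alpha^\vee}$ would be centralized by $\phi(P_F)$, enlarging the centralizer beyond $\hat T$. Finally, your closing worry that the explicit form of $^Lj$ from Section \ref{sec:chispec} must ``enter in an essential way'' is misplaced: the proof only uses that $^Lj$ is tamely ramified, so that $\phi$ and $\phi_{S,{^Lj}}$ have the same wild part, and this tameness holds for any choice of tame $\chi$-data.
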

\begin{proof}
The first item in \ref{cnd:char} is immediate, the second follows directly from the second item in \ref{cnd:parm}. For the third, we note that since the restriction of $^Lj$ to $P_F$ is trivial, the third item in \ref{cnd:parm} implies that $\phi_{S,{^Lj}}$ restricts trivially to $I_F^{\frac{1}{m}+}$. The claim now follows from \cite[\S7.10]{Yu09}.

We now come to the fourth condition in \ref{cnd:char}. According to Lemma \ref{lem:genchar}, we must analyze the character $\chi_{S,{^Lj}} \circ N$, where $N : S(E) \rw S(F)$ is the norm map and $E/F$ is the splitting extension of $S$. The Langlands parameter of this character is $\phi_{S,{^Lj}}|_{W_E}$. Note that since $^Lj$ is tamely ramified, the restrictions of $\phi_{S,{^Lj}}$ and $\phi$ to wild inertia coincide. For any $\alpha^\vee \in R^\vee(S,G) = R(\hat S,\hat G)$, the character $\chi_{S,{^Lj}}\circ N \circ \alpha^\vee$ is given as the composition
\[ E^\times \stackrel{\tx{Art}^{-1}}{\lrw} W_E/W_E^c \stackrel{\phi_{S,{^Lj}}}{\lrw} \hat S \stackrel{\alpha^\vee}{\lrw} \C^\times \]
and its non-vanishing on $U_E^1$ is equivalent to the non-vanishing of the restriction of $\alpha^\vee\circ\phi_{S,{^Lj}}$ to wild inertia. This is implied by the first item in Conditions \ref{cnd:parm}. Moreover, the statement that no Weyl element stabilizes $\chi\circ N|_{S(E)_\frac{1}{m}}$ is equivalent to the statement that no Weyl element stabilizes the restriction of $\phi_{S,{^Lj}}$ to wild inertia, which also follows from the first item in Conditions \ref{cnd:parm}.

\end{proof}

Now fix an inner twist $\xi : G \rw G'$ and an $L$-embedding $^Lj : {^LS} \rw {^LG}$. For every admissible embedding $j : S \rw G'$, we obtain the pair $j_*(S,\chi_{S,{^Lj}})$ of a maximal torus of $G'$ and a on it, and this pair satisfies Conditions \ref{cnd:char}. Let $\epsilon_j : S(F) \rw \C^\times$ be the pull-back along $j$ of the character $\epsilon_f$ on $jS(F)$ defined in Section \ref{sec:torinvchar}. By Lemma \ref{lem:torinvchar}, the pair $j_*(S,\chi_{S,{^Lj}}\cdot\epsilon_j)$ also satisfies Conditions \ref{cnd:char}. We let
\[ \pi_{j,{^Lj}} = \pi_{j_*(S,\chi_{S,{^Lj}}\cdot\epsilon_j)} \]
be the irreducible supercuspidal representation of $G'(F)$ given by the construction in Section \ref{sec:repconst}. According to Lemma \ref{lem:changelj}, we have for any $h \in \Omega(S,G)(F)$ the relation
\begin{equation} \pi_{j,{^Lj}\circ\hat{\tx{Ad}}(h)} = \pi_{j\circ\tx{Ad}(h),{^Lj}}. \label{eq:changelj} \end{equation}

We now set
\[ \Pi_{\phi,\xi,G'} \]
to be the set isomorphism classes of $\pi_{j,{^Lj}}$, where $j$ runs over the set of rational classes of admissible embeddings $j:S \rw G'$, and ${^Lj} : {^LS} \rw {^LG}$ is a fixed $L$-embedding in its $\hat G$-conjugacy class. The above relation shows that changing the choice of ${^Lj}$ has no effect on the set $\Pi_{\phi,\xi,G'}$. Furthermore, the set $\Pi_{\phi,\xi,G'}$ remains unchanged if we replace $\phi$ by $\tx{Ad}(g)\phi$ for any $g \in \hat G$. For if we at the same time replaced ${^Lj}$ by $\tx{Ad}(g){^Lj}$, the two changes cancel. Thus, we may as well write $\Pi_{\Phi,\xi,G'}$, where $\Phi = \tx{Ad}(\hat G)\phi$.

This completes the construction of the sets $\Pi_{\Phi,\xi,G'}$, apart from the specification of the $\hat G$-conjugacy class of $L$-embeddings ${^LS} \rw {^LG}$. We will turn to this matter shortly, but first we want to show that our packets satisfy Shahidi's generic packet conjecture \cite{Sha90}.

\begin{pro} \label{pro:unigen} Fix a Whittaker datum $(B,\psi_B)$ on $G$. Then the set $\Pi_{\Phi,G}$ contains a unique $(B,\psi)$-generic constituent.
\end{pro}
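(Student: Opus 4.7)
My plan is to reduce the question to Proposition \ref{pro:generic} of DeBacker--Reeder, and then to exploit the defining property of the Kostant section: as a section of the adjoint quotient $\mf{g} \rw \mf{g}/\!/G$, the Kostant section meets every stable conjugacy class of strongly-regular semisimple elements of $\mf{g}(F)$ in exactly one $G(F)$-orbit.

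The first step would be to verify that the character $\epsilon_j$ from Section \ref{sec:torinvchar} is of depth $0$, so that twisting $\chi_{S,{^Lj}}$ by $\epsilon_j$ does not alter the dual vector $Y \in \mf{s}^*(F)_{-1/e}$ that records $\chi_{S,{^Lj}}|_{S(F)_{1/e}}$. Each local factor $\epsilon_\alpha$ on $F_\alpha^\times$ is trivial on $U_{F_\alpha}^1$ by its very construction (it is trivial, inflated from a character of $k_{F_\alpha}^\times/k_{F_{\pm\alpha}}^\times$, or unramified according to the three cases). Applying Lemma \ref{lem:torinvchar} to any $\gamma \in S(F)_{0+}$, whose root values are topologically unipotent, one obtains $\epsilon_j(\gamma) = 1$; hence $\epsilon_j$ is trivial on $S(F)_{0+} \supset S(F)_{1/e}$. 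By Proposition \ref{pro:generic}, the representation $\pi_{j,{^Lj}}$ is therefore $(B,\psi_B)$-generic precisely when $dj(Y) \in \mf{g}(F)$ (identifying $\mf{g}^*$ and $\mf{g}$ via an invariant bilinear form) lies in the Kostant section $\kappa$ associated to the regular nilpotent $E_-$ determined by $(B,\psi_B)$.

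Finally, as $j$ ranges over the stable class of admissible embeddings $S \rw G$, the vectors $dj(Y)$ sweep out a single stable conjugacy class $\mc{O} \subset \mf{g}(F)$, and the regularity of $Y$ in $\mf{s}$ makes $j \mapsto dj(Y)$ a bijection between $G(F)$-conjugacy classes of $j$ and $G(F)$-orbits in $\mc{O}$. The Kostant section meets $\mc{O}$ in a single point, and hence in a single $G(F)$-orbit; this selects a unique $G(F)$-conjugacy class $j_0$ with $dj_0(Y) \in \kappa(F)$, giving the unique generic constituent of $\Pi_{\Phi,G}$. The only step that is not completely formal is the depth-zero property of $\epsilon_j$; once this is in hand, everything else is a routine application of the cited results.
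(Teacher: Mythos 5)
Your proposal is correct and follows essentially the same route as the paper: show $\epsilon_j$ is trivial on $S(F)_{0+}$ (either by noting each $\epsilon_\alpha$ is trivial on units, or more directly by Lemma \ref{lem:torinvchar} applied to elements with topologically unipotent root values), so the family $\{Y_j\}$ still runs over a single stable class, and then invoke Proposition \ref{pro:generic} together with the fact that the Kostant section meets each rational stable class in exactly one $G(F)$-orbit. The only difference is cosmetic — you make the Kostant-section step explicit where the paper leaves it implicit in the phrase ``the claim now follows from Proposition \ref{pro:generic}.''
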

\begin{proof}
Choose a representative $^Lj : {^LS} \rw {^LG}$ within its $\hat G$-conjugacy class, and choose a representative $j : S \rw G$ within each rational conjugacy class of admissible embeddings. The set of pairs $\{ j_*(S,\chi_{S,{^Lj}}) \}$ is a set of representatives for the rational conjugacy classes in a single stable class. For each $j$, the character $\epsilon_j$ is trivial on $S(F)_{0+}$. Thus the set of pairs $\{ j_*(S,\chi_{S,{^Lj}}\cdot\epsilon_j|_{S(F)_{0+}}) \}$ is still set of representatives for the rational conjugacy classes in a single stable class.
Recall from the discussion preceding Proposition \ref{pro:generic} that we can associate to each pair $j_*(S,\chi_{S,{^Lj}}\cdot\epsilon_j)$ an element $Y_j \in j\mf{s}(F)$ which describes the restriction of $j_*(\chi_{S,{^Lj}}\cdot\epsilon_j)$ to $jS(F)_{0+}$. Then the set $\{ Y_j \}$ also forms a set of representatives for the rational classes in a single stable class. The claim now follows from Proposition \ref{pro:generic}.
\end{proof}

\subsection{Construction of the $L$-embedding $^Lj : {^LS} \rw {^LG}$} \label{sec:chispec}
We maintain the notation that $\phi : W_F \rw {^LG}$ is a Langlands parameter satisfying Conditions \ref{cnd:parm}, $\xi : G \rw G'$ is an inner twist, and $j : S \rw G'$ is an admissible embedding of the torus $S$ constructed from $\phi$ in the previous subsection into $G'$.

Associated to $j$ is a canonical $\hat G$-conjugacy class of embeddings $\hat j : \hat S \rw \hat G$ of complex algebraic groups. In \cite{LS87}, Langlands and Shelstad have described a procedure which provides a $\hat G$-conjugacy class $[{^Lj}]$ of $L$-embeddings ${^Lj} : {^LS} \rw {^LG}$ extending $\hat j$. However, the construction of $[{^Lj}]$ requires the choice of auxiliary data, which they call $\chi$-data. Different choices of $\chi$-data lead to different $\hat G$-conjugacy classes $[{^Lj}]$, and in our construction this would lead to different characters $\chi_{S,{^Lj}}$, hence to different representations $\pi_j$. In this subsection we will describe the correct choice of $\chi$-data.

Recall that $W_F$ and $I_F$ denote the Weil group and inertia group of $F$. For convenience, we will drop the subscript $F$. We consider the action of $W$ on $R(\hat T, \hat G)$ given by $\phi$. Let $\alpha \in R(\hat T,\hat G)$ and put
\[ W_\alpha = \tx{Stab}(W,\alpha)\qquad W_{\pm\alpha} = \tx{Stab}(W,\{\alpha,-\alpha\}). \]
Denote the fixed fields for the action of these groups on $\ol{F}$ by $F_\alpha$ and $F_{\pm\alpha}$. Then $F_\alpha/F_{\pm\alpha}$ is an extension of degree at most $2$. A set of $\chi$-data is a set $\{ \chi_\alpha | \alpha \in R(S,G) \}$, where each $\chi_\alpha$ is a character
\[ \chi_\alpha : F_\alpha^\times \rw \C^\times, \]
and such that the set $\{\chi_\alpha\}$ satisfies the conditions of \cite[\S2.5]{LS87}. The most important of these conditions is that $\chi_\alpha$ be trivial on $N_{F_\alpha/F_{\pm\alpha}}(F_\alpha^\times)$ (this is slightly stronger than the original condition, but it is what we will use).

If $\alpha$ is asymmetric, we are forced to take $\chi_\alpha=1$. If $\alpha$ is symmetric but inertially asymmetric, then $F_\alpha/F_{\pm\alpha}$ is unramified, and there exists a unique unramified character $\chi_\alpha$ satisfying the imposed condition. If $\alpha$ is inertially symmetric, and if we require that $\chi_\alpha$ be tamely-ramified (unramified is not an option any more), then there are exactly two characters satisfying the imposed condition. It is between these two that we need to choose, and we will use the arithmetic information encoded in $\phi$ to do so.

It will be enough to specify the character on an arbitrary uniformizer $\omega \in F_\alpha^\times$, since these elements generate the multiplicative group $F_\alpha^\times$. To that end, consider the restriction of $\phi$ to the wild inertia subgroup $P$. Its image belongs to $\hat T$. Composing this restriction with the root $\alpha$ we obtain a homomorphism.
\[ \xymatrix{ P\ar@{^{(}->}[r]&W\ar[r]^\phi&\hat T\ar[r]^\alpha&\C^\times}. \]
It can be shown that this homomorphism extends to $W_{F_\alpha}$, and hence provides by local class field theory a homomorphism
\[ \xi_\alpha : U_{F_\alpha}^1 \rw \C^\times. \]
By assumption on $\phi$, this homomorphism is trivial on $U_{F_\alpha}^2$. Using the uniformizer $\omega$, we obtain a character
\[ \xi_{\alpha,\omega} : \xymatrix{ k_{F_\alpha}\ar[rr]^-{x\mapsto \omega x+1}&&U_{F_\alpha}^1/U_{F_\alpha}^2\ar[r]^-{\xi_\alpha}&\C^\times}. \]
With this, we define
\[ \chi_\alpha(\omega) = \lambda_{F_\alpha/F_{\pm\alpha}}(\xi_{\alpha,\omega})^{-1}, \]
where $\lambda$ is the Langlands $\lambda$-function \cite[Thm. 2.1]{LanArt} (see also \cite[\S1.5]{BH05b}). This concludes the construction of characters $\chi_\alpha : F_\alpha^\times \rw \C^\times$.

\begin{lem} The set $\{\chi_\alpha|\ \alpha \in R(\hat T,\hat G)\}$ just constructed is a set of $\chi$-data for the action of $W$ on $R(\hat T,\hat G)$ given by $\phi$. \end{lem}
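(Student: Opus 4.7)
The plan is to verify the four defining conditions of $\chi$-data from \cite[\S2.5]{LS87}: (a) $\chi_\alpha = 1$ for asymmetric $\alpha$; (b) $\chi_{\sigma\alpha} = \chi_\alpha \circ \sigma^{-1}$ for $\sigma \in W$; (c) $\chi_{-\alpha} = \chi_\alpha^{-1}$; and (d) for symmetric $\alpha$, $\chi_\alpha$ is trivial on $N_{F_\alpha/F_{\pm\alpha}}(F_\alpha^\times)$. Condition (a) holds by construction, and (d) is immediate in the inertially asymmetric case by the uniqueness of the unramified character trivial on norms. The substantive content lies in the inertially symmetric case and in the two equivariance conditions (b) and (c).

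First I would verify that the recipe $\chi_\alpha(\omega) = \lambda_{F_\alpha/F_{\pm\alpha}}(\xi_{\alpha,\omega})^{-1}$ consistently defines a single tamely-ramified character of $F_\alpha^\times$. If $\omega' = u\omega$ is another uniformizer with $u \in O_{F_\alpha}^\times$, then one checks that $\xi_{\alpha,\omega'}(x) = \xi_{\alpha,\omega}(\bar u x)$ with $\bar u \in k_{F_\alpha}^\times$, and the transformation law $\lambda_{F_\alpha/F_{\pm\alpha}}(\psi(a\,\cdot)) = \kappa_\alpha(a)\,\lambda_{F_\alpha/F_{\pm\alpha}}(\psi)$ for units $a$ (where $\kappa_\alpha$ is the quadratic residue character of the ramified extension) forces the restriction of $\chi_\alpha$ to $O_{F_\alpha}^\times$ to be the inflation of $\kappa_\alpha$ from $k_{F_\alpha}^\times$. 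This is exactly the unique tamely-ramified choice compatible with triviality on $N(F_\alpha^\times)$, so well-definedness and condition (d) fall out together. The only remaining check is at the uniformizer itself, where $N(\omega) = -\omega^2$, and the condition $\chi_\alpha(\omega)^2\chi_\alpha(-1) = 1$ follows from the squaring identity for the Weil constant when the residue characteristic is not $2$.

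For Galois equivariance (b), I would observe that $\sigma \in W$ transports the pair $(F_\alpha,\omega)$ to $(F_{\sigma\alpha},\sigma\omega)$ and, by the intertwining property of $\phi$ with the $W$-action on $R(\hat T,\hat G)$, carries the character $\alpha\circ\phi|_{P}$ to $(\sigma\alpha)\circ\phi|_{P}$; consequently $\xi_{\sigma\alpha,\sigma\omega}$ is identified with $\xi_{\alpha,\omega}$ under the induced isomorphism of residue fields, and the functoriality of $\lambda$ under isomorphisms of local data yields $\chi_{\sigma\alpha}(\sigma\omega) = \chi_\alpha(\omega)$. Promoting this to all of $F_{\sigma\alpha}^\times$ uses the norm-triviality already established. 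For (c), the character $(-\alpha)\circ\phi|_{P}$ is the inverse of $\alpha\circ\phi|_{P}$, so $\xi_{-\alpha,\omega} = \overline{\xi_{\alpha,\omega}}$, and the identity $\lambda_{F_\alpha/F_{\pm\alpha}}(\bar\psi)\,\lambda_{F_\alpha/F_{\pm\alpha}}(\psi) = \kappa_\alpha(-1)$, combined with the value of $\kappa_\alpha(-1)$ used in the preceding paragraph, delivers $\chi_{-\alpha}(\omega)\chi_\alpha(\omega) = 1$ on uniformizers and hence everywhere.

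The main obstacle I foresee is the careful bookkeeping of how $\lambda_{F_\alpha/F_{\pm\alpha}}$ transforms under the various changes of additive character and uniformizer: several potential sign and unit factors enter, and one must match them up consistently with the $\kappa_\alpha$-twist dictated by local class field theory. Once that accounting is done, each condition reduces to a standard identity for Langlands' $\lambda$-constant.
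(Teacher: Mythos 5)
Your proposal is correct and takes essentially the same approach as the paper: verify the $\chi$-data conditions by reducing each to a transformation law for the Langlands $\lambda$-constant, with the substantive work concentrated on the ramified (inertially symmetric) case, and using the twist rule $\lambda(a\psi)=\kappa_\alpha(a)\lambda(\psi)$ together with the squaring identity $\lambda^2 = \kappa_\alpha(-1)=\left(\tfrac{-1}{q}\right)$. One small correction in your treatment of (c): the identity you invoke, $\lambda_{F_\alpha/F_{\pm\alpha}}(\bar\psi)\,\lambda_{F_\alpha/F_{\pm\alpha}}(\psi)=\kappa_\alpha(-1)$, should read $\lambda(\bar\psi)\lambda(\psi)=1$ (since $\lambda(\bar\psi)=\overline{\lambda(\psi)}$ and $|\lambda|=1$); this gives $\chi_{-\alpha}(\omega)\chi_\alpha(\omega)=1$ outright and is equivalent, via the twist rule with $a=-1$, to the squaring identity the paper cites.
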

\begin{proof}
Let $\alpha \in R(\hat T,\hat G)$ and $\sigma \in \Gamma$. We need to show the following points
\begin{itemize}
\item $\chi_{-\alpha} = \chi_\alpha^{-1}$
\item $\chi_{\sigma\alpha} = \chi_\alpha\circ\sigma^{-1}$
\item $\chi|_{F_{\pm\alpha}^\times}$ is equal to the character corresponding to the extension $F_\alpha/F_{\pm\alpha}$ by local class field theory.
\end{itemize}
All of these points are obvious when the extension $F_\alpha/F_{\pm\alpha}$ is either trivial or unramified, so we focus on the case when this extension is ramified. First, replacing $\alpha$ by $-\alpha$ replaces $\xi_\alpha$ by $\xi_\alpha^{-1}$, hence $\xi_{\alpha,\omega}$ by $\xi_{\alpha,\omega}\circ(x \mapsto -x)$, and hence multiplies $\lambda_{F_\alpha/F_{\pm\alpha}}(\xi_{\alpha,\omega}^{-1})$ by $\left(\frac{-1}{q}\right)$. Thus we obtain
\[ \chi_{-\alpha}(\omega) = \left(\frac{-1}{q}\right)\chi_\alpha(\omega). \]
But it is known that $\lambda_{F_\alpha/F_{\pm\alpha}}^2=\left(\frac{-1}{q}\right)$, and the first point follows. The second point follows from the fact that $\sigma$ provides an isomorphism of extensions $F_\alpha/F_{\pm\alpha} \rw F_{\sigma\alpha}/F_{\pm\sigma\alpha}$ which transports $\xi_{\alpha,\omega}$ to $\xi_{\sigma\alpha,\sigma\omega}$ and from Fact \ref{fct:torinvgal}. For the third point, we must show that $\chi_\alpha$ restricts to the Legendre symbol on the Teichm\"uller representatives in $O_{F_\alpha}^\times$, and kills $-\omega^2$ for each uniformizer $\omega$. The first claim follows from the fact that multiplying $\omega$ by the Teichm\"uller representative of $u \in k_{F_\alpha}^\times$ replaces by $\xi_{\alpha,\omega}$ by $\xi_{\alpha,\omega}(u\cdot)$, while the second is equivalent to the claim that $\chi_\alpha(\omega^2)=\left(\frac{-1}{q}\right)$.

\end{proof}

\subsection{Parameterization of $L$-packets} \label{sec:lpackparm}

Our goal in this section is to provide a parameterization of the packets $\Pi_{\Phi,\xi,G'}$ from Section \ref{sec:lpackconst} in terms of $\hat G$ in accordance with the local Langlands conjecture. As has already been observed by Vogan \cite{Vog93}, the notion of an inner form is not rigid enough to allow for such a parameterization. This necessitates a rigidification of this notion. In other words, we must endow an inner form with additional structure and consider two inner forms as different if their additional structures differ, even if the underlying inner forms are the same. We will use the concept of extended pure inner forms, originally due to Kottwitz \cite{Kot85,Kot97}. We refer the reader to \cite{Kal11a} for an exposition of this notion, as well as the related notions of rational and stable conjugacy across extended pure inner forms.

The first step towards a parameterization is to replace the sets $\Pi_{\Phi,G'}$ with the single set $\Pi_\Phi$ consisting of equivalence classes of quadruples $(G^b,\xi,b,\pi)$ where $(\xi,b) : G \rw G^b$ is an extended pure inner twist (in particular $\xi : G \rw G^b$ is an inner twist), and $\pi \in \Pi_{\Phi,G^b}$. The notion of equivalence of quadruples, which is explained in \cite{Kal11a}, is such that if we fix an extended pure inner twist $(\xi,b) : G \rw G^b$, then the subset of $\Pi_\Phi$ consisting of elements corresponding to that extended pure inner twist is precisely the set $\Pi_{\Phi,\xi,G^b}$ discussed in Section \ref{sec:lpackconst}.

For each $\phi \in \Phi$ we have the diagonalizable group $S_\phi = \tx{Cent}(\phi,\hat G)$. Given $\phi,\phi' \in \Phi$, there exists $g \in \hat G$ with $\tx{Ad}(g)\phi=\phi'$, and this element provides an isomorphism $\tx{Ad}(g) : S_\phi \rw S_{\phi'}$. This isomorphism is independent of the choice of $g$, because $S_\phi$ is abelian. Thus we obtain a system $\{S_\phi\}_{\phi \in \Phi}$ of diagonalizable groups together with transition isomorphisms, and we call it $S_\Phi$.

In this section we are going to construct, given a Whittaker datum $(B,\psi_B)$ for $G$, a canonical bijection
\[ X^*(S_\Phi) \rw \Pi_\Phi. \]
Fix $\phi \in \Phi$. Choose any $L$-embedding ${^Lj}: {^LS}  \rw {^LG}$ within the $\hat G$-conjugacy class determined in Section \ref{sec:chispec} and satisfying Conditions \ref{cnd:ljx}. It provides a character $\chi_{S,{^Lj}} : S(F) \rw \C^\times$ as in Section \ref{sec:lpackconst}. It provides furthermore an isomorphism of diagonalizable groups
\[ \hat S^\Gamma \rw \tx{Cent}(\phi,\hat G). \]
Composing this isomorphism with the Kottwitz isomorphism we obtain an isomorphism
\begin{equation} \label{eq:d1} X^*(\tx{Cent}(\phi,\hat G)) \rw X^*(\hat S^\Gamma) = X_*(S)_\Gamma \rw \tb{B}(S). \end{equation}
According to Proposition \ref{pro:unigen} we can find an embedding $j_0 : S \rw G$ in the stable class dual to $[{^Lj}]$, such that the representation $\pi_{j_0,\chi_{S,^Lj}}$ is the unique generic constituent of the packet $\Pi_{\phi,\tx{id},G}$.

Given $\rho \in X^*(\tx{Cent}(\phi,\hat G))$, let $b_\rho \in E(S,Z)$ be any element mapping in $B(S)$ to the image of $\rho$ under \eqref{eq:d1}. We use $j_0$ to further map $b_\rho$ to $E(G,Z)$. Let $\xi_\rho : G \rw G^{b_\rho}$ be the corresponding inner twist. Then $j_\rho := \xi_\rho\circ j_0 : S \rw G'$ is an embedding defined over $F$, and we obtain the representation
\[ \pi_\rho := \pi_{j_\rho} \]
of the group $G'(F)$.

\begin{pro} The map
\[ \rho \mapsto (G^{b_\rho},\xi_\rho,b_\rho,\pi_\rho) \]
provides a well-defined bijection
\[ X^*(S_\Phi) \rw \Pi_\Phi \]
which is independent of the choice of $^Lj$ within its $\hat G$-conjugacy class.
\end{pro}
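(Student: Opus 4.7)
The plan is to verify four properties: the map is well-defined; it is independent of the choice of ${^Lj}$ within its $\hat G$-conjugacy class; it is surjective; and it is injective. I expect the independence of ${^Lj}$ to be the main obstacle, as it requires tracking several changes on different levels (the identification $\hat S^\Gamma \cong \tx{Cent}(\phi,\hat G)$, the class $b_\rho$, and the generic embedding $j_0$) that must cancel out.

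For well-definedness, the image of $\rho$ in $\tb{B}(S)$ via the isomorphism \eqref{eq:d1} is canonical. By Section \ref{sec:isoinner}, the preimage $b_\rho \in E(S,Z)$ is unique up to the equivalence relation on $E(S,Z)$, and equivalent preimages push forward via $j_0$ to equivalent elements of $E(G,Z)$, hence yield equivalent extended pure inner twists $(\xi_\rho, b_\rho)$. The resulting embedding $j_\rho = \xi_\rho \circ j_0$ is then determined up to rational conjugacy in $G^{b_\rho}(F)$, and the pullback character $\epsilon_{j_\rho}$ depends only on this rational class (since the toral invariant is transported along inner $G^{b_\rho}(F)$-automorphisms). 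Combined with Fact \ref{fct:repequiv}, this shows the quadruple $(G^{b_\rho}, \xi_\rho, b_\rho, \pi_\rho)$ is well-defined in $\Pi_\Phi$.

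For independence of ${^Lj}$, let ${^Lj_1}$ and ${^Lj_2} = {^Lj_1} \circ \hat{\tx{Ad}}(h)$ be two choices satisfying Conditions \ref{cnd:ljx}, so that by Lemma \ref{lem:changelj} we have $\chi_{S, {^Lj_2}} = \chi_{S, {^Lj_1}} \circ \tx{Ad}(h^{-1})$ for some $h \in \Omega(S,G)(F)$. The identification $\hat S^\Gamma \cong \tx{Cent}(\phi, \hat G)$ depends on ${^Lj}$, so the classes $b_\rho^{(1)}, b_\rho^{(2)} \in \tb{B}(S)$ produced from the two choices may differ by a Weyl shift by $h$. By Proposition \ref{pro:generic} applied to the transformed character and the uniqueness in Proposition \ref{pro:unigen} (using $\tx{Ad}$-invariance of the bilinear form defining $Y$), the generic embeddings satisfy $j_0^{(2)} = j_0^{(1)} \circ \tx{Ad}(h^{-1})$. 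Tracking the pushforward of $b_\rho^{(i)}$ to $E(G,Z)$ through $j_0^{(i)}$ shows that the two Weyl shifts cancel, so $(\xi_\rho^{(1)}, b_\rho^{(1)}) = (\xi_\rho^{(2)}, b_\rho^{(2)})$ as equivalence classes, and the resulting embeddings satisfy $j_\rho^{(2)} = j_\rho^{(1)} \circ \tx{Ad}(h^{-1})$. Applying \eqref{eq:changelj} then yields $\pi_{j_\rho^{(2)}, {^Lj_2}} = \pi_{j_\rho^{(2)} \circ \tx{Ad}(h), {^Lj_1}} = \pi_{j_\rho^{(1)}, {^Lj_1}}$, so the quadruple is unchanged.

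For surjectivity, any $(G^b, \xi, b, \pi) \in \Pi_\Phi$ has $\pi = \pi_{j, {^Lj}}$ for some admissible embedding $j : S \rw G^b$ lying in the stable class of $\xi \circ j_0$. Writing $j = \tx{Ad}(g) \circ \xi \circ j_0$ for some $g \in G^b(\ol F)$ produces an $S$-valued cocycle on $W_F$ whose class in $\tb{B}(S)$ pushes forward via $j_0$ to the class of $b$; via \eqref{eq:d1} this corresponds to the desired $\rho \in X^*(\tx{Cent}(\phi, \hat G))$. Since this construction is inverse to the one in the proposition, it also establishes injectivity: two characters producing equivalent quadruples must yield the same class in $\tb{B}(S)$ by this recipe, and hence must coincide.
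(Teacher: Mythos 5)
Your proof follows the same overall strategy as the paper: well-definedness of the quadruple from the image of $b_\rho$ in $\tb{B}(S)$, independence of ${^Lj}$ by tracking the Weyl shift $h$ from Lemma \ref{lem:changelj} and observing that the shift in the generic embedding $j_0$ cancels the shift in $b_\rho$, and bijectivity via Fact \ref{fct:repequiv} and the classification of rational conjugacy classes of admissible embeddings by $\tb{B}(S)$. Your detour through Propositions \ref{pro:generic} and \ref{pro:unigen} to identify $j_0^{(2)} = j_0^{(1)}\circ\tx{Ad}(h^{-1})$ is unnecessary---this follows directly from \eqref{eq:changelj}---but it does give the correct answer.

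There is one genuine gap: you never verify that the maps $X^*(S_\phi) \rw \Pi_\Phi$, constructed one $\phi$ at a time, are compatible with the transition isomorphisms $S_\phi \rw S_{\phi'}$ of the system $S_\Phi$. Since $X^*(S_\Phi)$ is by definition a compatible system of abelian groups indexed by $\phi \in \Phi$, a ``well-defined bijection $X^*(S_\Phi) \rw \Pi_\Phi$'' presupposes this coherence, and it does not follow formally from the three properties you list. The paper's proof devotes its final paragraph to precisely this point: choosing $g \in \hat G$ with $\tx{Ad}(g)\phi = \phi'$, it checks that $\tx{Ad}(g)$ carries the $\Gamma$-module $\hat S$ for $\phi$ to the one for $\phi'$, carries an ${^Lj}$ satisfying Conditions \ref{cnd:ljx} relative to $\phi$ to one satisfying them relative to $\phi'$, and hence carries $\phi_S$ to $\phi_{S'}$, so the two diagonal maps in the relevant triangle agree. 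Without this, you have only defined a family of bijections $X^*(S_\phi) \rw \Pi_\Phi$ that could a priori disagree after identification of the sources.
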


\begin{proof}
The fact that the equivalence class of the quadruple $(G^{b_\rho},\xi_\rho,b_\rho,\pi_\rho)$ depends only on the image of $b_\rho$ in $\tb{B}(S)$, and hence only on $\rho$, follows from the same argument as in the proof of \cite[Lemma 3.3.2]{Kal11a} and will be taken as proved. We will now show that, for a fixed $\phi$, the map $X^*(S_\phi) \rw \Pi_\Phi$ constructed above is independent of the choice of $^Lj$ within its conjugacy class. Applying Lemma \ref{lem:changelj} we see that any other $L$-embedding is of the form ${^Lj}\circ\hat{\tx{Ad}(h)}$. This latter $L$-embedding then gives rise, according to the same lemma, to the character $\chi_{S,{^Lj}}\circ\tx{Ad}(h^{-1}) = \tx{Ad}(h)_*\chi_{S,{^Lj}}$. We now have
\[ [j_0]_*(S,\chi_{S,{^Lj}}\cdot\epsilon_{j_0}) = [j_0\circ\tx{Ad}(h)]_*(S,\chi_{S,{^Lj}\circ\hat{\tx{Ad}}(h)}\cdot\epsilon_{j_0}) \]
and we see that changing ${^Lj}$ to ${^Lj}\circ\hat{\tx{Ad}(h)}$ has the effect of changing $j_0$ to $j_0\tx{Ad}(h)$. One sees immediately that the element $b_\rho$ associated to $\rho$ remains unchanged, and then so does the quadruple $(G^{b_\rho},\xi_\rho,b_\rho,\pi_\rho)$.

We have just shown that the map $X^*(S_\phi) \rw \Pi_\Phi$ depends only on $\phi$, and not on the choice of $^Lj$. The fact that it is bijective follows from Fact \ref{fct:repequiv} and the fact that $\tb{B}(S)$ classifies the rational conjugacy classes of embeddings of $S$ into the extended pure inner forms of $G$.

Finally, we claim that the system of maps $X^*(S_\phi) \rw \Pi_\Phi$ we have constructed for each $\phi \in \Phi$ provide a well-defined map $X^*(S_\Phi) \rw \Pi_\Phi$. This means that for any two $\phi,\phi'$ we have the diagram
\[ \xymatrix{X^*(S_\phi)\ar[rd]\ar[dd]\\&\Pi_\Phi\\X^*(S_\phi')\ar[ru] }\]
where the diagonal maps are given by the above construction, and the vertical map comes from the transition isomorphism $S_\phi \rw S_{\phi'}$. To that end, let $g \in \hat G$ be such that $\tx{Ad}(g)\phi = \phi'$. If $\hat S$ and $\hat S'$ are the $\Gamma$-modules associated to $\phi$ and $\phi'$ in the construction in Section \ref{sec:lpackconst}, then $\tx{Ad}(g)$ provides an isomorphism $\hat S \rw \hat S'$. If $^Lj_X : {^LS} \rw {^LG}$ is an embedding satisfying Conditions \ref{cnd:ljx} with respect to $\phi$, then $\tx{Ad}(g){^Lj_X}$ is one satisfying these conditions with respect to $\phi'$. In particular, we have $\phi_{S'}=\tx{Ad}(g)\phi_S$. The claim now follows.

\end{proof}

\subsection{Compatibility with the formal degree conjecture}
Fix an inner twist $\xi :G \rw G'$. In this section we will show that the $L$-packets $\Pi_{\xi,G'}$ satisfy the formal degree conjecture of Hiraga-Ichino-Ikeda \cite{HII08}. Let us briefly recall the statement of the conjecture. Let $\psi : F \rw \C^\times$ be a non-trivial character, and let $\mu_{G'/A,\psi}$ be the Haar measure on $G'(F)/A(F)$ defined in \cite{GG99}. Here $A$ is the maximal split torus in the center of $G$. Then the conjecture states that for each $\pi \in \Pi_{\xi,G'}$,
\[ d(\pi; \mu_{G'/A,\psi}) = \frac{\<1,\pi\>}{|\mc{S}_\phi^\natural|}|\gamma(0,\tx{Ad}\circ\phi,\psi)|. \]
Here, the left hand side is the formal degree if $\pi$ relative to the given Haar measure. On the right hand side, $\tx{Ad}$ denotes the representation of ${^LG}$ on the vector space $\tx{Lie}(\hat G)/\tx{Lie}(Z(\hat G)^\Gamma)$, and $\gamma(s,\tx{Ad}\circ\phi,\psi)$ is the $\gamma$-factor of the $W_F$-representation $\tx{Ad}\circ\phi$. The group $\mc{S}_\phi^\natural$ is the component group of the intersection of $S_\phi$ with the subgroup of $\hat G$ which is dual to $G/A$. Finally, $\<1,\pi\>$ is the value at $1$ of the character $\<\cdot,\pi\>$ of an irreducible representation conjecturally assigned to $\pi$. Since in our case the relevant finite group is abelian, this quantity is always equal to $1$.

The main result of this section is the following.
\begin{thm} \label{thm:formal} The $L$-packet $\Pi_{\xi,G'}$ satisfies the formal degree conjecture. \end{thm}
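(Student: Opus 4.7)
The plan is to compute both sides of the conjectured equality and match them, exploiting the explicit presentation of each $\pi \in \Pi_{\phi,\xi,G'}$ as a compactly induced representation together with the factorization $\phi = {^Lj}\circ\phi_{S,{^Lj}}$.

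For the left-hand side, each $\pi = \pi_{j}$ has the form $\tx{c-ind}_K^{G'(F)}\hat\chi$ with $K = j(S)(F)\,G'(F)_{y,1/e}$ and $\hat\chi$ a one-dimensional character, so its formal degree relative to $\mu_{G'/A,\psi}$ equals $\vol(K/A(F))^{-1}$. I would expand this volume using the Moy-Prasad material of Section \ref{sec:parahorics}: the quotient $G'(F)_{y,0:0+}$ identifies with the $k_F$-points of the connected reductive group $([\ts{G}_y]^{\Gamma_{E/F^u}})^\circ$, and the index $[G'(F)_{y,0}:G'(F)_{y,1/e}]$ is a power of $q$ determined by the dimensions of the $\zeta^{k/e}$-isotypic pieces of $\ts{g}_y$ for $0<k<e$. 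Crucially, twisting $\chi_{S,{^Lj}}$ by $\epsilon_j$ leaves $\vol(K/A(F))$ unchanged, so the formal degree is the same for all constituents of the packet, matching the fact that $|\mc{S}_\phi^\natural|$ on the right-hand side is also independent of $\rho$.

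For the right-hand side, I would decompose $\tx{Ad}\circ\phi$ as the sum of the $\Gamma$-modules $\hat{\mf{t}}/\tx{Lie}(Z(\hat G)^\Gamma)$ and $\hat{\mf{n}} = \bigoplus_{\alpha}\hat{\mf{g}}_\alpha$. The first summand is dual to $X^*(S)_\C - X^*(A)_\C$, and a comparison with $X^*(T_0)_\C$ for $T_0$ a minimal Levi of the quasi-split form allows me to apply Kottwitz's theorem from Section \ref{sec:torinvweil} and Corollary \ref{cor:torinveps}. This replaces the $\epsilon$-factor of the first summand by the Kottwitz sign $e(G')$ times a product of Langlands $\lambda$-constants indexed by $\Gamma$-orbits of symmetric roots of $(G,S)$, multiplied by the toral invariant $f_{(G',jS)}$. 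The second summand decomposes over $\Gamma$-orbits of roots, and because ${^Lj}$ is tamely ramified and $\phi$ is trivial on $I_F^{1/e+}$, each orbit contributes an explicit $\lambda$-factor together with a Gauss sum of the character $\xi_{\alpha,\omega}$ defined in Section \ref{sec:chispec}.

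The matching step is dictated by the construction itself: the values $\lambda_{F_\alpha/F_{\pm\alpha}}(\xi_{\alpha,\omega})^{-1}$ entering the $\chi$-data of Section \ref{sec:chispec} are precisely those whose product appears on the $\gamma$-factor side, and the toral invariant $f_{(G',jS)}$ appearing via Corollary \ref{cor:torinveps} is precisely the factor absorbed by the rectifying character $\epsilon_j$ (Lemma \ref{lem:torinvchar}). The inner-form discrepancy $e(G')$ is supplied by Corollary \ref{cor:weilinner} and Proposition \ref{pro:torinvstab}(3), and $\<1,\pi\>=1$ because $S_\phi$ is abelian under Conditions \ref{cnd:parm}. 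The absolutely simple simply-connected case of \cite{RY} provides the template, and the extra work here is the propagation through the $\epsilon_j$ twist and across inner forms. The main obstacle will be the meticulous bookkeeping of normalizations: the additive character $\psi$ enters both the Haar measure $\mu_{G'/A,\psi}$ and the analytic $\gamma$-factor, and matching requires consistent translation between group-level volumes, Lie-algebra-level Weil constants (Lemma \ref{lem:torinvweil}), and the arithmetic $\lambda$-constants of Section \ref{sec:chispec}; all the conceptual ingredients are in place, but the comparison must be carried out without leaving any stray sign or power of $q$ behind.
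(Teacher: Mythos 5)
Your proposal misidentifies the nature of the identity to be proved, and as a result proposes machinery that would not do the job. The formal degree conjecture asserts an equality of positive real numbers: $\deg(\pi;\mu_{G'/A,\psi}) = \frac{1}{|\mc{S}_\phi^\natural|}\,|\gamma(0,\tx{Ad}\circ\phi,\psi)|$. Only the \emph{absolute value} of the adjoint $\gamma$-factor appears, and for $\psi$ of level zero one has $|\epsilon(0,V,\psi)| = q^{a(V)/2}$ with $a(V)$ the Artin conductor. Thus all the phase-level quantities you bring in---$\lambda$-constants $\lambda_{F_\alpha/F_{\pm\alpha}}$, Kottwitz signs $e(G')$, toral invariants $f_{(G',jS)}$, and Gauss sums of $\xi_{\alpha,\omega}$---have absolute value $1$ and disappear entirely from the quantity you need. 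The claimed matching of ``the values $\lambda_{F_\alpha/F_{\pm\alpha}}(\xi_{\alpha,\omega})^{-1}$ entering the $\chi$-data'' against ``those whose product appears on the $\gamma$-factor side'' is therefore vacuous: neither enters the statement once you take absolute values. Corollary \ref{cor:torinveps} and Lemma \ref{lem:torinvweil} are tools for the sign- and phase-sensitive stability and endoscopy arguments of Section 5, not for formal degrees.

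The actual content of the proof is entirely a conductor-and-volume computation. On the Galois side one must compute $a(\tx{Ad}\circ\phi)$ from the ramification filtration (Conditions \ref{cnd:parm} give $\hat{\mf g}^{D_1}=\hat{\mf t}$ and $\hat{\mf g}^{D_0}=\hat{\mf z}^I$) and evaluate the two local $L$-factors $L(0,\tx{Ad}\circ\phi)$ and $L(1,\tx{Ad}\circ\phi)$, for which one needs the duality of $\C[\tx{Fr}]$-modules $M\otimes\C \leftrightarrow \hat{\mf z}^I/\hat{\mf z}^\Gamma$ with $M=X_*(Z/A)^I$. On the automorphic side one compares the Gross--Gan measure to the DeBacker--Reeder measure, computes $\vol(G'(F)_{y,0+}/A(F)_{0+})$ via the Moy--Prasad reductive quotient, and computes $[S'(F):S'(F)_{0+}A(F)]$ via parahoric data for $S'/A$. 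Finally, reconciling the two sides requires a genuine algebraic lemma of the form $|M_\Gamma| = |M_A^B|\cdot|M^A_B|$ for a finite-rank free $\Z$-module $M$ with $M^\Gamma=0$ and $\Gamma$ an extension of cyclic groups; this cardinality identity carries the real weight of the matching and is absent from your outline. Your observations that $\epsilon_j$ does not affect the volume and that $\langle 1,\pi\rangle=1$ are correct, but they are peripheral; without the conductor and volume computations and the final cardinality lemma, there is no proof.
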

\begin{proof}

We will first compute the right hand side. Recall that by definition,
\[ \gamma(s,V,\psi) = \epsilon(s,V,\psi)\frac{L(1-s,V^\vee)}{L(s,V)}. \]
The representation $\tx{Ad}$ being self-dual, we see
\[ \gamma(0,\tx{Ad}\circ\phi,\psi) = \epsilon(0,\tx{Ad}\circ\phi,\psi)\frac{L(1,\tx{Ad}\circ\phi)}{L(0,\tx{Ad}\circ\phi)}. \]

\begin{lem} Assume that $\psi$ is of order zero. Then \label{lem:epsilon}
\[ \log_q |\epsilon(0,\tx{Ad}\circ\phi,\psi)| = \frac{1}{2}(\dim \mf{\hat g} - \dim \mf{\hat z}^I + \#R/e), \]
where $R$ is the absolute root system of $G$ and $e$ is the order of the regular elliptic element giving the action of inertia on $\hat T$ through $\phi$.
\end{lem}
\begin{proof}
We know that $\epsilon(0,V,\psi)=w(V)q^{a(V)(\frac{1}{2}-s)}$ where $w(V)$ is the root number (being equal to $\det V(-1)$ when $V$ is self-dual), and $a(V)$ is the Artin conductor. For the representation $V=\tx{Ad}\circ\phi$, the lower ramification filtration ends at $D_2=\{1\}$, and we have
\[ \mf{\hat g}^{D_1} = \mf{\hat t}\qquad\tx{and}\qquad \mf{\hat g}^{D_0} = \mf{\hat z}^I, \]
according to Conditions \ref{cnd:parm}. It follows that
\begin{eqnarray*}
a(V)&=&\dim([\mf{\hat g}/\mf{\hat z}^\Gamma]/[\mf{\hat z}^I/\mf{\hat z}^\Gamma])+\frac{1}{e}\dim([\mf{\hat g}/\mf{\hat z}^\Gamma]/[\mf{\hat t}/\mf{\hat z}^\Gamma]) \\
&=&\dim(\mf{\hat g})-\dim(\mf{\hat z}^I)+\#R/e
\end{eqnarray*}
\end{proof}

Let $M=X_*(Z/A)^I$. This is a finite-rank free $\Z$-module with Frobenius action. In fact, it is the cocharacter module of the maximal reductive quotient $\ts{Z}/\ts{A}$ of the special fiber of the Iwahori group scheme of $[Z/A](F)$. In particular, $(M \otimes \ol{k_F}^\times)^\tx{Fr}$ is the set of $k_F$-points of $\ts{Z}/\ts{A}$.

\begin{lem} \label{lem:dualcfrmod} The $\C[\tx{Fr}]$-modules $M \otimes \C$ and $\mf{\hat z}^I/\mf{\hat z}^\Gamma$ are dual to each other.
\end{lem}
\begin{proof}
The $F$-torus $Z^\circ$ has as its complex dual torus $\hat Z/\hat Z_\tx{der}$. Thus the $\C[\Gamma]$-module $X_*(Z^\circ)\otimes \C$ is dual to $X_*(\hat Z/\hat Z_\tx{der}) \otimes \C$. The inclusion $X_*(\hat Z) \rw X_*(\hat Z/\hat Z_\tx{der})$ becomes an isomorphism after tensoring with $\C$, and we see that the dual of the $\C[\Gamma]$-module $X_*(Z) \otimes \C$ is $\mf{\hat z}$. Thus $M\otimes \C$, being equal to $\tx{cok}([X_*(Z)\otimes \C]^\Gamma \rw [X_*(Z) \otimes \C]^I)$, is dual to $\tx{ker}(\mf{\hat z}_I \rw \mf{\hat z}_\Gamma)$, which itself is equal to $\tx{cok}(\mf{\hat z}^\Gamma \rw \mf{\hat z}^I)$.
\end{proof}

\begin{lem} \label{lem:l1}
\[ |L(1,\tx{Ad}\circ\phi)| = q^{\dim(\mf{\hat z}^I/\mf{\hat z}^\Gamma)}\cdot|(M\otimes \ol{k_F}^\times)^\tx{Fr}|^{-1}. \]
\end{lem}
\begin{proof}
We have $L(1,\tx{Ad}\circ\phi) = \tx{det}(1-q^{-1}\tx{Ad}(\phi(\tx{Fr}))|(\mf{\hat g}/\mf{\hat z}^\Gamma)^{\tx{Ad}(\phi(I))})^{-1}$. Arguing as in the proof of Lemma \ref{lem:epsilon}, we see $(\mf{\hat g}/\mf{\hat z}^\Gamma)^{\tx{Ad}(\phi(I))} = (\mf{\hat g})^{D_0}/\mf{\hat z}^\Gamma = \mf{\hat z}^I/\mf{\hat z}^\Gamma$. Thus
\begin{eqnarray*}
L(1,\tx{Ad}\circ\phi)^{-1}&=&\det(1-q^{-1}\tx{Fr}| \mf{\hat z}^I/\mf{\hat z}^\Gamma ) \\
&=&\det(q^{-1}\tx{Fr}(q\tx{Fr}^{-1}-1)| \mf{\hat z}^I/\mf{\hat z}^\Gamma ) \\
&=&q^{-\dim(\mf{\hat z}^I/\mf{\hat z}^\Gamma)}\cdot\det(\tx{Fr}|\mf{\hat z}^I/\mf{\hat z}^\Gamma)\cdot\det(q\tx{Fr}^{-1}-1|\mf{\hat z}^I/\mf{\hat z}^\Gamma)
\end{eqnarray*}
The second of the three factors is a product of roots of unity. The third factor equals up to a sign $\det(1-q\tx{Fr}^{-1}|\mf{\hat z}^I/\mf{\hat z}^\Gamma)$, and using Lemma \ref{lem:dualcfrmod} we obtain
\[ \det(1-q\tx{Fr}^{-1}|\mf{\hat z}^I/\mf{\hat z}^\Gamma) = \det(1-q\tx{Fr}|M\otimes\C) = (M\otimes\ol{k_F}^\times)^\tx{Fr}. \]
\end{proof}

\begin{lem} \label{lem:l0}
\[ |L(0,\tx{Ad}\circ\phi)| = |M_\tx{Fr}|^{-1}. \]
\end{lem}
\begin{proof}
We have
\begin{eqnarray*}
L(0,\tx{Ad}\circ\phi)^{-1}&=&\det(1-\tx{Fr}|\mf{\hat z}^I/\mf{\hat z}^\Gamma)\\
&=&\det(\tx{Fr}|\mf{\hat z}^I/\mf{\hat z}^\Gamma)\cdot\det(\tx{Fr}^{-1}-1|\mf{\hat z}^I/\mf{\hat z}^\Gamma)\\
&=&\det(\tx{Fr}|\mf{\hat z}^I/\mf{\hat z}^\Gamma)\cdot\det(\tx{Fr}-1|M \otimes \C)\\
&=&\det(\tx{Fr}|\mf{\hat z}^I/\mf{\hat z}^\Gamma)\cdot|M/(\tx{Fr}-1)M|
\end{eqnarray*}
The first factor is again a product of roots of unity.
\end{proof}

\begin{lem} \label{lem:snatural}
\[ |\mc{S}^\natural| = |X_*(S/A)_\Gamma|. \]
\end{lem}
\begin{proof}
We have
\[ S^\natural = S_\phi \cap \hat{G/A} = \hat S^\Gamma \cap \hat{G/A} = \hat{S/A}^\Gamma = \tx{Hom}(X_*(S/A)_\Gamma,\C^\times). \]
The torus $S/A$ being anisotropic, the abelian group $X_*(S/A)_\Gamma$ is finite, hence the result.
\end{proof}

Combining the above results, we obtain the following expression for the right hand side of the formal degree conjecture.
\begin{cor} \label{cor:formalrhs}
\[ \frac{|\gamma(0,\tx{Ad}\circ\phi,\psi)|}{|\mc{S}^\natural|} = \frac{q^{\frac{1}{2}(\dim(\mf{\hat g})+\dim(\mf{\hat z}^I)+\#R/e)-\dim(\mf{\hat z}^\Gamma)}}{|M_\tx{Fr}|^{-1}|X_*(S/A)_\Gamma||(M \otimes\ol{k_F}^\times)^\tx{Fr}|}. \]
\end{cor}

We now turn to the computation of the formal degree. The first step is to compare the Haar measure of Gross-Gan \cite{GG99} with the one of DeBacker-Reeder \cite[\S5]{DR09}.

\begin{lem} \label{lem:compmeas} Let $\nu_{G'}$ be the Haar measure on $G'(F)$ normalized as in \cite[\S5]{DR09}. Assume that $\psi$ has order zero and let $\mu_{G'}$ be the Haar measure on $G'(F)$ defined in \cite{GG99} with respect to the $\psi$-self-dual absolute value on $F$. Then
\[ \nu_{G'} = q^{\frac{1}{2}\dim(G)}\mu_{G'}. \]
\end{lem}
\begin{proof}
We take $I^+$ to be the pro-unipotent radical of an Iwahori-subgroup of $G'(F)$. Then according to \cite[(4.11)]{Gr97} and \cite[\S5]{GG99}, we have
\[ -\log_q\vol(I^+,\mu_{G', \psi}) = \frac{1}{2}a(M_{G})+\sum_{d \geq 1} (d-1)\dim(V_d^I) + \tx{rank}_{F^u}G, \]
where
\[ M_G = \bigoplus_{d \geq 1} V_d(1-d) \]
is the motive of $G$ (the quasi-split inner form of $G'$), and
\[ a(M_G) = \sum_{d \geq 1} (2d-1)a(V_d) \]
is its Artin conductor \cite[(4.3)]{GG99}. Since $G$ is tamely-ramified, so are all the $\Gamma$-modules $V_d$, and thus
\[ a(M_G) = \sum_{d \geq 1} (2d-1)\dim(V_d/V_d^I). \]
A short computation reveals
\[ -\log_q\vol(I^+,\mu_{G', \psi}) = \frac{1}{2}\sum_{d \geq 1}\Big((2d-1)\dim(V_d)-\dim(V_d^I)\Big) + \tx{rank}_{F^u}G. \]
On the other hand, we have from \cite[\S1]{Gr97} the formulas
\[ \sum_{d \geq 1}(2d-1)\dim(V_d) = \dim(G)\qquad\sum_{d \geq 1} V_d = E\qquad \tx{rank}_{F^u}G = \dim(E^I), \]
which bring us to
\[ -\log_q\vol(I^+,\mu_{G', \psi}) = \frac{1}{2}(\dim(G)+\dim(E^I)). \]
On the other hand,
\[ \vol(I^+,\nu_{G'}) = [I:I^+]^{-1}\vol(I,\nu_{G'}) = |\tx{Lie}(\ts{I})(k_F)|^{-\frac{1}{2}} = q^{-\frac{1}{2}\dim(\ts{I})}, \]
where $\ts{I}$ is the reductive quotient of the special fiber of the Iwahori group scheme of $G'$. It's dimension is equal to the dimension of $E^I$, and the proof is complete.
\end{proof}

We now proceed to compute the formal degree of a constituent $\pi$ of the $L$-packet $\Pi_{\phi,\xi,G'}$. This representation is of the form (Section \ref{sec:repconst})
\[ \pi = \pi_{S',\chi'} = \textrm{c-ind}_{S'(F)G'(F)_{y,0+}}^{G'(F)} \hat\chi, \]
where $S'$ is a maximal torus of $G'$ which is the image of an admissible embedding $j : S \rw G'$, $y$ is the point in the reduced Bruhat-Tits building of $G'$ determined by $S'$, and $\hat\chi$ is a one-dimensional character of the inducing group $S'(F)G'(F)_{y,0+}$. It follows that
\[ \deg(\pi;\mu_{G'/A,\psi}) = \vol\left(\frac{S'(F)G'(F)_{y,0+}}{A(F)};\mu_{G'/A,\psi}\right)^{-1}. \]
We have the exact sequence
\[ 1 \rw \frac{G'(F)_{y,0+}}{A(F)_{0+}} \rw \frac{S'(F)G'(F)_{y,0+}}{A(F)} \rw \frac{S'(F)}{S'(F)_{0+}A(F)} \rw 1. \]
The final term of this sequence is a finite abelian group. Applying Lemma \ref{lem:compmeas}, we see
\[ \deg(\pi;\mu_{G'/A,\psi}) = q^{\frac{1}{2}\dim(G'/A)}\vol\left(\frac{G'(F)_{y,0+}}{A(F)_{0+}};\nu_{G'/A}\right)^{-1}\left|\frac{S'(F)}{S'(F)_{0+}A(F)}\right|^{-1}. \]
\begin{lem} \label{lem:vol1}
We have
\[ -\log_q\vol\left(\frac{G'(F)_{y,0+}}{A(F)_{0+}};\nu_{G'/A}\right) = \frac{1}{2}(\tx{rk}(M)+\#R/e). \]
\end{lem}
\begin{proof}
By definition,
\[ \vol(G'(F)_{y,0+};\nu_{G'}) = |\tx{Lie}(\ts{G'}_y)(k_F)|^{-\frac{1}{2}}, \]
where $\ts{G'}_y$ is the reductive quotient of the special fiber of the parahoric of $G'$ associated to $y$. Thus, we have
\[ -\log_q\vol(G'(F)_{y,0+};\nu_{G'}) = \frac{1}{2}\dim(\ts{G'}_y). \]

Let $\ts{G'}_{E,y}$ be the reductive quotient of the special fiber of the parahoric of $G' \times E$ associated to $y$. As argued in Section \ref{sec:parahorics}, we then have
\[ -\log_q\vol(G'(F)_{y,0+};\nu_{G'}) = \frac{1}{2}\dim(\ts{G'}_{E,y}^I). \]
Using the root-decomposition of the Lie-algebra of $\ts{G'}_{E,y}$ with respect to the adjoint action of the reduction of $S$, and the fact that $I$ acts elliptically on $S$ and regularly on $R(S,G)$, we see that $\dim(\ts{G'}_{E,y}^I)=\dim(\ts{Z})+\#R/e$.

On the other hand, we have $-\log_q\vol(A(F)_{0+},\nu_A) = \frac{1}{2}\dim(A)$. Recalling that $M$ is the cocharacter module of $\ts{Z}/\ts{A}$, we obtain the result.
\end{proof}

\begin{lem} \label{lem:vol2} We have
\[ \left|\frac{S'(F)}{S'(F)_{0+}A(F)}\right| = |X_*(S/A)_I^\tx{Fr}|\cdot|(M\otimes\ol{k_F}^\times)^\tx{Fr}|.\]
\end{lem}
\begin{proof}
The torus $S'$ being tamely ramified, the group $S'(F)_{0+}$ surjects onto $[S'/A](F)_{0+}$, and we see that
\[ \left|\frac{S'(F)}{S'(F)_{0+}A(F)}\right| = \left|\frac{[S'/A](F)}{[S'/A](F)_{0+}}\right|. \]
It is known \cite{HR08} that
\[ [[S'/A](F):[S'/A](F)_0] = |X_*(S/A)_I^\tx{Fr}|, \]
so it remains to compute $[[S'/A](F)_0:[S'/A](F)_{0+}]$. This is the number of $k_F$-points in the reductive quotient of the special fiber of the parahoric group scheme of $S'/A$. This is the connected component of the $I$-fixed points in the reductive quotient of the special fiber of the parahoric group scheme of $[S'/A] \times E$, and its character module is $M$. We conclude
\[ [[S'/A](F)_0:[S'/A](F)_{0+}] = |(M \otimes \ol{k_F}^\times)^\tx{Fr}| \]
and the statement follows.
\end{proof}
The last two lemmas allow us to conclude
\[ \tx{deg}(\pi;\mu_{G'/A,\psi}) = \frac{q^{\frac{1}{2}(\dim(G'/A)+\rk(M)+\#R/e)}}{|X_*(S/A)_I^\tx{Fr}|\cdot|(M\otimes\ol{k_F}^\times)^\tx{Fr}|}. \]

Recall that $M=X_*(Z/A)^I$, and hence
\[ \tx{rk}(M)=\dim(Z^I)-\dim(A)=\dim(\mf{\hat z}^I)-\dim(\mf{\hat z}^\Gamma). \]
Thus, comparing the formula for $\tx{deg}(\pi;\mu_{G'/A,\psi})$ above with the one in Corollary \ref{cor:formalrhs} and noticing that $X_*(S/A)^I=M$ by the ellipticity of the $I$-action on $S$, we see that the proof of Theorem \ref{thm:formal} will be complete once we establish the following elementary algebraic lemma.

\begin{lem} Let $\Gamma$ be an extension of the finite cyclic group $B$ by the finite cyclic group $A$, and let $M$ be a finite-rank free $\Z$-module with $\Gamma$-action such that $M^\Gamma=0$. Then
\[ |M_\Gamma| = |M_A^B| \cdot |M^A_B|. \]
\end{lem}
\begin{proof}
We have the exact sequence
\[ 0 \rw M_{A,\tx{tor}} \rw M_A \rw M^A \rw M^A/\tx{tr}_A M \rw 0, \]
where the middle map is given by the trace of the $A$-action. If we let $M_{A,\tx{free}}$ be the maximal torsion-free quotient of $M_A$, we see that the $B$-module $M_{A,\tx{free}}$ is a submodule of the $B$-module $M^A$. Our assumption implies that $H^0(B,M^A)=0$, and hence
\[ H^{2n}_\tx{Tate}(B,M^A) = 0,\qquad \forall n \in \Z. \]
By what we just argued, the same holds also with $M^A$ replaced by $M_{A,\tx{free}}$. Consider the exact sequence
\[ 0 \rw M_{A,\tx{tor}} \rw M_A \rw M_{A,\tx{free}} \rw 0. \]
On the one hand, the above cohomology vanishing statement implies that $H_1(B,M_{A,\tx{free}})=0$, and hence
\begin{equation} \label{eq:el1}
|M_\Gamma| = |M_{A,\tx{free},B}|\cdot|M_{A,\tx{tor},B}|.
\end{equation}
On the other hand, from $H^0(B,M_{A,\tx{free}})=0$ we obtain
\begin{equation} \label{eq:el2}
|M_A^B| = |(M_{A,\tx{tor}})^B|=|M_{A,\tx{tor},B}|.
\end{equation}
Next, we consider the exact sequence
\[ 0 \rw M_{A,\tx{free}} \rw M^A \rw M^A/\tx{tr}(M) \rw 0 \]
and obtain from $H_1(B,M^I)=0$ the exact sequence
\[ 0 \rw H_1(B,M^A/\tx{tr}_A(M)) \rw M_{A,\tx{free},B} \rw M^A_B \rw (M^A/\tx{tr}_A(M))_B \rw 0. \]
We claim that the outer terms in that sequence have the same cardinality. To save notation, let $X=M^A/\tx{tr}_A(M)$. Then
\[ |H_1(B,X)| = |H^0_\tx{Tate}(B,X)| = X^B/\tx{tr}_B(X). \]
The map $\tx{tr}_B : X \rw X$ is induced from the map $\tx{tr}_B : M^A \rw M^A$, but the latter map takes image in $M^\Gamma=0$. Thus
\[ |H_1(B,X)| = |X^B| = |X_B| \]
as claimed, and we conclude $|M_{A,\tx{free},B}| = |M^A_B|$. Combining this with \eqref{eq:el1} and \eqref{eq:el2} finishes the proof of the lemma, and with it also the proof of the theorem.
\end{proof}

\end{proof}

\subsection{Central and cocentral characters} \label{sec:centchar}

In this Section we want to study how the $L$-packets we have constructed behave with respect to central and cocentral characters. We begin by
giving a reformulation of two constructions, originally due to Langlands \cite[\S10]{Bo77}. The first construction assigns to any Langlands parameter $\phi : W_F \rw {^LG}$ a character $\chi_{\phi,Z} : Z(F) \rw \C^\times$. The second construction assigns to any Langlands parameter $\phi_z : W_F \rw{^LG}$ which factors through $\hat Z \rtimes W_F$ a character $\chi_{\phi_z,G} : G(F) \rw \C^\times$ which is trivial on the image of $G_\tx{sc}(F)$ in $G(F)$ (we will call such characters \emph{cocentral}). Here $Z$ is the center of $G$, and $\hat Z$ is the center of $\hat G$. These constructions are related to the conjectural local Langlands correspondence as follows:
\begin{itemize}
\item If $\phi : W_F \rw {^LG}$ is a Langlands parameter with corresponding packet $\Pi_\phi$, then the central character of each constituent of $\Pi_\phi$ is equal to $\chi_{\phi,Z}$.
\item Moreover, for any $\phi_z : W_F \rw \hat Z \rtimes W_F$, one has $\Pi_{\phi_z \otimes \phi} = \chi_{\phi_z,G} \otimes \Pi_\phi$.
\end{itemize}

The constructions given in \cite[\S10]{Bo77}, which follow Langlands' original exposition in \cite{Lan88}, proceed by replacing the group $G$ by certain auxiliary groups. We are going to give here a reinterpretation of these constructions that avoids the use of these auxiliary groups. This considerations are quite general, and are valid for any connected reductive group $G$ defined over any local field $F$ of characteristic zero. We will adopt this level of generality while giving them. Afterwards, we will consider again the case of tamely ramified $p$-adic groups  and show that the above conjectural properties hold for our $L$-packets.

For now, let $F$ by a local field of characteristic zero, and let $G$ by any connected reductive group defined over $F$. The key to our approach lies in the cohomological pairings for complexes of tori of length 2 constructed by Kottwitz and Shelstad in \cite[\S A]{KS99}, as well as the systematic use of the cohomology of crossed modules. We refer the reader to the paper \cite{No11} for an exposition on the cohomology of crossed modules, with the warning that our notation is shifted by $1$ from the notation used there, so that $H^0$ in our notation corresponds to $H^{-1}$ in the notation of \cite{No11}. This shift ensures that if $T \rw U$ is a map of tori, then its cohomology as a crossed module agrees with its hypercohomology when viewed as a complex placed in degree $0$ and $1$, and the latter situation is the one studied in \cite[\S A]{KS99}.

The crossed modules that we will be using are $[G_\tx{sc} \rw G]$ and $[\hat G_\tx{sc} \rw \hat G]$. These crossed modules are endowed with symmetric braidings: Let $g,h \in G$ and choose $g_\tx{sc},h_\tx{sc} \in G_\tx{sc}$ whose images in $G_\tx{ad}$ agree with those of $g$ resp. $h$. Then the symmetric braiding on $[G_\tx{sc} \rw G]$ is defined by $\{g,h\} := g_\tx{sc}^{-1}h_\tx{sc}^{-1}g_\tx{sc}h_\tx{sc}$. The braiding on $[\hat G_\tx{sc} \rw \hat G]$ is defined in the same way. The existence of these symmetric braidings ensures that $H^1(F,G_\tx{sc} \rw G)$ and $H^2(W_F,\hat G_\tx{sc} \rw \hat G)$ are abelian groups.

\begin{pro} \label{pro:dualities} There are canonical isomorphisms
\[ \tx{Hom}_\tx{cts}(Z(F),\C^\times) \cong H^2(W_F,\hat G_\tx{sc} \rw \hat G) \]
and
\[ \tx{Hom}_\tx{cts}(H^1(F,G_\tx{sc} \rw G),\C^\times) \cong H^1(W_F,\hat Z). \]
\end{pro}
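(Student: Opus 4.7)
The plan is to realize both isomorphisms as instances of the Tate--Nakayama-style duality for two-term complexes of tori developed in \cite[App.~A]{KS99}, which provides for any map $A \to B$ of $F$-tori (viewed as a complex in degrees $(0,1)$) a perfect pairing
\[
H^i(F, A \to B) \times H^{2-i}(W_F, \hat B \to \hat A) \to \C^\times,
\]
with dual complex obtained by dualizing each torus and reversing the arrow. The preliminary step is to replace each crossed module appearing in the statement by a quasi-isomorphic complex of $F$-tori, which is accomplished by passing to a maximal torus.

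Fix a maximal $F$-torus $T \subset G$, with preimage $T_\tx{sc} \subset G_\tx{sc}$ and image $T_\tx{ad} \subset G_\tx{ad}$. The natural inclusion of crossed modules $[T_\tx{sc} \to T] \hookrightarrow [G_\tx{sc} \to G]$ induces the identity on $\pi_0 = G/G_\tx{der}$ (since $G = T \cdot G_\tx{der}$) and on $\pi_1 = \ker(G_\tx{sc} \to G)$ (which is contained in $T_\tx{sc}$); moreover the induced action of the abelian $T$ on $T_\tx{sc}$ is trivial, so $[T_\tx{sc} \to T]$ is a complex of tori and the inclusion is a quasi-isomorphism of braided crossed modules. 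Hence $H^i(F, [G_\tx{sc} \to G]) = H^i(F, [T_\tx{sc} \to T])$, and analogously $H^i(W_F, [\hat G_\tx{sc} \to \hat G]) = H^i(W_F, [\hat T_\tx{sc} \to \hat T])$ after choosing a $\Gamma$-stable maximal torus $\hat T \subset \hat G$ with preimage $\hat T_\tx{sc} \subset \hat G_\tx{sc}$. For the second isomorphism, the KS-dual of $[T_\tx{sc} \to T]$ is $[\hat T \to \hat T_\tx{ad}]$, where $\hat T_\tx{ad}$ is the maximal torus of $\hat G_\tx{ad}$ dual to $T_\tx{sc}$; its arrow is the canonical surjection with kernel $\hat Z = Z(\hat G)$, so
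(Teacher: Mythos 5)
Your approach is essentially the one taken in the paper: pass from the crossed modules $[G_\tx{sc}\to G]$, $[\hat G_\tx{sc}\to\hat G]$ to quasi-isomorphic complexes of tori via maximal tori, and invoke the cohomological dualities for two-term complexes of tori from \cite[App.~A]{KS99}. Your direct inclusion $[T_\tx{sc}\to T]\hookrightarrow [G_\tx{sc}\to G]$ is a slight variant of the paper's zig-zag $[T_\tx{sc}\to T]\leftarrow [Z_\tx{sc}\to Z]\to [G_\tx{sc}\to G]$, and your argument via $\pi_0$ and $\pi_1$ for it being a quasi-isomorphism of braided crossed modules is sound.

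The proposal is cut off mid-sentence, though, before the duality is actually applied, and the part that is missing carries the real content. After reducing to complexes of tori, the first isomorphism still requires \cite[Lemma~A.3.A]{KS99} to identify $H^2(W_F,\hat T_\tx{sc}\to\hat T)$ with $\tx{Hom}_\tx{cts}(H^0(F,T\to T_\tx{ad}),\C^\times)$, followed by the quasi-isomorphism $[Z\to 1]\to [T\to T_\tx{ad}]$ to reach $Z(F)$. For the second isomorphism the corresponding duality step is \cite[Lemma~A.3.B]{KS99}, and there the paper records a preliminary point that your blanket appeal to a ``perfect pairing'' in all degrees passes over: one first observes, as in \cite{Kot84}, that the natural map $\hat T_\tx{ad}^\Gamma\to H^1(W_F,\hat T\to\hat T_\tx{ad})$ factors through $\pi_0(\hat T_\tx{ad}^\Gamma)$, and only then applies Lemma~A.3.B. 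The final identification of $H^1(W_F,\hat T\to\hat T_\tx{ad})$ with $H^1(W_F,\hat Z)$, using that $\hat Z$ is the kernel of the surjection $\hat T\to\hat T_\tx{ad}$, is the step you were beginning to state.
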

\begin{proof}
Let $\hat T$ be a maximal torus of $\hat G$ that is part of a $\Gamma$-invariant splitting. The inclusions
\[ [\hat T_\tx{sc} \rw \hat T] \lw [\hat Z_\tx{sc} \rw \hat Z] \rw [\hat G_\tx{sc} \rw \hat G] \]
induce isomorphisms on $W_F$-cohomology. According to \cite[Lemma A.3.A]{KS99}, $H^2(W_F,\hat T_\tx{sc} \rw \hat T)$ is the group of characters of $H^0(F,T\rw T_\tx{ad})$, where $T$ is the minimal Levi in the quasi-split inner form of $G$. This establishes the first isomorphism, since the map $[Z \rw 1] \rw [T \rw T_\tx{ad}]$ provides again an isomorphism on $\Gamma$-cohomology.

The second isomorphism is proved in an analogous fashion. Namely, we have
\[ H^1(W_F,\hat Z) = H^1(W_F,\hat T \rw \hat T_\tx{ad}). \]
Let us consider the natural map from $\hat T_\tx{ad}^\Gamma$ to the above group. It is known \cite{Kot84} that this map factors through $\pi_0(\hat T_\tx{ad}^\Gamma)$. Thus, according to \cite[Lemma A.3.B]{KS99}, we have a canonical isomorphism
\[ H^1(W_F,\hat T \rw \hat T_\tx{ad}) = \tx{Hom}_\tx{cts}(H^1(F,T_\tx{sc} \rw T),\C^\times). \]
The second isomorphism in the statement of the proposition now follows from the fact that
\[ [T_\tx{sc} \rw T] \lw [Z_\tx{sc} \rw Z] \rw [G_\tx{sc} \rw G] \]
induce isomorphisms on $\Gamma$-cohomology.
\end{proof}

Proposition \ref{pro:dualities} relates to Langlands' constructions as follows: The exact sequence of crossed modules
\[ 1 \rw [1 \rw \hat G] \rw [\hat G_\tx{sc} \rw \hat G] \rw [\hat G_\tx{sc} \rw 1] \rw 1 \]
induces a map
\[ H^1(W_F,\hat G) = H^2(W_F,1 \rw \hat G) \rw H^2(W_F,\hat G_\tx{sc} \rw \hat G).  \]
On the other hand, the dual exact sequence
\[ 1 \rw [1 \rw G] \rw [G_\tx{sc} \rw G] \rw [G_\tx{sc} \rw 1] \rw 1 \]
induces a long exact sequence on cohomology
\[ 1 \rw \ker(G_\tx{sc} \rw G)(F) \rw G_\tx{sc}(F) \rw G(F) \rw H^1(F,G_\tx{sc} \rw G) \rw H^1(F,G_\tx{sc}). \]
Thus we obtain an injection $G(F)/G_\tx{sc}(F) \rw H^1(F,G_\tx{sc} \rw G)$ which in the $p$-adic case is also bijective due to Kneser's theorem on the vanishing of $H^1(F,G_\tx{sc})$.
\begin{pro}
The maps
\[ H^1(W_F,\hat G) \rw H^2(W_F,\hat G_\tx{sc} \rw \hat G) \cong \tx{Hom}_\tx{cts}(Z(F),\C^\times) \]
and
\[ H^1(W_F,\hat Z) \cong \tx{Hom}_\tx{cts}(H^1(F,G_\tx{sc}\rw G),\C^\times) \rw \tx{Hom}_\tx{cts}(G(F)/G_\tx{sc}(F),\C^\times) \]
coincide with Langlands' constructions.
\end{pro}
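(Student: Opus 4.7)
The plan is to match both of our descriptions with Langlands' by reducing everything to the local Langlands correspondence for tori through a $z$-extension, and then invoking the naturality of the Kottwitz--Shelstad pairings of \cite[\S A]{KS99}. Fix a $z$-extension $1 \rw C \rw G_1 \rw G \rw 1$ with $G_{1,\tx{der}}$ simply connected, so that $G_{1,\tx{sc}} = G_{1,\tx{der}} \hookrightarrow G_1$ is an embedding and $C$ is an induced torus (in particular $H^1(F,C)=0$). Consequently the crossed module $[G_{1,\tx{sc}} \rw G_1]$ is quasi-isomorphic to the torus $D_1 := G_1/G_{1,\tx{der}}$ placed in degree $1$, and dually $[\hat G_{1,\tx{sc}} \rw \hat G_1]$ is quasi-isomorphic to $\hat D_1$, giving
\[ H^2(W_F,[\hat G_{1,\tx{sc}} \rw \hat G_1]) \cong H^1(W_F,\hat D_1). \]
Under this identification the map $H^1(W_F,\hat G_1) \rw H^2(W_F,[\hat G_{1,\tx{sc}} \rw \hat G_1])$ provided by Proposition \ref{pro:dualities} is just the abelianization $\phi_1 \mapsto \phi_1^\tx{ab}$, and by \cite[Lemma A.3.A]{KS99} the KS pairing with $H^0(F,[G_1 \rw G_{1,\tx{ad}}]) = Z(G_1)(F)$ is the composition of LLC for the torus $D_1$ with the map $Z(G_1)(F) \rw D_1(F)$.

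For the first map, Langlands' construction of $\chi_{\phi,Z}$ in \cite[\S10]{Bo77} is precisely this: lift $\phi$ to a parameter $\phi_1 : W_F \rw {^LG_1}$ (the obstruction to lifting vanishes because $C$ is induced), pass to the abelianization $\phi_1^\tx{ab}$, obtain by LLC for tori a character of $D_1(F) = G_1(F)/G_{1,\tx{der}}(F)$, restrict to $Z(G_1)(F)$, and descend to $Z(F) = Z(G_1)(F)/C(F)$ (having first normalized the lift so that its restriction to $C$ is trivial, which makes the resulting character depend only on $\phi$). Our construction applies Proposition \ref{pro:dualities} for $G$ directly to $\phi$ and $Z(F)$. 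The morphism of crossed modules $[G_{1,\tx{sc}} \rw G_1] \rw [G_\tx{sc} \rw G]$ (identity on the first factor, projection $G_1 \rw G$ on the second) and its dual $[\hat G_\tx{sc} \rw \hat G] \rw [\hat G_{1,\tx{sc}} \rw \hat G_1]$ intertwine the KS pairings by functoriality; combined with the identification above, the resulting commutative diagram shows that our character of $Z(F)$ is the restriction to $Z(F) \subset Z(G_1)(F)$ of the character of $D_1(F)$ attached to $\phi_1^\tx{ab}$, which is $\chi_{\phi,Z}$.

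The argument for the second map is formally dual, using the surjection $Z(G_1) \srw Z$ and its dual inclusion $\hat Z \hookrightarrow \widehat{Z(G_1)}$. A parameter $\phi_z : W_F \rw \hat Z \rtimes W_F$ composes with this inclusion to a parameter for $Z(G_1)$; further composition with $\widehat{Z(G_1)} \rw \hat D_1$ and LLC for tori gives a character of $D_1(F) = G_1(F)/G_{1,\tx{der}}(F)$, which inflates to a character of $G_1(F)$ and descends (via the surjection $G_1(F) \srw G(F)$, which is surjective because $H^1(F,C)=0$) to Langlands' cocentral character of $G(F)$, trivial on the image of $G_\tx{sc}(F)$. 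Our construction uses the identification $H^1(W_F,\hat Z) \cong H^1(W_F,[\hat G \rw \hat G_\tx{ad}])$ and the KS pairing with $H^1(F,[G_\tx{sc} \rw G])$. Naturality of this pairing with respect to the morphism $[\hat G \rw \hat G_\tx{ad}] \rw [\hat G_1 \rw \hat G_{1,\tx{ad}}]$, together with the reduction to LLC for $D_1$ carried out above, matches the two constructions on $G_1(F)$ and hence on its quotient $G(F)$.

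The main obstacle will be the careful diagrammatic bookkeeping needed to verify that the quasi-isomorphisms used above, the various maps of crossed modules induced by the $z$-extension, and the KS pairings all assemble into the requisite commutative diagrams; in particular that the normalization ambiguity in the lift $\phi_1$ does not affect the character on $Z(F)$, and dually that the lift of $\phi_z$ through $\hat Z \hookrightarrow \widehat{Z(G_1)}$ gives a character of $G_1(F)$ genuinely descending to $G(F)$. Once the basic compatibility of the pairings of \cite[\S A]{KS99} with LLC for tori is granted, no genuinely new input is required beyond this diagram chase.
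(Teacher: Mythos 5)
Your overall strategy — reducing to a $z$-extension $G_1$ with simply connected derived group and appealing to naturality of the Kottwitz--Shelstad pairings — matches the paper's. But the crucial identification in your first paragraph is false, and the defect propagates through the rest of the argument. You assert that since $[G_{1,\tx{sc}} \rw G_1]$ is quasi-isomorphic to $D_1 := G_1/G_{1,\tx{der}}$ placed in degree $1$, ``dually'' $[\hat G_{1,\tx{sc}} \rw \hat G_1]$ is quasi-isomorphic to $\hat D_1$, whence $H^2(W_F, \hat G_{1,\tx{sc}} \rw \hat G_1) \cong H^1(W_F, \hat D_1)$. This is wrong. The dual complex of $[T_{1,\tx{sc}} \rw T_1]$ in the sense of \cite[\S A]{KS99} is $[\hat T_1 \rw \hat T_{1,\tx{ad}}]$, i.e.\ $[\hat G_1 \rw \hat G_{1,\tx{ad}}]$, and it is \emph{that} complex which reduces to $\hat D_1$ in degree $0$. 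The complex $[\hat G_{1,\tx{sc}} \rw \hat G_1]$ is instead the dual of $[T_1 \rw T_{1,\tx{ad}}]$, whose degree-zero cohomology is the diagonalizable group $Z(G_1)$, which need not be a torus. Even when $G_{1,\tx{der}}$ is simply connected, $[\hat G_{1,\tx{sc}} \rw \hat G_1]$ has nontrivial $H^0 = Z(\hat G_{1,\tx{sc}})$: for $G_1 = \tx{SL}_n$ one has $[\hat G_{1,\tx{sc}} \rw \hat G_1] = [\tx{SL}_n(\C) \rw \tx{PGL}_n(\C)]$ with $H^0 = \mu_n$, while $D_1 = 1$, so $H^2(W_F, \hat G_{1,\tx{sc}} \rw \hat G_1) \cong \tx{Hom}_\tx{cts}(\mu_n(F),\C^\times) \neq 0 = H^1(W_F, \hat D_1)$.

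Consequently your ``reduction to LLC for $D_1$'' does not apply to the central-character map: the pairing between $Z(G_1)(F)$ and $H^2(W_F, \hat G_{1,\tx{sc}} \rw \hat G_1)$ is perfect, whereas $Z(G_1)(F) \rw D_1(F)$ is in general neither injective nor surjective, so the pairing cannot be ``LLC for $D_1$ precomposed with that map.'' For the cocentral-character map the relevant dual complex $[\hat G_1 \rw \hat G_{1,\tx{ad}}]$ genuinely is quasi-isomorphic to $\hat D_1$, so that half of your argument could be salvaged; indeed that is the case the paper sketches explicitly, and the general $G$ is then handled by a commutative diagram with exact columns coming from the $z$-extension, not by a direct reduction to tori. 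You would need either to correct the identification of dual complexes for the central-character case, or to argue via that diagram as the paper does. (A smaller slip: $\widehat{Z(G_1)}$ in your second paragraph should be $Z(\hat G_1)$, the center of the dual group, which is connected and dual to $D_1$ when $G_{1,\tx{der}}$ is simply connected; the dual of the diagonalizable group $Z(G_1)$ is a different object.)
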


\begin{proof}
Since the proofs of the two statements are quite similar, we will only sketch the second one. First recall the construction of $\chi_{\phi_z,G}$, following the exposition in \cite{Bo77}. Choose a $z$-extension
\[ 1 \rw D \rw \tilde G \rw G \rw 1. \]
The reader is referred to \cite[\S1]{Kot82} for a review of $z$-extensions. Recall in particular that $D$ is an induced torus and $\tilde G$ has a simply-connected derived group. On the dual side we obtain the exact sequence
\[ 1 \rw \hat G \rw \hat{\tilde G} \rw \hat D \rw 1 \]
and the center of $\hat{\tilde G}$ is connected and dual to the torus $\tilde G/G_\tx{sc}$. Composing $\phi_z$ with the inclusion $Z(\hat G) \rw Z(\hat{\tilde G})$ we obtain an element of $H^1(W_F,Z(\hat{\tilde G}))$, hence a character on $[\tilde G/G_\tx{sc}](F)$, which we can pull back to a character on $\tilde G(F)$. One checks easily that this character restricts trivially to $D(F)$, and hence provides a character on $\tilde G(F)/D(F) = G(F)$, where the last equality holds due to the cohomological triviality of $D$.

The claim now follows from the commutativity of the diagram
\[ \xymatrix{
1\ar[d]&1\ar[d]\\
H^1(W_F,Z(\hat G))\ar[r]\ar[d]&\tx{Hom}_\tx{cts}(G(F),\C^\times)\ar[d]\\
H^1(W_F,Z(\hat{\tilde G}))\ar[r]\ar[d]&\tx{Hom}_\tx{cts}(\tilde G(F),\C^\times)\ar[d]\\
H^1(W_F,\hat D)\ar[r]&\tx{Hom}_\tx{cts}(D(F),\C^\times)
} \]
and the exactness of both vertical sequences, as well as the fact that when $G$ has a simply-connected derived group, so that we can take $\tilde G=G$, Langlands' original construction coincides with the one given here.
\end{proof}

We now assume that $F$ is $p$-adic, $G$ is tamely-ramified, and $\phi$ satisfies Conditions \ref{cnd:parm}.

\begin{pro} Let $(S,\chi_{S,X})$ be the pair obtained from $\phi$ using any choice of $\chi$-data $X$. Then
\begin{enumerate}
\item $\chi_{S,X}|_{Z(F)}=\chi_{\phi,Z}$.
\item If $\phi_z \in Z^1(W_F,\hat Z)$ and $\chi'_{S,X}$ is the character on $S(F)$ obtained from $\phi_z \otimes \phi$ and the same $\chi$-data $X$, then $\chi'_{S,X} = \chi_{\phi_z,G}|_{S(F)} \otimes \chi_{S,X}$.
\end{enumerate}
\end{pro}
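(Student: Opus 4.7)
Both parts are instances of a single compatibility: the local Langlands correspondence for the torus $S$, applied to $\phi_S$, is compatible with the cohomological constructions of central and cocentral characters given by Proposition \ref{pro:dualities}. The common strategy is to exploit the factorization $\phi = {^Lj} \circ \phi_S$ (and an analogous factorization of $\phi_z \otimes \phi$) to transport the statement from ${^LG}$ to the explicit setting of ${^LS}$.

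For part (1), first note that $\chi_{S,X}|_{Z(F)}$ is independent of the choice of $\chi$-data $X$ and of $^Lj$ within its $\hat G$-conjugacy class: two sets of $\chi$-data differ by characters of the form $\chi'_\alpha / \chi_\alpha$ composed with root values, which vanish on $Z(F)$, and by Lemma \ref{lem:changelj}, changing $^Lj$ changes $\chi_{S,X}$ by precomposition with $\tx{Ad}(h)$ for some $h \in \Omega(S,G)(F)$, which fixes $Z(F)$ pointwise. By functoriality of LLC for tori along $Z \hrw S$, the restriction $\chi_{S,X}|_{Z(F)}$ corresponds to $\phi_S$ composed with the surjection $\hat S \srw \widehat{Z(G)}$ dual to this inclusion. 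On the other hand, $\chi_{\phi,Z}$ is the image of $[\phi] = {^Lj}_*[\phi_S]$ under the iso of Proposition \ref{pro:dualities}. The proof then reduces to the functorial characterization of that iso: for any $F$-rational maximal torus $T \subset G$ with $L$-inclusion $^L\iota \colon {^LT} \rw {^LG}$, the composition
\[ H^1(W_F, \hat T) \xrightarrow{({^L\iota})_*} H^1(W_F,\hat G) \rw H^2(W_F,[\hat G_\tx{sc} \rw \hat G]) \cong \tx{Hom}(Z(F),\C^\times) \]
agrees with LLC-for-$T$ followed by restriction to $Z(F) \subset T(F)$. Applying this with $S$ and $^Lj$ in place of $T$ and $^L\iota$ yields the claim; the cocycle twist $r \in Z^1(W_F, N_{\hat G}(\hat T))$ built into $^Lj$ is innocuous, since inner automorphisms by normalizer elements act trivially on $Z(\hat G)$ and contribute nothing upon dualizing to $Z(F)$-characters.

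For part (2), I verify that $\phi_z \otimes \phi$ factors through the same $^Lj$ as $\phi_z \otimes \phi = {^Lj} \circ \phi'_S$, with $\phi'_S = \tilde\phi_z \cdot \phi_S$. Here $\tilde\phi_z \colon W_F \rw {^LS}$ is the lift of $\phi_z$ along the inclusion $\hat Z = Z(\hat G) \hrw \hat T = \hat S$, which is $W_F$-equivariant because the $\phi$-twisted action on $\hat S$ differs from the standard one by an element of $\Omega(\hat T, \hat G)$ and thus acts as the identity on $Z(\hat G)$. Multiplicativity of LLC for tori gives $\chi'_{S,X} = \chi_{\tilde\phi_z} \cdot \chi_{S,X}$, and it remains to identify $\chi_{\tilde\phi_z}$ with $\chi_{\phi_z,G}|_{S(F)}$. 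This is the counterpart of part (1) for the second iso of Proposition \ref{pro:dualities}: by the analogous functorial characterization of $H^1(W_F, \hat Z) \cong \tx{Hom}(H^1(F,[G_\tx{sc} \rw G]),\C^\times)$ via any maximal $F$-torus $T$, the restriction of $\chi_{\phi_z,G}$ to $T(F) \subset G(F)$ corresponds under LLC-for-$T$ to the composition of $\phi_z$ with $\hat Z \hrw \hat T$. Applied to $S$ in place of $T$, this is precisely $\chi_{\tilde\phi_z}$.

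The main obstacle is establishing the two functorial characterizations of the isomorphisms of Proposition \ref{pro:dualities} with respect to maximal $F$-tori $T \subset G$. Both come down to tracing through the crossed-module cocycle computations underlying the Kottwitz-Shelstad pairings \cite[App. A]{KS99}, and checking that the connecting homomorphisms $H^1(W_F, \hat T) \rw H^2(W_F,[\hat T_\tx{sc} \rw \hat T])$ and $H^0(F,[T \rw T_\tx{ad}]) \rw H^1(F,[G_\tx{sc} \rw G])$ match the natural restriction/corestriction maps on the character side under the identifications supplied by LLC for tori. Once these are verified, both parts follow from a short diagram chase.
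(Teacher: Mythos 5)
Your overall strategy matches the paper's: identify homomorphisms $W_F \to {}^LG$ with $\hat G$-valued cocycles, exploit the factorization $\phi = {}^Lj \circ \phi_{S,X}$ (so $\phi(w) = \phi_{S,X}(w)\cdot\xi_X(w)$ with $\xi_X$ the cocycle built into $^Lj$), and reduce to a compatibility of the Kottwitz--Shelstad crossed-module pairings with the Langlands correspondence for the torus $S$. Your treatment of part (2) is essentially the same as the paper's short argument via duality of $S(F)\to G(F)/G_\tx{sc}(F)$ and $H^1(W_F,Z(\hat G))\to H^1(W_F,\hat S)$, together with the multiplicativity $\phi'_{S,X} = \tilde\phi_z\cdot\phi_{S,X}$.

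There is, however, a genuine gap in part (1). You declare that the twist $\xi_X$ built into $^Lj$ is ``innocuous, since inner automorphisms by normalizer elements act trivially on $Z(\hat G)$.'' This is the wrong reason and would not sustain a careful write-up. The cocycle $\xi_X$ does not enter as a conjugating element; it enters multiplicatively in $\phi = \phi_{S,X}\cdot\xi_X$, and what must be checked is that $\phi$ and $\phi_{S,X}$ represent compatible classes after the pushforwards $H^1(W_F,\hat G)\to H^2(W_F,\hat G_\tx{sc}\to\hat G)$ and $H^1(W_F,\hat S)\to H^2(W_F,\hat S_\tx{sc}\to\hat S)\cong H^2(W_F,Z(\hat G_\tx{sc})\to Z(\hat G))$. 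The fact that conjugation by $N(\hat T)$ fixes $Z(\hat G)$ pointwise says nothing about whether the $1$-cocycle $\xi_X$ contributes to the $H^2$-class. The point that actually makes this work --- and which the paper explicitly flags as ``the key point'' --- is that $\xi_X$ lifts to a $1$-cocycle $\xi_X^\tx{sc}: W_F\to N(\hat T_\tx{sc})\subset\hat G_\tx{sc}$, which is a non-obvious consequence of the Langlands--Shelstad construction (the factor $n(\sigma_w)$ is the Springer lift and $r_{B,X}(w)$ is a product of coroot images, both of which live in $\hat G_\tx{sc}$). This liftability is exactly what forces $(1,\phi_{S,X}\cdot\xi_X)$ and a representative of $\phi_{S,X}$ to be cohomologous in $H^2(W_F,\hat G_\tx{sc}\to\hat G)$. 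Your ``functorial characterization'' step, which you acknowledge as the ``main obstacle'' but do not carry out, would need precisely this liftability; without it the diagram chase you invoke does not close. Replacing the ``inner automorphisms'' justification by the liftability of $\xi_X$ to $\hat G_\tx{sc}$, and then carrying out the explicit cocycle manipulation (write $\phi_{S,X}=\bar c_1\cdot c_2$ with $c_1$ a $1$-cochain in $\hat S_\tx{sc}$, $c_2$ in $Z(\hat G)$, and track $(\partial c_1^{-1},c_2)$ under the comparison maps) would repair the gap and bring your argument in line with the paper's.
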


Before we go into the proof, we want to remark that the above statements make sense. For the first one, the stable class of embeddings $S \rw G$ provides a canonical embedding $Z \rw S$, which is what is used to define the restriction to $Z(F)$. For the second statement, we use again the embeddings $S \rw G$ to restrict $\chi_{\phi_z,G}$ to $S(F)$. This restriction is again independent of the particular embedding.

\begin{proof}
During this proof, we will identify homomorphisms $W_F \rw {^LG}$ with 1-cocycles $W_F \rw \hat G$ using the semi-direct product structure on $^LG$.
For the first claim, we must show that the images of $\phi \in H^1(W_F,\hat G)=H^2(W_F,1 \rw \hat G)$ and $\phi_{S,X} \in H^1(W_F,\hat S)=H^2(W_F,1 \rw \hat S)$ in $H^2(W_F,Z(\hat G_\tx{sc}) \rw Z(\hat G))$ under the following maps coincide:
\[ \xymatrix{
H^2(W_F,1 \rw \hat S)\ar[r]&H^2(W_F,\hat S_\tx{sc} \rw \hat S)\ar[d]^\cong\\
&H^2(W_F,Z(\hat G_\tx{sc}) \rw Z(\hat G))\\
H^2(W_F,1 \rw \hat G)\ar[r]&H^2(W_F, \hat G_\tx{sc} \rw \hat G)\ar[u]^\cong
} \]
To that end, recall that $\phi(w)=\phi_{S,X}(w)\cdot\xi_X(w)$, where $\xi_X : W_F \rw N(\hat T)$ is the 1-cocycle with $^Lj_X(s,w)=(s\xi_X(w),w)$. The key point that makes everything work is the fact that $\xi_X$ lifts to a 1-cocycle $\xi_X^\tx{sc} : W_F \rw N(\hat T_\tx{sc})$. This follows from the construction of Langlands-Shelstad. We can find continuous 1-cochains $c_1 : W_F \rw \hat S_\tx{sc}$ and $c_2 : W_F \rw Z(\hat G)$ such that $\phi_{S,X}(w)=\bar c_1(w) \cdot c_2(w)$.
We have $\partial c_1 \in Z^2(W_F,Z(\hat G_\tx{sc}))$, and the elements $(\partial c_1^{-1},c_2)$ and $(1,\phi_{S,X})$ of $H^1(W_F,\hat S_\tx{sc} \rw \hat S)$ are equal. On the other hand, the element $(\partial c_1^{-1},c_2)$ of $H^2(W_F,Z(\hat G_\tx{sc}) \rw Z(\hat G))$, when mapped to $H^2(W_F,Z\hat G_\tx{sc} \hat G)$, is equal to the element $(\partial c_1^{-1}\cdot\partial(c_1 \cdot \xi_X^\tx{sc}),c_2 \cdot \bar c_1 \cdot \xi_X)$, which, by virtue of $\partial\xi_X^\tx{sc}=1$, equals $(1,\phi_{S,X}\cdot\xi_X)$, which is $(1,\phi)$.

The second claim follows from the fact that the map $S(F) \rw G(F)/G_\tx{sc}(F)$ is dual to the map $H^1(W_F,Z(\hat G)) \rw H^1(W_F,\hat S)$, as well as the fact that $\phi'_{S,X}=\phi_{z,G} \otimes \phi_{S,X}$.

\end{proof}

\section{Stability and endoscopy}

\subsection{Stable and $s$-stable characters of epipelagic $L$-packets} \label{sec:schar}

Let $\phi : W_F \rw {^LG}$ be an epipelagic parameter. For any inner twist $\xi : G \rw G'$, we constructed in Section \ref{sec:lpackconst} the $L$-packet $\Pi_{\phi,\xi,G'}$. Moreover, given a Whittaker datum $(B,\psi_B)$ for $G$, we have constructed in Section \ref{sec:lpackparm} the compound packet $\Pi_\phi$, which consists of equivalence classes of quadruples $(G^b,\xi_b,b,\pi)$, and a bijection
\[ X^*(S_\phi) \rw \Pi_\phi,\qquad\rho \mapsto (G^{b_\rho},\xi_\rho,b_\rho,\pi_\rho). \]
The map $\Pi_\phi \rw B(G)_\tx{bas}$ which sends a quadruple $(G^b,\xi_b,b,\pi)$ to the equivalence class of $b$ is surjective and its fibers are the sets $\Pi_{\phi,\xi_b,G^b}$. Not all of the sets $\Pi_{\phi,\xi,G'}$ however occur as fibers of this map.

For every $\xi : G \rw G'$, we can consider the stable character of the $L$-packet $\Pi_{\phi,\xi,G'}$, which is the function on $G'(F)$ given by
\[ S\Theta_{\phi,\xi} = e(G')\sum_{\pi \in \Pi_{\phi,\xi,G'}} \Theta_\pi. \]
The quantity $e(G')$ is the Kottwitz sign \cite{Kot83} of $G'$.

Moreover, for a triple $(G^b,\xi_b,b)$ and an element $s \in S_\phi$, we can consider the $s$-stable character of the $L$-packet for $(G^b,\xi_b,b)$, which is the function on $G^b(F)$ given by
\[ \Theta_{\phi,b}^s = e(G^b)\sum_{\substack{\rho \in X^*(S_\phi)\\ \rho \mapsto b}} \rho(s) \Theta_{\pi_\rho}. \]
The sum runs over those characters $\rho \in X^*(S_\phi)$ whose restriction to $X^*(Z(\hat G)^\Gamma)$ corresponds to $b \in B(G)_\tx{bas}$ under the Kottwitz isomorphism \cite[Prop 5.6]{Kot85}. We have
\[ S\Theta_{\phi,\xi_b} = \Theta_{\phi,b}^1. \]
We will now provide formulas for the functions $S\Theta_{\phi,\xi}$ and $\Theta^s_{\phi,b}$ using the work of Adler and Spice \cite{AS10} on character values for supercuspidal representations. In order to be able to derive our formulas, we need to assume that the residual characteristic of $F$ is large enough. A convenient lower bound is given by $p \geq (2+e)n$, where $e$ is the ramification degree of $F/\Q_p$, and $n$ is the smallest dimension of a faithful rational representation of $G$. Under this assumption, DeBacker and Reeder have shown in the appendices to \cite{DR09} that the following statements hold.
\begin{enumerate}
\item For every inner form $G'$ of $G$, the exponential map $\exp : \mf{g'}(F^u)_{0+} \rw G'(F^u)_{0+}$ is defined and provides a bijection which is equivariant for the adjoint action of $G'(F^u)$ as well as for the action of Frobenius.
\item There exists a bilinear form $\<\>$ on $\mf{g'}(\ol{F})$ which is invariant with respect to the adjoint action of $G'(\ol{F})$ as well as the Galois action, and such that for any maximal torus $T \subset G$, the element $\<H_\alpha,H_\alpha\>$ is a unit in $\ol{F}$ for all coroots $H_\alpha$ of $T$.
\end{enumerate}
We will denote by $\log : G'(F^u)_{0+} \rw \mf{g'}(F^u)_{0+}$ the inverse of $\exp$. Fix a character $\psi : F \rw \C^\times$ which is trivial on $\mf{p}_F$ and non-trivial on $O_F$. Let $(S,\chi)$ be a pair of a maximal torus of $G'$ and a character of $S(F)$ satisfying Conditions \ref{cnd:char}. Then the main result of \cite{AS10} provides the following formula for the character of $\pi_{S,\chi}$.
\begin{equation} \Theta_{\pi_{S,\chi}}(\gamma) = \frac{D_{J}(\gamma_{>0})}{D_{G'}(\gamma)}\sum_{\substack{g \in J(F) \lmod G'(F)/S(F)\\ g^{-1}\gamma_0g \in S(F)}}\chi_g(\gamma_0)D_{J}(Y_g) \hat\mu^{J}_{Y_g}(\log(\gamma_{>0})). \label{eq:1char} \end{equation}
We need to explain the notation. First $\gamma=\gamma_0 \cdot \gamma_{>0}$ is a topological Jordan decomposition modulo $Z(G)^\circ$ of $\gamma$ \cite{Sp08}. Not every element $\gamma$ has such a decomposition, but it is part of the character formula that if $\gamma$ does not have this decomposition, then $\Theta_{\pi_{S,\chi}}(\gamma)=0$. Moreover, since $S$ is tamely ramified, so is $Z(G)^\circ$, and thus the map $G'(F)_{\tx{rs},0+} \rw [G'/Z(G)^\circ](F)_{\tx{rs},0+}$ is surjective, so we can always arrange that $\gamma_{>0} \in G'(F)_{0+}$, which we will assume.

Next, the group $J$ is the connected centralizer of $\gamma_0$ in $G'$. We let $Y \in \mf{s}(F)$ be a generic element for which
\[ \chi_S(x) = \psi\<Y,x-1\> \]
for all $x \in S(F)_{0+}$. Then $Y_g = \tx{Ad}(g)Y$, and $\chi_g = \tx{Ad}(g)_*\chi$. The quantities $D()$ denote the Weyl-discriminants in the corresponding group or Lie-algebras. For example, $D_{G'}(\gamma)$ denotes the square root of the product of $|\alpha(\gamma)-1|$ where $\alpha$ runs over all roots of the centralizer of $\gamma$ in $G'$, while $D_{J}(Y_g)$ denotes the square root of the product of $|d\alpha(Y_g)|$ where $\alpha$ runs over all roots of the centralizer of $Y_g$ in $J$. Finally, $\hat\mu_{Y_g}^{J}$ denotes the Fourier-transform of the orbital integral at $Y_g$ in the Lie-algebra of $J$. The orbital integral is taken with respect to the measure on $J(F)/\tx{Cent}(Y_g,J)(F)$ which is the quotient of the measures on the two groups normalized as in \cite[\S5.1]{DR09}. The Fourier-transform is taken using the bi-character on $\mf{g'}(F)$ given by $\psi\<\>$.

Before we state our formula, we recall a different normalization of the function $\hat\mu$, which is used by Waldspurger in \cite{Wal97}. It is the function
\[ \hat\iota^J_Y(X) = D_J(Y)D_J(X)\hat\mu^J_Y(X). \]

\begin{pro} \label{pro:schar} We have that $S\Theta_{\phi,\xi}(\gamma)$ is equal to
\[ e(G')|D_{G'}(\gamma)|^{-1}\sum_{[j]}j_*[\chi_{S,{^Lj}}\cdot\epsilon_j](\gamma_0)\sum_{[k]} \hat\iota^{J}({kY},\log(\gamma_{>0})), \]
while $\Theta^s_{\phi,b}(\gamma)$ is equal to
\[ e(G^b)|D_{G^b}(\gamma)|^{-1}\sum_{[j]}j_*[\chi_{S,{^Lj}}\cdot\epsilon_j](\gamma_0)\sum_{[k]} \<\tx{inv}(j_0,k),s\> \hat\iota^{J}(kY,\log(\gamma_{>0})). \]
In both sums, $[j]$ runs over the $J$-stable conjugacy classes of embeddings $S \rw J$ whose composition with the inclusion $J \rw G'$ (resp. $J \rw G^b$) is admissible, and $[k]$ runs over the set of $J(F)$-conjugacy classes inside $[j]$.
\end{pro}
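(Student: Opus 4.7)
The plan is to expand each constituent of the $L$-packet using the Adler-Spice character formula \eqref{eq:1char}, weight-sum the expansions appropriately, and regroup the resulting double sum so that the outer index becomes a $J$-stable conjugacy class of embeddings $S\rw J$ and the inner index becomes a $J(F)$-rational class within it. Fix ${^Lj}$ within its $\hat G$-conjugacy class and let $\iota$ range over representatives of the $G'(F)$-conjugacy classes of admissible embeddings $S\rw G'$; every $\pi\in\Pi_{\phi,\xi,G'}$ then equals $\pi_{\iota_*(S,\chi_{S,{^Lj}}\cdot\epsilon_\iota)}$, and \eqref{eq:1char} gives its character as a sum over double cosets $g\in J(F)\backslash G'(F)/\iota(S)(F)$ satisfying $g^{-1}\gamma_0 g\in\iota(S)(F)$, with generic element $Y_g=\tx{Ad}(g)\iota(Y)$.

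The heart of the argument is a reindexing. To each pair $(\iota,g)$ I associate the embedding $k=\tx{Ad}(g)\circ\iota\colon S\rw G'$. The double-coset constraint is equivalent to $\gamma_0\in k(S)$, which forces $k(S)\subset\tx{Cent}(\gamma_0,G')^{\circ}=J$, so $k$ is in fact an embedding $S\rw J$ whose composition into $G'$ lies in the stable class prescribed by $\phi$. Left $J(F)$-multiplication on $g$ becomes $J(F)$-conjugation on $k$, right $\iota(S)(F)$-multiplication fixes $k$ (since the pointwise stabilizer of an embedding of a maximal torus is the torus itself), and a short check confirms that $(\iota,g)\mapsto k$ descends to a bijection between pairs $([\iota],[g])$ and $J(F)$-conjugacy classes of admissible embeddings $k\colon S\rw J$. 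Collecting these into $J$-stable classes produces the outer index $[j]$ and inner index $[k]\in[j]$ of the proposition.

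Next I would identify the pieces of \eqref{eq:1char} under this bijection. The character value $\chi_g(\gamma_0)$ becomes $(\chi_{S,{^Lj}}\cdot\epsilon_\iota)(s_0)$ with $s_0=k^{-1}(\gamma_0)\in S(F)$; since $\gamma_0\in Z(J)$, the element $s_0$ is independent of the choice of $k\in[j]$, so $\chi_{S,{^Lj}}(s_0)$ already depends only on $[j]$. For the toral-invariant factor, Lemma \ref{lem:torinvchar} rewrites $\epsilon_\iota(s_0)$ as a product of $f_{(G',k(S))}(\alpha)$ over symmetric $\Gamma$-orbits of roots $\alpha$ of $k(S)$ in $G'$ satisfying $\alpha(\gamma_0)\neq 1$ (equivalently, roots in $G'$ but outside $J$); applying Proposition \ref{pro:torinvstab}(2) and using that the cocycle relating two members of $[j]$ arises from an element of $J(\ol F)$ shows that this restricted product likewise depends only on $[j]$, and may be absorbed into the outer factor $j_*[\chi_{S,{^Lj}}\cdot\epsilon_j](\gamma_0)$. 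The remaining $k$-dependent piece $D_J(Y_g)\hat\mu^J_{Y_g}(\log\gamma_{>0})$, multiplied by the $D_J(\gamma_{>0})$ appearing in the prefactor of \eqref{eq:1char}, equals $\hat\iota^J(kY,\log\gamma_{>0})$ by the very definition of $\hat\iota$, and the compatibility $Y_g=\tx{Ad}(g)\iota(Y)=k(Y)$ matches the notation $kY$. Pulling the common $e(G')|D_{G'}(\gamma)|^{-1}$ outside yields the first equality.

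The $s$-stable case runs identically, carrying along the additional weight $\rho(s)$ attached to each representation $\pi_\rho$ by the bijection $X^*(S_\phi)\rw\Pi_\phi$ of Section \ref{sec:lpackparm}. Under that parameterization $\pi_\rho$ corresponds to the rational embedding class $[\xi_\rho\circ j_0]$, and the Kottwitz-style duality between $X^*(S_\phi)$ and $\tb{B}(S)$ identifies $\rho(s)$ with the cohomological pairing $\langle\tx{inv}_b(j_0,\xi_\rho\circ j_0),s\rangle$; after reindexing, this weight attaches to each $[k]$ via the $G^b(F)$-rational class of its composition $S\rw J\rw G^b$, producing $\langle\tx{inv}(j_0,k),s\rangle$. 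The principal technical obstacle I anticipate is precisely the stable-class invariance of the restricted toral-invariant product discussed above, since the cocycle at hand is valued in $J$ but is paired against root data for $S$ in $G'$ lying outside $R(S,J)$, and one must verify that the resulting sign is trivial; this is where Proposition \ref{pro:torinvstab} is invoked in an essential way and forms the linchpin of the argument.
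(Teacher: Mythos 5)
Your proposal follows the paper's proof step for step: unfold each packet as a sum over rational classes of admissible embeddings with the weight $\<\tx{inv}(j_0,j),s\>$ in the $s$-stable case, insert the Adler--Spice character formula \eqref{eq:1char}, reindex the resulting double sum via $(\iota,g)\mapsto\tx{Ad}(g)\circ\iota$, and regroup by $J$-stable class $[j]$ with inner index $[k]$. The one place you go beyond the paper is at the final step, where the paper simply asserts that $\chi^k(\gamma_0)$ is constant on $[j]$; you rightly flag that while $\chi_{S,{^Lj}}(k^{-1}\gamma_0)$ is constant because $\gamma_0\in Z(J)$, the constancy of $\epsilon_k(k^{-1}\gamma_0)$ is a nontrivial sign identity, but an appeal to Proposition \ref{pro:torinvstab}(2) alone does not close it --- the cleanest route is to combine part (1) with the computation in the proof of part (3), applied once to $G'$ (giving $\prod_{R(S,G')_\tx{sym}/\Gamma}\kappa_\alpha(\eta_{t,\alpha})=e(G')^2=1$) and once to $J$ (legitimate because $h\in J(\ol{F})$ and every $\alpha\in R(S,J)$ factors through the image of $S$ in $J_\tx{ad}$), then divide to see that the product over roots outside $J$ is trivial.
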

\begin{proof}
We will derive both formulas simultaneously. Consider for a moment $\Theta^s_{\phi,b}$. Its definition involves a sum over the set $\{ \rho \in X^*(S_\phi)| \rho \mapsto b\}$ and the map $\rho \mapsto \pi_\rho$ constructed in Section \ref{sec:lpackparm}. The latter map was constructed from a choice of a Whittaker datum $(B,\psi_B)$, and a choice of an $L$-embedding ${^Lj} : {^LS} \rw {^LG}$ within the $\hat G$-conjugacy class specified in Section \ref{sec:chispec}. We make these choices. As argued in Section \ref{sec:lpackparm}, this determines an isomorphism $X^*(S_\phi) \rw B(S)$, as well as an admissible embedding $j_0 : S \rw G$ for which $\pi_{j_0}$ is $(B,\psi_B)$-generic. Under the isomorphism $X^*(S_\phi) \rw B(S)$,
the sum over the set $\{ \rho \in X^*(S_\phi)| \rho \mapsto b\}$ is translated  to a sum over $\{ \lambda \in B(S)| \lambda \mapsto b\}$ where $\lambda \mapsto b$ is taken under the map $B(j_0) : B(S) \rw B(G)_\tx{bas}$. We have a bijection from the set of admissible embeddings of $S$ into $G^b$ to the set $\{ \lambda \in B(S)| \lambda \mapsto b\}$, and this bijection sends an embedding $j : S \rw G^b$ to the element $\tx{inv}(j_0,j)$. Moreover, if $\rho \mapsto j$ under the isomorphism $X^*(S_\phi) \rw B(S)$, then
\[ \rho(s) = \<\tx{inv}(j_0,j),s\>. \]
We conclude that
\begin{equation} \Theta^s_{\phi,b} = e(G^b)\sum_{[j]} \<\tx{inv}(j_0,j),s\> \Theta_{\pi_j}, \label{eq:schar1} \end{equation}
where the sum runs over the set of $G^b(F)$-conjugacy classes of admissible embeddings $j : S \rw G^b$.

Now consider $S\Theta_{\phi,\xi,G'}$. By construction, the set $\Pi_{\phi,\xi,G'}$ is in bijection with the set of $G'(F)$-conjugacy classes of admissible embeddings $j : S \rw G'$, with $j$ mapping to $\pi_j$. Thus
\[ S\Theta_{\phi,\xi,G'} = e(G')\Theta_{\pi_j}. \]
We see that the formulas for $\Theta^s_{\phi,b}$ and $S\Theta_{\phi,\xi,G'}$ are the same apart from the factor $\<\tx{inv}(j_0,j),s\>$. Because of this, it will be enough to treat the case of $\Theta^s_{\phi,b}$, the argument for $S\Theta_{\phi,\xi,G'}$ being the same except that the factor $\<\tx{inv}(j_0,j),s\>$ can be set to $1$ in that case.

Inserting the Adler-Spice formula \eqref{eq:1char} into \eqref{eq:schar1}, we see that if $\gamma \in G^b(F)_\tx{rs}$ has the topological Jordan decomposition mod $Z(G)^\circ$ given by $\gamma=\gamma_0 \cdot \gamma_{>0}$ with $\gamma_{>0} \in G^b(F)_{0+}$, then $\Theta^s_{\phi,b}(\gamma)$ equals
\begin{equation*}
e(G^b) \frac{D_{J}(\gamma_{>0})}{D_{G^b}(\gamma)}
\sum_{[j]} \<\tx{inv}(j_0,j),s\>\ssum{\substack{g \in J(F) \lmod G^b(F)/S(F)\\ g^{-1}\gamma_0g \in jS(F)}}\chi^j_g(\gamma_0)D_{J'}(Y^j_g) \hat\mu^{J}_{Y^j_g}(\log(\gamma_{>0})).
\label{eq:schar2}
\end{equation*}
Here $\chi^j=j_*(\chi_{S,{^Lj}}\cdot\epsilon_j)$, and $Y^j=jY$. As $[j]$ traverses the first summation set and $g$ traverses the second, the composition $\tx{Ad}(g)j$ traverses the set of $J(F)$-conjugacy classes of embeddings $k : S \rw J$ which are $G^b$-stably conjugate to $j_0$. This set can be traversed by first summing over the set $[j]$ of $J$-stable classes of embeddings $S \rw J$ which are $G^b$-stably conjugate to $j_0$, and then summing over the set $[k]$ of $J(F)$-conjugacy classes of embeddings $S \rw J$ inside each $[j]$. With this, we obtain that $\Theta^s_{\phi,b}(\gamma)$ equals
\begin{equation*}
e(G^b) \frac{D_{J}(\gamma_{>0})}{D_{G^b}(\gamma)}
\sum_{[j]} \sum_{[k]} \<\tx{inv}(j_0,k),s\> \chi^k(\gamma_0)D_{J}(Y^k) \hat\mu^{J}_{Y^k}(\log(\gamma_{>0})).
\label{eq:schar3}
\end{equation*}
To complete the proof, we observe that the term $\chi^k(\gamma_0)$ is constant in $k$ and depends only on $j$.
\end{proof}

\subsection{Stability and transfer to inner forms} \label{sec:stab}

\begin{thm} \label{thm:stab} Let $\xi : G \rw G'$ be an inner twist, and let $\gamma \in G(F)_\tx{rs}$ and $\gamma' \in G(F)_\tx{rs}$ be related elements. Then
\[ S\Theta_{\phi,1,G}(\gamma) = S\Theta_{\phi,\xi,G'}(\gamma'). \]
\end{thm}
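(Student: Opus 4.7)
The plan is to expand both sides using the character formula of Proposition \ref{pro:schar} and reduce the claim to Waldspurger's endoscopic transfer theorem for stable orbital integrals on Lie algebras, with the sign bookkeeping controlled by the toral-invariant identities of Section \ref{sec:torinv}.

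First I would reduce to the case where $\gamma$ and $\gamma'$ admit topological Jordan decompositions modulo $Z(G)^\circ$, say $\gamma=\gamma_0\gamma_{>0}$ and $\gamma'=\gamma_0'\gamma_{>0}'$, with $\gamma_{>0},\gamma_{>0}'$ chosen in $G(F)_{0+},G'(F)_{0+}$; if not, both sides vanish by Adler--Spice. Since $\gamma$ and $\gamma'$ are related, I may assume $\gamma_0,\gamma_0'$ are related, their connected centralizers $J\subset G$ and $J'\subset G'$ form a matching pair of inner forms, and $\log\gamma_{>0}\in\mf{j}(F)$ is stably conjugate to $\log\gamma_{>0}'\in\mf{j}'(F)$. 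The prefactors $|D_{G}(\gamma)|^{-1}=|D_{G'}(\gamma')|^{-1}$ coincide, and the inner twist $\xi$ induces a canonical bijection $[j]\leftrightarrow[j']$ between the $J$-stable (resp.\ $J'$-stable) classes of admissible embeddings $S\hrw J$ (resp.\ $S\hrw J'$) appearing in Proposition \ref{pro:schar}. Because $\chi_{S,{^Lj}}$ depends only on $\phi$, the values $j_*\chi_{S,{^Lj}}(\gamma_0)=j'_*\chi_{S,{^Lj}}(\gamma_0')$ agree for corresponding classes.

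The next step is to compare the factors $\epsilon_j(\gamma_0)$ and $\epsilon_{j'}(\gamma_0')$. By Lemma \ref{lem:torinvchar} each is the product of toral invariants over the $\Gamma$-orbits of symmetric roots $\alpha$ with $\alpha(\gamma_0)\neq 1$, i.e.\ the symmetric roots of $S$ in $G$ which are \emph{not} roots of $J$ (and analogously for $G',J'$). Splitting the total product over symmetric roots of $S$ in $G$ according to whether $\alpha$ is a root of $J$ or not, and applying Proposition \ref{pro:torinvstab}(3) twice, once to the pair $(G,jS),(G',j'S)$ and once to $(J,jS),(J',j'S)$, I obtain the $[j]$-independent ratio
\[ \frac{\epsilon_{j'}(\gamma_0')}{\epsilon_j(\gamma_0)}\;=\;\frac{e(G)e(G')}{e(J)e(J')}. \]
Using this and $e(G)^2=1$, the desired equality $S\Theta_{\phi,1,G}(\gamma)=S\Theta_{\phi,\xi,G'}(\gamma')$ reduces, for each matched pair $[j]\leftrightarrow[j']$, to
\[ \sum_{[k]\in[j]}\hat\iota^{J}(kY,\log\gamma_{>0})\;=\;e(J)e(J')\sum_{[k']\in[j']}\hat\iota^{J'}(k'Y,\log\gamma_{>0}'), \]
where $Y\in\mf{s}(F)$ is the generic element dual to the restriction of $\chi_{S,{^Lj}}$ to $S(F)_{0+}$.

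The displayed identity is precisely the statement of stable transfer of Fourier-transformed orbital integrals between the inner forms $J$ and $J'$ of $\mf{j}(F)$, which follows from Waldspurger \cite{Wal97},\cite{Wal06} combined with Ngo's proof \cite{Ngo10} of the fundamental lemma; it applies here because the assumption on the residual characteristic makes the logarithm map available and lets us pass between the group and its Lie algebra. The main obstacle is essentially a bookkeeping one: matching the Haar-measure normalizations used in the Adler--Spice formula with those for which Waldspurger's transfer is stated, and ensuring that the stable class of $kY\in\mf{j}(F)$ transfers to that of $k'Y\in\mf{j}'(F)$ under the inner twisting $J\rw J'$ induced by $\xi$; the latter is automatic from the compatibility of the embedding bijection $[j]\leftrightarrow[j']$ with the inner twist restricted to the common torus $S$.
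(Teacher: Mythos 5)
Your proposal follows the same overall line as the paper's proof: choose compatible topological Jordan decompositions, pass to the centralizer $J$, expand both sides via Proposition \ref{pro:schar}, match embedding classes $[j]\leftrightarrow[j']$ across the inner twist, and reduce to Waldspurger/Ng\^o on $\mf{j},\mf{j'}$. Your computation of the ratio $\epsilon_{j'}(\gamma_0')/\epsilon_j(\gamma_0)=e(G)e(G')/e(J)e(J')$ is correct and is in fact a nice reorganization of the paper's bookkeeping: you use Lemma \ref{lem:torinvchar} to identify the product over $R(S,G)_\tx{sym}\sm R(S,J)_\tx{sym}$, use that $f_{(G,jS)}=f_{(J,jS)}$ on roots of $J$, and apply Proposition \ref{pro:torinvstab}(3) twice.

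The gap is in your final claim that the resulting identity
\[ \sum_{[k]}\hat\iota^{J}(kY,\log\gamma_{>0})\;=\;e(J)e(J')\sum_{[k']}\hat\iota^{J'}(k'Y,\log\gamma_{>0}') \]
is ``precisely'' Waldspurger's transfer. What Waldspurger's theorem actually produces is a Weil-constant factor, so the reduction gives
\[ \sum_{[k]}\hat\iota^{J}(kY,\log\gamma_{>0})\;=\;\frac{\gamma_\psi(\mf{j}(F),\langle\,\rangle)}{\gamma_\psi(\mf{j'}(F),\langle\,\rangle)}\sum_{[k']}\hat\iota^{J'}(k'Y,\log\gamma_{>0}'), \]
and one still has to show $\gamma_\psi(\mf{j'})/\gamma_\psi(\mf{j})=e(J)e(J')$. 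This is not a Haar-measure normalization issue: it is a genuine identity that requires Lemma \ref{lem:torinvweil} applied to $(J,jS)$ and $(J',j'S)$ (using ellipticity of $jS$ in $J$) together with Proposition \ref{pro:torinvstab}(3) applied to $J,J'$. The paper handles this differently: it never evaluates the Weil-constant ratio as a Kottwitz sign, but instead keeps the ratio in terms of toral invariants and observes that the three factors $\gamma_\psi(\mf{j'})/\gamma_\psi(\mf{j})$, $e(G')/e(G)$, and $\epsilon_{j'}/\epsilon_j$ each contribute a product of $f_{(G',j'S)}(\alpha)/f_{(G,jS)}(\alpha)$ over $R(S,J)_\tx{sym}$, $R(S,G)_\tx{sym}$, and $(R(S,G)\sm R(S,J))_\tx{sym}$ respectively, whose combined product is $\left(\prod_{R(S,G)_\tx{sym}}f_{(G',j'S)}/f_{(G,jS)}\right)^2=1$. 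Your argument is correct provided you make the Weil-constant step explicit: replace ``bookkeeping'' by citing Lemma \ref{lem:torinvweil} and Proposition \ref{pro:torinvstab}(3) for $\gamma_\psi(\mf{j'})/\gamma_\psi(\mf{j})=e(J)e(J')$.
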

\begin{proof}
Let $T \subset G$ and $T' \subset G'$ be the centralizers of $\gamma$ and $\gamma'$. By assumption, we may modify $\xi$ within its equivalence class so that $\xi(\gamma)=\gamma'$. Then $\xi : T \rw T'$ is an isomorphism defined over $F$. The element $\gamma$ has a topological Jordan decomposition modulo $Z(G)^\circ$ if and only if $\gamma'$ does, and we may assume this is the case, for otherwise both sides of the claimed equality are zero. Let us then choose such a decomposition $\gamma=\gamma_0 \cdot \gamma_{>0}$ with $\gamma_0 \in T(F)$ and $\gamma_{>0} \in T(F)_{0+}$ and set $\gamma'_0 = \xi(\gamma_0)$ and $\gamma'_{>0}=\xi(\gamma_{>0})$. Then $\gamma'=\gamma'_0\cdot \gamma'_{>0}$ is such a decomposition of $\gamma'$. Letting $J$ and $J'$ be the connected centralizers of $\gamma_0$ and $\gamma'_0$, the map $\xi$ descends to an inner twist $J \rw J'$. This inner twist provides a bijection between the stable classes of elliptic maximal tori in $J$ and those in $J'$, and this bijection restricts to a bijection $j \leftrightarrow j'$ between the set of $J$-stable classes of embeddings $j : S \rw J$ which are $G$-admissible when composed with the inclusion $J \rw G$, and the corresponding set for $J'$. Moreover, if $j \leftrightarrow j'$, then $j^{-1}(\gamma_0)=j'^{-1}(\gamma'_0)$.

Applying Proposition \ref{pro:schar} to both $G$ and $G'$, we see that we must show for each pair of embeddings $j : S \rw J$ and $j' : S \rw J'$ with $j \leftrightarrow j'$ that
\[ e(G'){j'}_*[\chi_{S,{^Lj}}\cdot\epsilon_{j'}](\gamma'_0)\sum_{[k']} \hat\iota^{J'}({k'Y},\log(\gamma'_{>0})) \]
is equal to
\[ e(G){j}_*[\chi_{S,{^Lj}}\cdot\epsilon_{j}](\gamma_0)\sum_{[k]} \hat\iota^{J}({kY},\log(\gamma_{>0})) \]
The fundamental results of Waldspurger \cite{Wal97}, \cite{Wal06}, \cite{Ngo10} reduce our task to showing the equation
\[ e(G')\gamma_\psi(\mf{j'}(F),\<\>){j'}_*[\chi_{S,{^Lj}}\cdot\epsilon_{j'}](\gamma'_0) = e(G)\gamma_\psi(\mf{j}(F),\<\>){j}_*[\chi_{S,{^Lj}}\cdot\epsilon_{j}](\gamma_0). \]
Applying Lemma \ref{lem:torinvweil} we see that
\[ \frac{\gamma_\psi(\mf{j'}(F),\<\>)}{\gamma_\psi(\mf{j}(F),\<\>)}
=\prod_{\alpha \in R(J,G)_\tx{sym}/\Gamma} \frac{f_{(J',j'S)}(\alpha)}{f_{(J,jS)}(\alpha)} \]
On the other hand, Proposition \ref{pro:torinvstab} shows
\[ \frac{e(G')}{e(G)} = \prod_{\alpha \in R(S,G)_\tx{sym}/\Gamma} \frac{f_{(G',j'S)}(\alpha)}{f_{(G,jS)}(\alpha)} \]
Finally, Lemma \ref{lem:torinvchar} shows
\[ \frac{j'_*e_{j'}(\gamma'_0)}{j_*e_j(\gamma_0)} = \prod_{\alpha \in R(S,G)_\tx{sym} \sm R(S,J)_\tx{sym}/\Gamma} \frac{f_{(G',j'S)}(\alpha)}{f_{(G,jS)}(\alpha)}, \]
and the proof is complete.
\end{proof}

\subsection{Descent lemmas} \label{sec:desclem}

\begin{lem} \label{lem:conjdisc} Let $J$ be a disconnected linear algebraic group defined over $F$ whose connected component $J^\circ$ is reductive and quasi-split. Let $\gamma \in J^\circ(F)$ be an element such that $\tx{Cent}(\gamma,J) \subset J^\circ$. Then $\pi_0(J)(F)$ acts simply transitively on the set of those $J^\circ(\ol{F})$-conjugacy classes inside the $J(\ol{F})$-conjugacy class of $\gamma$ which are defined over $F$. Moreover, every such class has an $F$-point.
\end{lem}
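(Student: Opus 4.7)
The plan is to identify the set $X$ of $J^\circ(\ol{F})$-conjugacy classes contained in the $J(\ol{F})$-conjugacy class of $\gamma$ with $\pi_0(J)(\ol{F})$ as a Galois-equivariant torsor, using the hypothesis $\tx{Cent}(\gamma,J)\subset J^\circ$, and then to produce $F$-rational representatives via Steinberg's theorem on conjugacy classes in quasi-split groups.

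First I would observe that $\pi_0(J)(\ol{F})$ acts on $X$ by the rule $\bar h\cdot [x]_{J^\circ} = [hxh^{-1}]_{J^\circ}$ for any lift $h\in J(\ol{F})$ of $\bar h$; this is well-defined because $J^\circ$ is normal in $J$. The action is transitive since the $J(\ol{F})$-orbit of $\gamma$ is covered by the $J^\circ(\ol{F})$-orbits of the elements $h\gamma h^{-1}$. It is free because $\bar h$ fixes $[\gamma]_{J^\circ}$ precisely when some lift $h$ lies in $J^\circ(\ol{F})\cdot\tx{Cent}(\gamma,J)(\ol{F})$, and the hypothesis $\tx{Cent}(\gamma,J)\subset J^\circ$ forces this coset to be trivial in $\pi_0(J)(\ol{F})$. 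The action map is manifestly Galois-equivariant, and since $\gamma\in J^\circ(F)$ the base point $[\gamma]_{J^\circ}\in X$ is itself Galois-fixed, so the $\pi_0(J)(\ol{F})$-torsor $X$ is canonically trivialized. Taking Galois invariants then yields a simply transitive action of $\pi_0(J)(F)$ on $X^\Gamma$, which is the first assertion of the lemma.

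For the second assertion I need to produce, for each $C\in X^\Gamma$, an $F$-point of $C$. Writing $C$ as the $J^\circ(\ol{F})$-conjugacy class of $\tilde j\gamma\tilde j^{-1}$ for any lift $\tilde j\in J(\ol{F})$ of the corresponding $j\in\pi_0(J)(F)$, the class $C$ is a Galois-stable subvariety of the quasi-split connected reductive group $J^\circ$. At this point I would invoke Steinberg's theorem (as sharpened by Kottwitz in \emph{Rational conjugacy classes in reductive groups}) that every Galois-stable conjugacy class in a quasi-split connected reductive group over a perfect field contains an $F$-rational element. This is the main difficulty and the only step where the quasi-split hypothesis on $J^\circ$ is used: for semisimple $\gamma$ it is Steinberg's classical theorem applied directly to $C$, and the general case follows via Jordan decomposition, since $\tilde j\gamma\tilde j^{-1}$ has the same semisimple-unipotent data as $\gamma$ (up to $J^\circ(\ol{F})$-conjugacy) and so reduces to the semisimple statement after descent to the connected centralizer of a rational semisimple representative.
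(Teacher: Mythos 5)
Your argument for the first assertion matches what the paper takes as obvious: $\pi_0(J)(\ol F)$ acts simply transitively on the set $X$ of $J^\circ(\ol F)$-classes, the torsor is canonically trivialized by the $\Gamma$-fixed base point $[\gamma]_{J^\circ}$, and passing to $\Gamma$-invariants gives the claim.

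For the second assertion you take a genuinely different and, unfortunately, problematic route. The paper's proof is short and explicit: the conjugation action of $\pi_0(J)$ on the set of $J^\circ$-conjugacy classes factors through $\tx{Out}(J^\circ)$, and an $F$-splitting of the quasi-split group $J^\circ$ furnishes a $\Gamma$-equivariant section $\tx{Out}(J^\circ) \rw \tx{Aut}(J^\circ)$. Composing gives a map $f : \pi_0(J)(F) \rw \tx{Aut}(J^\circ)(F)$, and $f(x)\gamma$ is then visibly an $F$-point of the $J^\circ$-class indexed by $x$, because $f(x)$ is an $F$-rational automorphism of $J^\circ$ applied to the $F$-rational element $\gamma$. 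No existence theorem for rational points in conjugacy classes is invoked.

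Your version instead appeals to "Steinberg's theorem, as sharpened by Kottwitz," in the form: every $\Gamma$-stable conjugacy class in a quasi-split connected reductive group over a perfect field has an $F$-point. That is not a standard theorem. Kottwitz's Lemma 3.3 (which this paper cites elsewhere) covers semisimple classes only, and Steinberg's original result in this generality requires $\tx{cd}(F)\leq 1$, which fails for $p$-adic $F$; the lemma here makes no semisimplicity assumption on $\gamma$. You acknowledge this and sketch a Jordan-decomposition reduction, but it has a real gap: after fixing a rational semisimple part $s_0$ with connected centralizer $M:=J^\circ_{s_0}$ quasi-split, the element witnessing $\sigma u \sim u$ lies only in the possibly disconnected full centralizer $(J^\circ)^{s_0}$, so the $M$-conjugacy class of the unipotent part $u$ need not be $\Gamma$-stable; and even when it is, the existence of a rational point in a $\Gamma$-stable unipotent class of a quasi-split group is again not covered by Steinberg--Kottwitz. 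The paper's construction of rational points via the $F$-rational section of $\tx{Out}(J^\circ) \rw \tx{Aut}(J^\circ)$ avoids all of these issues.
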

\begin{proof}
The first statement is obvious; we turn to the second. Let $C=\tx{Ad}(J(\ol{F}))\gamma$. The action of $\pi_0(J)(\ol{F})$ on $C/\tx{Ad}(J^\circ(\ol{F}))$ factors through the map $\pi_0(J) \rw \tx{Out}(J)$. Fix an $F$-splitting of the quasi-split group $J^\circ$ provides a $\Gamma$-equivariant section $\tx{Out}(J^\circ) \rw \tx{Aut}(J^\circ)$ of the natural projection. Hence we obtain
\[ f : \pi_0(J)(F) \rw \tx{Out}(J^\circ)(F) \rw \tx{Aut}(J^\circ)(F). \]
For $x \in \pi_0(J)(F)$, the element $f(x)\gamma$ is an $F$-point in the $J^\circ$-class $\tx{Ad}(x\cdot J^\circ)\gamma$.
\end{proof}

We now let $J$ be a connected reductive group defined over $F$. We assume that for each $\gamma \in J(F)_\tx{rs}$, we have fixed a decomposition $\gamma = \gamma_0 \cdot \gamma_1$, with $\gamma_0,\gamma_1 \in T$, where $T=\tx{Cent}(\gamma,J)^\circ$. We moreover assume that for any admissible isomorphism $f : T \rw T'$ of maximal tori of $J$, the decomposition of $f(\gamma)$ is $f(\gamma_0)\cdot f(\gamma_1)$.

We are now going to establish two lemmas dealing with descent to the centralizer of $\gamma_0$, where $\gamma_0$ is the $0$-part in the decomposition $\gamma=\gamma_0\cdot\gamma_1$ of a strongly regular semi-simple element. The first lemma provides a comparison between stable conjugacy in $J$ and stable conjugacy in $J_{\gamma_0}$.

\begin{lem} \label{lem:decdesc}
Assume that $J$ is quasi-split. The map
\[ p : J(F)_\tx{sr}/\tx{st} \rw J(F)_\tx{ss}/\tx{st},\qquad \gamma \mapsto \gamma_0 \]
is well defined. Every stable class in the image of this map contains a point $y \in J(F)_\tx{ss}$ for which $J_y$ is quasi-split. Moreover,
\[ p_y : J_y(F)_1/\tx{st} \rw p^{-1}([y]),\qquad z \mapsto zy \]
is a surjection, where the subscript $1$ denotes all those elements $\gamma_1$ such that $y\gamma_1$ is strongly-regular semi-simple and has the decomposition $y\cdot\gamma_1$. Finally, the fibers of $p_y$ are torsors under $\pi_0(H^y)(F)$.
\end{lem}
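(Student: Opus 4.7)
The plan is to treat the four assertions in order, using throughout the hypothesis that the decomposition $\gamma = \gamma_0\cdot\gamma_1$ is compatible with admissible isomorphisms of maximal tori, together with Lemma \ref{lem:conjdisc} applied to the (generally disconnected) group $J^y = \tx{Cent}(y,J)$.

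For well-definedness of $p$, suppose $\gamma,\gamma' \in J(F)_\tx{sr}$ are stably conjugate via $g \in J(\ol F)$. Then $\tx{Ad}(g)$ restricts to an admissible isomorphism $T \rw T'$ between the two connected centralizers, and this restriction is automatically defined over $F$ because $\tx{Ad}(g^{-1}\sigma g) = 1$ on $T$ for every $\sigma \in \Gamma$. Hence by our standing hypothesis on decompositions, $\tx{Ad}(g)\gamma_0 = \gamma_0'$, so $\gamma_0$ and $\gamma_0'$ are stably conjugate in $J(F)_\tx{ss}$.

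For the existence of a representative $y$ with $J_y$ quasi-split, I would cite the standard result (Kottwitz \cite{Kot82}, using a Steinberg section attached to an $F$-pinning of $J$) that in a quasi-split connected reductive group over $F$, every stable class of semi-simple elements has a representative whose connected centralizer is quasi-split over $F$. For the surjectivity of $p_y$, I would take $\gamma$ in $p^{-1}([y])$ and pick any $g \in J(\ol F)$ with $\tx{Ad}(g)\gamma_0 = y$ coming from the stable conjugacy; the relation $\gamma \in J(F)$ together with $\sigma(y) = y$ forces $g^{-1}\sigma(g) \in T(\ol F)$, so the element $yz' := \tx{Ad}(g)\gamma$ automatically lies in $J(F)$ and $z'$ lies in $J^y(F)$. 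I now apply Lemma \ref{lem:conjdisc} to the disconnected group $J^y$: the needed hypothesis $\tx{Cent}(z',J^y) \subset J_y$ holds because this centralizer equals $\tx{Cent}(yz',J) = T$, a connected torus contained in $J_y$. The lemma supplies an $F$-point $z \in J_y(F)$ in the $J_y(\ol F)$-orbit of $z'$, and by decomposition compatibility applied to the $J^y(\ol F)$-conjugation carrying $z'$ to $z$, the element $yz$ has topological Jordan decomposition $y \cdot z$, so $z \in J_y(F)_1$ and $[z]$ lies over $[\gamma]$ under $p_y$.

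For the torsor structure on fibers, I would identify the fiber $p_y^{-1}([\gamma])$ for a chosen $\gamma = yz_0$ with the set of $J_y(\ol F)$-conjugacy classes inside the $J^y(\ol F)$-orbit of $z_0$ which are defined over $F$. Given $[z] \in p_y^{-1}([\gamma])$ the decomposition-compatibility argument produces $g \in J^y(\ol F)$ with $\tx{Ad}(g)z_0 = z$, so the $J_y(\ol F)$-class of $z$ is Galois-stable and only depends on $[z]$; the previous paragraph gives the converse assignment via the $F$-point supplied by Lemma \ref{lem:conjdisc}. Two $F$-points in the same $J_y(\ol F)$-class are automatically $J_y$-stably conjugate (as they are strongly regular in $J_y$). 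The fiber thus matches the set on which Lemma \ref{lem:conjdisc}, applied to $J^y$, asserts a simply transitive action of $\pi_0(J^y)(F)$.

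The main technical hurdle is the rationality control in the surjectivity step: one must pass from an arbitrary representative $yz'$ with $z' \in J^y(F)$, which may sit in a non-identity component of $J^y$, to a representative $yz$ with $z \in J_y(F)$, while preserving both the decomposition property and the stable class in $J$. Everything else is either formal from the compatibility hypothesis or a direct appeal to Lemma \ref{lem:conjdisc}, which is exactly tailored to handle this component-group bookkeeping.
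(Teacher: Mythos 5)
Your treatments of well-definedness, the existence of $y$ with $J_y$ quasi-split, and the torsor structure on the fibers all follow the paper essentially verbatim. The gap is in the surjectivity of $p_y$, and it is exactly in the step you dismiss as automatic. You claim that picking $g \in J(\ol{F})$ with $\tx{Ad}(g)\gamma_0 = y$ forces $g^{-1}\sigma(g) \in T(\ol{F})$ and hence $\tx{Ad}(g)\gamma \in J(F)$. This does not follow: the rationality of $\gamma_0$ and $y$ only gives $g^{-1}\sigma(g) \in J^{\gamma_0}(\ol{F})$, and $g$ being a stable-conjugacy witness upgrades this to $g^{-1}\sigma(g) \in J_{\gamma_0}(\ol{F})$, but $J_{\gamma_0}$ is a connected reductive group that is in general much larger than the maximal torus $T_\gamma = \tx{Cent}(\gamma,J)$. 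You are conflating the strongly regular situation from your well-definedness step, where a conjugator between rational strongly regular elements does land in the centralizing torus, with the merely semisimple situation at hand. Consequently $z' := \tx{Ad}(g)\gamma_1$ need not be rational; you only know that its $J_y(\ol{F})$-conjugacy class is Galois-stable. Lemma \ref{lem:conjdisc} cannot then supply an $F$-point, because its hypothesis already demands a rational element of $J^\circ(F)$ to start from --- precisely the thing you are still missing.

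The paper fills this gap by using the quasi-splitness of $J_y$ in an essential way: since $\tx{Ad}(g) : J_{\gamma_0} \rw J_y$ is an inner twist and $T_\gamma$ is a maximal torus of $J_{\gamma_0}$, one may choose $h \in J_y$ so that $T' := \tx{Ad}(hg)T_\gamma$ is a maximal torus of $J_y$ defined over $F$ and $\tx{Ad}(hg)|_{T_\gamma}$ is defined over $F$ (the standard transfer of a maximal torus from an inner form to a quasi-split group). Then $(hg)^{-1}\sigma(hg) \in T_\gamma(\ol{F})$ for all $\sigma$, so $\gamma' := \tx{Ad}(hg)\gamma$ is rational and stably conjugate to $\gamma$ in $J$, the element $\gamma'_1 := \tx{Ad}(hg)\gamma_1$ lies in $J_y(F)$, and $\gamma' = y\cdot\gamma'_1$ is the correct decomposition by compatibility with the admissible isomorphism $\tx{Ad}(hg)|_{T_\gamma}$. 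Note that Lemma \ref{lem:conjdisc} plays no role in the surjectivity argument; the paper invokes it only for the fiber count, where your argument is correct and matches the paper's.
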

\begin{proof}
For the first claim, we need to show that the map $J(F)_\tx{rs} \rw J(F)_\tx{ss}$ given by sending $\gamma$ to $\gamma_0$ respects stable conjugacy. If $\gamma$ and $\gamma'$ are stably-conjugate, say by $g \in J(\ol{F})$, then $\tx{Ad}(g) : T_\gamma \rw T_{\gamma'}$ is an isomorphism defined over $F$ between the centralizers of $\gamma$ and $\gamma'$. Being admissible, it carries $\gamma_0$ to $\gamma'_0$, and hence exhibits these two elements as stably conjugate.

Now let $y \in J(F)_\tx{ss}$ be an element whose stable class belongs to the image of $p$. According to \cite[Lemma 3.3]{Kot82}, we may choose $y$ within its stable class so that $J_y$ is quasi-split. To prove the surjectivity of $p_y$, let $\gamma \in J(F)$ be such that $\gamma_0$ is stably-conjugate to $y$, and let $g \in J(\ol{F})$ be an element which stably-conjugates $\gamma_0$ to $y$. Then $\tx{Ad}(g) : J_{\gamma_0} \rw J_y$ is an inner twist. The centralizer $T_\gamma$ is a maximal torus in $J_{\gamma_0}$ and contains $\gamma_1$. Since $J_y$ is quasi-split, there exists $h \in J_y$ such that $T' := \tx{Ad}(hg)T_\gamma$ is a maximal torus of $J_y$ defined over $F$, and $\tx{Ad}(hg) : T_\gamma \rw T'$ is defined over $F$. Then $\gamma' := \tx{Ad}(hg)\gamma$ belongs to $J_y(F)$ and has the decomposition $\gamma' = y \cdot \gamma'_1$, where $\gamma'_1 = \tx{Ad}(hg)\gamma_1$. We see that $p_y(\gamma'_1)$ is stably conjugate to $\gamma$, and this proves the surjectivity of $p_y$.

To prove the statement about the fibers of $p_y$, let $\gamma_1,\gamma'_1 \in J_y(F)_{\tx{sr},1}$ and assume that that $y\gamma_1$ and $y\gamma_1'$ are stably conjugate, say by $g \in J(\ol{F})$. Our assumptions on the decomposition imply then that $g \in J^y(\ol{F})$. We conclude that the fiber of $p_y$ through $\gamma_1$ is the set of $F$-points of $\tx{Ad}(J^y(\ol{F}))\gamma_1$. The statement now follows from Lemma \ref{lem:conjdisc}.
\end{proof}

The second descent lemma will deal with the notion of transfer of elements from an endoscopic group $H$ to $J$. If $\gamma_0 \in J(F)$ and $y \in H(F)$ are such that $y$ transfers to $\gamma_0$ and $H_y$ is quasi-split, then Langlands and Shelstad show \cite{LS90} that $H_y$ can be realized as an endoscopic group of $J_{\gamma_0}$. In order to do that, one needs to fix an isomorphism from a maximal torus in $H_y$ to a maximal torus in $J_{\gamma_0}$ which is admissible when considered as an isomorphism from a maximal torus of $H$ to a maximal torus of $J$. Conjugating this isomorphism by elements of $H_y$ or $J_{\gamma_0}$ has no effect on the way $H_y$ behaves as an endoscopic group of $J_{\gamma_0}$. We will write $\Xi(H_y,J_{\gamma_0})$ for the set of such isomorphisms of tori which send $y$ to $\gamma_0$, taken up to conjugation by elements of $H_y$ and $J_{\gamma_0}$.

\begin{lem} \label{lem:embdesc}
Let $H$ be an endoscopic group of $J$ and $S$ a torus defined over $F$. Let $j : S \rw J$ and $j^H : S \rw H$ be embeddings of $S$ as an elliptic maximal torus of $J$ and $H$, and let $\gamma \in J(F)_\tx{rs}$ be such that $\gamma_0 \in jS(F)$. Consider the set of triples
\[ \{ (y,\xi_y,j^H_y) \} \]
where $y$ runs over a fixed set of representatives $Y$ for the stable classes of preimages of $\gamma_0$, chosen so that $H_y$ is quasi-split, $\xi_y$ runs over $\Xi(H_y,J_{\gamma_0})$, and $j^H_y$ runs over the $H_y$-stable classes of embeddings $S \rw H_y$ whose composition with the natural inclusion $H_y \rw H$ is stably conjugate to $j^H$. Consider also the set
\[ \{ j_{\gamma_0} : S \rw J_{\gamma_0} \} \]
of $J_{\gamma_0}$-stable classes of embeddings whose composition with the inclusion $j_{\gamma_0} \rw J$ is stably conjugate to $j$. Then we claim that the map
\[ q : \{(y,\xi_y,j^H_y)\} \rw \{j_{\gamma_0}\} \]
given by transfer of stable classes of tori from $H_y$ to $J_{\gamma_0}$ is surjective and its fiber through a triple $(y,\xi_y,j^H_y)$ is a torsor under $\pi_0(H^y)(F)$.
\end{lem}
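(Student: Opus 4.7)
The plan is to establish the lemma in three steps: well-definedness of $q$, surjectivity, and the fiber structure, the last of which is the substantive part.

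For well-definedness, given a triple $(y,\xi_y,j^H_y)$, I choose a representative of $j^H_y$ whose image is the source torus $T_H$ of a representative $\xi_y:T_H\rw T_J\subset J_{\gamma_0}$ and take the composition $\xi_y\circ j^H_y:S\rw J_{\gamma_0}$. The $J_{\gamma_0}$-stable class of this embedding depends only on the input data, because changing the representative of $j^H_y$ by an $H_y(\ol{F})$-conjugation and changing $\xi_y$ within its class in $\Xi(H_y,J_{\gamma_0})$ (conjugating by $H_y$ on the source, by $J_{\gamma_0}$ on the target) both translate into $J_{\gamma_0}(\ol{F})$-conjugation of the composition. That the composition with $J_{\gamma_0}\hookrightarrow J$ is stably conjugate to $j$ follows because $j^H_y$ composed with $H_y\hookrightarrow H$ is stably conjugate to $j^H$ and the endoscopic datum matches stable classes in $H$ with stable classes in $J$.

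For surjectivity, given $j_{\gamma_0}:S\rw J_{\gamma_0}$ with composition stably conjugate to $j$, I set $s_0:=j_{\gamma_0}^{-1}(\gamma_0)$ and note that $j^H(s_0)\in H(F)$ is a preimage of $\gamma_0$ by virtue of the compatibility of $j^H$ with $j$ under the endoscopic datum. Let $y\in Y$ be the fixed representative of the stable class of $j^H(s_0)$. Since $H_y$ is quasi-split, the stable class of tori in $H_y$ corresponding to $j_{\gamma_0}(S)\subset J_{\gamma_0}$ under the endoscopic relation $H_y\leftrightarrow J_{\gamma_0}$ contains a torus $T_H$ defined over $F$, and the admissible isomorphism $T_H\rw j_{\gamma_0}(S)$ provides both the representative $\xi_y\in\Xi(H_y,J_{\gamma_0})$ and (by transporting $j_{\gamma_0}$) the embedding $j^H_y:S\rw H_y$ with image $T_H$. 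By construction $q(y,\xi_y,j^H_y)=j_{\gamma_0}$.

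For the fiber structure, the construction in the previous paragraph recovers the stable class of $y$ from $j_{\gamma_0}$, hence $y$ itself is determined. On pairs $(\xi_y,j^H_y)$ over a fixed $j_{\gamma_0}$, I define an action of $H^y(F)$ by
\[ h\cdot(\xi_y,j^H_y)=\bigl(\xi_y\circ\tx{Ad}(h^{-1}),\,\tx{Ad}(h)\circ j^H_y\bigr). \]
This preserves the composition $\xi_y\circ j^H_y$ and therefore the fiber. The subgroup $H_y(F)$ acts trivially by the definitions of equivalence in $\Xi(H_y,J_{\gamma_0})$ and of $H_y$-stable conjugacy, yielding an action of $\pi_0(H^y)(F)$. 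For freeness, if $\tx{Ad}(h)$ fixes a pair, it preserves the $H_y$-stable class of $j^H_y$; since $j^H_y(S)$ is a maximal torus of $H_y$, this forces $h\in j^H_y(S)(\ol{F})\cdot H_y(\ol{F})\subset H_y(\ol{F})$, trivial in $\pi_0(H^y)(F)$. For transitivity, if two pairs map to $j_{\gamma_0}$, one first uses the transitivity of stable conjugacy in $H_y$ and the ambiguity within $\Xi(H_y,J_{\gamma_0})$ to reduce to the case where $\xi_y\circ j^H_y$ coincides up to conjugation by some $g\in J_{\gamma_0}(\ol{F})$; then transferring $g$ back through $\xi_y$ produces an element $h\in H^y(\ol{F})$ implementing the transition, and a Galois-descent argument in the spirit of Lemma \ref{lem:conjdisc} and the final part of the proof of Lemma \ref{lem:decdesc} allows us to choose $h$ so that its class in $\pi_0(H^y)$ is Galois-invariant, hence lies in $\pi_0(H^y)(F)$.

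The main obstacle will be the transitivity step. Upgrading a stable conjugacy in $J_{\gamma_0}(\ol{F})$ between two transfers into an $F$-rational element of $\pi_0(H^y)$ requires careful tracking of the endoscopic isomorphism $\xi_y$ and a descent argument in $H^y$ that mirrors the component-group descent used in Lemmas \ref{lem:conjdisc} and \ref{lem:decdesc}.
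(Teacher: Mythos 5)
Your overall architecture (well-definedness, surjectivity, fiber structure) mirrors the paper's, and the surjectivity step, apart from some circularity in how the admissible isomorphism underlying $\xi_y$ is produced, is essentially the paper's: set $s_0=j_{\gamma_0}^{-1}(\gamma_0)$, look at $j^H(s_0)$, find the right $y\in Y$, and use quasi-splitness of $H_y$ to descend. The paper phrases the map $q$ as ``transfer of the stable class of the torus $j^H_yS$ from $H_y$ to the quasi-split form of $J_{\gamma_0}$ and then to $J_{\gamma_0}$ itself,'' rather than your ``composition $\xi_y\circ j^H_y$''; your formulation glosses over the need to replace a representative of $\xi_y$ by an admissible isomorphism whose source is the torus $j^H_y(S)$, and this requires knowing that $j^H_y(S)$ is $H_y(F)$-conjugate to the source of the chosen representative of $\xi_y$, which is not automatic. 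This is a minor imprecision that can be repaired.

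The substantive gap is in the fiber structure. You define an action of $H^y(F)$ on pairs $(\xi_y,j^H_y)$ and note that $H_y(F)$ acts trivially, concluding that $\pi_0(H^y)(F)$ acts. But what you actually obtain is an action of the quotient $H^y(F)/H_y(F)$, which injects into $\pi_0(H^y)(F)$ but need not surject: the cokernel of $H^y(F)\rw \pi_0(H^y)(F)$ is $\ker\bigl(H^1(F,H_y)\rw H^1(F,H^y)\bigr)$, and this can be nontrivial even for quasi-split $H_y$. If your action were simply transitive on the fiber, the lemma's conclusion (torsor under $\pi_0(H^y)(F)$) would force $H^y(F)/H_y(F)=\pi_0(H^y)(F)$, which is false in general. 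The action of the full $\pi_0(H^y)(F)$ in Lemma \ref{lem:conjdisc} is not given by conjugation by $F$-rational elements of $H^y$: it is defined via the $\Gamma$-equivariant section $\tx{Out}(H_y)\rw\tx{Aut}(H_y)$ furnished by an $F$-splitting of the quasi-split group $H_y$, and quasi-splitness is then used a second time to replace the $\ol{F}$-conjugate $\tx{Ad}(h)\circ j^H_y$ (which need not be defined over $F$) by an $F$-rational representative of its $H_y(\ol{F})$-class. Your proposal neither sets up this action nor addresses the rationality problem, and the transitivity step is left as an appeal to ``a Galois-descent argument in the spirit of Lemma \ref{lem:conjdisc}'' without being carried out; carrying it out is precisely where the distinction between $H^y(F)/H_y(F)$ and $\pi_0(H^y)(F)$ becomes visible. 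The paper avoids the explicit action altogether: it first pins down $y$, observes that $j^H_y$ and $j^H_{y'}$ (being $H$-stably conjugate and sending $s$ to the same $y$) are conjugate under $H^y(\ol{F})$, notes the same element works for $\xi$, and then invokes Lemma \ref{lem:conjdisc} as a black box to deliver the torsor structure.

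Your ``transfer $g$ back through $\xi_y$'' step in the transitivity discussion is also a detour: the paper gets the $H^y(\ol{F})$-conjugation directly from $H$-stable conjugacy of $j^H_y$ and $j^H_{y'}$ together with the equalities $j^H_y(s)=y=j^H_{y'}(s)$, without passing through $J_{\gamma_0}(\ol{F})$.
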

\begin{proof}
The map $q$ is well defined -- given a triple $(y,\xi_y,j^H_y)$, we may transfer the stable class of the maximal torus $j^H_y S$ of $H_y$ to the quasi-split form of $J_{\gamma_0}$. Since $S$ is elliptic for $J$, it is also elliptic for the quasi-split form of $J_{\gamma_0}$, and hence transfers further to $J_{\gamma_0}$ itself. Furthermore it is clear that $q$ is $\pi_0(H^y)(F)$-invariant.

For surjectivity, we start with $j_{\gamma_0}$. Then $y' = j^H\circ j_{\gamma_0}^{-1}(\gamma_0) \in H(F)$ is a preimage of $\gamma_0 \in J(F)$. There exists a unique element $y \in Y$ which is stably-conjugate to $y'$, say by $h \in H$. Then $\tx{Ad}(h) : H_{y'} \rw H_y$ is an inner twist and there exists $h' \in H_y$ such that $j^H_y := \tx{Ad}(h'h)\circ j^H : S \rw H_y$ is defined over $F$. Finally, take $\xi_y = j_{\gamma_0} \circ [j^H_y]^{-1}$.

For injectivity, let $(y,\xi_y,j^H_y)$ and $(y',\xi_{y'},j^H_{y'})$ be two triples giving rise to the same $J_{\gamma_0}$-stable class of embeddings $j_{\gamma_0} : S \rw J_{\gamma_0}$. Then we can choose $\xi_y$ and $\xi_{y'}$ within their equivalence classes in such a way that
\[ \xi_y \circ j^H_y = j_{\gamma_0} = \xi_{y'} \circ j^H_{y'}. \]
Setting $s=j_{\gamma_0}^{-1}(\gamma_0)$, this shows that $y=j_y^H(s)$ and $y'=j_{y'}^h(s)$. But the embeddings $j_y^H$ and $j_{y'}^H$ are $H$-stably conjugate, while $y,y' \in Y$. This forces $y=y'$. From this we see that $j_y^H$ and $j_{y'}^H$ are conjugate under $H^y(\ol{F})$, and the same element of $H^y$ conjugating these embeddings must then also conjugate $\xi_y$ to $\xi_{y'}$ in order to maintain the relationship with $j_{\gamma_0}$. The result now follows from Lemma \ref{lem:conjdisc}.

\end{proof}

\subsection{Endoscopic transfer} \label{sec:endotran}

We now fix an endoscopic datum $(H,s,\mc{H},{^L\eta})$ for $G$. To ease notation, we will assume that there exists an $L$-isomorphism $\mc{H} \rw {^LH}$. The general case can easily be reduced to this -- one has to replace $G$ by a $z$-extension as in \cite[\S4]{LS87}. Thus we may replace $\mc{H}$ by $^LH$ and assume that ${^L\eta}$ is an $L$-embedding ${^LH} \rw {^LG}$. We further assume that this $L$-embedding is tamely ramified. Fix a Whittaker datum $W=(B,\psi_B)$ for $G$. Then, for each extended pure inner twist $(\xi,b) : G \rw G^b$ we have \cite[\S2]{Kal11a} the Whittaker normalization of the transfer factor
\[ \Delta_{W,b} : H(F)_\tx{G-sr} \times G^b(F)_\tx{sr} \rw \C. \]
We emphasize that this is the normalization which, on the group $G$, coincides with the factor $\Delta'_\lambda$ defined in \cite{KS12}.
Let $\phi^H : W_F \rw {^LH}$ be a Langlands parameter and assume that its composition $\phi : W_F \rw {^LG}$ with ${^L\eta}$ is epipelagic. Then $\phi^H$ is itself epipelagic. Thus we have the $L$-packets $\Pi_{\phi^H,1,H}$ on $H(F)$ and $\Pi_{\phi,\xi,G^b}$ on $G^b(F)$. Our goal is to show that for all $\gamma^b \in G^b(F)_\tx{sr}$ the endoscopic character identity holds.
\begin{thm}\label{thm:endotran}
\[ \Theta^s_{\phi,b}(\gamma^b) = \sum_{\gamma^H \in H(F)_\tx{sr}/\tx{st}} \Delta_{W,b}(\gamma^H,\gamma^b)\frac{D^H(\gamma^h)^2}{D^{G^b}(\gamma^b)^2}S\Theta_{\phi^H}(\gamma^H). \]
\end{thm}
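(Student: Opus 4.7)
The plan is to expand both sides of the identity by means of the Adler--Spice character formula (Proposition~\ref{pro:schar}), reduce the group-level identity to a Lie-algebra identity by topological Jordan decomposition modulo $Z(G)^\circ$, invoke endoscopy on Lie algebras to handle the Fourier transforms of orbital integrals, and finally verify a numerical identity matching the remaining discrete factors with the Whittaker-normalized transfer factor.

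If $\gamma^b$ admits no topological Jordan decomposition modulo $Z(G)^\circ$, both sides vanish; so we may assume $\gamma^b=\gamma^b_0\cdot\gamma^b_{>0}$ with $\gamma^b_{>0}\in G^b(F)_{0+}$. Proposition~\ref{pro:schar} then presents $\Theta^s_{\phi,b}(\gamma^b)$ as a double sum over stable classes $[j]$ of admissible embeddings $S\rw J:=(G^b)_{\gamma^b_0}$ and $J(F)$-classes $[k]$ within each, whose inner terms are Fourier transforms of orbital integrals on $\mf{j}(F)$. To expand the right-hand side, I would use Lemma~\ref{lem:decdesc} to replace the sum over stable classes of $\gamma^H$ by a sum over a fixed set $Y$ of representatives of stable classes of semisimple preimages of $\gamma^b_0$ in $H$ (chosen so that $H_y$ is quasi-split) together with stable classes of $\gamma^H_{>0}\in H_y(F)_{\tx{sr},1}$; only those $y\in Y$ that transfer to $\gamma^b_0$ contribute, since $\Delta_{W,b}$ vanishes otherwise. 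Lemma~\ref{lem:embdesc} then provides a bijection (up to the $\pi_0(H^y)(F)$-torsor structure, which cancels against the fiber in Lemma~\ref{lem:decdesc}) between the embeddings $S\rw H_y$ occurring on the right and the embeddings $S\rw J$ occurring on the left. Langlands--Shelstad descent splits $\Delta_{W,b}(\gamma^H,\gamma^b)$ into a factor depending only on $(y,\gamma^b_0)$ and the transfer factor $\Delta^{J,H_y}(\gamma^H_{>0},\log\gamma^b_{>0})$ for the Lie-algebra endoscopic datum $(H_y,\ldots)$ of $J$.

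With both sides reorganized according to a common parameterization, the Lie-algebra portion of the identity is exactly the endoscopic matching of Fourier transforms of stable orbital integrals between $\mf{j}(F)$ and $\mf{h}_y(F)$. Under our residual characteristic hypothesis the logarithm is available and the multiplicative problem reduces to the additive one, so the combined work of Waldspurger \cite{Wal97,Wal06} and Ngo \cite{Ngo10} yields this matching. What remains is, for each matched pair of embeddings $j\leftrightarrow j^H$, a numerical identity of the form
\[ e(G^b)\,j_*[\chi_{S,{^Lj}}\cdot\epsilon_j](\gamma^b_0)\,\<\tx{inv}(j_0,j),s\>\,\gamma_\psi(\mf{j}(F),\<\>)^{-1} = e(H)\,j^H_*[\chi_{S,{^Lj^H}}\cdot\epsilon_{j^H}](\gamma^H_0)\,\gamma_\psi(\mf{h}_y(F),\<\>)^{-1}\,\Delta^{\tx{desc}}_{W,b}(y,\gamma^b_0), \]
the $\gamma_\psi$-factors arising from the passage between the Adler--Spice normalization $\hat\iota$ and the Waldspurger normalization of the Fourier transform.

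The main obstacle is the verification of this last identity. I expect to carry it out by combining four inputs: Proposition~\ref{pro:torinvstab}, which expresses the ratio $e(G^b)/e(H)$ as a product of toral-invariant ratios over symmetric root classes; Lemma~\ref{lem:torinvchar}, which evaluates $\epsilon_j(\gamma^b_0)$ in terms of toral invariants at inertially symmetric roots $\alpha$ with $\alpha(\gamma^b_0)\neq 1$; Corollary~\ref{cor:torinveps}, which converts ratios of Weil constants into products of Langlands $\lambda$-constants and the $\epsilon_L$-factors of a virtual degree-zero Galois representation; and the explicit construction of $^Lj$ in Section~\ref{sec:chispec}, which guarantees that the ratio $\chi_{S,{^Lj}}/\chi_{S,{^Lj^H}}$, restricted to depth-zero elements, is given by precisely the $\lambda$-constants produced by the additive characters $\xi_{\alpha,\omega}$ extracted from $\phi$. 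After these substitutions the identity reduces to the explicit Langlands--Shelstad formula for the factors $\Delta_I$, $\Delta_{II}$, $\Delta_{III_1}$, $\Delta_{III_2}$, $\Delta_{IV}$ in the $\Delta'_\lambda$ Whittaker normalization of \cite{KS12}, matched termwise against the descent factor obtained from the Langlands--Shelstad splitting.
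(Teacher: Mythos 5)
Your plan matches the architecture of the paper's proof almost exactly: expand $\Theta^s_{\phi,b}$ and $S\Theta_{\phi^H}$ via Proposition~\ref{pro:schar}, reorganize the right-hand side with Lemma~\ref{lem:decdesc}, match embeddings via Lemma~\ref{lem:embdesc} with the $\pi_0(H^y)(F)$-torsor cancellation you describe, pass to the Lie algebra and invoke Waldspurger--Ngo, and finally reduce to a discrete identity verified with Proposition~\ref{pro:torinvstab}, Lemma~\ref{lem:torinvchar}, Corollary~\ref{cor:torinveps}, and the $\lambda$-constant construction of the $\chi$-data.

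There is, however, one genuine gap in the step where you invoke Langlands--Shelstad descent. The descent theorem of \cite{LS90} only asserts that $\mr{\Delta}^\tx{desc}_{W,b,\xi}(\tilde\gamma^H_0\tilde z,\gamma^b_0 z^b)=\mr{\Delta}_{W,b}(\gamma^H_0 z,\gamma^b_0 z^b)$ for elements $\tilde z,z^b$ \emph{sufficiently close} to $\tilde\gamma^H_0$ and $\gamma^b_0$, i.e.\ in a small neighborhood. Your $\gamma^b_{>0}$ is an arbitrary topologically unipotent strongly regular element of $G^b(F)_{0+}$, and there is no a priori reason for it to lie in that neighborhood. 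The paper resolves this with a Hales-type argument (Lemma~\ref{lem:ugly}): one replaces $\gamma_{>0}$ by $\gamma_{>0}^{Q^{2k}}$ with $Q$ a suitable power of $p$ and $k$ large; one then checks, factor-by-factor through $\Delta_I,\Delta_{II},\Delta_{III_1},\Delta_{III_2}$, that this power operation does not change $\mr{\Delta}$ (the only nontrivial point is $\Delta_{III_2}$, which changes by the central character $\lambda_G$ evaluated at $\gamma_0^{Q^{2k}-1}$, and this cancels against the corresponding change in the descended factor). Only after this shrinking does the descent theorem, and the compatibility $\mr{\Delta}^\tx{desc}=\Delta^{\mf{h}_y}\circ\log$, become applicable. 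Without it, the group-to-Lie-algebra passage in your third paragraph is not justified.

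A secondary point your sketch glosses over: the final term-by-term verification is not an abstract matching against the full list $\Delta_I,\ldots,\Delta_{IV}$. The paper eliminates $\Delta_{III_1}$ by choosing $j_0(j^H)^{-1}$ as the admissible isomorphism, works with the $\Delta_{IV}$-free factor $\mr{\Delta}$ throughout, and trivializes $\Delta_I$ by picking $a$-data of the form $a_\alpha=\<X_\alpha,X_{-\alpha}\>^{-1}d\alpha(j_0Y)$, relating the splitting to the Kostant section of the Whittaker datum via Proposition~\ref{pro:generic}, and invoking Kottwitz's theorem \cite{Kot99}. The only surviving nontrivial factor is $\Delta_{II}$, whose explicit computation against the $\lambda$-constants built into the $\chi$-data is the real content of the numerical identity you isolate.
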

We will use the formulas from Proposition \ref{pro:schar}, and for this we need to fix topological Jordan decompositions mod $Z(G)^\circ$ for the elements involved. First, fix a topological Jordan decomposition $\gamma^b=\gamma^b_0\cdot\gamma^b_{>0}$ which is such that $\gamma^b_{>0} \in T_{\gamma^b}(F)_{0+}$. As remarked in Section \ref{sec:stab}, this is possible. We endow all stable conjugates of $\gamma^b$ with the induced decompositions. Moreover, for every stable class in $H$ which transfers to the stable class of $\gamma^b$, we fix a representative $\gamma^H$ and an admissible isomorphism $f : T_{\gamma^H} \rw T_{\gamma^b}$ and endow $\gamma^H$ with the decomposition of $\gamma^b$ transported over $f$. This fixes corresponding decompositions on all elements of $H(F)$ which transfer to $\gamma^b$.

Before we set out to prove this identity, we need to recall the results of Langlands and Shelstad \cite{LS90} on descent for transfer factors. For this, fix an element $\gamma^H \in H(F)$ which transfers to $\gamma^b$. We may modify $\gamma^H$ within its stable class to assume that $H_{\gamma^H_0}$ is quasi-split. Let $f : T_{\gamma^H} \rw T_{\gamma^b}$ be an admissible isomorphism sending $\gamma^H$ to $\gamma^b$. Then Langlands and Shelstad show how to produce from $(H,s,\mc{H},{^L\eta})$ an endoscopic datum $(H_{\gamma^H_0},s,\mc{H'},{^L\eta'})$ for $G^b_{\gamma^b_0}$. The isomorphism class of this endoscopic datum depends only on the class $\xi \in \Xi(H_{\gamma^H_0},G^b_{\gamma^b_0})$ of $f$. We cannot expect however that $\mc{H'}$ is an $L$-group of $H_{\gamma^H_0}$ and thus we have to take an extension $\tilde H_{\gamma^H_0}$ of $H_{\gamma^H_0}$ and an $L$-embedding ${^L\tilde\eta} : \mc{H} \rw {^L\tilde H_{\gamma^H_0}}$. We again assume that ${^L\tilde\eta}$ is tamely ramified. This data provides a canonical relative transfer factor
\[ \tilde H_{\gamma^H_0}(F)_\tx{sr} \times \tilde H_{\gamma^H_0}(F)_\tx{sr} \times G^b_{\gamma^b_0}(G)_\tx{sr} \times G^b_{\gamma^b_0}(G)_\tx{sr} \rw \C \]
and hence a family of absolute transfer factors
\[ \tilde H_{\gamma^H_0}(F)_\tx{sr} \times G^b_{\gamma^b_0}(G)_\tx{sr} \rw \C, \]
which is a torsor under $S^1$ acting by multiplication. The main theorem of \cite{LS90} then asserts that one of these transfer factors, which we will denote by $\Delta^\tx{desc}_{W,b,\xi}$, satisfies
\[ \Delta^\tx{desc}_{W,b,\xi}(\tilde \gamma^H_0 \cdot\tilde z,\gamma^b_0\cdot z^b) = \Delta_{W,b}(\gamma^H_0 \cdot z, \gamma^b_0\cdot z^b) \]
for all $\tilde z \in \tilde H_{\gamma^H_0}(F)_\tx{sr}$ and $z^b \in G^b_{\gamma^b_0}(G)_\tx{sr}$ which are sufficiently close to $\tilde\gamma^H_0$ and $\gamma^b_0$, where $\tilde\gamma^H_0 \in \tilde H_{\gamma^H_0}(F)$ is a fixed lift of $\gamma^H_0$. In the course of our proof, we will work with a version of the transfer factors which is missing the term $\Delta_{IV}$. In order to avoid confusion, we will denote these transfer factors by $\mathring{\Delta}$. The work of Langlands and Shelstad applies equally well to these transfer factors and asserts
\[ \mathring{\Delta}^\tx{desc}_{W,b,\xi}(\tilde \gamma^H_0 \cdot\tilde z,\gamma^b_0\cdot z^b) = \mathring{\Delta}_{W,b}(\gamma^H_0 \cdot z, \gamma^b_0\cdot z^b). \]
We would like to know that this statement also holds when $\tilde z$ and $z^b$ are any strongly-regular semi-simple topologically unipotent elements. This follows from the work of Hales \cite{Hal93}, which, even though originally written for unramified groups and compact elements, applies equally well in our setting, as we will now review.

\begin{lem} \label{lem:ugly} Let $G$ be a tamely ramified connected reductive group defined over $F$, $(H,s,{^L\eta})$ a tamely ramified extended endoscopic triple. Let $\gamma \in G(F)$ be a strongly-regular semi-simple element and $\gamma = \gamma_0 \cdot \gamma_{>0}$ a commuting decomposition with $\gamma_0$ topologically semi-simple modulo $Z(G)^\circ$ and $\gamma_{>0}$ topologically unipotent. Fix a power $Q$ of $p$ for which $\alpha(\gamma_0)^Q=\alpha(\gamma_0)$ for all roots $\alpha$ of the centralizer of $\gamma$. Then, for any absolute transfer factor $\Delta$ and element $\gamma^H \in H(F)$ which transfers to $\gamma$ and has the compatible decomposition $\gamma^H=\gamma^H_0 \cdot \gamma^H_{>0}$, we have
\[ \mr{\Delta}(\gamma^H_0\cdot[\gamma^H_{>0}]^{Q^{2k}},\gamma_0\cdot\gamma_{>0}^{Q^{2k}}) = \mr{\Delta}(\gamma^H,\gamma). \]

\end{lem}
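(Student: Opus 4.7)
The strategy is to analyze each of the Langlands--Shelstad factors in $\mathring\Delta=\Delta_I\cdot\Delta_{II}\cdot\Delta_{III}$ of \cite{LS87} separately, showing each is invariant under the replacement $(\gamma^H,\gamma)\mapsto(\gamma^H_0\cdot[\gamma^H_{>0}]^{Q^{2k}},\gamma_0\cdot\gamma_{>0}^{Q^{2k}})$. Since the absolute normalization of a relative factor is independent of the chosen auxiliary data, I fix once and for all tame $\chi$-data and $a$-data. The two inputs share the same connected centralizer $T=T_\gamma$ (and correspondingly $T_H=T_{\gamma^H}$), the same admissible isomorphism $T_H\to T$, and the same stable class. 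The factor $\Delta_I$, built from the splitting invariant of the admissibly embedded $T$ together with the $a$-data, depends on $\gamma$ only through $T$ and is therefore unchanged. For $\Delta_{III}$, the $\gamma$-dependence enters via evaluation at $\gamma$ of a character $\lambda$ of $T(F)$ whose Langlands parameter $W_F\to\hat T$ is tame (thanks to our choice of $\chi$-data together with the hypothesis that ${^L\eta}$ is tamely ramified); tameness makes $\lambda$ trivial on the pro-$p$ subgroup $T(F)_{0+}$, and since $\gamma_{>0}^{Q^{2k}}\in T(F)_{0+}$, one deduces $\Delta_{III}(\gamma^H_0[\gamma^H_{>0}]^{Q^{2k}},\gamma_0\gamma_{>0}^{Q^{2k}})=\Delta_{III}(\gamma^H,\gamma)$.

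The main work is therefore
\[ \Delta_{II}\big(\gamma^H_0\cdot[\gamma^H_{>0}]^{Q^{2k}},\gamma_0\cdot\gamma_{>0}^{Q^{2k}}\big)=\Delta_{II}(\gamma^H,\gamma). \]
Writing $\Delta_{II}=\prod_\alpha\chi_\alpha((\alpha(\gamma)-1)/a_\alpha)$ as a product over $\Gamma$-orbit representatives $\alpha$ of roots of $T$ in $G$ not inherited from $H$, it suffices to show $\chi_\alpha(r_\alpha)=1$ for each such $\alpha$, where
\[ r_\alpha=\frac{\alpha(\gamma_0)\,\alpha(\gamma_{>0})^{Q^{2k}}-1}{\alpha(\gamma_0)\,\alpha(\gamma_{>0})-1}\in F_\alpha^\times. \]
A case split on whether $\alpha(\gamma_0)=1$ organizes the remainder of the argument.

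If $\alpha(\gamma_0)\neq 1$, the hypothesis $\alpha(\gamma_0)^Q=\alpha(\gamma_0)$ forces $\alpha(\gamma_0)\in\mu_{Q-1}(F_\alpha)$, so $\alpha(\gamma_0)-1\in O_{F_\alpha}^\times$. Writing $\alpha(\gamma_{>0})=1+u$ with $u\in\mathfrak{p}_{F_\alpha}$ topologically nilpotent, both numerator and denominator of $r_\alpha$ are congruent to $\alpha(\gamma_0)-1$ modulo $\mathfrak{p}_{F_\alpha}$, so $r_\alpha\in 1+\mathfrak{p}_{F_\alpha}$, and tameness of $\chi_\alpha$ delivers $\chi_\alpha(r_\alpha)=1$.

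The essential obstacle is the descent case $\alpha(\gamma_0)=1$, where $\alpha$ descends to a root of $G_{\gamma_0}$ and $r_\alpha=((1+u)^{Q^{2k}}-1)/u$ for $u\in\mathfrak{p}_{F_\alpha}\smallsetminus\{0\}$; in general $r_\alpha\notin 1+\mathfrak{p}_{F_\alpha}$, so tameness alone is inadequate. I would follow Hales \cite{Hal93}: for asymmetric $\alpha$ one has $\chi_\alpha=1$ and nothing is required, while for symmetric $\alpha$ the even exponent $Q^{2k}=(Q^k)^2$ is indispensable. A graded-piece analysis of the $Q^k$-power map on the standard filtration of $1+\mathfrak{p}_{F_\alpha}$ --- each graded piece being a finite-dimensional $k_{F_\alpha}$-vector space on which $Q^k$-power acts as a power of residual Frobenius --- shows that $r_\alpha$, modulo the subgroup killed by $\chi_\alpha$ by tameness, lies in the image of the norm $N_{F_\alpha/F_{\pm\alpha}}$. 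Since the defining property of $\chi$-data enforces $\chi_\alpha|_{F_{\pm\alpha}^\times}=\kappa_\alpha$ (the non-trivial character killing that norm group), this yields $\chi_\alpha(r_\alpha)=1$ and completes the proof.
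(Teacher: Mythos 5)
Your route differs genuinely from the paper's. The paper first compares $\mathring\Delta([\gamma^H]^{Q^{2k}},\gamma^{Q^{2k}})$ with $\mathring\Delta(\gamma^H,\gamma)$, picking up a discrepancy $\lambda_G(\gamma_0^{Q^{2k}-1})$ in $\Delta_{III_2}$ from the central element $\gamma_0^{Q^{2k}-1}$, and then cancels that discrepancy by a second appeal to \cite[\S3.5]{LS90} when passing from $\gamma_0^{Q^{2k}}\gamma_{>0}^{Q^{2k}}$ to $\gamma_0\gamma_{>0}^{Q^{2k}}$. You compare the two pairs directly, so no central-element correction arises: your treatment of $\Delta_I$ (depends only on $T$) and of $\Delta_{III}$ (the $\Delta_{III_2}$-character is tame, hence trivial on $T(F)_{0+}$, and $\gamma_{>0},\gamma_{>0}^{Q^{2k}}\in T(F)_{0+}$) is correct and in fact slightly cleaner than the paper's. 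For $\Delta_{II}$ the paper simply cites \cite[Cor.~10.3]{Hal93} as remaining valid in this setting; you attempt the computation directly. Your case $\alpha(\gamma_0)\neq 1$ is fine.

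The gap is in the essential case $\alpha(\gamma_0)=1$. There $r_\alpha=\bigl((1+u)^{Q^{2k}}-1\bigr)/u$ has \emph{large positive valuation} (roughly $v(Q^{2k})$), so it is nowhere near $1+\mathfrak p_{F_\alpha}$, and it is not clear what your ``graded-piece analysis of the $Q^k$-power map'' is supposed to produce: the $p$-power map on graded pieces of $1+\mathfrak p_{F_\alpha}$ is \emph{not} uniformly given by residual Frobenius (in higher filtration degrees it degenerates to multiplication by $p$), and even if it were, that would not obviously locate $r_\alpha$ in $N_{F_\alpha/F_{\pm\alpha}}(F_\alpha^\times)\cdot(1+\mathfrak p_{F_\alpha})$. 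The argument you are groping for is simpler and more concrete: write $(1+u)^{Q^{2k}}-1=Q^{2k}u\cdot(1+\varepsilon)$ with $\varepsilon\in\mathfrak p_{F_\alpha}$ (from the binomial expansion $\sum_{j\ge 2}\binom{Q^{2k}}{j}u^j$, each term $\binom{Q^{2k}}{j}u^{j-1}/Q^{2k}\in\mathfrak p_{F_\alpha}$; this uses that $u$ is topologically nilpotent and, strictly, that $p$ is not too small), so $r_\alpha\equiv Q^{2k}\pmod{1+\mathfrak p_{F_\alpha}}$, and then observe that $Q^{2k}=(Q^k)^2$ is a square in $F^\times\subset F_{\pm\alpha}^\times$, hence a norm from $F_\alpha^\times$, hence killed by $\chi_\alpha$. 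Without this explicit reduction the key step of your $\Delta_{II}$ argument is a gesture rather than a proof.
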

\begin{proof} Let us first compare $\mr{\Delta}([\gamma_H]^{Q^{2k}},\gamma^{Q^{2k}})$ with $\mr{\Delta}(\gamma_H,\gamma)$.
One sees immediately that the factors $\Delta_I$ and $\Delta_{III_1}$ are equal for both pairs $([\gamma_H]^{Q^{2k}},\gamma^{Q^{2k}})$  and $(\gamma_H,\gamma)$, as they only depend on the tori $T^H \subset H$ and $T \subset G$ containing the elements $\gamma^H$ and $\gamma$, as well as the isomorphism $T^H \rw T$ mapping $\gamma^H$ to $\gamma$, and this data remains unaffected when we replace the elements by their powers. It remains to examine the factors $\Delta_{II}$ and $\Delta_{III_2}$. The factor $\Delta_{II}$ is again the same for both pairs of elements, due to \cite[Corollary 10.3]{Hal93}, whose argument remains valid in our setting. The factor $\Delta_{III_2}$ however is different. It is a tamely ramified character of the centralizer of $\gamma$ in $G$ whose restriction to $Z(G)(F)$ is equal to a character $\lambda_G$ that depends only on $G$. This is the content of \cite[\S3.5]{LS90}. Thus the character defining $\Delta_{III_2}$ is trivial on any power of $\gamma_{>0}$. Moreover its values at $\gamma_0^{Q^{2k}}$ and $\gamma_0$ differ by its value at $\gamma_0^{Q^{2k}-1}$, but this element is killed by all roots and hence is central. Thus we see
\[ \Delta_{III_2}([\gamma_H]^{Q^{2k}},\gamma^{Q^{2k}}) = \lambda_G(\gamma_0^{Q^{2k}-1}) \cdot \Delta_{III_2}(\gamma_H,\gamma). \]
Since all the other components of the transfer factor were equal at the two pairs $([\gamma_H]^{Q^{2k}},\gamma^{Q^{2k}})$  and $(\gamma_H,\gamma)$, we conclude that
\[ \mr{\Delta}([\gamma_H]^{Q^{2k}},\gamma^{Q^{2k}}) = \lambda_G(\gamma_0^{Q^{2k}-1}) \cdot \mr{\Delta}(\gamma_H,\gamma). \]
The statement now follows from a second application of \cite[\S3.5]{LS90}.
\end{proof}

\begin{proof}[Proof of Theorem \ref{thm:endotran}]
Consider the right hand side of the claimed equality. If the element $\gamma^H$ does not transfer to $\gamma^b$, the corresponding summand is zero. For any element $\gamma^H$ which does transfer to $\gamma^b$, we have fixed a decomposition $\gamma^H=\gamma^H_0\cdot\gamma^H_{>0}$. Let $Y$ be a set of representatives for the stable classes of the elements $\gamma^H_0$ obtained in this way, with the property that for each $y \in Y$, $H_y$ is quasi-split. Applying Lemma \ref{lem:decdesc}, we can write said right hand side as
\[ \sum_{y \in Y} |\pi_0(H^y)(F)|^{-1} \sum_{z \in H_y(F)_1/\tx{st}} \mr{\Delta}_{W,b}(yz,\gamma^b)\frac{D^H(yz)}{D^{G^b}(\gamma^b)}S\Theta_{\phi^H}(yz). \]
Applying Lemma \ref{lem:ugly}, we can rewrite this as
\[ \sum_{y \in Y} |\pi_0(H^y)(F)|^{-1} \sum_{z \in H_y(F)_1/\tx{st}} \mr{\Delta}_{W,b}(yz^{Q^{2k}},\gamma^b_0(\gamma^b_{>0})^{Q^{2k}})\frac{D^H(yz)}{D^{G^b}(\gamma^b)}S\Theta_{\phi^H}(yz). \]
Taking $k$ large enough, we can apply the Langlands-Shelstad descend theorem \cite[Theorem 1.6]{LS90} and conclude that the above term equals to
\[ \sum_{y \in Y} |\pi_0(H^y)(F)|^{-1} \sum_{z \in H_y(F)_1/\tx{st}} \mr{\Delta}^\tx{desc}_{W,b,\xi}(yz^{Q^{2k}},\gamma^b_0(\gamma^b_{>0})^{Q^{2k}})\frac{D^H(yz)}{D^{G^b}(\gamma^b)}S\Theta_{\phi^H}(yz). \]
Recall that $\xi$ was an element of $\Xi(H_{\gamma^H_0},G^b_{\gamma^b_0})$. This is the unique element of that set for which the pair of elements in $\mr{\Delta}^\tx{desc}_{W,b,\xi}$ is related. Thus the above expression is equal to
\[ \sum_{y \in Y} |\pi_0(H^y)(F)|^{-1} \sum_f \sum_{z \in H_y(F)_1/\tx{st}} \mr{\Delta}^\tx{desc}_{W,b,\xi}(yz^{Q^{2k}},\gamma^b_0(\gamma^b_{>0})^{Q^{2k}})\frac{D^H(yz)}{D^{G^b}(\gamma^b)}S\Theta_{\phi^H}(yz), \]
where $f$ now runs over the set $\Xi(H_{\gamma^H_0},G^b_{\gamma^b_0})$. The sum over $z$ can be extended to $H_y(F)_\tx{sr}/\tx{st}$, since for elements outside of $H_y(F)_1$ the transfer factor will be zero.

Applying Proposition \ref{pro:schar} to $S\Theta_{\phi^H}$, see that the above expression becomes
\begin{eqnarray*}
&&\sum_{y \in Y} |\pi_0(H^y)(F)|^{-1} \sum_\xi \sum_{z \in H_y(F)_\tx{sr}/\tx{st}} \mr{\Delta}^\tx{desc}_{W,b,\xi}(yz^{Q^{2k}},\gamma^b_0(\gamma^b_{>0})^{Q^{2k}})\frac{1}{D^{G^b}(\gamma^b)}\\
&&\sum_{[j^H]}j^H_*[\chi_{S,{^Lj^H}}\cdot\epsilon_{j^H}](y)\sum_{[k^H]} \hat\iota^{J^H}({k^HY},\log(z))
\end{eqnarray*}
We recall that $[j^H]$ runs over the set of $H_y$-stable classes of embeddings $S \rw H_y$ whose composition with $H_y \rw H$ is admissible, while $[k^H]$ runs over the set of $H_y(F)$-conjugacy classes inside of each $[j^H]$.

Applying \cite[Lemma 3.5.A]{LS90} to the factor $\mr{\Delta}_{W,b,\xi}^\tx{desc}$ and rearranging sums, the above expression becomes
\begin{eqnarray}
&&D^{G^b}(\gamma^b)|^{-1}\sum_{y \in Y} |\pi_0(H^y)(F)|^{-1} \sum_\xi \sum_{[j^H]}j^H_*[\chi_{S,{^Lj^H}}\cdot\epsilon_{j^H}](y) \Xi_{W,b,\xi}^\tx{desc}(\gamma^b_0)\nonumber\\
&&\sum_{z \in H_y(F)_\tx{sr}/\tx{st}} \mr{\Delta}^\tx{desc}_{W,b,\xi}(z^{Q^{2k}},(\gamma^b_{>0})^{Q^{2k}})
\sum_{[k^H]} \hat\iota^{J^H}({k^HY},\log(z)), \label{eq:rhs1}
\end{eqnarray}
where we have denoted by $\Xi_{W,b,\xi}^\tx{desc}$ the character of $Z(G^b_{\gamma^b_0})(F)$ which Langlands and Shelstad call $\lambda_G$. After potentially increasing $k$, we have
\[ \mr{\Delta}^\tx{desc}_{W,b,\xi}(z^{Q^{2k}},(\gamma^b_{>0})^{Q^{2k}}) = \Delta^{\mf{h}_y}_{W,b,\xi}(\log(z^{Q^{2k}}),\log((\gamma^b_{>0})^{Q^{2k}})), \]
where the factor on the right is the Lie-algebra transfer factor normalized compatibly with the one on the left.
With this, \eqref{eq:rhs1} becomes
\begin{eqnarray}
&&D^{G^b}(\gamma^b)|^{-1}\sum_{y \in Y} |\pi_0(H^y)(F)|^{-1} \sum_\xi \sum_{[j^H]}j^H_*[\chi_{S,{^Lj^H}}\cdot\epsilon_{j^H}](y) \Xi_{W,b,\xi}^\tx{desc}(\gamma^b_0)\nonumber\\
&&\sum_{Z \in \mf{h}_y(F)_\tx{rs}/\tx{st}} \Delta^{\mf{h}_y}_{W,b,\xi}({Q^{2k}}Z,{Q^{2k}}\log(\gamma^b_{>0}))
\sum_{[k^H]} \hat\iota^{J^H}({k^HY},Z). \label{eq:rhs2}
\end{eqnarray}

The factors $Q^{2k}$ can now be removed, as the Lie-algebra transfer factor is invariant under multiplication by squares from $F^\times$. Let $j : S \rw G^b_{\gamma^b_0}$ be the image of a triple $(y,\xi,j^H)$ in the above summation set under the map $q$ of Lemma \ref{lem:embdesc}. Waldspurger's fundamental result \cite[Thm. 1.2]{Wal95} combined with \cite{Wal97} and \cite{Ngo10} implies that
\[ \sum_{Z \in \mf{h}_y(F)_\tx{rs}/\tx{st}} \Delta^{\mf{h}_y}_{W,b,\xi}(Z,\log(\gamma^b_{>0}))
\sum_{[k^H]} \hat\iota^{J^H}({k^HY},Z) \]
is equal to
\[ \gamma_\psi(\mf{g}^b_{\gamma^b_0})\gamma_\psi(\mf{h}_y)^{-1}\sum_{[k]} \Delta^{\mf{h}_y}_{W,b,\xi}(j^H Y, kY) \hat\iota^{J}(kY,\log(\gamma^b_{>0})), \]
where the sum runs over the set of $G^b_{\gamma^b_0}(F)$-conjugacy classes of embeddings in the $G^b_{\gamma^b_0}$-stable class of $j$. This, combined with the statement of Lemma \ref{lem:embdesc} shows that \eqref{eq:rhs2} is equal to
\[
\sum_{[j]} j^H_*[\chi_{S,{^Lj^H}}\cdot\epsilon_{j^H}](y) \frac{\Xi_{W,b,\xi}^\tx{desc}(\gamma^b_0)}{|D^{G^b}(\gamma^b)|}\frac{\gamma_\psi(\mf{g}^b_{\gamma^b_0})}{\gamma_\psi(\mf{h}_y)}
\!\!\sum_{[k]} \Delta^{\mf{h}_y}_{W,b,\xi}(j^H Y, kY) \hat\iota^{J}(kY,\log(\gamma^b_{>0})).
\]

We now compare this expression with the left hand side of the equality claimed in Theorem \ref{thm:endotran}. Applying Proposition \ref{pro:schar} to $\Theta^s_{\phi,b}$, we see that it is equal to
\[ e(G^b)|D_{G^b}(\gamma)|^{-1}\sum_{[j]}j_*[\chi_{S,{^Lj}}\cdot\epsilon_j](\gamma^b_0)\sum_{[k]} \<\tx{inv}(j_0,k),s\> \hat\iota^{J}(kY,\log(\gamma^b_{>0})). \]

Thus, in order to prove Theorem \ref{thm:endotran}, we need to show the following equality for each embedding $j : S \rw G^b_{\gamma^b_0}$ whose composition with the inclusion $G^b_{\gamma^b_0} \rw G^b$ is admissible.
\begin{eqnarray} \label{eq:rest1}
&j^H_*[\chi_{S,{^Lj^H}}\cdot\epsilon_{j^H}](y) \Xi_{W,b,\xi}^\tx{desc}(\gamma^b_0)\frac{\gamma_\psi(\mf{g}^b_{\gamma^b_0})}{\gamma_\psi(\mf{h}_y)}\Delta^{\mf{h}_y}_{W,b,\xi}(j^H Y, kY)&\nonumber\\
&=&\\
&e(G^b)j_*[\chi_{S,{^Lj}}\cdot\epsilon_j](\gamma^b_0)\<\tx{inv}_b(j_0,j),s\>& \nonumber.
\end{eqnarray}
Recall here that the triple $(y,\xi,j^H)$ is related to $j$ via the map $q$ of Lemma \ref{lem:embdesc}.

As a first step, we undo the descent of the transfer factor. Choosing an element $z \in F^\times$ close to zero, we have
\begin{eqnarray*}
\Xi_{W,b,\xi}^\tx{desc}(\gamma^b_0)\Delta^{\mf{h}_y}_{W,1,\xi}(j^H Y, jY)
&=&\Xi_{W,b,\xi}^\tx{desc}(\gamma^b_0)\Delta^{\mf{h}_y}_{W,1,\xi}(z^2j^H Y, z^2jY)\\
&=&\Xi_{W,b,\xi}^\tx{desc}(\gamma^b_0)\mr{\Delta}^\tx{desc}_{W,1,\xi}(\exp(z^2j^H Y), \exp(z^2jY))\\
&=&\mr{\Delta}^\tx{desc}_{W,b,\xi}(y\exp(z^2j^H Y), \gamma^b_0\exp(z^2jY))\\
&=&\mr{\Delta}_{W,b}(y\exp(z^2j^H Y), \gamma^b_0\exp(z^2jY))\\
&=&\mr{\Delta}_{W,1}(y\exp(z^2j^H Y), \gamma_0\exp(z^2j_0Y))\<\tx{inv}(j_0,j),s\>,\\
\end{eqnarray*}
where $\gamma_0 = j_0j^{-1}(\gamma^b_0)$. With this, we see that Equation \ref{eq:rest1} becomes
\begin{equation}\label{eq:rest2}
\mr{\Delta}_{W,1}(y\exp(z^2j^H Y), \gamma_0\exp(z^2j_0Y))\frac{\gamma_\psi(\mf{g}^b_{\gamma^b_0})}{\gamma_\psi(\mf{h}_y)} =
e(G^b)\frac{j_*[\chi_{S,{^Lj}}\cdot\epsilon_j](\gamma^b_0)}{j^H_*[\chi_{S,{^Lj^H}}\cdot\epsilon_{j^H}](y)}.
\end{equation}

We now need to go into the structure of $\mr{\Delta}_{W,1}$. We can choose an $F$-spitting $\tx{spl} = (T,B,\{X_\alpha\})$ so that the Whittaker datum $W$ comes from $\tx{spl}$ and the additive character $\psi : F \rw \C^\times$ which we fixed in Section \ref{sec:schar}. The centralizers in $H$ and $G$ of the arguments of the transfer factor are $j^HS$ and $j_0S$, and $j_0(j^H)^{-1}$ is an admissible isomorphism between them carrying the first argument of the transfer factor to the second. We also need to choose $a$-data and $\chi$-data, which for now we take arbitrary. Then
\[ \mr{\Delta}_{W,1} = \epsilon_L(X^*(T)_\C-X^*(T^H)_\C,\psi)\Delta_I[\tx{spl},a]\Delta_{II}[a,\chi]\Delta_{III_2}[\chi]. \]
The reason that $\Delta_{III_1}$ is missing lies in our choice of $j_0(j^H)^{-1}$ as the admissible isomorphism. The element $\Delta_I$ does not depend on the elements $y\exp(z^2j^H Y)$ and $\gamma_0\exp(z^2j_0Y)$ directly, but rather only on the tori containing them, which as we remarked are $j^HS$ and $j_0S$. Moreover, these are also the tori centralizing the regular elements $j^HY$ and $j_0Y$. It follows that
\[ \Delta_I[\tx{spl},a](y\exp(z^2j^H Y), \gamma_0\exp(z^2j_0Y)) = \Delta_I[\tx{spl},a](j^HY,j_0Y), \]
with the right hand side as in \cite{Kot99}. The main result of this article shows that if $\tx{spl}_1$ is a splitting whose Kostant section meets the rational class of $j_0Y$ and if as $a$-data we take $\{d\alpha(j_0Y)| \alpha \in R(j_0S,G)\}$, then
\[ \Delta_I[\tx{spl}_1,a](j^HY,j_0Y) = 1. \]
On the other hand, recall from Proposition \ref{pro:generic} that $j_0Y$ meets the Kostant section associated to the regular nilpotent element $E_-$ in the Lie-algebra of the unipotent radical of the $T$-opposite Borel subgroup of $B$, which is specified by the equation
\[ \psi_B(\exp(X)) = \psi\<X,E_-\> \]
for all $X$ in the Lie-algebra of the unipotent radical of $B$. A straight-forward computation shows that
\[ E_- = \sum_{\alpha \in \Delta} \<X_\alpha,X_{-\alpha}\>^{-1} X_{-\alpha}, \]
where $[X_\alpha,X_{-\alpha}] = H_\alpha$. In other words, we have $\tx{spl}_1=\{ \<X_\alpha,X_{-\alpha}\>X_\alpha \}$. Using \cite[Lemma 4.2]{Kal12}, we then see that
\[ \Delta_I[\tx{spl},a](y\exp(z^2j^H Y), \gamma_0\exp(z^2j_0Y)) = 1, \]
provided we take as $a$-data for $R(j_0S,G)$ the expression
\[ a_\alpha = \<X_\alpha,X_{-\alpha}\>^{-1}d\alpha(j_0Y). \]
Turning to $\Delta_{III_2}$, the construction in \cite[\S3.5]{LS87} shows that if we take as $\chi$-data the one specified in Section \ref{sec:chispec}, then
\[ \Delta_{III_2}(y\exp(z^2j^H Y), \gamma_0\exp(z^2j_0Y)) = \frac{j_*[\chi_{S,{^Lj}}](\gamma_0\exp(z^2j_0Y))}{j^H_*[\chi_{S,{^Lj^H}}](y\exp(z^2j^HY))}. \]
Choosing $z$ small enough the right hand side becomes equal to $\frac{j_*[\chi_{S,{^Lj}}](\gamma^b_0)}{j^H_*[\chi_{S,{^Lj^H}}](y)}$.
Taking all this into account, equation \eqref{eq:rest2} becomes
\begin{eqnarray}\label{eq:rest3}
&\epsilon_L(X^*(T)_\C-X^*(T^H)_\C,\psi)\frac{\gamma_\psi(\mf{g}^b_{\gamma^b_0})}{\gamma_\psi(\mf{h}_y)}\Delta_{II}[a,\chi](y\exp(z^2j^H Y), \gamma_0\exp(z^2j_0Y))& \nonumber\\
&=&\\
&e(G^b)\frac{j_*\epsilon_j(\gamma^b_0)}{j^H_*\epsilon_{j^H}(y)}& \nonumber.
\end{eqnarray}

At this point, the $a$-data and $\chi$-data are specified, and we may compute the term $\Delta_{II}$. Recall that it is defined as the product over all roots $\alpha \in R(S,G) \sm R(S,H)$ of the expressions
\[ \chi_\alpha\left( \frac{j_0\alpha(\gamma_0\exp(z^2j_0Y))-1}{a_\alpha} \right). \]
We have chosen $\chi_\alpha=1$ for asymmetric roots $\alpha$, so we only need to compute the above expression for symmetric roots. To ease notation, we will identify $S$ with $j_0S$ via $j_0$ and suppress $j_0$ from the notation.

If $\alpha(\gamma_0)=1$, then we have
\[ \lim_{z \rw 0} \frac{\alpha(\exp(z^2Y))-1}{z^2} = d\alpha(Y), \]
and since $\chi_\alpha$ has order $2$ when restricted to $F^\times$, we see that for $z$ small enough
\[ \chi_\alpha\left( \frac{\alpha(\gamma_0\exp(z^2Y))-1}{a_\alpha} \right) = \chi_\alpha(\<X_\alpha,X_{-\alpha}\>) = \kappa_\alpha(\<X_\alpha,X_{-\alpha}\>). \]
If on the other hand $\alpha(\gamma_0) \neq 1$, then
\[ \lim_{z \rw 0} \alpha(\gamma_0\exp(z^2Y))-1 = \alpha(\gamma_0)-1, \]
and thus for $z$ small enough we have
\[ \chi_\alpha\left( \frac{\alpha(\gamma_0\exp(z^2Y))-1}{a_\alpha} \right) = \chi_\alpha\left(\frac{(\alpha(\gamma_0)-1)\<X_\alpha,X_{-\alpha}\>}{d\alpha(Y)}\right). \]
The element $\alpha(\gamma_0)-1$ is a unit in $F_\alpha$, because $\alpha(\gamma_0)$ is topologically semi-simple. The element $\<X_\alpha,X_\alpha\>$ is a unit in $F_{\pm\alpha}$ by choice of $\<\>$. On the other hand, $Y$ is generic of depth $-1/e$, and hence $d\alpha(Y)^{-1}$ is a uniformizer of $F_\alpha^\times$. All in all, we see that the argument of $\chi_\alpha$ is a uniformizer, which we will call $\omega$ for short.

We now recall the construction of $\chi_\alpha$ from Section \ref{sec:chispec}. If $\alpha$ is inertially asymmetric, then $\chi_\alpha$ is the unramified character sending each uniformizer of $F_\alpha$ to $-1$, and we see
\[ \chi_\alpha(\omega) = -1 = \lambda_{F_\alpha/F_{\pm\alpha}}(\psi\circ\tx{tr}_{F_{\pm\alpha}/F}). \]
If $\alpha$ is inertially symmetric, then we have
\[ \chi_\alpha(\omega) = \lambda_{F_\alpha/F_{\pm\alpha}}(\xi_{\alpha,\omega}^{-1}). \]
Recalling the definition of $\xi_{\alpha,\omega} : k_{F_\alpha} \rw \C^\times$, we see that for $x \in k_{F_\alpha}$, we have
\begin{eqnarray*}
\xi_{\alpha,\omega}(x)&=&\chi_{S,{^Lj}}(N_{F_\alpha/F}(\alpha^\vee(1+\omega x)))\\
&=&\chi_{S,{^Lj}}(1+\tx{Tr}_{F_\alpha/F}(\alpha^\vee(\omega x))) \\
&=&\psi\<\tx{Tr}_{F_\alpha/F}(\alpha^\vee(\omega x)),Y\> \\
&=&\psi\tx{Tr}_{F_\alpha/F}\<\alpha^\vee(\omega x),Y\> \\
&=&\psi\tx{Tr}_{F_\alpha/F}(\omega x\cdot\<H_\alpha,Y\>) \\
&=&\psi\tx{Tr}_{F_\alpha/F}(\omega x\cdot d\alpha(Y)\cdot \<X_\alpha,X_{-\alpha}\>) \\
&=&\psi\tx{Tr}_{F_\alpha/F}(x\cdot(\alpha(\gamma_0)-1) \cdot \<X_\alpha,X_{-\alpha}\>^2). \\
\end{eqnarray*}
The element $\alpha(\gamma_0)$ of $F_\alpha$ is topologically semi-simple and lies in the kernel of the norm to $F_{\pm\alpha}$. Since $F_\alpha/F_{\pm\alpha}$ is ramified, the only such elements are $1$ and $-1$. Being non-trivial, we see that $\alpha(\gamma_0)=-1$, and we obtain
\[ \xi_{\alpha,\omega}(x) = \psi\tx{Tr}_{F_{\pm\alpha}/F}(-4\<X_\alpha,X_{-\alpha}\>^2), \]
from which we conclude that, just like in the inertially asymmetric case, we have
\[ \chi_\alpha(\omega) = \lambda_{F_\alpha/F_{\pm\alpha}}(\psi\tx{Tr}_{F_{\pm\alpha}/F}). \]
Combining these results we see
\[ \Delta_{II} = \prod_{\alpha(\gamma_0) = 1} \kappa_\alpha(\<X_\alpha,X_{-\alpha}\>) \prod_{\alpha(\gamma_0) \neq 1} \lambda_{F_\alpha/F_{\pm\alpha}}(\psi\circ\tx{Tr}_{F_{\pm\alpha}/F}), \]
where in both products $\alpha$ runs over a set of representatives for the symmetric orbits of $\Gamma$ in $R(S,G)-R(S,H)$.

Returning to the proof of equation \eqref{eq:rest3}, we see that combining the above expression for $\Delta_{II}$ with Lemma \ref{lem:torinvweil}, Corollary \ref{cor:weilinner}, and Corollary \ref{cor:torinveps}, shows that \eqref{eq:rest3} is equivalent to
\begin{equation} \label{eq:rest4}
\prod_{H-H_y} f_{(H,j^HS)}(\alpha) \prod_{G-G_{\gamma_0}} f_{(G,j_0S)} e(G_{\gamma_0})e(G^b_{\gamma^b_0}) =
e(G^b)\frac{j_*\epsilon_j(\gamma^b_0)}{j^H_*\epsilon_{j^H}(y)}.
\end{equation}
Here the symbol $H-H_y$ denotes the set of symmetric $\Gamma$-orbits of roots of $S$ in $H$ outside of $H_y$, and the symbol $G-G_{\gamma_0}$ has the analogous meaning. According to Lemma \ref{lem:torinvchar} and Proposition \ref{pro:torinvstab}, we have
\[ j_*e_j(\gamma^b_0) = [j_0]_*e_{j_0}(\gamma_0) \cdot e(G)e(G^b)e(G_{\gamma_0})e(G^b_{\gamma^b_0}). \]
Thus Equation \eqref{eq:rest4} becomes
\begin{equation} \label{eq:rest5}
\prod_{H-H_y} f_{(H,j^HS)}(\alpha) \prod_{G-G_{\gamma_0}} f_{(G,j_0S)} =
\frac{[j_0]_*\epsilon_{j_0}(\gamma_0)}{j^H_*\epsilon_{j^H}(y)}.
\end{equation}
That this last equation holds is implied by Lemma \ref{lem:torinvchar}.
\end{proof}

\section{Epipelagic $L$-packets for $\tx{GL}_n$} \label{sec:epipackgln}

In this section we will show that the construction of epipelagic $L$-packets described in Section \ref{sec:packs} specializes in the case of the group $\tx{GL}_n$ to the local Langlands correspondence established by Harris-Taylor and Henniart. We put $G=\tx{GL}_n$ and then $\hat G = \tx{GL}_n(\C)$. Since $W_F$ acts trivially on $\hat G$, we will use $\hat G$ instead of the $L$-group $\hat G \times W_F$ and will regards Langlands parameters simply as $\hat G$-conjugacy classes of admissible homomorphisms $W_F \rw \hat G$. In order for epipelagic parameters to exist, we must assume that $p \nmid n$.

We will use the explicit description of the local Langlands correspondence for $\tx{GL}_n$ when $p \nmid n$ derived by Bushnell and Henniart in \cite{BH05a,BH05b}.
Central to this description is the notion of an admissible pair of degree $n$, which is a pair $(E/F,\xi)$ consisting of a tamely-ramified extension $E/F$ of degree $n$, and a character $\xi : E^\times \rw \C^\times$, subject to certain conditions. From an admissible pair, one can construct an irreducible $n$-dimensional representation $\sigma_{(E/F,\xi)}$ of $W_F$, and an irreducible supercuspidal representation $\pi_{(E/F,\xi)}$ of $\tx{GL}_n(F)$. The first one is obtained by taking $\sigma := \tx{Ind}_{W_E}^{W_F}\xi$. For the second one, there is an explicit construction,  described in \cite[\S2]{BH05a}, which uses the work of Bushnell and Kutzko \cite{BK93} on the classification of irreducible representations of $\tx{GL}_n(F)$. The following is one of the main results of \cite{BH05a,BH05b}:

\begin{thm}[Bushnell-Henniart]
Assume that $E/F$ is totally ramified. Under the local Langlands correspondence, the representation $\sigma_{(E/F,\xi)}$ corresponds to the representation $\pi_{(E/F,\xi\cdot\mu_\xi)}$, where $\mu_\xi : E^\times \rw \C^\times$ is a character which is explicitly given by \cite[Thm. 2.1]{BH05b}.
\end{thm}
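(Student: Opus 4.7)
The plan is to show that the author's general construction from Section \ref{sec:lpackconst}, specialized to $G = \tx{GL}_n$ and applied to the parameter $\sigma = \sigma_{(E/F,\xi)} = \tx{Ind}_{W_E}^{W_F}\xi$, produces exactly the representation $\pi_{(E/F,\xi\cdot\mu_\xi)}$. First I would verify that when $(E/F,\xi)$ is admissible and $E/F$ is totally tamely ramified of degree $n$, the parameter $\sigma$ satisfies Conditions \ref{cnd:parm} and the torus $S$ extracted from it by the recipe of Section \ref{sec:lpackconst} is $\tx{Res}_{E/F}\mb{G}_m$, so that $S(F) = E^\times$. The admissibility of $(E/F,\xi)$ should correspond directly to Conditions \ref{cnd:char} for the pair $(S,\chi_{S,{^Lj}})$; the generic element in $\mf{s}^\ast(F)_{-1/n}$ underlying $\chi_{S,{^Lj}}$ is the one produced by $\xi$ via class field theory for $E$.

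Next I would introduce the ``traditional'' $L$-embedding ${^Lj}_0 : {^LS} \rw {^LG}$ that is available only for $\tx{GL}_n$: it is the embedding characterized by the property that $\sigma = {^Lj}_0\circ\phi_{S,{^Lj}_0}$ with $\phi_{S,{^Lj}_0}$ corresponding under local Langlands for $S$ to $\xi$ itself (this is just Shapiro's lemma for $\tx{Ind}_{W_E}^{W_F}\xi$). The author's $^Lj$ from Section \ref{sec:chispec} lies in the same $\hat G$-conjugacy class but in general differs from ${^Lj}_0$; by Lemma \ref{lem:changelj} the two can be arranged to differ by a $1$-cocycle valued in $\hat T$, corresponding to a character $\mu : E^\times \rw \C^\times$ with $\chi_{S,{^Lj}} = \xi\cdot\mu$. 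Incorporating the sign-character $\epsilon_j$ of Section \ref{sec:torinvchar}, the representation produced by the author's construction is $\pi_{(E/F,\,\xi\cdot\mu\cdot\epsilon_j|_{E^\times})}$, and the theorem reduces to the identity
\[ \mu\cdot \epsilon_j|_{E^\times} = \mu_\xi. \]

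The core of the argument is this last identity. The left-hand side is, by construction, a product over $\Gamma$-orbits of symmetric roots $\alpha \in R(\hat T,\hat G)$ of the characters $\chi_\alpha$ from Section \ref{sec:chispec} (built from $\lambda_{F_\alpha/F_{\pm\alpha}}(\xi_{\alpha,\omega})$) and the characters $\epsilon_\alpha$ from Section \ref{sec:torinvchar}. For $S = \tx{Res}_{E/F}\mb{G}_m$, the roots of $S$ in $\tx{GL}_n$ correspond to ordered pairs of distinct $F$-embeddings $E \hrw \ol{F}$, and $\Gamma$-orbits of symmetric roots correspond precisely to the intermediate quadratic-type data indexing the Bushnell-Henniart formula for $\mu_\xi$ in \cite[Thm. 2.1]{BH05b}. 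The plan is then to match the two products orbit-by-orbit, using the basic properties of Langlands' $\lambda$-constants (transitivity in towers, behavior under twisting by characters) and the explicit description of $\xi_{\alpha,\omega}$ as a restriction of $\xi\circ N$ along the root.

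The main obstacle will be bookkeeping at this final step: the author's character is naturally indexed by Galois orbits of roots and extracts its data from $\sigma|_{P_F}$ via Frobenius reciprocity, whereas Bushnell-Henniart's $\mu_\xi$ is indexed by intermediate fields $F \subset M \subset E$ and extracts its data from $\xi$ directly. Carefully reconciling the two presentations — and in particular sorting out the contributions of inertially symmetric versus inertially asymmetric orbits, where $\chi_\alpha$ vanishes but $\epsilon_\alpha$ need not — is where the bulk of the work will lie. Once the dictionary between orbits and subfields is pinned down, Lemma \ref{lem:torinvchar} and the formulas of Section \ref{sec:torinvweil} should let the two sides be matched factor by factor.
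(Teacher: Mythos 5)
The statement you were asked to prove is a cited external theorem of Bushnell and Henniart; it is not a result that this paper proves or needs to prove. It asserts that under the Harris--Taylor/Henniart local Langlands correspondence for $\tx{GL}_n$ --- a correspondence established independently of this paper and characterized by $\epsilon$- and $L$-factors of pairs --- the Weil representation $\sigma_{(E/F,\xi)} = \tx{Ind}_{W_E}^{W_F}\xi$ corresponds to the Bushnell--Kutzko supercuspidal $\pi_{(E/F,\xi\cdot\mu_\xi)}$. Establishing that requires independent access to the actual correspondence together with Bushnell and Henniart's explicit $\epsilon$-factor and Gauss-sum computations of the rectifying character; none of that lives inside the present paper, which takes the theorem as black-box input.

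What your proposal actually outlines is a proof of Theorem \ref{thm:recti}, the paper's own comparison result $\chi_S = \xi\cdot\mu_\xi$. That is a different theorem: it shows that the paper's construction reproduces Bushnell--Henniart's recipe, and the paper proves it in Sections \ref{sec:admpairs}--\ref{sec:cmprect} along lines close to yours (introduce the Shapiro $L$-embedding, compute the discrepancy from the paper's $^Lj$ orbit-by-orbit via $\lambda$-constants, then match against the formula for $\mu_\xi$). One simplification you missed: the toral invariant for $\tx{GL}_n$ is identically trivial (final lemma of Section \ref{sec:epigln}), so the $\epsilon_j$ factor you carry is vacuous. But as a proof of the Bushnell--Henniart theorem itself your approach is circular: showing that Kaletha's construction and Bushnell--Henniart's recipe produce the same supercuspidal from a given parameter says nothing about whether either of them implements the actual Langlands correspondence. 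That is exactly the external fact the cited theorem supplies; you cannot derive it from the internal comparison without already assuming it.
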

The character $\mu_\xi$ is called the rectifying character of the admissible pair $(E/F,\xi)$.

The construction of this paper associates to a Langlands parameter $\phi : W_F \rw \hat G$ satisfying Conditions \ref{cnd:parm} a triple $(S,\chi_S,[j])$ where $S$ is a tamely ramified $F$-torus, $[j]$ is a stable class of embeddings of $S$ as a maximal torus of $G$, and $\chi_S : S(F) \rw \C^\times$ is a character on $S(F)$ well-defined up to $\Omega(S,G)(F)$. In the situation of $G=\tx{GL}_n$, the set $[j]$ consists of a single $G(F)$-conjugacy class. Thus, we have in fact a pair $(S,\chi_S)$ where $S \subset G$ is a maximal torus and $\chi_S$ is a character of $S(F)$. This pair is well-defined up to $G(F)$-conjugacy. The representation of $G(F)$ associated to $\phi$ is then the one obtained from the pair $(S,\chi_S\cdot \epsilon_f)$ using the construction of Section \ref{sec:repconst}. Here $\epsilon_f : S(F) \rw \C^\times$ is the character built from the toral invariant of $S$ in Section \ref{sec:torinvchar}. In fact, as we will show, it is a special feature of $\tx{GL}_n$ that all toral invariants are trivial. Thus, we may from now on ignore $\epsilon_f$.

The comparison of the construction given in this paper with the local Langlands correspondence for $\tx{GL}_n$ will proceed in the following way. Given a parameter $\phi : W_F \rw \hat G$ satisfying Conditions \ref{cnd:parm}, let $(E/F,\xi)$ be an admissible pair such that $\phi \cong \tx{Ind}_{W_E}^{W_F}\xi$, and let $(S,\chi_S)$ is the pair constructed from $\phi$. The main point is to show that there exists an isomorphism $S(F) \cong E^\times$ which identifies $\chi_S$ with $\xi\cdot\mu_\xi$. This will be shown by first obtaining in Section \ref{sec:admpairs} a Langlands parameter $\phi_\xi : W_F \rw \hat S$ for the character $\xi$ of $E^\times \cong S(F)$, then computing the character $\chi_S \cdot \xi^{-1}$ by examining its parameter $\phi_S \cdot \phi_\xi^{-1}$ in Section \ref{sec:chisxi}, and finally comparing in Section \ref{sec:cmprect} the result of this computation with the formula for the rectifying character given in \cite{BH05b}.

Once this is complete, we will further show in Section \ref{sec:epigln} that the representation of $G(F)$ obtained from the pair $(E/F,\chi_S)$ using the construction of \cite[\S2]{BH05a} is isomorphic to the one obtained from $(S,\chi_S)$ using the construction of Section \ref{sec:repconst}.

\subsection{From epipelagic parameters to admissible pairs} \label{sec:admpairs}

We are going to establish some notation which will be used subsequently and then show how the construction in \ref{sec:lpackconst} leads to admissible pairs in the sense of \cite{BH05a,BH05b}.

Let $\hat T$ be the standard (i.e. diagonal) maximal torus in $\hat G$. We label its coordinates by elements of $\Z/n\Z$, the top left coordinate being $0$, and the bottom right $n-1$. This labeling provides bijections from $\Z/n\Z$ to the standard bases of $X^*(T)$ and $X_*(T)$, and further identifies $\Omega(\hat T)$ with the group $\tx{Bij}(\Z/n\Z)$ of bijections of the set $\Z/n\Z$.

Let $\phi : W_F \rw \hat G$ be a Langlands parameter satisfying Conditions \ref{cnd:parm}. Conjugating by $\hat G$ if necessary we may assume that $\tx{Cent}(\phi(P_F),\hat G)$ is the standard torus $\hat T$. This implies that $\tx{im}(\phi) \subset N(\hat T)$.
The image of the composition
\[ W_F \rw N(\hat T) \rw \Omega(\hat T). \]
is the Galois group of a finite Galois extension $K/F$. Let us study this group.

Let $\tilde F$ be the maximal unramified sub-extension. The assumptions on $\phi$ imply that the extension $K/\tilde F$ is tamely ramified and its Galois group is generated by a Coxeter element, i.e. it is cyclic of order $n$. We choose a generator $s \in \tx{Gal}(K/\tilde F)$. Since all Coxeter elements in $\Omega(\hat T)$ are conjugate, we can further assume after conjugating $\phi$ under $N(\hat T)$ that $s$ is the element $+1 \in \tx{Bij}(\Z/n\Z)$.

We claim that the subgroup $\<s\> \subset \Gamma_{K/F}$ has a complement. Let $Q' \in \tx{Gal}(K/F)$ be a Frobenius element. We have $Q'sQ'^{-1} = s^q$. Let $Q \in \tx{Bij}(\Z/n\Z)$ be the element $\cdot q$. It also has the property $QsQ^{-1}=s^q$. Thus $Q$ differs from $Q'$ by an element of $\tx{Cent}(s,\Omega(\hat T))=\<s\>$, and this shows that $Q \in \Gamma_{K/F}$. Since the subgroups $\<+1\>$ and $\<\cdot q\>$ of $\tx{Bij}(\Z/n\Z)$ have trivial intersection, we see that
\[ \Gamma_{K/F} = \<s\> \rtimes \<Q\> \]
as claimed. We let $E=K^Q$. Observe that the extensions $K/E$ and $\tilde F/F$ are both unramified of equal degree, namely
\[ f = \tx{ord}(q,(\Z/n\Z)^\times). \]
Thus we have the diagram
\[ \xymatrix{ &K\ar@{-}[ddl]_{\tx{tr}}^{n}\ar@{-}[dr]^{\tx{ur}}_{f}&\\&&E\\\tilde F\\&F\ar@{-}[ul]^{\tx{ur}}_f\ar@{-}[uur]_{\tx{tr}}^n&} \]
The extension $E/F$ is the only extension of the three that is in general not normal.

Write $\hat S$ for the Galois-module obtained from $\hat T$ by twisting via $\phi$, and let $S$ be the $F$-torus whose dual is $\hat S$. Since $X^*(\hat S)$ is a permutation $W_F$-module and the stabilizer of $\chi_0 \in X^*(\hat S)$, the basis element corresponding to $0 \in \Z/n\Z$, is $W_E$, we have
\[ \hat S = \tx{Ind}_{W_E}^{W_F} \C^\times \qquad\textrm{and hence}\qquad S = \tx{Res}_{E/F} \mb{G}_m. \]

 We consider the map
\[ \tx{strip} : N(\hat T) \rw N(\hat T) \]
which is the composition of the natural projection $N(\hat T) \rw \Omega(\hat T)$ with the standard injection $\Omega(\hat T) \rw N(\hat T)$ given by sending an element of $\Omega(\hat T) \cong S_n$ to the corresponding permutation matrix. One checks that the map
\[ \phi_\xi : W_F \rw \hat S,\qquad w \mapsto \phi(w)\cdot\tx{strip}(\phi(w))^{-1} \]
belongs to $Z^1(W_F,\hat S)$ and hence provides a character $\xi : S(F) \rw \C^\times$. Via the isomorphism $S(F) \cong E^\times$ we may view this as a character on $E^\times$, and hence also as a character on $W_E$. This character is nothing more then the image of $\phi_\xi$ under the isomorphism
$H^1(W_F,\tx{Ind}_{W_E}^{W_F}\C^\times) \cong H^1(W_E,\C^\times)$.

\begin{lem} \label{lem:paircmp} The $W_F$-representations $\phi$ and $\tx{Ind}_{W_E}^{W_F}\xi$ are isomorphic.
\end{lem}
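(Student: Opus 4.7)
The plan is to exhibit an isomorphism $\tx{Ind}_{W_E}^{W_F}\xi \rw \phi$ by Frobenius reciprocity, taking advantage of the fact that $\phi$ has a distinguished $W_E$-stable coordinate line on which the action is visibly $\xi$.

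First I would observe that since the image of $W_E$ in $\Omega(\hat T) = \tx{Bij}(\Z/n\Z)$ is contained in $\<Q\> = \<\cdot q\>$, which fixes $0 \in \Z/n\Z$, the line $L = \C\cdot e_0 \subset \C^n$ is stable under $\phi(W_E)$. Using the factorization $\phi(w) = \phi_\xi(w) \cdot \tx{strip}(\phi(w))$, for $w \in W_E$ the permutation matrix $\tx{strip}(\phi(w))$ fixes the index $0$, so
\[ \phi(w)\cdot e_0 = (\phi_\xi(w))_0 \cdot e_0. \]

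Next I would identify the resulting character $w \mapsto (\phi_\xi(w))_0$ of $W_E$ with $\xi$. Under the canonical isomorphism $\hat S = \tx{Ind}_{W_E}^{W_F}\C^\times$, projection onto the $0$-th coordinate is the $W_E$-equivariant map underlying Shapiro's lemma, which identifies $H^1(W_F,\hat S)$ with $H^1(W_E,\C^\times)$. This map sends the cocycle $\phi_\xi$ to the homomorphism $w \mapsto (\phi_\xi(w))_0$, and, via local class field theory and the identification $S(F)=E^\times$, this is precisely the restriction of $\xi$ to $W_E$. Hence $W_E$ acts on $L$ via $\xi$, and Frobenius reciprocity yields the $W_F$-equivariant map
\[ \Phi : \tx{Ind}_{W_E}^{W_F}\xi \lrw \C^n,\qquad g\otimes 1 \mapsto \phi(g)\cdot e_0. \]

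Finally I would verify $\Phi$ is an isomorphism by an explicit calculation on a set of coset representatives. Choosing a lift $\tilde s \in W_F$ of the Coxeter element $s = +1 \in \Gamma_{K/F}$, the elements $\tilde s^0,\tilde s^1,\dots,\tilde s^{n-1}$ represent $W_F/W_E$. Writing $\phi(\tilde s) = DP_s$ with $D = \phi_\xi(\tilde s) = \tx{diag}(d_0,\dots,d_{n-1})$ and $P_s$ the permutation matrix for $+1$, an elementary inductive computation gives
\[ \phi(\tilde s^i)\cdot e_0 = \Big(\prod_{j=1}^{i} d_j\Big)\cdot e_i, \]
which is a nonzero multiple of $e_i$. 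Thus the distinguished spanning set of $\tx{Ind}_{W_E}^{W_F}\xi$ is sent to a basis of $\C^n$, and since both spaces have dimension $n$, $\Phi$ is an isomorphism. There is no real obstacle here: the only point that warrants care is checking that the Shapiro map used to define $\xi$ from $\phi_\xi$ in Section \ref{sec:admpairs} is indeed projection to the $e_0$-coordinate, after which the rest is immediate.
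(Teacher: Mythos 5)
Your argument is correct, and since the paper's proof consists of the single remark "This is a straightforward computation," the details you supply—identifying the $W_E$-stable coordinate line $\C e_0$, matching its character with $\xi$ via the explicit Shapiro map (restriction followed by $\chi_0$), and checking nondegeneracy on the coset representatives $\tilde s^i$—are exactly the computation the author has in mind. Nothing is missing: the nonvanishing of the $d_j$ is automatic since $\phi_\xi(\tilde s)\in\hat T\subset\tx{GL}_n(\C)$, and the $\tilde s^i$ do represent $W_F/W_E$ because $W_K\subset W_E$ and $\Gamma_{K/F}/\Gamma_{K/E}$ is represented by $s^0,\dots,s^{n-1}$.
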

\begin{proof} This is a straightforward computation. \end{proof}

According to Fact \ref{fct:repcmp} and Lemma \ref{lem:paircmp} our task is now to compare the character $\chi_S$ obtained from $\phi$ via factoring through the embedding $^Lj_X$ constructed by means of $\chi$-data (where the $\chi$-data is chosen according to Section \ref{sec:chispec}), with the product $\xi\cdot\mu_{\xi}$, where $\mu_{\xi} : E^\times \rw \C^\times$ is the rectifying character constructed in \cite{BH05a, BH05b}, associated to the admissible pair $(E/F,\xi)$ .

\subsection{Computation of $\chi_S\cdot\xi^{-1}$} \label{sec:chisxi}
Recall that the parameter $\phi_S$ for the character $\chi_S$ is given by $\phi = {^Lj_X}\circ\phi_S$. If we write $^Lj_X(s,w) = s\cdot\xi_X(w)$, then this translates to $\phi_S(w)\cdot\xi_X(w)=\phi(w)$. We conclude that the parameter for $\xi^{-1}\cdot\chi_S$ is given by
\[ \tx{strip}(\phi(w))\cdot\phi(w)^{-1}\cdot\phi(w)\cdot\xi_X(w)^{-1} = \tx{strip}(\phi(w))\cdot\xi_X(w)^{-1}. \]
As remarked in the previous section, under the isomorphism $S(F) \cong E^\times$ the character $\xi^{-1}\cdot\chi_S$ is given by the image of the above parameter under the Shapiro isomorphism $H^1(W_F,\hat S) \cong H^1(W_E,\C^\times)$. This isomorphism is obtained on the level of cochains by restriction to $W_E$ followed by composition with $\chi_0 \in X^*(\hat S)$. In other words, we have for $w \in W_E$
\begin{equation} \chi_S\cdot\xi^{-1}(\tx{Art}(w)) = \chi_0( \tx{strip}(\phi(w))\cdot\xi_X(w)^{-1} ). \label{eq:chidiff1} \end{equation}
Let $(\hat T,\hat B,\{X_{\alpha^\vee}\})$ be the standard splitting of $\hat G$. That is, $\hat B$ is the set of upper triangular matrices, and $\{X_{\alpha^\vee}\}$ consists of the matrices in $\mf{gl}_n(\C)$ all of whose entries are zero except for a unique non-zero entry in the upper off-diagonal. Then we have
\begin{equation} \xi_X(w) = r_{B,X}(w)n(\sigma_w), \label{eq:chidiff2} \end{equation}
where $\sigma_w \in \Omega(\hat T)$ is the projection of $\phi(w)$, $r_{B,X} : W_F \rw \hat T$ is a certain 1-chain, and $n(\sigma_w) \in N(\hat T)$ is a certain lift of $\sigma_w$, both of which are constructed in \cite[\S2]{LS87}.

\begin{pro} \label{pro:stdspr} For any $\alpha \in R(\hat S,\hat G)$, write $\alpha > 0$ if $\alpha \in R(\hat S,\hat B)$, and let $\alpha^\vee = x_\alpha - y_\alpha$, be the unique expression of $\alpha$ in terms of the standard basis of $X_*(\hat S)$. Then for any $w \in W_F$ the following equality holds
\[ \tx{strip}(\phi(w))\cdot n(\sigma_w)^{-1} = \prod_{\substack{\alpha>0 \\ \sigma_w^{-1}\alpha < 0}} y_\alpha(-1). \]
In particular
\[ \chi_0(\tx{strip}(\phi(w))\cdot n(\sigma_w)^{-1}) = 1. \]
\end{pro}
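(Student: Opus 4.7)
The plan is to prove the main identity $\tx{strip}(\sigma) \cdot n(\sigma)^{-1} = d(\sigma)$, where I write $d(\sigma) := \prod_{\alpha > 0,\, \sigma^{-1}\alpha < 0} y_\alpha(-1)$, for every $\sigma \in \Omega(\hat T)$, by induction on the length $\ell(\sigma)$. Two structural features will drive the argument: the map $\tx{strip}$ is a genuine group homomorphism $\Omega(\hat T) \to N(\hat T)$ (it is the composition of $\Omega(\hat T) \cong S_n$ with the embedding as permutation matrices), while the Springer lift $n$ of \cite[\S2]{LS87}, although not multiplicative in general, does satisfy $n(s\sigma) = n(s)n(\sigma)$ whenever $s$ is a simple reflection and $\ell(s\sigma) = \ell(\sigma)+1$.

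I would first dispose of the base case $\sigma = 1$ (both sides are the identity) and the simple reflections by direct matrix computation in the standard pinning. For $\alpha = e_i - e_{i+1}$ the Springer element $n(s_\alpha) = \exp(X_{\alpha^\vee})\exp(-X_{-\alpha^\vee})\exp(X_{\alpha^\vee})$ is the block $\left(\begin{smallmatrix}0&1\\-1&0\end{smallmatrix}\right)$ at positions $(i,i+1)$, while $\tx{strip}(s_\alpha)$ has the block $\left(\begin{smallmatrix}0&1\\1&0\end{smallmatrix}\right)$; they are related by $\tx{strip}(s_\alpha) = y_\alpha(-1)\cdot n(s_\alpha)$, which is exactly $d(s_\alpha)\cdot n(s_\alpha)$ since $S(s_\alpha) = \{\alpha\}$.

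For the inductive step I would write $\sigma = s \sigma'$ with $s = s_\alpha$ simple and $\ell(\sigma) = \ell(\sigma')+1$. The homomorphism property of $\tx{strip}$, the length-additive multiplicativity of $n$, the inductive hypothesis, and the fact that conjugation by $n(s)$ on $\hat T$ implements the Weyl action of $s$ combine to yield
\[ \tx{strip}(\sigma) \cdot n(\sigma)^{-1} = y_\alpha(-1) \cdot s\bigl(d(\sigma')\bigr). \]
To identify the right-hand side with $d(\sigma)$, I would invoke the standard combinatorial identity $S(s\sigma') = \{\alpha\} \sqcup s\cdot S(\sigma')$, valid under the length condition and automatically placing $s\cdot S(\sigma')$ inside $R^+$, together with the elementary cocharacter equality $s(y_\beta) = y_{s\beta}$, which is immediate from $y_{e_i-e_j} = e_j$ for any ordered pair.

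The final assertion is then a trivial indexing observation: every positive root takes the form $e_i - e_j$ with $i<j$, so $y_\alpha = e_j$ with $j \geq 1$, and no positive root satisfies $y_\alpha = e_0$. Hence the $0$-th diagonal entry of $d(\sigma) \in \hat T$ is an empty product equal to $1$, so $\chi_0(d(\sigma_w)) = 1$. The only delicate point in the proof will be coordinating the sign conventions of the Springer section with those of the permutation-matrix lift; once aligned, the induction is routine.
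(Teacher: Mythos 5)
Your proposal is correct and follows essentially the same route as the paper: induction on $\ell(\sigma_w)$, a direct check for a simple reflection, then the inversion-set identity (which the paper attributes to Bourbaki) together with the $\Omega(\hat T)$-equivariance of $\alpha \mapsto y_\alpha$. The details you spell out — the homomorphism property of $\tx{strip}$, the length-additive multiplicativity of the Springer section, the $2\times 2$ block computation showing $\tx{strip}(s_\alpha) = y_\alpha(-1)n(s_\alpha)$, and the observation that $y_\alpha \neq e_0$ for $\alpha > 0$ — are exactly the steps the paper leaves implicit.
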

\begin{proof}
The proof of the first statement proceeds by induction on the length of a reduced expression of $\sigma_w$ in terms of $\hat B$-simple reflections. If the length is 1, the statement is immediate. The general case follows from \cite[VI.\S1.no 6. Cor 2]{Bou} and the fact that $\alpha \mapsto y_\alpha$ is $\Omega(\hat T)$-equivariant.

The second statement follows from the fact that $\chi_0(y_\alpha)=1$ for all $\alpha>0$.
\end{proof}

Equations \eqref{eq:chidiff1} and \eqref{eq:chidiff2} and Proposition \ref{pro:stdspr} imply
\begin{equation} \xi^{-1}\cdot\chi_S(\tx{Art}(w)) = \chi_0(r_{B,X}(w)^{-1}) \label{eq:chidiff4} \end{equation}
for all $w \in W_E$, and we turn to computing $r_{B,X}$. To lighten the notation, we will write $\Gamma$ instead of $\Gamma_{K/F}$ for the Galois group of the finite extension $K/F$. Using the bijections between $\Z/n\Z$ and the standard bases of $X^*(\hat T)$ and $X_*(\hat T)$ established in the previous section, the set $R(\hat T,\hat G)$ is identified with the complement of the diagonal in $\Z/n\Z \times \Z/n\Z$. Recall that $\hat S$ is the Galois module whose underlying abelian group is $\hat T$ and whose Galois-structure is given by $\phi$. Thus the set $R(\hat S,\hat G)$ is also identified with the complement of the diagonal in $\Z/n\Z \times \Z/n\Z$, but it has a twisted action of $\Gamma = \<s\> \rtimes \<Q\>$. We write $\Z/n\Z^\dagger$ for the set $\Z/n\Z-\{0\}$. The group $\<Q\>$ acts on $\Z/n\Z^\dagger$ with $Q$ being multiplication by $q$. The group $\<\pm 1\>$ acts on both sides by multiplication. The injection
\begin{equation} \eta : \Z/n\Z^\dagger \rw R(\hat S,\hat G),\qquad z \mapsto (0,z) \label{eq:orbitbij} \end{equation}
is $\<Q\>\times\<\pm 1\>$-equivariant and its image is a cross-section for the set of $\<s\>$-orbits in $R(\hat S,\hat G)$. In particular, it induces a bijection between the set of orbits of $\<Q\>$ on $\Z/n\Z^\dagger$ and the set of orbits of $\<s\>\rtimes\<Q\>$ on $R(\hat S,\hat G)$, which restricts to a bijection between the symmetric orbits on both sides (recall that the symmetric orbits are the ones preserved by $\<\pm 1\>$).

When $n$ is even, there is a unique inertially symmetric orbit in $R(\hat S,\hat G)$ (recall this notion from Section \ref{sec:chispec}), namely the one containing the element $(0,n/2)$. It corresponds to the unique symmetric orbit in $\Z/n\Z^\dagger$ which is a singleton. When $n$ is odd, there is no inertially symmetric orbit in $R(\hat S,\hat G)$ and no singleton symmetric orbit in $\Z/n\Z^\dagger$.

Let $\Xi \subset \Z/n\Z^\dagger$ be a set of representatives for the orbits of $\<Q\> \times \< \pm 1\>$. Then we have
\begin{equation} r_{B,X}(w) = \prod_{a \in \Xi} r_{B,X,a}(w). \label{eq:chidiff5} \end{equation}
We will now describe $r_{B,X,a}$, for a given $a \in \Xi$. Let $\Gamma_{\eta(a)}$ be the subgroup of $\Gamma$ stabilizing $\eta(a)$, and let $\Gamma_{\pm\eta(a)}$ be the subgroup stabilizing the set $\{\eta(a),-\eta(a)\}$. We have the following cases:
\begin{enumerate}
\item If the orbit of $a$ is not symmetric, and $\<Q^m\>$ is the stabilizer of $a$ in $\<Q\>$, then
\[ \Gamma_{\eta(a)} = \Gamma_{\pm\eta(a)} = \<Q^m\>. \]
A set of representatives $\dot\Gamma_{\pm\eta(a)}$ for $\Gamma/\Gamma_{\pm\eta(a)}$ is given by
\[ \dot\Gamma_{\pm\eta(a)} = \{ s^kQ^t| k \in \Z/n\Z, 0 \leq t < m \}. \]
\item If the orbit of $a$ is symmetric but not a singleton, and $\<Q^m\>$ is the stabilizer of the set $\{a,-a\}$ in $\<Q\>$, then
\[ \Gamma_{\eta(a)} = \< Q^{2m} \> \qquad\tx{and}\qquad \Gamma_{\pm\eta(a)} = \< s^aQ^m \>. \]
A set of representatives $\dot\Gamma_{\pm\eta(a)}$ for $\Gamma/\Gamma_{\pm\eta(a)}$ is given by
\[ \dot\Gamma_{\pm\eta(a)} = \{ s^kQ^t| k \in \Z/n\Z, 0 \leq t < m \}. \]
\item If $2|n$ and $a=\frac{n}{2}$, then
\[ \Gamma_{\eta(a)} = \<Q\> \qquad\tx{and}\qquad \Gamma_{\pm\eta(a)} = \<s^\frac{n}{2}\> \rtimes \<Q\>. \]
A set of representatives $\dot\Gamma_{\pm\eta(a)}$ for $\Gamma/\Gamma_{\pm\eta(a)}$ is given by
\[ \dot\Gamma_{\pm\eta(a)} = \{ s^k| 0 \leq k < \frac{n}{2} \}. \]
\end{enumerate}
The assignment
\[ p(\dot\Gamma_{\pm\eta(a)} \cdot \eta(a))=1 \]
provides a gauge $p : \Gamma\cdot\{\pm\eta(a)\} \rw \{\pm 1\}$ on the $\Gamma \times \{\pm 1\}$-orbit  of $\eta(a)$. We have
\begin{equation} r_{B,X,a}(w) = s_{B/p,a}(\sigma_w) \cdot r_{p,X,a}(w), \label{eq:chidiff6} \end{equation}
where the first term is constructed in \cite[\S 2.4]{LS87}, and the second term in \cite[\S 2.5]{LS87}.
Since the image of $W_E$ in $\Gamma_{K/F}$ is generated by $Q$, it will be enough to compute the value of $\chi_0(s_{B/p}(\cdot))$ at $Q$.

\begin{lem} \label{lem:sbpa} We have
\[ \chi_0(s_{B/p,a}(Q)) = \begin{cases}
-1&,\textrm{ if }\<Q\>\cdot a\textrm{ is symmetric and not singleton}\\
1&,\textrm{ else}
\end{cases} \]
\end{lem}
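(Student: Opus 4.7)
The proof is a direct computation based on the explicit formula for $s_{B/p,a}(\sigma)$ from \cite[\S 2.4]{LS87}. Recall that $s_{B/p,a}(\sigma) \in \hat T$ is an element of order dividing $2$, expressible as a product $\prod_\beta \beta^\vee(-1)$, where $\beta$ ranges over those roots in the $\Gamma$-orbit $\mc{O} := \Gamma \cdot \eta(a)$ on which the gauge $p$ and the $B$-gauge disagree in the prescribed way relative to $\sigma = Q$. Since $\chi_0(\beta^\vee(-1)) = -1$ precisely when $0$ appears among the two coordinates of $\beta \in \Z/n\Z \times \Z/n\Z$, and equals $+1$ otherwise, the value $\chi_0(s_{B/p,a}(Q))$ is $(-1)^N$, where $N$ counts the contributing roots $\beta$ with $0 \in \beta$.

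The plan is to handle the three cases separately, using the cross-section $\eta$ from \eqref{eq:orbitbij} and the explicit coset representatives $\dot\Gamma_{\pm\eta(a)}$ listed before the statement of the lemma to enumerate the relevant roots. The key observation is that since $s = +1$ shifts both coordinates of a root by $1$, within any free $\<s\>$-orbit in $\mc{O}$ there are exactly two roots in which the coordinate $0$ appears (one with $0$ in the first slot and one with $0$ in the second); the only exception arises in the singleton symmetric case, where the $\<s\>$-orbit has size $n/2$.

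For Case 1 (asymmetric orbit), the gauge $p$ is constant on $\<s\>$-cosets, and the set of contributing roots for $s_{B/p,a}(Q)$ is a union of full $\<s\>$-cosets; since each such coset contributes two roots involving the coordinate $0$, $N$ is even and $\chi_0(s_{B/p,a}(Q)) = +1$. For Case 3 (the singleton $a = n/2$), the stabilizer $\Gamma_{\pm\eta(a)}$ already contains $Q$, so $Q$ fixes our gauge and no roots contribute to $s_{B/p,n/2}(Q)$, giving again $+1$. Case 2 (symmetric but not singleton) is the non-trivial case: here $Q^m$ acts on $a$ by negation, and while the contributions from full $\<s\>$-orbits inside $\mc{O}$ pair off as in Case 1, the pair $\{(0,a),(0,-a)\}$ is split by the definition of $p$ on $\dot\Gamma_{\pm\eta(a)}$, so that exactly one of these two $0$-involving roots survives in the product defining $s_{B/p,a}(Q)$. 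Hence $N$ is odd, and $\chi_0(s_{B/p,a}(Q)) = -1$.

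The main obstacle is the bookkeeping in Case 2, where one must verify that among the pair $\{(0,a),(0,-a)\}$ exactly one root satisfies the gauge-discrepancy condition defining the contributing set. This rests on the interaction between the explicit choice of $\dot\Gamma_{\pm\eta(a)}$ (which treats $\eta(a)$ and $-\eta(a)$ asymmetrically via the $Q^m$-coset structure) and the action of $Q$ on this set; once this is checked, the other contributions cancel by the same $\<s\>$-pairing argument used in Case 1, and the stated sign falls out.
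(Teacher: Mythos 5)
Your reduction --- $\chi_0(s_{B/p,a}(Q)) = (-1)^N$ with $N$ the number of contributing roots having a $0$-coordinate --- matches the paper and is the right starting point. But the structural claim in Case 1, that the contributing set for $s_{B/p,a}(Q)$ is a union of full $\<s\>$-cosets, is false, and that claim carries the whole parity argument. The two Langlands--Shelstad conditions defining the contributing set (recalled in the paper's proof) both begin with ``$\lambda>0$'', i.e.\ they build in $B$-positivity, and $B$-positivity is manifestly not $\<s\>$-stable: $(0,c)$ is positive while $s^{-c}(0,c)=(-c,0)$ is negative. So the contributing set is never a union of $\<s\>$-cosets and the ``two $0$-bearing roots per coset'' count does not apply. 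Your Case 3 step has the same kind of gap: $Q \in \Gamma_{\eta(n/2)}$ does not make $Q$ preserve the gauge $p$. In that case $p(s^k\eta(n/2))=+1$ exactly for $0\le k<n/2$, whereas $Q\cdot s^k\eta(n/2)=s^{qk}\eta(n/2)$ generally lands outside that window, so $p$ is not $Q$-invariant; the conclusion happens to be right, but for a different reason.

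The paper repairs all of this by cutting down first: only $\lambda$ with $\chi_0(\lambda(-1))\neq 1$ matter, and among those the $B$-positive ones are exactly $\<Q\>\cdot\eta(a)=\{(0,q^ta)\}$, a set much smaller than any $\<s\>$-coset. On that set one checks the two LS conditions directly. The first always fails since $Q^{-1}(0,q^ta)=(0,q^{t-1}a)$ is still positive; the second fails in the asymmetric case because $p$ is $\Gamma$-invariant there, fails in the singleton case because $\lambda>0 \iff p(\lambda)=1$, and holds for exactly one element ($t=m$) in the symmetric non-singleton case, producing the $-1$. Your Case 2 intuition that $p$ splits $\{(0,a),(0,-a)\}$ is pointing at the right phenomenon, but it needs to be run through the LS conditions on $\<Q\>\cdot\eta(a)$ rather than through an $\<s\>$-coset count that isn't available.
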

\begin{proof}
Recall that $s_{B/p,a}(\sigma)$ is equal to the product of $\lambda(-1)$, where $\lambda \in X_*(\hat S)$ belongs to the $\Gamma$-orbit of $\eta(a)$ and satisfies one of the two following conditions
\[ \{ \lambda>0, \sigma^{-1}\lambda<0, p(\lambda)=1, p(\sigma^{-1}\lambda)=1 \} \]
\[ \{ \lambda>0, \sigma^{-1}\lambda>0, p(\lambda)=-1, p(\sigma^{-1}\lambda)=1 \}. \]
The elements $\lambda>0$ in the $\Gamma$-orbit of $\eta(a)$ which pair non-trivially with $\chi_0$ are precisely the $\<Q\>$-orbit of $\eta(a)$. They never meet the first condition, because they fail $Q^{-1}\lambda<0$. We consider the second condition. If the orbit is symmetric and singleton, then $\lambda>0 \Leftrightarrow p(\lambda)=1$. If the orbit is asymmetric, then $\Gamma$ preserves $p$. In either case, we see that no $\lambda$ meets the second condition.

If the orbit of $a$ is symmetric and not a singleton, then an element of the form $\lambda = Q^t\eta(a)$ satisfies $p(\lambda)=-1$, $p(Q^{-1}\lambda)=1$ if and only if $t=m$. It follows that $\chi_0(s_{B/p}(Q))=\chi_0(Q^m\eta(a)(-1))=-1$.
\end{proof}

It remains to evaluate $\chi_0(r_{p,X,a}(w))$. This is where the $\chi$-data $X$ for the action of $\Gamma$ on $R(\hat S,\hat G)$, chosen according to Section \ref{sec:chispec}, enters. Let $F_+$ and $F_\pm$ be the fixed fields in $K$ of the groups $\Gamma_{\eta(a)}$ and $\Gamma_{\pm\eta(a)}$ respectively.

\begin{lem} \label{lem:rpa1} If $\<Q\>\cdot a$ is either asymmetric or symmetric and not a singleton, then
\[ r_{p,X,a}(w)=1. \]
\end{lem}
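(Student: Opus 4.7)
Both cases reduce to a direct evaluation of the Langlands--Shelstad formula for $r_{p,X,\mc{O}}(w)$ from \cite[\S 2.5]{LS87}, which expresses the $\alpha_0^\vee$-component of this cochain as a product of $\chi_{\alpha_0}$-values at explicit elements of $F_+^\times$ built from $w$ and the gauge $p$.

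The asymmetric case is immediate. By our choice of $\chi$-data in Section \ref{sec:chispec}, $\chi_\alpha = 1$ for every $\alpha$ in an asymmetric $\Gamma$-orbit, so every factor in the defining product is trivial and $r_{p,X,a}(w)=1$ for all $w$.

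For the symmetric non-singleton case, I would first observe that such an orbit is automatically \emph{inertially} asymmetric. Indeed, the $\<s\>$-orbit through $\eta(a)=(0,a)$ equals $\{(k,k+a):k\in\Z/n\Z\}$, and this set is preserved by multiplication by $-1$ if and only if $2a\equiv 0\pmod n$, i.e.\ $a=n/2$, which has been excluded by hypothesis. Hence $F_+/F_\pm$ is an unramified quadratic extension and $\chi_{\alpha_0}$ is the unramified quadratic character on $F_+^\times$, which kills $O_{F_\pm}^\times$ (via surjectivity of the unramified norm $O_{F_+}^\times \twoheadrightarrow O_{F_\pm}^\times$) and kills $N_{F_+/F_\pm}(F_+^\times)$ by construction. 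The computation then amounts to plugging the gauge $p$ (which assigns $+1$ to $\dot\Gamma_{\pm\eta(a)}\cdot\eta(a)$) into the LS87 formula and verifying that the resulting argument of $\chi_{\alpha_0}$ always lies in $O_{F_\pm}^\times\cdot N_{F_+/F_\pm}(F_+^\times)$.

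The main obstacle will be this last combinatorial check. One must organise the coset representatives in $\dot\Gamma_{\pm\eta(a)}$ according to which positive roots $w$ sends to negative roots, pair each such root with its image under the non-trivial element of $\Gamma_{\pm\eta(a)}/\Gamma_{\eta(a)}$, and show that each such pair contributes a norm from $F_+^\times$. The stabiliser data $\Gamma_{\eta(a)} = \<Q^{2m}\>$ versus $\Gamma_{\pm\eta(a)} = \<s^a Q^m\>$ produce a free Frobenius action on the set of symmetric pairs, which should make the bookkeeping tractable and deliver the required norm factorisation.
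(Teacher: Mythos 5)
The asymmetric case is fine and matches the paper, and your observation that a symmetric non-singleton orbit is automatically inertially asymmetric (so $F_+/F_\pm$ is unramified quadratic and $\chi_a$ is the nontrivial unramified quadratic character) is also correct and implicit in the paper's setup.

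However, your strategy for the symmetric non-singleton case has a genuine gap. You propose to show that the arguments of $\chi_a$ appearing in the LS87 product lie in (or pair up to lie in) the kernel of $\chi_a$, so that each factor is individually trivial. This is not what happens. Write out the LS87 formula as the paper does:
\[ r_{p,X,a}(\sigma) = \prod_{t=0}^{m-1}\prod_{k\in\Z/n\Z}\chi_a\bigl(v_0(u_{k,t}(\sigma))\bigr)^{s^kQ^t\eta(a)}, \]
and compute the arguments at the two generators $\sigma=s$, $\sigma=Q$. For $\sigma=Q$ and $t=0$ one finds $v_0(u_{k,0}(Q))=Q^{2m}$, a generator of $\Gamma_{K/F_+}$, i.e.\ (a power of) Frobenius for $F_+$. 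Under Artin reciprocity this corresponds to a uniformizer of $F_+$, and $\chi_a$, being the \emph{nontrivial} unramified quadratic character, evaluates to $-1$ there. So the individual factors are not trivial, and pairing them against the nontrivial element $s^aQ^m$ of $\Gamma_{\pm\eta(a)}/\Gamma_{\eta(a)}$ does not rescue this: for a non-singleton orbit the coroots $s^k\eta(a)$ and $s^{k'}\eta(a)$ are never negatives of one another in pairs (that would force $2a\equiv 0$, the excluded singleton case).

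The idea you are missing is the ellipticity argument. The paper shows that the base $\chi_a(v_0(u_{k,t}(\sigma)))$ is \emph{independent of $k$} (for $\sigma=s$ because the argument lands in inertia, where the unramified $\chi_a$ is trivial; for $\sigma=Q$ by the explicit computation above). One can therefore collapse the inner product over $k$ into a single factor raised to the cocharacter $\sum_{k\in\Z/n\Z}s^k\eta(a)$. Since $\eta(a)$ lies in the coroot lattice of $\hat S$ and $s$ acts elliptically (it is the Coxeter element), this sum vanishes, so the factor is $1$ regardless of the (nontrivial) value of $\chi_a$. That is the mechanism that makes the lemma true, and your plan as written would not discover it.
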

\begin{proof}
Let $\chi_a : F_+^\times \rw \C^\times$ be the character associated to $\eta(a)$ in Section \ref{sec:chispec}. If $\<Q\> \cdot a$ is asymmetric, this character is trivial and the statement follows immediately. In the second case, this is the unique non-trivial unramified quadratic character of $F_+^\times$. We enlarge the field $K$ if necessary to ensure it contains the unramified quadratic extension of $F_+$, which we call $F_2$.  Then $\chi_a$ kills the norm subgroup $N(F_2^\times)$, and under the Artin reciprocity map $W_{F_+}^\tx{ab} \rw F_+^\times$ it provides a character on $W_{F_+}$ that factors through the quotient $\Gamma_{F_2/F_+}$. We now use the formula for $r_{p,X,a}$ from \cite[\S2.5]{LS87}. In our situation all computation can be performed in $\Gamma_{K/F}$ instead of $W_{K/F}$. We have for $\sigma \in \Gamma_{K/F}$
\[ r_{p,X,a}(\sigma) = \prod_{t = 0}^{m-1}\prod_{k \in Z/n\Z} \chi_a( v_0(u_{k,t}(\sigma)))^{s^kQ^t\eta(a)} \]
where $u_{k,t}(\sigma)$ is the unique element of $\Gamma_{F_\pm/F}$ of the form
\[ Q^{-t}s^{-k}\sigma s^l Q^s,\qquad l \in \Z/n\Z,0 \leq s < m \]
and for $\tau \in \Gamma_{F_\pm/F}$ the element $v_0(\tau)$ is the unique one in $\Gamma_{F_+/F}$ of the form
\[ \tau (s^aQ^m)^\epsilon,\qquad 0 \leq \epsilon \leq 1. \]
It is enough to compute $r_{p,X,a}(\sigma)$ for $\sigma=s$ and $\sigma=Q$. We claim that in both these cases the value $\chi_a(v_0(u_{k,t}(\sigma)))$
is independent of $k$. In the first case one has $u_{k,t}(s) \in I_{F_\pm/F}$, hence $v_0(u_{k,t}(s))=u_{k,t}(s) \in I_{F_+/F}=I_{F_\pm/F}$, which belongs to the kernel of $\chi_a$. In the second case one has
\[
\begin{cases}
u_{k,t}(Q)=1&, t>0\\
u_{k,0}(Q)=s^aQ^m&,t=0
\end{cases}%
\Longrightarrow%
\begin{cases}
v_0(u_{k,t}(Q))=1&, t>0\\
v_0(u_{k,0}(Q))=Q^{2m}&,t=0
\end{cases}%
\]
This shows that indeed the value $\chi_a(v_0(u_{k,t}(\sigma)))$ for $\sigma=s,Q$ is independent of $k$. Then we obtain
\[ r_{p,X,a}(\sigma) = \prod_{t = 0}^{m-1}\chi_a( v_0(u_{0,t}(\sigma)))^{Q^t\sum_{k \in Z/n\Z}s^k\eta(a)} \]
Since $\eta(a)$ belongs to the coroot lattice of $\hat S$ and $s\in  \Omega(\hat S)$ is an elliptic element, we conclude that $r_{p,X,a}(\sigma)=1$.
\end{proof}

\begin{lem} \label{lem:rpa2} If $\<Q\>\cdot a$ is the unique symmetric singleton orbit, then for all $w \in W_E$ we have
\[ \chi_0(r_{p,X,a}(w)) = \chi_a(\tx{Art}(w)), \]
where $\chi_a$ is the character on $F_+^\times=E^\times$ associated to the root $\eta(a)$ in Section \ref{sec:chispec}.
\end{lem}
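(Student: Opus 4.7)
The plan is to apply the explicit formula for $r_{p,X,a}$ given in \cite[\S2.5]{LS87}, exactly as was used in the proof of Lemma \ref{lem:rpa1}, to the singleton symmetric orbit $\{\eta(a)\}$ with $a=n/2$. Using the representative set $\dot\Gamma_{\pm\eta(a)}=\{s^k : 0\leq k<n/2\}$ from case 3 of the discussion preceding Lemma \ref{lem:sbpa}, this formula reads
\[ r_{p,X,a}(\sigma)=\prod_{k=0}^{n/2-1}[s^k\eta(a)]\bigl(\chi_a(v_0(u_k(\sigma)))\bigr), \]
where $u_k(\sigma)\in \Gamma_{\pm\eta(a)}$ is the unique element of the form $s^{-k}\sigma s^l$ with $0\leq l<n/2$, and $v_0(\tau)\in \Gamma_{\eta(a)}=\langle Q\rangle$ is the unique element of the form $\tau\cdot(s^{n/2})^\epsilon$ with $\epsilon\in\{0,1\}$. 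Unlike in Lemma \ref{lem:rpa1}, the character $\chi_a$ is now genuinely (wildly) ramified, so the formula must be interpreted at the level of $W_{K/F}$, with $\chi_a$ viewed on $W_{F_+}$ via local class field theory.

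First I would apply $\chi_0$ and distribute it over the product, turning the exponent $s^k\eta(a)\in X_*(\hat S)$ into the integer pairing $\langle\chi_0,s^k\eta(a)\rangle$. Writing $\eta(a)=e_0-e_{n/2}$ in the standard basis of $X_*(\hat S)$ and using that $s$ acts by $+1$, one computes
\[ \langle\chi_0, s^k\eta(a)\rangle=\delta_{k,0}-\delta_{k,\,n/2\bmod n}, \]
which on the range $0\leq k<n/2$ equals $\delta_{k,0}$. The product therefore collapses to the single factor $\chi_a(v_0(u_0(\sigma)))$.

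Second, for $w\in W_E$ the image of $w$ in $\Gamma_{K/F}$ lies in $\langle Q\rangle\subseteq \Gamma_{\pm\eta(a)}$. The defining condition for $u_0$ then permits $l=0$, giving $u_0(w)=w$; and since $w$ already lies in $\langle Q\rangle=\Gamma_{\eta(a)}$, the defining condition for $v_0$ gives $\epsilon=0$ and $v_0(w)=w$. Lifting this combinatorial identification from $\Gamma_{K/F}$ back to the Weil groups, $v_0(u_0(w))$ is just $w$ viewed inside $W_{F_+}=W_E$. Consequently $\chi_0(r_{p,X,a}(w))=\chi_a(w)$, which under Artin reciprocity $W_E^{\text{ab}}\cong E^\times$ is precisely $\chi_a(\tx{Art}(w))$, as claimed.

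The only point requiring real care is the last step: one must verify that the representative-selection recipe for $u_k$ and $v_0$ from \cite[\S2.5]{LS87}, when interpreted at the level of $W_F$ rather than the finite quotient $\Gamma_{K/F}$, actually returns $w\in W_{F_+}$ on the nose for $w\in W_E$, rather than some other lift differing by a wildly ramified element of $W_K$. Once this Weil-group bookkeeping is set up correctly, the identity follows immediately from the two clean observations above and no further computation or cancellation argument is needed.
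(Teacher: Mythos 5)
Your proof is correct and follows essentially the same route as the paper's: you identify the representative set for $\Gamma/\Gamma_{\pm\eta(a)}$, observe that only the $k=0$ factor of the Langlands–Shelstad product pairs nontrivially with $\chi_0$, and then check $v_0(u_0(w))=w$ for $w\in W_E=W_{F_+}$. The paper states all this more tersely but it is the same computation.

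One small inaccuracy in your narration: you call $\chi_a$ "genuinely (wildly) ramified," but by construction in Section \ref{sec:chispec} the $\chi$-data characters $\chi_\alpha$ are tamely ramified (the wildly ramified input $\xi_{\alpha,\omega}$ enters only through the $\lambda$-constant used to fix the value of $\chi_\alpha$ on a uniformizer; the resulting character is trivial on $U^1_{F_\alpha}$). This actually defangs the caution you raise at the end: since $\chi_a$ is trivial on $U^1_{F_+}$, any ambiguity in the Weil-group lift of the coset representative that lives over $1\in\Gamma_{K/F}$ (i.e.\ in the kernel of $W_{K/F}\to\Gamma_{K/F}$) with image in the appropriate higher unit groups cannot change the value $\chi_a(v_0(u_0(w)))$. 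So the bookkeeping you flag is indeed worth noting, but it is harmless precisely because $\chi_a$ is tame.
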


\begin{proof}
We are in the situation $2|n$ and $a=n/2$, thus $\eta(a)=(0,n/2)$. The only factor in the product defining $r_{p,X,a}(w)$ which pairs non-trivially with $\chi_0$ is the factor $\chi_a(v_0(u_0(w)))^{\eta(a)}$, and we have
\[ \chi_0( \chi_a(v_0(u_0(w)))^{\eta(a)} ) = \chi_a(v_0(u_0(w))). \]
Since $W_E=W_{F^+}$, we have $v_0(u_0(w))=w$.
\end{proof}

We are now ready to state the final formula for the character $\chi_S\cdot\xi^{-1}$ of $E^\times$. Let $v_E$ be the valuation on $E^\times$ which sends uniformizers to $1$. We will write $\tx{sgn}(q,\Z/n\Z)$ for the sign of the permutation which multiplication by $q$ induces on the set $\Z/n\Z$. If $2|n$, then $\eta(n/2)$ is a representative for the unique inertially symmetric orbit of $\Gamma$ in $R(\hat S,\hat G)$. In that case we have $F_+=E$. Let $\chi_{\eta(n/2)}$ be the character of $E^\times$ constructed in Section \ref{sec:chispec} corresponding to the root $\eta(n/2)$.

\begin{pro} \label{pro:chisxi} For $e \in E^\times$ we have
\[ \chi_S\cdot\xi^{-1}(e) = \begin{cases}
\tx{sgn}(q,\Z/n\Z)^{v_E(e)}&, \textrm{ if } 2 \nmid n\\
\tx{sgn}(q,\Z/n\Z)^{v_E(e)}\cdot \chi_{\eta(n/2)}(e)^{-1}&, \textrm{ if } 2|n
\end{cases}. \]
\end{pro}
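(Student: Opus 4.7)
The core input is equation \eqref{eq:chidiff4}, $\chi_S\xi^{-1}(\tx{Art}(w))=\chi_0(r_{B,X}(w)^{-1})$ for $w\in W_E$, together with the decomposition of $r_{B,X}$ given by \eqref{eq:chidiff5} and \eqref{eq:chidiff6}. The plan is to evaluate this expression at a Frobenius lift $w_Q\in W_E$ (which corresponds under Artin reciprocity to a uniformizer $\varpi_E$ of $E$) and at arbitrary $w\in I_E$ (corresponding to units), then assemble the values by multiplicativity.

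First I would verify that $\chi_0\circ r_{B,X}$ restricts to a genuine homomorphism on $W_E$, not merely a $1$-cochain. Indeed $r_{B,X}$ is a cocycle for the $W_F$-action on $\hat S$ obtained by twisting $\hat T$ via $\phi$, so the cocycle relation involves the image $\sigma_{w_1}$ of $w_1$ in $\Omega(\hat T)$. For $w_1\in W_E$ this image lies in $\<Q\>$, and since $Q$ acts on $\Z/n\Z$ by multiplication by $q$, it fixes $0\in\Z/n\Z$ and hence fixes $\chi_0\in X^*(\hat S)$. Pairing with $\chi_0$ therefore converts the twisted cocycle relation into ordinary multiplicativity, reducing the proof to evaluating $\chi_0(r_{B,X}(\cdot)^{-1})$ on generators of $W_E^{\tx{ab}}$.

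Factor-by-factor evaluation at the Frobenius lift $w_Q$ with $\sigma_{w_Q}=Q$ is carried out by applying Lemmas \ref{lem:sbpa}, \ref{lem:rpa1}, and \ref{lem:rpa2} to each $a\in\Xi$. The factors $r_{p,X,a}(w_Q)$ pair trivially with $\chi_0$ except in the inertially symmetric singleton case $a=n/2$, where they contribute $\chi_{\eta(n/2)}(\varpi_E)$. The factors $s_{B/p,a}(Q)$ pair to $-1$ for each symmetric non-singleton orbit and trivially otherwise, producing a net sign $(-1)^s$, where $s:=|\Xi_{\text{sym-ns}}|$ is the number of symmetric non-singleton $\<Q\>\times\<\pm 1\>$-orbits in $\Z/n\Z^\dagger$. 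Evaluation for $w\in I_E$ is simpler: $\sigma_w=1$ kills every Springer-section contribution, and Lemmas \ref{lem:rpa1} and \ref{lem:rpa2} give $\chi_0(r_{B,X}(w)^{-1})=\chi_{\eta(n/2)}(\tx{Art}(w))^{-1}$ when $2\mid n$, and $1$ otherwise.

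The one step that is not an immediate citation of a previous lemma—and therefore the main obstacle—is the combinatorial identity $(-1)^s=\tx{sgn}(q,\Z/n\Z)$. I would prove it via $\tx{sgn}(\mu_q)=(-1)^{n-c}$, where $c$ is the number of $\<Q\>$-orbits on $\Z/n\Z$. Those orbits break into the fixed point $\{0\}$, the singleton $\{n/2\}$ when $2\mid n$, the $s$ symmetric non-singleton orbits (all of even length $2m_i$, by ellipticity of the action on the $\pm$-pair), and matched pairs of asymmetric orbits of common length $\ell_j$; a routine tally yields $n-c\equiv\sum_i(2m_i-1)\equiv s\pmod 2$. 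Once this identity is in hand, multiplicativity combines the Frobenius and inertia evaluations: for $e=u\varpi_E^{v_E(e)}\in E^\times$ one obtains $\chi_S\xi^{-1}(e)=\tx{sgn}(q,\Z/n\Z)^{v_E(e)}\cdot\chi_{\eta(n/2)}(e)^{-\epsilon}$, where $\epsilon=1$ if $2\mid n$ and $\epsilon=0$ otherwise, recovering the stated formula.
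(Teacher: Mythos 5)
Your proposal is correct and follows essentially the same line as the paper's own proof: both start from Equations \eqref{eq:chidiff4}--\eqref{eq:chidiff6}, decompose $\Xi$ into asymmetric, symmetric non-singleton, and symmetric singleton parts, evaluate factor-by-factor via Lemmas \ref{lem:sbpa}, \ref{lem:rpa1}, \ref{lem:rpa2}, and invoke the same fact that $\tx{sgn}(q,\Z/n\Z)$ equals $(-1)$ raised to the number of symmetric orbits of even size in $\Z/n\Z^\dagger$. The only differences are expository: you make explicit the multiplicativity of $\chi_0\circ r_{B,X}$ on $W_E$ (which the paper takes for granted since the left-hand side of \eqref{eq:chidiff4} is manifestly a character) and you spell out the cycle-counting argument $\tx{sgn}(\mu_q)=(-1)^{n-c}\equiv(-1)^s$ that the paper states without derivation.
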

\begin{proof} According to Equations \eqref{eq:chidiff4}, \eqref{eq:chidiff5}, \eqref{eq:chidiff6}, we have
\[ \chi_S \cdot \xi^{-1}(\tx{Art}(w)) = \prod_{a \in \Xi} \chi_0(s_{B/p,a}(\sigma_w)\cdot r_{p,X,a}(w))^{-1}. \]
We write $\Xi$ as the disjoint union
\[ \Xi = \Xi_\tx{a} \sqcup \Xi_\tx{sn} \sqcup \Xi_\tx{ss}, \]
according to whether an element $a \in \Xi$ represents a asymmetric, a symmetric and non-singleton, or a symmetric and singleton orbit. This is equivalent to $\eta(a)$ representing an asymmetric, symmetric but inertially asymmetric, or an inertially symmetric orbit of $\Gamma$ in $R(\hat S,\hat G)$. Note that $\Xi_\tx{ss}$ is empty when $2 \nmid n$ and contains exactly one element of $2|n$.

Then according to Lemmas \ref{lem:rpa1} and \ref{lem:rpa2} we have
\[ \chi_S \cdot \xi^{-1}(e) = \prod_{a \in \Xi_\tx{sn}} (-1)^{v_E(e)} \prod_{a \in \Xi_\tx{ss}} \chi_a(e)^{-1} \]
Since $\tx{sgn}(q,\Z/n\Z)$ is equal to the parity of the number of symmetric orbits of even size in $\Z/n\Z$, which is equal to the number of symmetric orbits of even size in $\Z/n\Z^\dagger$, we see that the first product is equal to $\tx{sgn}(q,\Z/n\Z)^{v_E(e)}$.

\end{proof}

\subsection{A comparison with the rectifying character} \label{sec:cmprect}

The purpose of this section is to prove the following:

\begin{thm} \label{thm:recti}
\[ \chi_S = \xi \cdot \mu_{\xi}. \]
\end{thm}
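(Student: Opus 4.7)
The proof plan is to compare the explicit formula for $\chi_S \cdot \xi^{-1}$ already obtained in Proposition \ref{pro:chisxi} with the explicit formula of Bushnell--Henniart for the rectifying character $\mu_\xi$ in \cite[Thm. 2.1]{BH05b}. The key preliminary observation is that the extension $E/F$ coming from an epipelagic parameter is totally ramified of degree $n$: indeed, $K/\tilde F$ is tame cyclic of degree $n$ and totally ramified, while $K/E$ is unramified of degree $f=\tx{ord}(q,(\Z/n\Z)^\times)$, so the residue field of $E$ coincides with that of $F$. Hence the totally ramified case of \cite[Thm. 2.1]{BH05b} applies.

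Since both $\chi_S\cdot \xi^{-1}$ and $\mu_\xi$ are tamely ramified characters of $E^\times$, each is determined by its restriction to $\mu_{q-1}(F) \subset O_E^\times$ (the Teichm\"uller lifts, giving $k_E^\times = k_F^\times$) and its value on any chosen uniformizer $\varpi_E$ of $E$. I would therefore proceed in two steps, organized by the parity of $n$.

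First, for odd $n$: Proposition \ref{pro:chisxi} reduces the claim to showing that $\mu_\xi(e) = \tx{sgn}(q,\Z/n\Z)^{v_E(e)}$. The Bushnell--Henniart formula in this case reduces to a pure sign character: $\mu_\xi$ is unramified and its value on $\varpi_E$ is a Legendre-type symbol $\left(\frac{\cdot}{\cdot}\right)$ attached to the cyclic structure. The identification of this symbol with $\tx{sgn}(q,\Z/n\Z)$ is the classical fact that the sign of the permutation $x\mapsto qx$ on $\Z/n\Z$ is detected by the number of symmetric $\langle q\rangle$-orbits of even cardinality, and this in turn matches the quadratic residue symbol appearing in \cite[\S1.5, \S2]{BH05b}.

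Second, for even $n$: Proposition \ref{pro:chisxi} gives $\chi_S\cdot\xi^{-1}(e) = \tx{sgn}(q,\Z/n\Z)^{v_E(e)} \cdot \chi_{\eta(n/2)}(e)^{-1}$. Here the Bushnell--Henniart formula for $\mu_\xi$ has, beyond the unramified sign part handled as above, a ramified tamely ramified quadratic component that is expressed via a Gauss sum $\lambda_{F_\alpha/F_{\pm\alpha}}$ attached to the additive character obtained from $\xi$ restricted to $U_E^1/U_E^2$. But this is precisely the construction of $\chi_{\eta(n/2)}$ given in Section~\ref{sec:chispec}: the additive character $\xi_{\alpha,\omega}$ of $k_{F_\alpha}$ that enters the definition of $\chi_\alpha$ for the inertially symmetric root $\alpha=\eta(n/2)$ is obtained by pulling back $\phi$ along $\alpha^\vee$, which by Lemma \ref{lem:paircmp} and the definition of $\phi_\xi$ yields exactly the restriction of $\xi$ required. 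Hence the two ramified Gauss-sum contributions coincide up to inversion, and this inversion is compatible with the inverse sign in Proposition \ref{pro:chisxi}.

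The main obstacle is the book-keeping in the even case: one must verify carefully that the conventions (normalization of $\lambda$-constants, choice of uniformizer, role of $v_0$ and $u_0$ in \S\ref{sec:chisxi}, and the sign conventions in \cite[\S1.5]{BH05b}) all line up, so that the Gauss sum appearing in $\chi_{\eta(n/2)}$ really matches the ramified factor $\mu_\xi/\mu_\xi^{\text{unram}}$ of \cite[Thm. 2.1]{BH05b}. Once these identifications are made, Theorem \ref{thm:recti} follows by combining them with Proposition \ref{pro:chisxi}, since both sides then agree on $\mu_{q-1}(F)$ and on $\varpi_E$, hence on all of $E^\times$.
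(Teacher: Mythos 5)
Your proposal follows the same overall strategy as the paper: reduce Theorem \ref{thm:recti} to a comparison between the explicit formula of Proposition \ref{pro:chisxi} for $\chi_S\cdot\xi^{-1}$ and the Bushnell--Henniart formula for $\mu_\xi$, treating $n$ odd and $n$ even separately and using Zolotarev's identification $\tx{sgn}(q,\Z/n\Z) = (q/n)$ in the odd case. For odd $n$ you have essentially the whole argument.

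For even $n$, however, there is a real gap in what you wrote. You describe \cite[Thm.\ 2.1(2)]{BH05b} as already splitting $\mu_\xi$ into ``an unramified sign part handled as above'' plus ``a ramified Gauss-sum component,'' and you then propose to match the latter against $\chi_{\eta(n/2)}^{-1}$. But that is not how BH05b packages the answer: their formula is $\mu_\xi(\omega) = \bigl(\zeta(\omega,\xi)/q\bigr)\cdot\lambda_{E/F}$, where $\lambda_{E/F}$ is the Langlands $\lambda$-constant for the \emph{whole} degree-$n$ totally ramified tower $E/F$, not a Gauss sum for a quadratic extension and not visibly the product of $\tx{sgn}(q,\Z/n\Z)$ with anything built from $\xi$. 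Extracting the factor $\tx{sgn}(q,\Z/n\Z)$ requires a genuine argument: one must factor $\lambda_{E/F}=\lambda_{E/E_2}\cdot\lambda_{E_2/F}^2$, compute $\lambda_{E_2/F}^2$ (which is $1$ if $[E_2:F]$ is odd and $\bigl(\tfrac{-1}{q}\bigr)$ otherwise, via a further intermediate field $E_4$), and then invoke \cite{DH05} to convert $\bigl(\tfrac{-1}{q}\bigr)^{[E:F]/2-1}$ into $\tx{sgn}(q,\Z/n\Z)$. Your proposal skips this entirely. A second missing ingredient is the normalization device the paper uses to make the final comparison workable: one must choose $\psi_F$ so that $\zeta(\omega,\xi)=1$, after which the $\xi$-dependence is absorbed into the additive characters $\psi_E$, $\psi_{E_2}$ entering $\lambda_{E/E_2}$; only then does $\lambda_{E/E_2}=\chi_{\eta(n/2)}(\omega)^{-1}$ follow, and verifying even that equality requires the explicit computation that $\xi_\alpha$ on $U_E^1$ is the restriction of $x\mapsto\xi(x\cdot x^{-\tau})$, coming from $\alpha=\chi_0-s^{n/2}\chi_0$. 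You correctly flag this book-keeping as ``the main obstacle,'' but the proposal offers no mechanism to overcome it; as written, the even case is only a plan, not a proof.
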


\begin{proof}

If $2 \nmid n$, then the theorem follows at once from Proposition \ref{pro:chisxi} above, Theorem 2.1(1) of \cite{BH05b}, and the well known observation of Zolotarev that $\tx{sgn}(q,\Z/n\Z)$ is equal to the Jacobi symbol $(q/n)$ if $n$ is odd.

For the remainder of the proof, we assume $2|n$. Recall that we have the admissible pair $(E/F,\xi)$ with the property that the $W_F$-representation $\tx{Ind}_{W_E}^{W_F}\xi$ is isomorphic to the one given by the epipelagic parameter $\phi : W_F \rw \hat G$. In particular, $E/F$ is totally tamely ramified. Working through section 1 of \cite{BH05b} we gather the following data:
\[ l=1,E_0=E,E_1=F,d_0=[E:F]=n,d_1=1,S(\xi)=\{1\},i^+=i_+=1,d^+=1. \]
Let $\psi_F : F \rw \C^\times$ be a character trivial on $\mf{p}_F$ but non-trivial on $O_F$.  For every field extension $K/F$ we set $\psi_K = \psi_F \circ \tx{Tr}_{K/F}$. Then, for each tower $L/K/F$ we have the Langlands constant
\[ \lambda_{L/K} = \frac{\epsilon(\tx{Ind}_{\Gamma_L}^{\Gamma_K} 1_L,\frac{1}{2},\psi_K)}{\epsilon(1_L,\frac{1}{2},\psi_L)}. \]

Since $\xi|_{U_E^2}=1$, we can choose an element $\alpha \in E^\times$ with valuation $\tx{val}_E(\alpha)=-1$ and the property that
\[ \xi|_{U_E^1}(x) = \psi_E(\alpha(x-1)). \]
Given any uniformizing element $\omega \in E^\times$ we put
\[ \zeta(\omega,\xi) = w\alpha \in k_E. \]
Since $\alpha$ is well defined up to multiplication by $U^1_E$, this element does not depend on the choice of $\alpha$. Theorem 2.1(2) of \cite{BH05b} then asserts that
\[ \mu_\xi(\omega) = \left(\frac{\zeta(\omega,\xi)}{q}\right)\cdot\lambda_{E/F}.\]
Note that both factors depend on the arbitrary character $\psi_F$, but their product is independent of $\psi_F$.

We want to show that this formula agrees with the formula for $\chi_S \cdot \xi^{-1}$ given in Proposition \ref{pro:chisxi}.

\begin{lem} Let $E_2$ be the subfield of $K$ fixed by the subgroup $\{1,s^\frac{n}{2}\} \times \<Q\>$ of $\Gamma_{K/F}$. Then
\[ \lambda_{E/F} = \tx{sgn}(q,\Z/n\Z) \cdot \lambda_{E/E_2}. \]
\end{lem}

\begin{proof}

The extension $E/E_2$ is quadratic and ramified, and we have (see e.g. \cite[\S1.5]{BH05b})
\[ \lambda_{E/F} = \lambda_{E/E_2} \cdot \lambda_{E_2/F}^2. \]
If $[E_2:F]$ is odd, then $\lambda_{E_2/F}^2$ is trivial. If $[E_2:F]$ is even, then $\Gamma_{K/F}$ contains the normal subgroup $\{1,s^\frac{n}{4},s^\frac{n}{2},s^\frac{3n}{4}\}$.
We claim that this group preserves $E_2$. Indeed, $E_2$ is the subfield of $K$ fixed by both $Q$ and $s^\frac{n}{2}$ and one sees right away that if an element $e$ has this property, then so does $s^\frac{n}{4}e$. Let $E_4$ be the subfield of $E_2$ fixed by $s^\frac{n}{4}$. Then we have
\[ \lambda_{E_2/F}^2 = \lambda_{E_2/E_4}^2 = \left(\frac{-1}{q}\right).\]
All in all, we obtain
\[ \lambda_{E/F} = \left(\frac{-1}{q}\right)^{\frac{[E:F]}{2}-1} \cdot \lambda_{E/E_2}. \]
The lemma now follows from \cite[Thm. 1]{DH05}.
\end{proof}

We conclude that
\[ \mu_\xi(\omega) = \tx{sgn}(q,\Z/n\Z)\cdot\left(\frac{\zeta(\omega,\xi)}{q}\right)\cdot\lambda_{E/E_2} \]
and focus on the product of the second and third factors. Recall that these factors depend on the arbitrary character $\psi_F$, with $\zeta(w,\xi)$ involving $\psi_E = \psi_F \circ \tx{Tr}_{E/F}$, and $\lambda_{E/E_2}$ involving furthermore $\psi_{E_2} = \psi_F \circ \tx{Tr}_{E_2/F}$. All of these characters enter into the above objects through their reduction to $k_E=k_{E_2}=k_F$, and for these reductions we have
\[ \psi_E(x) = \psi_F(nx), \qquad \psi_{E_2}(x) = \psi_F(\frac{n}{2}x),\qquad\qquad x \in k_F.  \]
We choose $\psi_F$ so that its reduction satisfies $\psi_F(nx) = \xi(\omega x+1)$ for all $x \in k_F$. Then by construction we have $\zeta(\omega,\xi)=1$. The following lemma completes the proof of the theorem:

\begin{lem} With the above choice of $\psi_F$, we have
\[ \lambda_{E/E_2} = \chi_{\eta(n/2)}(\omega)^{-1}. \]
\end{lem}

\begin{proof}
According to the construction of Section \ref{sec:chispec}, we have $\chi_{\eta(n/2)}(\omega)^{-1}=\lambda_{E/E_2}(\xi_{\alpha,\omega})$, where $\xi_{\alpha,\omega}$ is the character on $k_{E_2} = k_E$ obtained as the composition
\[ \xi_{\alpha,\omega} : \xymatrix{ k_{F_\alpha}\ar[rr]^-{x\mapsto \omega x+1}&&U_{F_\alpha}^1/U_{F_\alpha}^2\ar[r]^-{\xi_\alpha}&\C^\times}, \]
where $\xi_\alpha : U_E^1 \rw \C^\times$ is dual to the homomorphism
\[ \xymatrix{ P\ar@{^{(}->}[r]&W\ar[r]^\phi&\hat T\ar[r]^\alpha&\C^\times}, \]
and $\alpha = \eta(n/2)$. Our goal is to show that the characters $\xi_{\alpha,\omega}$ and $x \mapsto \xi(\omega 2^{-1}x+1)$ of $k_{E_2}$ are translates under $k_{E_2}^2$.

We claim that $\xi_\alpha : U_E^1 \rw \C^\times$ is the restriction of the character on $E^\times$ given by
\[ x \mapsto \xi(x \cdot x^{-\tau}) \]
where $\tau$ is the non-trivial element of $\Gamma_{E/E_2}$. Indeed, the homomorphisms $\phi$ and $\phi_\xi$ have the same restriction to $P$, so $\xi_\alpha$ is the restriction to $U_E^1$ of the character of $E^\times$ given by the composition
\[ \xymatrix{ E^\times\ar[r]^{\tx{Art}^{-1}}&W_E^\tx{ab}\ar[r]^{\phi_\xi}&\hat T\ar[r]^\alpha&\C^\times}, \]
while, according to the Shapiro isomorphism, $\xi$ is the character of $E^\times$ given by the composition
\[ \xymatrix{ E^\times\ar[r]^{\tx{Art}^{-1}}&W_E^\tx{ab}\ar[r]^{\phi_\xi}&\hat T\ar[r]^{\chi_0}&\C^\times}. \]
The claim follows from the fact that $\alpha=\chi_0 - s^\frac{n}{2}\cdot\chi_0$, and $s^\frac{n}{2}$ restricts to $E$ as the non-trivial automorphism preserving $E_2$.

The claim implies $\xi_{\alpha,\omega}(x)=\xi(\omega 2 x +1)$, and the proof is complete.
\end{proof}

\end{proof}

\subsection{Epipelagic representations for $\tx{GL}_n$} \label{sec:epigln}

Consider an embedding $j : S \rw G$ and a character $\chi : S(F) \rw \C^\times$ subject to Conditions \ref{cnd:char}. The following facts are easily checked.

\begin{fct}
We have $S(F) \cong E^\times$ for a totally ramified extension $E/F$ of degree $n$. Moreover, $\chi : E^\times \rw \C^\times$ is generic in the sense of Kutzko \cite[Def. 2.2.3]{My86}.
\end{fct}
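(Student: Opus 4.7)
First I would establish the structure of $S$. Maximal tori of $G=\tx{GL}_n$ defined over $F$ are classified by commutative étale $F$-algebras $A$ with $\dim_F A=n$: one has $S=\tx{Res}_{A/F}\G_m$ and $S(F)=A^\times$. The set $X$ of $F$-algebra embeddings $A\hookrightarrow\ol F$ is a $\Gamma$-set of cardinality $n$, and the resulting permutation action of $\Gamma$ on $\Z^X\cong X^*(S)$ is the standard one, with Weyl group $\Omega(S,G)=S(X)$. By the first item of Conditions~\ref{cnd:char}, the image of $I$ in $\tx{Aut}_{\ol F}(S)$ is generated by an elliptic regular element of the Weyl group; in $S_n$ the only such elements are $n$-cycles. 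Hence $I$ acts transitively on $X$, which forces $A$ to be a field $E$ and $E/F$ to be totally ramified. Tameness of $E/F$ is already built into the setup via the tameness of the splitting extension of $S$ (equivalently $p\nmid n$ in the epipelagic setting), so $E/F$ is totally tamely ramified of degree $n$ and $S(F)=E^\times$.

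For the genericity of $\chi$ I would apply Lemma~\ref{lem:genchar} to reformulate the third item of Conditions~\ref{cnd:char} as two assertions about $\chi\circ N$, where $N\colon S(L)\rw S(F)$ is the norm relative to the splitting field $L/F$ of $S$. Fix a nontrivial $\psi\colon k_F\rw\C^\times$ and let $Y\in\mf{s}^*(F)_{-1/e}$ be the functional representing $\chi$ on $S(F)_{1/e:2/e}$. Using the canonical identifications $\mf{s}=E$ and $\mf{s}^*=E$ via the trace pairing, $Y$ becomes an element of $E$ of $E$-valuation $-1$. The roots of $S$ in $G$ are the characters $\alpha_{ij}$ indexed by ordered pairs of distinct embeddings $\sigma_i,\sigma_j\colon E\hookrightarrow\ol F$, and a direct computation shows $d\alpha_{ij}^\vee(Y)=\sigma_i(Y)-\sigma_j(Y)$. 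The first condition of Lemma~\ref{lem:genchar}, the nonvanishing of $x\mapsto\chi(N(\alpha_{ij}^\vee(x)))$, is therefore equivalent to $\sigma_i(Y)\neq\sigma_j(Y)$ for all $i\neq j$, i.e.\ to $F[Y]=E$. The second condition (trivial stabilizer in $\Omega(S,G)$) follows from the same statement: any Weyl element permuting the $\sigma_i(Y)$ must fix them all.

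Finally I would translate $F[Y]=E$ into Kutzko's/Moy's definition of genericity. Moy's condition \cite[Def.~2.2.3]{My86} for a character of $E^\times$ defining a stratum $(E^\times,1/e,\alpha)$ is precisely that the representative $\alpha\in E$ of the character on the top graded piece generates $E$ over $F$, which is equivalent to the nonexistence of a proper intermediate field $F\subsetneq K\subsetneq E$ such that $\chi|_{U_E^1}$ factors through $N_{E/K}$. Our $Y$ is exactly such a representative (with $\alpha=Y$ up to a uniformizer power), so $F[Y]=E$ is Moy's genericity verbatim. The only point that deserves a careful check is the unravelling of the dictionary between the Moy-Prasad description of $\chi$ on $S(F)_{1/e}$ used in Section~\ref{sec:repconst} and the valuation-filtration description of $\chi|_{U_E^{\lceil e/e\rceil}}$ on which Moy's definition is phrased; this is routine given that $E/F$ is totally tamely ramified of degree $e=n$, so $S(F)_{r}=U_E^{\lceil en\cdot r\rceil /n}$ matches up the filtrations exactly.
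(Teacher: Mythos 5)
The paper records this Fact as ``easily checked'' without supplying a proof, so there is no official argument to compare against; your verification is correct and fills in exactly what is meant. Both halves are sound: the elliptic regular elements of $\Omega(S,G)\cong S_n$ are precisely the $n$-cycles, so Conditions~\ref{cnd:char}(1) forces $I$ to act transitively on the embeddings $A\hookrightarrow\ol{F}$, whence $A=E$ is a field totally (tamely) ramified of degree $n$ over $F$; and Lemma~\ref{lem:genchar} reduces Conditions~\ref{cnd:char}(3) to the statement $F[Y]=E$ for the representative $Y\in E$ of valuation $-1$, with both conditions of that lemma coming down to the distinctness of the conjugates $\sigma_i(Y)$, which is exactly Moy's genericity; you also correctly distinguish $E$ from the (generally larger) splitting field of $S$ when invoking Lemma~\ref{lem:genchar}. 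The only slip is the final filtration identity, which with $\tx{val}_E$ normalized so a uniformizer has valuation $1$ should read $S(F)_r=U_E^{\lceil er\rceil}$; your expression happens to agree with this at the jump values $r\in\frac{1}{n}\Z$, which is all that is used.
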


\begin{fct} \label{fct:repcmp} The representation obtained from $(jS,j\chi_S)$ coincides with the representation obtained from the admissible pair $(E^\times,\chi)$ in \cite[\S2.3]{BH05a}.
\end{fct}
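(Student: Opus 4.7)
The plan is to show that both representations arise as the compact induction of the same character from the same open subgroup of $G(F)$. Since Conditions \ref{cnd:char} force depth $\tfrac{1}{n}$ and $j\mf{s}\hookrightarrow\mf{g}=\mf{gl}_n(F)$ realises $E$ as a totally ramified maximal subfield of $M_n(F)$, the principal hereditary order $\mf{A}\subset M_n(F)$ normalised by $E^\times$ satisfies $\mf{A}^\times = G(F)_{y,0}$ and $G(F)_{y,k/n} = 1+\mf{P}^k$ for $k\geq 1$, where $\mf{P}=\tx{rad}(\mf{A})$. In particular
\[ S(F)\cdot G(F)_{y,1/n} \;=\; E^\times\cdot U^1(\mf{A}), \]
which is precisely the open subgroup used in Bushnell--Henniart's construction at the minimal simple stratum $[\mf{A},1,0,\beta]$ attached to $(E/F,\chi)$, with $\beta\in E^\times$ the unique element of $E$-valuation $-1$ satisfying $\chi(1+x) = \psi(\tx{tr}_{E/F}(\beta x))$ for $x\in\mf{p}_E$.

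The next task is to match the characters on this common subgroup. The character $\hat\chi$ of Section \ref{sec:repconst} is characterised by $\hat\chi|_{S(F)}=\chi$ together with $\hat\chi(1+X) = \psi\<\beta,X\>$ for $X\in\mf{g}(F)_{y,1/n}$, where the pairing is evaluated after projecting $X$ to its $\mf{s}$-component under $\mf{g}=\mf{s}\oplus\mf{n}$. Dualising $\beta\in\mf{s}^*$ to an element of $\mf{s}=E$ via the invariant trace form $(X,Y)\mapsto\tx{tr}_{M_n/F}(XY)$, the projection step becomes redundant: $\mf{s}$ and $\mf{n}$ are orthogonal under any $\tx{Ad}(S)$-invariant form, since for weights $\alpha,\alpha'$ of $S$ invariance forces $\<X,Y\>=0$ unless $\alpha+\alpha'=0$, and $\mf{n}$ is the sum of the non-zero weight spaces. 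Thus $\hat\chi(1+X) = \psi(\tx{tr}_{M_n/F}(\beta X))$ for all $X\in\mf{g}(F)_{y,1/n}$, which is the simple character $\theta_\beta$ of the minimal stratum. On the other side, the Bushnell--Henniart extension of $\chi$ to $E^\times U^1(\mf{A})$ is the unique character that agrees with $\chi$ on $E^\times$ and with $\theta_\beta$ on $U^1(\mf{A})$ --- the consistency on the overlap $U^1_E$ being exactly the defining relation of $\beta$ --- so the two extensions coincide.

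Once the subgroups and characters are matched, the compactly induced representations $\pi_{S,\chi}$ and $\pi_{(E/F,\chi)}$ are literally equal. The only real subtlety is confirming that in the minimal-stratum case the Bushnell--Kutzko formalism genuinely reduces to this elementary compact induction: one must record that for $[\mf{A},1,0,\beta]$ the Heisenberg data is trivial because $U^1(\mf{A})$ supports $\theta_\beta$ as an actual character (its commutator lands in $1+\mf{P}^2$, on which $\theta_\beta$ is trivial), so that $\mb{J}(\beta,\mf{A}) = \mc{O}_E^\times U^1(\mf{A})$ carries the literal one-dimensional extension $eu\mapsto\chi(e)\theta_\beta(u)$, which then extends uniquely to $E^\times U^1(\mf{A})$ by $\chi$ on a uniformiser. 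This verification is straightforward for depth $\tfrac{1}{n}$ but would be the main technical obstacle if one tried to extend the comparison to admissible pairs of higher level, where non-trivial $\beta$-extensions genuinely intervene.
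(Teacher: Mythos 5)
Your proof is correct and supplies the verification the paper leaves implicit for this Fact. The three key points you identify are exactly what is needed: both constructions compactly induce the same one-dimensional character from the common subgroup $E^\times U^1(\mathfrak{A}) = S(F)G(F)_{y,1/n}$; the orthogonality $\mathfrak{s}\perp\mathfrak{n}$ under the trace form shows that the Adler--Yu extension $\hat\chi_0$ agrees with the simple character $\theta_\beta$ without any projection term; and at level $1/n$ the Bushnell--Kutzko Heisenberg/$\beta$-extension machinery degenerates since $[U^1(\mathfrak{A}),U^1(\mathfrak{A})]\subset U^2(\mathfrak{A})\subset\ker\theta_\beta$, so $\theta_\beta$ is already an honest character.
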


\begin{lem} The toral invariant $f_{jS,G}$ is trivial.
\end{lem}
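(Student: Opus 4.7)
The plan is to put $j$ in a concrete form via Skolem--Noether and then verify the vanishing directly with elementary matrices. First, I would note that any admissible embedding $j : S = \tx{Res}_{E/F}\mb{G}_m \hookrightarrow G = \tx{GL}_n$ is $G(F)$-conjugate, by the Skolem--Noether theorem, to the one coming from the realization of $E$ as a maximal commutative $F$-subalgebra of $\tx{End}_F(V)$, where $V := E$ viewed as an $F$-vector space. Since $f_{(G,jS)}$ depends only on the $G(F)$-conjugacy class of $jS$, I may assume $j$ is this canonical embedding.

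Next I would split $V$ over $\ol{F}$: since $E/F$ is separable, $V \otimes_F \ol{F} = \bigoplus_{\sigma} \ol{F} e_\sigma$, indexed by $\sigma \in \tx{Hom}_F(E, \ol{F})$, with Galois action $\rho \cdot e_\sigma = e_{\rho\sigma}$. In this basis $j(S)$ is the standard diagonal maximal torus of $\tx{GL}(V \otimes \ol{F})$; the roots are $\alpha_{\sigma\tau}(\tx{diag}(x_\nu)) = x_\sigma/x_\tau$ for $\sigma \neq \tau$, the root space $\mf{g}_{\alpha_{\sigma\tau}}$ is spanned by the elementary matrix $E_{\sigma\tau}$, the coroot is $H_{\alpha_{\sigma\tau}} = E_{\sigma\sigma} - E_{\tau\tau}$, and a direct computation yields $\rho \cdot E_{\sigma\tau} = E_{\rho\sigma, \rho\tau}$ for every $\rho \in \Gamma$.

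Given a symmetric root $\alpha = \alpha_{\sigma\tau}$, I would take $X_\alpha := E_{\sigma\tau}$. Since $\Gamma_\alpha$ fixes both $\sigma$ and $\tau$ as embeddings, it fixes $X_\alpha$, so $X_\alpha \in \mf{g}_\alpha(F_\alpha)$. For any $\rho \in \Gamma_{\pm\alpha} \sm \Gamma_\alpha$ (so $\rho\sigma = \tau$ and $\rho\tau = \sigma$) one has $\rho X_\alpha = E_{\tau\sigma}$, and
\[ \frac{[X_\alpha, \rho X_\alpha]}{H_\alpha} = \frac{[E_{\sigma\tau}, E_{\tau\sigma}]}{E_{\sigma\sigma} - E_{\tau\tau}} = 1 \in F_{\pm\alpha}^\times, \]
whence $f_{(G,jS)}(\alpha) = \kappa_\alpha(1) = 1$.

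There is no real obstacle: the standard linear-algebraic realization of $S$ in $\tx{GL}_n$ makes the elementary matrices double as root vectors and (via their commutators) coroots, which forces the toral invariant to be trivial on every symmetric root at once. As an alternative, one could invoke Proposition \ref{pro:torinvvan} to handle all inertially asymmetric symmetric roots, leaving only the (at most one) inertially symmetric $\Gamma$-orbit --- which exists only when $n$ is even and is represented by $\alpha_{\sigma, s^{n/2}\sigma}$ for the inertia generator $s$ --- to be verified by the same matrix computation; but the direct approach covers all cases uniformly.
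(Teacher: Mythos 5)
Your proof is correct, and it is essentially the same argument as the paper's, made fully explicit. The paper observes that $jS$ can be conjugated to the diagonal torus by a $g$ with $g^{-1}\sigma(g)$ a permutation matrix and then notes that $f^{\tx{coh}}_{(G,jS)}(X_\alpha)$ is trivial with respect to the standard pinning; your choice of basis $\{e_\sigma\}_{\sigma \in \tx{Hom}_F(E,\ol{F})}$, with the Galois action $\rho\cdot e_\sigma = e_{\rho\sigma}$, realizes precisely such a permutation cocycle, and you carry out the resulting computation $[E_{\sigma\tau},E_{\tau\sigma}]/H_\alpha = 1$ directly from the definition of $f(\alpha)$ rather than through the cohomological reformulation. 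The only minor gain of your version is that it is self-contained and spells out why the elementary matrices simultaneously serve as $\Gamma_\alpha$-rational root vectors and compute the coroot exactly, which the paper leaves implicit.
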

\begin{proof}
We replacing $j$ by a $G(F)$-conjugate embedding we can find $g \in G$ such that $g^{-1}{^\sigma g}$ is a permutation matrix for any $\sigma \in \Gamma$. One then computes immediately that with respect to the standard splitting of $\tx{GL_n}$, the element $f_{jS,G}^\tx{coh}(X_\alpha)$ is trivial.
\end{proof}


\begin{thebibliography}{99999}

\bibitem[Ad98]{Ad98}    J.D. Adler, Refined anisotropic $K$-types and supercuspidal representations, Pacific J. Math. 185 (1998), no. 1, 1--32.
\bibitem[AS10]{AS10}    J. D. Adler, L. Spice, Supercuspidal characters of reductive, p-adic groups, Amer. J. Math. 131 no. 4, 1137--1210.
\bibitem[BH05a]{BH05a} C.J. Bushnell, G. Henniart, The essentially tame local Langlands correspondence. I. J. Amer. Math. Soc. 18 (2005), no. 3, 685--710.
\bibitem[BH05b]{BH05b}  C.J. Bushnell, G. Henniart, The essentially tame local Langlands correspondence. II. Totally ramified representations. Compos. Math. 141 (2005), no. 4, 979--1011.
\bibitem[BK93]{BK93}    C.J. Bushnell and P.C. Kutzko, The admissible dual of GL(N) via compact open subgroups, Annals of Mathematics Studies, vol. 129 (Princeton University Press, Princeton, NJ, 1993).
\bibitem[Bo77]{Bo77} A. Borel, Automorphic L-functions, Automorphic forms, representations and L-functions, Proc. AMS, vol. 33, no. 2, (1977), 27-61
\bibitem[Bou]{Bou} N. Bourbaki - Lie groups and Lie algebras. Springer, 2002, ISBN 3-540-42650-7.
\bibitem[DH05]{DH05}    W. Duke, K. Hopkins, Quadratic reciprocity in a finite group. Amer. Math. Monthly 112 (2005), no. 3, 251--256.
\bibitem[DR09]{DR09}    S. DeBacker, M. Reeder, Depth-zero supercuspidal $L$-packets and their stability.  Ann. of Math. (2)  169  (2009),  no. 3, 795--901
\bibitem[DR10]{DR10}    S. DeBacker, M. Reeder, On some generic very cuspidal representations. Compos. Math. 146 (2010), no. 4, 1029--1055.
\bibitem[GG99]{GG99}    B. Gross, W.-T. Gan, Haar measure and the Artin conductor, Trans. AMS 351 (1999), 1691-704.
\bibitem[Gr97]{Gr97}    B. Gross, On the motive of a reductive group, Inv. Math., 130 (1997), 287--313.
\bibitem[GR10]{GR10}    B. Gross, M. Reeder, Arithmetic invariants of discrete Langlands parameters, Duke Math. J. 154 (2010), no. 3, 431--508.
\bibitem[GLRY]{GLRY}      B. Gross, P. Levy, M. Reeder, J.-K. Yu, Gradings of positive rank on simple Lie algebras. Transformation Groups, to appear.
\bibitem[RY]{RY}       M. Reeder, J.-K. Yu, Epipelagic representations and invariant theory, preprint.
\bibitem[Hal93]{Hal93}  T. C. Hales, A simple definition of transfer factors for unramified groups, Contemporary Math., 145, (1993) 109--134.
\bibitem[HII08]{HII08} K. Hiraga, A. Ichino, and T. Ikeda, Formal degrees and adjoint gamma-factors, J. Amer. Math. Soc. 21 (2008), 283--304.
\bibitem[HR08]{HR08}    T. Haines, M. Rapoport, On parahoric subgroups, appendix to: G. Pappas, M. Rapoport, Twisted loop groups and their affine flag varieties , Adv. in Math. 219, no. 1, (2008), 118-198.
\bibitem[JL70]{JL70}    H. Jacquet, R. P. Langlands,  Automorphic forms on ${\rm GL}(2)$. Lecture Notes in Mathematics, Vol. 114. Springer-Verlag, Berlin-New York, 1970. vii+548 pp.
\bibitem[Kal11a]{Kal11a}T. Kaletha, Supercuspidal $L$-packets via isocrystals, Amer. J. Math., accepted
\bibitem[Kal12]{Kal12}  T. Kaletha, Simple wild $L$-packets, J. Inst. Math. Jussieu, to appear
\bibitem[Kal12]{Kal12}  T. Kaletha, Genericity and contragredience in the local Langlands correspondence, arXiv:1204.0132.
\bibitem[Kot82]{Kot82}  R. E. Kottwitz, Rational conjugacy classes in reductive groups. Duke Math. J. 49 (1982), no. 4, 785--806.
\bibitem[Kot83]{Kot83}  R. E. Kottwitz, Sign changes in harmonic analysis on reductive groups.  Trans. Amer. Math. Soc.  278  (1983), no. 1, 289--297.
\bibitem[Kot84]{Kot84}  R. E. Kottwitz, Stable trace formula: Cuspidal tempered terms, Duke Math. J. vol. 51, no. 3 (1984), 611--650.
\bibitem[Kot85]{Kot85}  R. E. Kottwitz, Isocrystals with additional structure. Compositio Math. 56 (1985), no. 2, 201--220.
\bibitem[Kot97]{Kot97}  R. E. Kottwitz, Isocrystals with additional structure. II. Compositio Math. 109 (1997), no. 3, 255--339.
\bibitem[Kot99]{Kot99}  R. E. Kottwitz, Transfer factors for Lie algebras.  Represent. Theory  3  (1999), 127--138.
\bibitem[KS99]{KS99}    R. E. Kottwitz, D. Shelstad, Foundations of twisted endoscopy.  Ast\'erisque  No. 255  (1999).
\bibitem[KS12]{KS12}    R. E. Kottwitz, D. Shelstad, On splitting invariants and sign conventions in endoscopic transfer, arXiv:1201.5658
\bibitem[LanArt]{LanArt}R.P. Langlands,On the Functional Equation of the Artin L-functions, manuscript, \url{http://publications.ias.edu/rpl/paper/61}.
\bibitem[Lan88]{Lan88}  R.P. Langlands, On the classification of irreducible representations of real algebraic groups, Math. Surveys and Monographs, No. 31, AMS (1988).
\bibitem[Lan97]{Lan97}  R.P. Langlands, Representations of abelian algebraic groups, Pacific J. Math. 181, no. 3, (1997), 231--250.
\bibitem[LS87]{LS87}    R. P. Langlands, D. Shelstad, On the definition of transfer factors, Math. Ann., vol. 278 (1987), 219--271.
\bibitem[LS90]{LS90}    R. P. Langlands, D. Shelstad, Descent for transfer factors, in the Grothendieck Festschrift, Vol. II, Birkh\"auser (1990) pp. 485--563.
\bibitem[MP96]{MP96}    A. Moy, G. Prasad, Jacquet functors and unrefined minimal $K$-types. Comment. Math. Helv. 71 (1996), no. 1, 98--121.
\bibitem[My86]{My86}    A. Moy, Local Constants and the Tame Langlands Correspondence, Amer. J. Math., vol. 108, No. 4, 863--929.
\bibitem[No11]{No11}    B. Noohi, Group cohomology with coefficients in a crossed module. J. Inst. Math. Jussieu 10 (2011), no. 2, 359--404.
\bibitem[Ngo10]{Ngo10}  B. C. Ng\^o,  ``Le lemme fondamental pour les alg\`ebres de Lie'', Pub. Math. I.H.\'E.S. 111 (2010), 1--169.
\bibitem[Pr01]{Pr01}    G. Prasad, Galois-fixed points in the Bruhat-Tits building of a reductive group, Bull. Soc. math. France, 129 (2), 2001, p. 169--174.
\bibitem[SGA3]{SGA3}    M. Demazure, A. Grothendieck, Sch\'emas en groupes I, II, III, Lecture Notes in Math 151, 152, 153, Springer- Verlag,
    New York (1970).
\bibitem[Sha90]{Sha90}      F. Shahidi, A proof of Langlands' conjecture on Plancherel measures; complementary series for p-adic groups. Ann. of Math. (2) 132 (1990), no. 2, 273--330.
\bibitem[She10]{She10}  D. Shelstad, Tempered endoscopy for real groups II: spectral transfer factors. Automorphic forms and the Langlands Program
Higher Education Press/ International Press,  2009/ 2010,  pp. 236--276.
\bibitem[Ser79]{Ser79}  J. P. Serre, Local fields, Springer-Verlag, 1979.
\bibitem[Sp08]{Sp08}    L. Spice, Topological Jordan decompositions, J. Algebra 341 no. 8, 3141--3163.
\bibitem[Tat77]{Tat77}  J. Tate, Number theoretic background, Automorphic Forms, Representations and $L$-Functions, part 2, Proc. Sympos. Pure Math., XXXIII,
    Amer. Math. Soc., Providence, R.I., 1977, pp. 3--26.
\bibitem[Vog93]{Vog93}  D. Vogan, Representation Theory of Groups and Algebras (J. Adams et al., eds.) Contemporary Mathematics 145. American Mathematical Society, 1993.
\bibitem[Wal95]{Wal95}  J.-L. Waldspurger, Une formule des traces locale pour les alg\`ebres de Lie $p$-adiques, J. Reine Angew. Math.  465  (1995), 41--99.
\bibitem[Wal97]{Wal97}  J.-L. Waldspurger, Le lemme fondamental implique le transfert, Compositio Math. 105 (1997), 153--236.
\bibitem[Wal06]{Wal06}  J.-L. Waldspurger, Endoscopie et changement de caract\'eristique, J. Inst. Math. Jussieu  5  (2006),  no. 3, 423--525.
\bibitem[Yu01]{Yu01}    J.K.Yu, Construction of tame supercuspidal representations. J. Amer. Math. Soc. 14 (2001), no. 3, 579--622.
\bibitem[Yu09]{Yu09}    J.K.Yu, On the Local Langlands Correspondence for Tori, in Ottawa Lectures on Admissible Representations of Reductive Groups
(C. Cunningham, M. Nevins eds.), Fields Institute Monographs, vol. 26, 2009.
\end{thebibliography}
\end{document}